\newtheorem{stat}{Statement}[section]
\newtheorem{theorem}{Theorem}[section]
\newtheorem{lemma}[theorem]{Lemma}
\newtheorem{proposition}[theorem]{Proposition}
\newtheorem{corollary}[theorem]{Corollary}
\theoremstyle{definition}
\newtheorem{assumption}[stat]{Assumption}
\newtheorem{remark}[stat]{Remark}
\numberwithin{equation}{section}
\newcommand{\be}{\begin{equation}}
\newcommand{\ee}{\end{equation}}
\def\E{\bE}
\def\P{\bP} %% environment measure
\def\bE{\mathbb{E}}
\def\bP{\mathbb{P}}
\newcommand{\N}{\mathbf{N}}
\newcommand{\Z}{\mathbf{Z}}
\newcommand{\B}{\mathbf{B}}
\newcommand{\R}{\mathbf{R}}
\renewcommand{\d}{{\rm d}}
\renewcommand{\ge}{\geqslant}
\renewcommand{\le}{\leqslant}
\newcommand{\bes}{\begin{equation*}}
\newcommand{\ees}{\end{equation*}}
\newcommand{\lip}{\text{\rm Lip}_\sigma }
\renewcommand{\P}{\mathrm{P}}
\renewcommand{\E}{\mathrm{E}}
\def\ind{\mathbf{1}}
\def\mE{\mathbf{E}}
\def\mP{\mathbf{P}}
\def\m1{\mathbf{1}}
\begin{document}
\author{Mathew Joseph}

\title{An invariance principle for the stochastic heat equation
%\thanks{}
}
\date{}

\maketitle

\begin{abstract}
We approximate the white-noise driven stochastic heat equation by replacing the 
fractional Laplacian by the generator of a discrete time random walk on the one dimensional 
lattice, and approximating white noise by a collection of i.i.d. mean zero random variables.  As a
consequence, we give an alternative proof of the weak convergence of the scaled partition
function of directed polymers in the intermediate disorder regime, to the stochastic heat equation; an advantage of the proof is that it gives the
convergence of all moments.

%\noindent{\it Keywords:}
%Stochastic PDEs, comparison theorems, colored noise.\\
	
%	
%\noindent{\it \noindent AMS 2010 subject classification:}
%Primary 60H15; Secondary: 35K57.
\end{abstract}

\section{Introduction}
Consider the following discrete space-time stochastic heat equation
\be \label{eq:dis:she}
u_{i+1}(k)-u_i(k)= \sum_{l\in \Z} \P(k,l) \cdot \big[ u_i(l) -u_i(k)\big]+ \sigma\left(u_i(k)\right)\cdot \xi_i(k)
\ee
where $\P$ is the transition kernel of a random walk $X_n$ on $\Z$ and $\sigma:\R\to \R$ is a Lipschitz continuous
function with Lipschitz coefficient $\lip$. The collection $\xi:=\{\xi_i(k), \, i \in \Z, \, k \in \Z_+\}$ consists of i.i.d. random variables
with 
\begin{equation}\begin{split} \label{cond:xi}
& \bE \,\xi_i(k)=0, \quad \bE \,\xi_i^2(k)=1,\\
&\bE\, \big|\xi_i(k)\big|^{2+\kappa} < \infty \; \text{for some } \kappa>0.
\end{split}\end{equation}
Since $\sum_l \P(k,l)=1$ we can remove $u_i(k)$ from both sides of \eqref{eq:dis:she}. A solution to \eqref{eq:dis:she} satisfies
\be \label{eq:dis:sol}
u_{i+1}(k) = \sum_l \P_{i+1}(k,l) \, u_0(l) + \sum_{j=0}^i \sum_l \P_{i-j}(k,l) \cdot \sigma\big(u_j(l)\big)\cdot\xi_j(l). 
\ee
Above $\P_{i}(k,l)$ gives the $i$-step transition probability of jumping from $k$ to $l$. Note by homogeneity 
$\P_{i-j}(k,l)= \P_{i-j}(0, l-k)$ which we define to be $\P_{i-j}(l-k)$ by an abuse of notation.

We shall assume that the random walk $X_n$ has an asymptotic speed $\mu$ and when centered 
it is in the domain of attraction of a symmetric Stable($\alpha$) process with generator $-\nu(-\Delta)^{\alpha/2}$. Our assumptions will be stated in terms of the characteristic function of $X_1$ in the next section. 
A consequence of our main result Theorem \ref{thm:main}, is that after an appropriate scaling of space, time and noise 
in \eqref{eq:dis:she}, we get the continuous space-time stochastic heat equation. To be precise
consider
\be \label{eq:dis:sc}
u^{(n)}_{i+1}(k)= \sum_{l\in \Z} \P(k,l) \, u_i^{(n)}(l) + \sigma\big(u_i^{(n)}(k)\big)\cdot \frac{\xi_i(k)}{n^{(\alpha-1)/2\alpha}}.
\ee
We show that for $t\in \R_+, \, x \in \R$ 
\[
u^{(n)}_{[nt]}\big([xn^{1/\alpha}] -[\mu nt] \big) \Rightarrow v_t(x),
\]
where $\Rightarrow$ denotes convergence in distribution, and $v_t(x)$ is the mild solution to the stochastic heat equation
\be \label{eq:she}
\partial_t v = -\nu(-\Delta)^{\alpha/2}\,v_t(x) + \sigma(v_t(x)) \cdot \dot{W}(t,x).
\ee
Note that the existence and uniqueness of mild solutions to \eqref{eq:she} (see \cite{foon-khos-09}), as well 
as the finiteness of the speed of the random walk $X_n$ in the domain of attraction of 
a Stable($\alpha$) process requires 
\[ 1<\alpha\le 2,\]
and we shall assume this without further mention for the rest of the paper.

In the case that $\sigma(x)\equiv 1$, the limiting random field \eqref{eq:she} is Gaussian.
One can then use the Lindeberg-Feller theorem to prove the weak convergence of $u^{(n)}$ to 
$v$, and this was the approach
used by \cite{sepp-zhai} in their analysis of the Harness process. Another special case is 
when $\sigma(x)\propto x$. In this case \eqref{eq:she} is called
the parabolic Anderson model or the multiplicative stochastic heat equation. Here
one can explicitly write the solution to \eqref{eq:she} as an infinite series involving multiple 
stochastic integrals (the {\it Wiener chaos expansion}). To illustrate, suppose that $v_0(\cdot)\equiv 1$,
then one can check that
\[v_t(x)= 1+ \sum_{k=1}^\infty \int_{\Delta_k} \int_{\R^k} \prod_{i=1}^k p_{t_i-t_{i-1}}(y_i-y_{i-1}) W(dt_{i-1}\, dy_{i-1}),\]
where $\Delta_k=\{(t_0, t_1,\cdots, t_{k-1}): 0<t_0<t_1<\cdots< t_{k-1}< t =t_k\}$ and $y_k=x$. Similarly
 $u^{(n)}$ also has an infinite series expansion involving sums of products of the random variables $\xi$. 
The approach in \cite{albe-khan-quas} and \cite{cara-sun-zygo} shows the weak convergence of $u^{(n)}$ to 
$v$ by showing the convergence of the individual terms in the infinite series expansions. Both these methods are specific and do not work in general, even for a slight perturbation of $\sigma$.
The approach that we take to address the general case has been inspired by \cite{kana}, where discrete
approximations to stochastic differential equations were considered. We point out that as opposed to weak approximations,
there are quite a few papers concerned with strong approximations of the stochastic heat equation (see \cite{foon-jose-li}, \cite{funa}, \cite{gyon-98}, \cite{gyon-99}, \cite{jose-khos-muel}).

Our results allow us to give an alternative proof of the weak convergence of the 
partition function of {\it one} dimensional directed random polymers in the intermediate disorder regime, obtained in \cite{albe-khan-quas}, \cite{cara-sun-zygo} by different arguments. 
%In this model we first put a collection of weights $\omega = (\omega(i, x), i\in \Z_+, x \in \Z$ on the 
%lattice.
For each $n$ consider  the following measure on paths $\mathbf x= (0=x_0, x_1, x_2, \cdots, x_n)$ of length $n$, where each $x_i \in \Z$:
\[\P^{\xi, \beta}_n (\mathbf x) = \frac{\exp\big[\beta \sum_{i=0}^n \xi_i( x_i)\big]}{Z_n^\xi(\beta)} \, \P_n(\mathbf x) .\] 
The measure $\P_n$ is a probability measure on paths of length $n$, of a random walk following the transition probability $\P$ and starting at $0$. The partition function for the inverse temperature $\beta$ 
\be \label{partition}Z_n(\beta)= Z^{\xi}_n(\beta):= E_0 \exp\big[\beta \sum_{i=0}^n \xi_i(X_i) \big]\ee
is the normalizing constant so that $\P_n^{\xi, \beta}(x)$ is a probability measure on paths of length $n$; here $E_0$ is the expectation
over paths of the random walk starting at $0$. One is 
then interested in the behaviour of paths sampled from this new measure $\P_n^{\xi, \beta}$. As the temperature
becomes large, alternatively $\beta$ becomes small, one expects that the paths behave like that of a
random walk with transition matrix $\P$. On the other hand as the temperature becomes small the effect of the environment
$\xi$ becomes important, since substantially more weight is on paths with larger energy $H_n(\xi) = \sum_{i=0}^n \xi_i(x_i)$.
A big motivation for work in this area is to understand this competition between entropy (from the measure $\P_n$) and energy
$H_n(\xi)$. The competition
between entropy and energy is characterized by the limit of the martingale
\[M_n=\frac{Z_n(\beta)}{(\bE[e^{\beta \xi}])^{n+1}}.\]
If the limit is positive a.e. we say that the polymer measure is in {\it weak} disorder while it is in {\it strong} disorder
if the limit is $0$ a.e.. The polymer paths are diffusive in the weak disorder regime (see \cite{come-yosh}) while we see vastly different behavior in the strong disorder regime.  One (as well as two) dimensional polymers are in the strong disorder regime for any $\beta>0$, while in higher spatial dimensions, the polymer is in the strong disorder regime only if $\beta$ is large enough.  %In the weak disorder regime the polymer paths follow a central limit theorem \cite{come-yosh}
%while the behavior in the strong disorder regime is far less known.  In higher dimensions $d\ge 3$ there is critical positive value of %$\beta$ for which weak disorder holds below the critical value, and strong disorder holds above it. On the other hand in dimensions %$d=1,2$, 
%strong disorder holds for all $\beta>0$.
 It is believed that for one dimensional walks which are in the domain of attraction of Brownian motion the 
typical fluctuation of the polymer paths for any $\beta>0$ would be of order $n^{2/3}$ (as opposed to $n^{1/2}$ when $\beta=0$), although this question is still open in general.
One also expects that $\log Z_n$ has fluctuations of order
$n^{1/3}$ (as opposed to $0$ when $\beta=0$), but this question is also open in general.  We refer to \cite{albe-khan-quas}, \cite{come}, \cite{den} for a more detailed
exposition on this subject. 

In this paper we shall consider (as in \cite{albe-khan-quas} and \cite{cara-sun-zygo}) one dimensional polymers in the intermediate disorder regime, that is when the inverse temperature goes to $0$ at
a specified rate with $n$. With the choice of the inverse temperature $\beta(n)= \beta/n^{(\alpha-1)/2\alpha}$, we observe interesting behavior which we shall detail in the following section.

%In \cite{albe-khan-quas} and \cite{cara-sun-zygo}, one dimensional random polymers
%are considered in the {\it intermediate} disorder regime, that is when $\beta$ goes to zero with $n$. With the choice
%$\beta(n)= \beta/n^{(\alpha-1)/2\alpha}$, the limiting distribution of the log of the partition function after appropriate centering %(and no scaling)
%is obtained. 

\subsection{Main results}
Let us denote by $\mu:=\sum_{k\in \Z} k\cdot \P(k)$ the mean of $X_1$. Let $\phi$ denote the characteristic function associated to the random walk
\[
\phi(z) := \sum_{k \in \Z} e^{\mathrm{i}zk} \cdot\P(k),\quad z\in [-\pi,\pi].
\]
The centered characteristic function is 
\[
\tilde{\phi}(z):=e^{\mathrm{-i} \mu z}  \phi(z).
\]
We shall make the following assumption on the random walk.
\begin{assumption} \label{cond1} Assume that $\{z\in [-\pi,\pi]: \vert\phi(z)\vert=1\}=\{0\}$ and 
that there exists a constant $0<a<1$ such that 
\be\label{eq:char}
\tilde{\phi}(z)= 1-\nu|z|^\alpha + \mathcal{D}(z),
\ee
where $\mathcal{D}(z)= O(|z|^{\alpha+a})$ as $|z| \to 0$.
\end{assumption}
\begin{remark} The first part of the assumption means that the random walk is {\it strongly aperiodic} and the assumption
on $\tilde \phi$ necessarily implies that the centered random walk is in the domain 
of attraction of the strictly Stable($\alpha$) law.
\end{remark}
\begin{remark} The strong aperiodicity assumption is used in the local limit theorem \ref{llt} below and it should be possible to modify the theorem to remove this assumption. In particular the results of this paper are valid for the simple random walk although
it is not strongly aperiodic.
\end{remark}
Let us now state the main result of this paper. Let
\be \label{u:bar}
\bar u_t^{(n)}(x) := u^{(n)}_{[nt]}\big([xn^{1/\alpha}] -[\mu nt]\big).
\ee
%Also $v= \big(v_t(x),\,t \ge 0,\, x \in \R\big)$ be the solution to the continuum stochastic heat equation
%\be \label{eq:she}
%\partial_t v = -\nu(-\Delta)^{\alpha/2}v_t(x) + \sigma(v_t(x)) \cdot \dot{W}(t,x).
%\ee
We choose our initial profile 
\be \label{eq:initial} u_0^{(n)}(k) = v_0\Big(\frac{k}{n^{1/\alpha}}\Big)\ee
for a continuous $v_0$. We allow $v_0$ to be random but it should be {\it independent} of the 
white noise $\dot W$ and the random field $\xi$. Our main result is the following 
\begin{theorem} \label{thm:main}
Let the conditions in Assumption \ref{cond1} hold, and fix $m\in \N$ so that $2m<2+\kappa$. Let $v_0$ be a continuous (random) function so that $\sup_x \mE |v_0(x)|^{2m}<\infty$, that is independent of $\xi$ and $\dot W$. Let $u^{(n)}$ be the solution to 
\eqref{eq:dis:sc} with initial profile $u_0^{(n)}$. Then for each $t> 0, \, x\in \R$ we have $\bar u_t^{(n)}(x) \Rightarrow v_t(x)$, where $v$ is the solution to \eqref{eq:she} with initial profile $v_0$. Furthermore we have 
\be \label{eq:main}
\mE\big|\bar u^{(n)}_t(x) \big|^{2m} \to \mE\big| v_t(x)\big|^{2m} \quad \text{as } n \to \infty.
\ee
\end{theorem}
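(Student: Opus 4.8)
\emph{Proof outline.} The plan is to compare $u^{(n)}$ and $v$ through their mild (Duhamel) representations and a matched pair of Picard iteration schemes. On the continuous side, $v_t(x)=\int_\R p_t(x-y)v_0(y)\,dy+\int_0^t\!\!\int_\R p_{t-s}(x-y)\,\sigma(v_s(y))\,W(ds\,dy)$ with $p$ the transition density of the Stable($\alpha$) process of generator $-\nu(-\Delta)^{\alpha/2}$; set $v^{(0)}_t(x)=\int p_t(x-y)v_0(y)\,dy$ and $v^{(M+1)}_t(x)=v^{(0)}_t(x)+\int_0^t\!\!\int p_{t-s}(x-y)\sigma(v^{(M)}_s(y))\,W(ds\,dy)$. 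On the discrete side, \eqref{eq:dis:sol} (with the scaled noise of \eqref{eq:dis:sc}) gives the fixed-point equation for $u^{(n)}$, and I define iterates $u^{(n),0}_i(k)=\sum_l\P_i(k,l)u^{(n)}_0(l)$, $u^{(n),M+1}_{i+1}(k)=\sum_l\P_{i+1}(k,l)u^{(n)}_0(l)+\sum_{j=0}^i\sum_l\P_{i-j}(k-l)\,\sigma(u^{(n),M}_j(l))\,\xi_j(l)\,n^{-(\alpha-1)/2\alpha}$, writing $\bar u^{(n),M}_t(x)$ for the corresponding rescaled field. The argument has three parts: uniform moment bounds, a uniform Picard-tail estimate, and convergence of each fixed iterate; combining them proves both conclusions.

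First I would establish the moment bounds. Applying the discrete Burkholder--Davis--Gundy inequality (in the time variable, using that $\sigma(u^{(n)}_j(l))$ is measurable with respect to the noise up to time $j-1$) to the sum in \eqref{eq:dis:sol}, bounding the bracket with the Lipschitz property of $\sigma$, and using Minkowski's inequality in $L^m$ gives, for $f(i):=\sup_k\|u^{(n)}_i(k)\|_{2m}^2$,
\[
f(i+1)\ \le\ C+C\sum_{j=0}^{i}\Big(\sum_l\P_{i-j}(l)^2\Big)n^{-(\alpha-1)/\alpha}\,\big(1+f(j)\big);
\]
here $\mE|\xi|^{2m}<\infty$ because $2m<2+\kappa$, and $\sup_x\mE|v_0(x)|^{2m}<\infty$ by hypothesis. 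By the local limit theorem (Theorem~\ref{llt}), $\sum_l\P_m(l)^2\le C(1\vee m)^{-1/\alpha}$, so the kernel $a_i:=C(1\vee i)^{-1/\alpha}n^{-(\alpha-1)/\alpha}$ has $\sum_{i=0}^{[nT]}a_i\le C_T$ \emph{uniformly in $n$} --- this is exactly where the critical scaling $n^{-(\alpha-1)/2\alpha}$ is used --- and a discrete fractional Gronwall lemma (iterating the convolution inequality, the $r$-fold convolutions summing to a Mittag-Leffler series in $C_T$) yields $\sup_n\sup_{i\le[nT]}\sup_k\mE|u^{(n)}_i(k)|^{2m}<\infty$, with the identical bound for $v$. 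Running the same estimate on the difference $u^{(n)}-u^{(n),M}$ shows that $\sup_n\mE|\bar u^{(n)}_t(x)-\bar u^{(n),M}_t(x)|^{2m}$ is bounded by the tail (from index $M$ on) of the same Mittag-Leffler series, hence $\to0$ as $M\to\infty$; likewise $\mE|v_t(x)-v^{(M)}_t(x)|^{2m}\to0$.

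Next, I would prove by induction on $M$ that $(\bar u^{(n),M}_t(x))$ converges to $v^{(M)}_t(x)$ jointly at finitely many space--time points, in distribution and with convergence of all mixed moments of total order $\le 2m$. The base case is the Riemann-sum convergence $\sum_l\P_{[nt]}(k,l)\,v_0(l/n^{1/\alpha})\to\int p_t(x-y)v_0(y)\,dy$, supplied by Theorem~\ref{llt}. For the inductive step one must pass, jointly with the already-convergent integrand $\sigma(u^{(n),M-1})$, from the discrete stochastic convolution $\sum_{j,l}\P_{[nt]-j}([xn^{1/\alpha}]-[\mu nt]-l)\,\sigma(u^{(n),M-1}_j(l))\,\xi_j(l)\,n^{-(\alpha-1)/2\alpha}$ to $\int_0^t\!\!\int p_{t-s}(x-y)\sigma(v^{(M-1)}_s(y))\,W(ds\,dy)$: the summands are martingale differences in $j$, their conditional variances converge (by Theorem~\ref{llt}, to $\int_0^t\!\!\int p_{t-s}(x-y)^2\sigma(v^{(M-1)}_s(y))^2\,dy\,ds$, the bracket of the limit), and the Lindeberg condition is verified from the $(2+\kappa)$-moment bound on $\xi$ together with the $\ell^\infty$-decay of the kernel weights, yielding the distributional convergence by a (stable) martingale central limit theorem. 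Convergence of the mixed moments comes from expanding $\mE[\prod_a\bar u^{(n),M}_{t_a}(x_a)]$: terms in which some $\xi$ appears exactly once vanish by the martingale property, terms with a multiplicity $\ge 3$ carry surplus factors of $n^{-(\alpha-1)/2\alpha}$ and vanish in the limit, and the surviving pairing terms reduce via Theorem~\ref{llt} to products of moments of $\sigma(u^{(n),M-1})$ of total order $\le 2m$, which converge to their continuous counterparts by the induction hypothesis --- here using that joint convergence in distribution together with convergence of the $2m$-th moments upgrades to uniform integrability (Scheff\'e), which is what allows passing to the limit through the nonlinear $\sigma$ without an $L^{2m+\delta}$ bound.

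Finally, combine: given $\varepsilon>0$, choose $M$ with $\sup_n\mE|\bar u^{(n)}_t(x)-\bar u^{(n),M}_t(x)|^{2m}<\varepsilon$ and $\mE|v_t(x)-v^{(M)}_t(x)|^{2m}<\varepsilon$; apply the $M$-th-iterate convergence; let $n\to\infty$ and then $\varepsilon\to0$. This gives $\bar u^{(n)}_t(x)\Rightarrow v_t(x)$ and, via the triangle inequality in $L^{2m}$, the moment convergence \eqref{eq:main}. The main obstacle is the inductive step: one must transport the convergence of the \emph{random} integrand simultaneously with the convergence of the discrete noise to white noise --- a stable / martingale-CLT statement robust enough to be composed with the Lipschitz $\sigma$ at the next level --- while the uniform-in-$n$ bookkeeping in the moment and Picard-tail estimates, which is precisely what makes the critical scaling work, is the other point requiring care.
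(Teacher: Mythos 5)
Your route is genuinely different from the paper's: you propose a level-by-level Picard scheme with a martingale/stable CLT at each level plus a uniform-in-$n$ Picard-tail estimate, whereas the paper constructs, for each $n$, an explicit coupling of $\xi$ with a white noise (coarse-graining over blocks of size $[n^\theta]\times[n^\gamma]$, a quantile coupling with Berry--Esseen rates in Theorem \ref{thm:couple}) and then runs purely $L^{2m}$ comparisons through intermediate fields $U$, $V$, $\bar V$, $\bar v$, $\tilde v$ with a Gronwall argument. Your moment bounds, the uniform Picard-tail estimate, and the final three-epsilon combination are sound and essentially parallel Section \ref{sec:exis} of the paper. The problem is the inductive step, which you yourself flag as ``the main obstacle'' but do not carry out, and as stated your induction hypothesis cannot carry it. Two things are missing. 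First, the conditional variance of the discrete stochastic convolution at level $M$ is a weighted sum of $\sigma^2\big(u^{(n),M-1}_j(l)\big)$ over order $n^{1+1/\alpha}$ space-time points; to show it converges (in probability) to $\int_0^t\!\int_\R p_{t-s}^2(x-y)\,\sigma^2\big(v^{(M-1)}_s(y)\big)\,\d y\,\d s$ you need functional control of the integrand field -- uniform-in-$n$ space-time H\"older-type estimates for the iterates (the analogue of Proposition \ref{prop:u:hol}) together with convergence of Riemann-sum functionals -- not merely joint convergence at finitely many points plus mixed moments. Second, and more fundamentally, the limit of the level-$M$ convolution must be identified as the It\^o integral of $\sigma(v^{(M-1)})$ against the \emph{same} white noise $W$ of which $v^{(M-1)}$ is the $(M-1)$-st iterate; this requires stable (joint) convergence of the pair (integrand field, driving noise) across levels, with the limiting Gaussian noise identified inside the limiting filtration. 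Nothing in your finite-dimensional induction hypothesis delivers this identification, and without it you only learn that $\bar u^{(n),M}_t(x)$ converges to \emph{some} random variable, not one with the law of $v^{(M)}_t(x)$. The paper's coupling exists precisely to remove this identification problem by putting everything on one probability space, after which only $L^{2m}$ estimates are needed.

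A secondary gap is in your moment-convergence argument at each level: because the integrand $\sigma\big(u^{(n),M-1}_j(l)\big)$ is adapted to (and correlated with) the same $\xi$'s that appear as noise factors, the expansion of mixed moments does not reduce to clean pairings with coefficients given by moments of $\sigma(u^{(n),M-1})$. One must peel off the largest time by conditioning, and what remains are nested joint moments mixing integrand values with earlier noise variables; passing to the limit in these again requires a joint-convergence statement stronger than the one you assume inductively. None of this says the strategy is unsalvageable -- a functional stable CLT with tightness of the iterates might make it work -- but as written the central step is asserted rather than proved, and it is exactly the step where the generality of Lipschitz $\sigma$ (as opposed to $\sigma\propto x$, where term-by-term chaos arguments as in the cited polymer papers apply) makes the difficulty genuine.
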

\begin{remark} Above and for the rest of the paper we use $\mE$ to denote the expectation over all random quantities involved. For ease of notation, we will often drop the superscript $n$ when referring from $u^{(n)}$ and $\bar u^{(n)}$. 
\end{remark}
\begin{remark} The proof also shows the convergence of all moments of order less than $2m$. Our proof can also be used to give an upper bound on the rate of convergence in \eqref{eq:main}. This will of course
depend on the 
initial profile $v_0$. %For example if $v_0\equiv 1$ the rate
%of convergence will be given by $n^{-\Upsilon}$, where the number $\Upsilon$ is obtained by minimising the $m$'th power
%of the  expression
%in \eqref{eq:opt} subject to the constraints \eqref{g:t:cond}. By putting $\gamma=(\alpha-1)\theta/2\alpha$ and
%$\theta+\gamma=\min(a, \alpha-1)/2\alpha$ in \eqref{g:t:cond}, and with $m=1$ we get a lower bound
%\[ \Upsilon \ge \frac{[\min(a,\alpha-1)]^2\min(1,\kappa)}{2\alpha(3\alpha-1)}.\]
\end{remark}
\begin{remark} In the case that $\bE \xi_i^2(k)=\rho>0$ one has a factor of $\sqrt\rho$ with the last term 
in \eqref{eq:she}.
\end{remark}
We next focus on the case that $\sigma$ is uniformly bounded, that is $\sup_x |\sigma(x) | < \infty$. In this
situation we can consider more general initial profiles, not necessarily those that are uniformly bounded
in $L^{2m}$. We state a general result for the special case $\alpha=2$.  Let $\eta(k), \, k \in \Z$ be i.i.d.
random variables such that
\be \label{eta}
\mE\, \eta(k)=0,\quad \mE\, \eta^2(k)=\lambda,\quad \mE\, \vert \eta(k) \vert^{2+\kappa'}< \infty \text{ for some } \kappa'>0, 
\ee 
We further assume that the random variables $\eta= \{ \eta(k),\, k \in \Z\}$ are independent of the noise $\xi$.
Let 
\[ u_0^{(n), \eta}(l) = \begin{cases} 0 & \text{if } l=0,\\ n^{-1/4} \sum_{k=1}^l \eta(k) & \text{if } l>0, \\ n^{-1/4} \sum_{k=l}^{0} \eta(k) &\text{if } l <0.\end{cases}\]
We have the following 
\begin{theorem}[$\alpha=2$] \label{thm:char} Assume that $\sup_x |\sigma(x)|<\infty$, and the random variables $\eta$
satisfy \eqref{eta} and are independent of $\xi$ and $\dot W$. Let the conditions in Assumption \ref{cond1} hold, and fix $m\in \N$ such that $2m< 2+\min(\kappa, \kappa')$. Let $u^{(n)}$ be the solution to 
\eqref{eq:dis:sc} with initial profile $u_0^{(n), \eta}$. Then for each $t> 0, \, x\in \R$ we have $\bar u_t^{(n)}(x) \Rightarrow v_t(x)$, where $v$ is the solution to \eqref{eq:she} with initial profile $B$, a two-sided Brownian motion with variance $\lambda$. Furthermore we have 
\bes
\mE\big|\bar u^{(n)}_t(x) \big|^{2m} \to \mE\big| v_t(x)\big|^{2m} \quad \text{as } n \to \infty.
\ees
\end{theorem}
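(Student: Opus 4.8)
\noindent\textbf{Proof strategy for Theorem \ref{thm:char}.}
The plan is to reduce to Theorem~\ref{thm:main} by truncating the initial profile so that it becomes uniformly bounded in $L^{2m}$, to apply the main theorem to the truncated problem, and then to remove the truncation using that $\sigma$ is bounded. Fix $t>0$ and $x\in\R$, put $k_n:=[x\sqrt{n}]-[\mu n t]$, and split $\bar u^{(n)}_t(x)=I_n(t,x)+N_n(t,x)$ by \eqref{eq:dis:sol}, where
\begin{align*}
\bar u^{(n)}_t(x)&=I_n(t,x)+N_n(t,x),\qquad I_n(t,x):=\sum_{l}\P_{[nt]}(k_n,l)\,u^{(n),\eta}_0(l),\\
N_n(t,x)&:=\sum_{j=0}^{[nt]-1}\sum_l \P_{[nt]-1-j}(k_n,l)\,\sigma\big(u^{(n)}_j(l)\big)\,n^{-1/4}\xi_j(l).
\end{align*}
First I would record the a priori bound $\sup_n\mE|\bar u^{(n)}_t(x)|^{2m}<\infty$. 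Since $|\sigma|\le\|\sigma\|_\infty$, the bound $\sup_n\mE|N_n(t,x)|^{2m}<\infty$ holds independently of the initial data, via Burkholder's inequality, the estimate $\sum_l\P_m(l)^2\le Cm^{-1/2}$ from the local limit theorem~\ref{llt}, and a discrete Gronwall argument (exactly as in the proof of Theorem~\ref{thm:main}). The bound $\sup_n\mE|I_n(t,x)|^{2m}<\infty$ follows from Minkowski's inequality together with~\ref{llt} and the estimate $\|u^{(n),\eta}_0(l)\|_{2m}\le C\,(|l|/\sqrt{n})^{1/2}$, valid because $\eta$ has a finite $2m$-th moment.

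Next, for $R>0$ choose a continuous cutoff $\chi_R$ with $\chi_R\equiv1$ on $[-R,R]$ and $\chi_R\equiv 0$ off $[-R-1,R+1]$, and set $v_0^R:=B\chi_R$ and $u^{(n),\eta,R}_0(l):=u^{(n),\eta}_0(l)\,\chi_R(l/\sqrt{n})$. Then $v_0^R$ is a continuous random function, independent of $\xi$ and $\dot W$, with $\sup_y\mE|v_0^R(y)|^{2m}<\infty$, while $u^{(n),\eta,R}_0([\,\cdot\,\sqrt{n}])\Rightarrow v_0^R$ in $C(\R)$ by Donsker's theorem (note $\Var\big(u^{(n),\eta}_0([y\sqrt{n}])\big)\to\lambda|y|$). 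One checks that the proof of Theorem~\ref{thm:main} uses of the initial profile only that it is independent of $(\xi,\dot W)$, that $\sup_n\sup_l\mE|u^{(n)}_0(l)|^{2m}<\infty$, and that $u^{(n)}_0([\,\cdot\,\sqrt{n}])$ converges in distribution, locally uniformly, to a continuous function; since the truncated profiles $u^{(n),\eta,R}_0$ enjoy all three properties, Theorem~\ref{thm:main} applies to the solution $u^{(n),R}$ of \eqref{eq:dis:sc} started from $u^{(n),\eta,R}_0$ and yields $\bar u^{(n),R}_t(x)\Rightarrow v^R_t(x)$ and $\mE|\bar u^{(n),R}_t(x)|^{2m}\to\mE|v^R_t(x)|^{2m}$, where $v^R$ solves \eqref{eq:she} with initial profile $v_0^R$.

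It remains to control the truncation error uniformly in $n$. Writing $w^{(n)}:=u^{(n)}-u^{(n),R}$, the equation for $w^{(n)}$ has initial profile $u^{(n),\eta}_0-u^{(n),\eta,R}_0$, supported in $\{|l|>R\sqrt{n}\}$, and its multiplicative nonlinearity $\sigma(u^{(n)}_j(l))-\sigma(u^{(n),R}_j(l))$ is at most $\lip\,|w^{(n)}_j(l)|$ in absolute value. Iterating the Duhamel representation and using the local limit theorem~\ref{llt} to replace the centered $i$-step kernels by Gaussian densities, the whole contribution is controlled by the homogeneous term $\sum_{|l|>R\sqrt{n}}\P_{[ns]}(\,\cdot\,,l)\,\|u^{(n),\eta}_0(l)\|_{2m}$, which, evaluated near the observation point $k_n$ (bounded after rescaling by $\sqrt n$), is at most $C\int_{|z|>R}p_s(x-z)\,|z|^{1/2}\,dz+o_n(1)$; here the Gaussian tail beats the growth $\|u^{(n),\eta}_0(l)\|_{2m}\le C(|l|/\sqrt n)^{1/2}$ of the partial sums of $\eta$. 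One thus obtains $\mE|\bar u^{(n)}_t(x)-\bar u^{(n),R}_t(x)|^{2m}\le\varepsilon(R)+o_n(1)$ with $\varepsilon(R)\to0$ as $R\to\infty$, and the identical computation in the continuum gives $\mE|v^R_t(x)-v_t(x)|^{2m}\to0$ as $R\to\infty$. Combining $\bar u^{(n),R}_t(x)\Rightarrow v^R_t(x)$ for each $R$, $v^R_t(x)\to v_t(x)$ in $L^{2m}$, and $\limsup_n\mE|\bar u^{(n)}_t(x)-\bar u^{(n),R}_t(x)|^{2m}\le\varepsilon(R)$, a standard three-$\varepsilon$ argument yields $\bar u^{(n)}_t(x)\Rightarrow v_t(x)$ directly, and $\mE|\bar u^{(n)}_t(x)|^{2m}\to\mE|v_t(x)|^{2m}$ via $|a^{2m}-b^{2m}|\le 2m\max(a,b)^{2m-1}|a-b|$, Hölder's inequality, and the uniform $L^{2m}$ bounds. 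I expect the main obstacle to be this uniform-in-$n$ truncation bound — where the local limit theorem and the tails of the walk and of the partial sums of $\eta$ must be combined carefully — together with verifying that the proof of Theorem~\ref{thm:main} genuinely tolerates random, $n$-dependent truncated initial profiles.
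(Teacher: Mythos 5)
There is a genuine gap at the central step, namely the claim that Theorem \ref{thm:main} ``applies to the solution $u^{(n),R}$ started from $u^{(n),\eta,R}_0$'' because the initial profile is independent of the noise, uniformly bounded in $L^{2m}$, and converges \emph{in distribution} to a continuous limit. Theorem \ref{thm:main} is stated for initial data of the form $u_0^{(n)}(k)=v_0(k n^{-1/\alpha})$ with a single, $n$-independent continuous $v_0$, and its proof is an $L^{2m}$ coupling: the final step bounds $\big\| \sum_l \P_{[nt]}(k,l)\,v_0(l n^{-1/\alpha}) - (p_t*v_0)(x)\big\|_{2m}$, which goes to zero precisely because the \emph{same} function $v_0$ sits in both the discrete and the continuum term (almost sure convergence inside the norm plus dominated convergence). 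Your truncated profile $u^{(n),\eta,R}_0$ is an $n$-dependent random-walk partial sum; distributional convergence to $B\chi_R$ gives no control whatsoever on $\big\|\sum_l \P_{[nt]}(k_n,l)\,u^{(n),\eta,R}_0(l) - (p_t*(B\chi_R))(x)\big\|_{2m}$, so the conclusion $\mE|\bar u^{(n),R}_t(x)-v^R_t(x)|^{2m}\to 0$ — and with it both the weak convergence and, a fortiori, the moment convergence — does not follow from Theorem \ref{thm:main} as proved. To make this step work you must first \emph{couple} the partial sums of $\eta$ with the Brownian motion $B$ on a common probability space with a quantitative $L^{2m}$ rate; this is exactly the content of the paper's Proposition \ref{prop:couple:2}, and it is the key ingredient your proposal omits (flagging it as something ``to verify'' does not supply it, since the needed property is strictly stronger than the three conditions you list).

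By contrast, the paper's proof needs no truncation at all: since $\sigma$ is bounded, the entire noise-term analysis of Theorem \ref{thm:main} (which only used $\sup_{j\le[nT],l}\|\sigma(u_j(l))\|_{2m}<\infty$) carries over verbatim, and the non-noise term is \emph{linear} in the initial data, so it is treated directly in four steps: blocking the $\eta$'s into groups of size $[n^{\theta'}]$, replacing the block sums by the coupled Brownian increments via Proposition \ref{prop:couple:2}, replacing $\P_{[nt]}$ by the heat kernel via the local limit theorem \ref{llt} (with a separate tail estimate), and a Riemann-sum argument identifying the limit $\int p_t(x-y)\tilde B(y)\,\d y$. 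Your a priori moment bounds, the splitting into $I_n+N_n$, and the Gaussian-tail heuristics for the truncation error are all reasonable, but without the strong coupling of the initial data the reduction to Theorem \ref{thm:main} fails, and that coupling plus the direct treatment of the linear initial term is essentially the whole proof.
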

\begin{remark} 
The above theorem explains the height fluctuations of the harness process obtained in \cite{sepp-zhai}. This is the system $h_{i+1}(k)=\sum_l \P(k, l) \cdot h_i(l) +\xi_{i}(k),\, i \ge 0, \, k\in \Z.$
Let us consider the simplest case where the initial height profile satisfies $h_0(0)=0$, and the increments $h_0(k+1)-h_0(k), k \in \Z$ are i.i.d. random variables with finite $2+\kappa'$ moments and mean $\rho_0$ (say). Then the transformation
$u_i(k)= n^{-1/4} \cdot [h_i(k)-\rho_0\mu i -\rho_0 k]$ satisfies \eqref{eq:dis:sc} and the initial profile $u_0$ is of the form above. We then have that $n^{-1/4}\cdot \big [h_{[nt]}([x\sqrt n]-[\mu nt]) -\rho_0[x\sqrt n]\big] \Rightarrow v_t(x)$ where $\partial_t v =\nu \partial_x^2 v + \dot{W}$, which implies that the harness process is in the {\it Edwards-Wilkinson} class; see \cite{corw} for an explanation of this class and the related KPZ class.
\end{remark}

We next consider the scaled partition function of the directed random polymer in the intermediate disorder regime:
\be\label{part:sc}  M_n^{(\tilde\xi)} = \frac{E_0 \exp(\beta  \sum_{i=1}^n \tilde\xi_i(X_i))}{[\bE e^{\beta \tilde\xi}]^{n+1}}\ee
where $\tilde\xi$ is the scaled disorder
\[ \tilde\xi_i(k) = \frac{\xi_i(k)}{n^{(\alpha-1)/2\alpha}} .\]
We shall also be interested in the scaled point to point partition function given by 
\be\label{part:sc:point}  M_n^{(\tilde\xi,\,x)} = \frac{E_0\big[ \mathbf 1\{X_n=x\} \cdot \exp(\beta  \sum_{i=1}^n \tilde\xi_i(X_i))\big]}{[\bE e^{\beta \tilde\xi}]^{n+1}}\ee
Note that $  M_n^{(\tilde\xi,\, x)}/M_n^{(\tilde\xi)}=\P_n^{\tilde\xi, \beta}(x)$ is the probability of the intermediate disorder polymer to be at $x$ at time $n$. We rederive the following theorem from \cite{albe-khan-quas} and \cite{cara-sun-zygo}. An advantage of our method is that we are able to show the convergence of all moments; see the first two statements of the theorem.

\begin{theorem}\label{thm:part} Assume that the environment variables $\xi_i(k)$ are i.i.d. and have exponential moments: $\bE e^{C|\xi_i(k)|}< \infty$ for all small $C>0$. Suppose that $\mu=0$. Then the following statements hold.
\begin{enumerate}
\item $ M_n^{(\tilde\xi)}$ converges in distribution to $v_1(0)$, where $\partial_t v= -\nu (-\Delta)^{\alpha/2}v +\beta v \dot{W}$ with initial
profile $v_0\equiv 1$. Furthermore the moments of $M_n^{(\tilde\xi)}$ converge to those of $v_1(0)$.
\item $ n^{1/\alpha} M_n^{(\tilde\xi,\,[xn^{1/\alpha}])}$ converges in distribution to $g_1^{(x)}(0)$, where $\partial_t g^{(x)}= -\nu (-\Delta)^{\alpha/2}g^{(x)} +\beta g^{(x)} \dot{W}$ with initial
profile $g_0^{(x)}=\delta_x$. Furthermore the moments of $\ M_n^{(\tilde\xi,\,[xn^{1/\alpha}])}$ converge to those of $g_1^{(x)}(0)$.
\item $n^{1/\alpha}\P_n^{\tilde \xi, \beta} ([xn^{1/\alpha}])$ converges in distribution to $g_1^{(x)}(0)/v_1(0)$, where $g^{(x)}$ and $v$ are as above and driven by the same white noise $\dot W$. 
\end{enumerate}
\end{theorem}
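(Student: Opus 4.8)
The plan is to express the quantity in statement (3) as the ratio of the two objects appearing in statements (2) and (1), to upgrade those marginal convergences to a joint convergence in which both limits are built from the \emph{same} white noise, and then to apply the continuous mapping theorem after checking that the denominator is almost surely nonzero. Since the normalization $[\bE e^{\beta\tilde\xi}]^{n+1}$ cancels, one has the exact identity
\[
n^{1/\alpha}\,\P_n^{\tilde\xi,\beta}\big([xn^{1/\alpha}]\big)=\frac{n^{1/\alpha}\,M_n^{(\tilde\xi,\,[xn^{1/\alpha}])}}{M_n^{(\tilde\xi)}},
\]
and the denominator $M_n^{(\tilde\xi)}$ is strictly positive for every realization of $\tilde\xi$, being a positive multiple of the expectation of a strictly positive random variable; so the right-hand side is always well defined. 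Both numerator and denominator are functionals of the single noise field $\{\xi_i(k)\}$, and it is this common randomness that is responsible for the coupling of $g^{(x)}$ and $v$ in the limit.

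The heart of the matter is the joint convergence
\[
\Big(n^{1/\alpha}\,M_n^{(\tilde\xi,\,[xn^{1/\alpha}])}\,,\;M_n^{(\tilde\xi)}\Big)\Longrightarrow\big(g_1^{(x)}(0)\,,\;v_1(0)\big),
\]
with $g^{(x)}$ and $v$ as in statements (2) and (1), driven by the same white noise $\dot W$. Here I would use that the (normalized) polymer recursion is linear in its source: the point-to-point partition function is its solution started from a point mass, $M_n^{(\tilde\xi)}=\sum_y M_n^{(\tilde\xi,y)}$, and hence for any $a,b\in\R$ the combination $a\,n^{1/\alpha}M_n^{(\tilde\xi,[xn^{1/\alpha}])}+b\,M_n^{(\tilde\xi)}$ is again a normalized partition function, namely the one started from $a$ times a rescaled point mass at $[xn^{1/\alpha}]$ plus the constant profile $b$. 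By the convergence argument underlying statements (1) and (2), applied to such a (possibly singular, possibly signed) source, this converges to $a\,g_1^{(x)}(0)+b\,v_1(0)$, the solution of \eqref{eq:she} with $\sigma(v)=\beta v$ started from $a\delta_x+b$. The Cram\'er--Wold device then gives the displayed joint convergence, and the ``same-noise'' coupling is automatic because the limit of every linear combination is the matching linear combination of $g_1^{(x)}(0)$ and $v_1(0)$; the precise identification of the numerator's limit as $g_1^{(x)}(0)$ (rather than the solution started from $\delta_0$ and read at $x$), together with the matching of the full joint law, only uses the reflection and time-reversal symmetries of space--time white noise.

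It remains to note that $v_1(0)>0$ almost surely -- the strict positivity of the solution of \eqref{eq:she} with $\sigma(v)=\beta v$ started from the nonnegative, not identically zero profile $v_0\equiv1$ (Mueller's positivity theorem and its extensions to $1<\alpha\le2$; see, e.g., \cite{foon-khos-09}). Hence the map $(a,b)\mapsto a/b$ is continuous at almost every point of the support of the law of $(g_1^{(x)}(0),v_1(0))$, and the continuous mapping theorem applied to the ratio above delivers $n^{1/\alpha}\,\P_n^{\tilde\xi,\beta}([xn^{1/\alpha}])\Rightarrow g_1^{(x)}(0)/v_1(0)$, which is statement (3). As a consistency check, the identity $v_1(0)=\int_\R g_1^{(x)}(0)\,dx$, itself a consequence of linearity and Fubini, shows that the limit integrates to one in $x$, as it must since the left-hand side is $n^{1/\alpha}$ times a probability mass.

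The main obstacle is the joint convergence, and within it the treatment of the point-mass component of the source, which lies outside the hypothesis $\sup_x\mE|v_0(x)|^{2m}<\infty$ of Theorem \ref{thm:main}: the relevant discrete source is the rescaled discrete heat kernel, which is not uniformly bounded, so the uniform-in-$n$ moment estimates and the moment convergence must be re-derived using the local limit theorem \ref{llt}, with particular care near time $0$ where the kernel concentrates -- this is exactly the extra work already needed for statement (2). Granted statements (1) and (2), the remaining ingredients -- the algebraic identity, the linearity together with Cram\'er--Wold, the positivity input, and the final continuous-mapping step -- are routine.
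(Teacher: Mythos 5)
Your proposal only engages seriously with part (3): it takes statements (1) and (2) as granted and builds the ratio/continuous-mapping argument on top of them. But (1) and (2) are the substance of the theorem, and they are exactly where the work lies in the paper. Proving (1) requires showing that the (time-reversed) partition-function field $w$ of \eqref{eq:w:she} — which involves the multiplicative weights $e^{\beta\tilde\xi}/\bE e^{\beta\tilde\xi}$ and a one-step spatial smoothing, not the linear noise $\beta\tilde\xi_i(k)u_i(k)$ — is $L^{2m}$-close to the linear discrete equation \eqref{eq:u}; the paper does this through the chain $w \to w^* \to \tilde w \to u$, Gronwall's inequality, and the expansions \eqref{eq:exp-lin} that use the exponential-moment hypothesis, before invoking Theorem \ref{thm:main} with $\sigma(x)=\beta x$. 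Proving (2) additionally requires the Dirac initial condition result (Theorem \ref{thm:dirac}), whose moment bounds cannot be obtained from the uniform estimates of Theorem \ref{thm:u} and are instead derived from iterated (chaos-expansion type) bounds as in Proposition \ref{pr:umbd}, together with the local limit theorem. You flag this as ``the extra work already needed for statement (2)'' but do not carry any of it out, so as a proof of the stated theorem the proposal has a substantial gap.

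Within part (3) itself your route is legitimate but different from the paper's, and arguably heavier than necessary. You propose Cram\'er--Wold applied to signed combinations $a\,n^{1/\alpha}M_n^{(\tilde\xi,[xn^{1/\alpha}])}+b\,M_n^{(\tilde\xi)}$, which (after time reversal) is the discrete field started from $a\,n^{1/\alpha}\mathbf 1\{\cdot=[xn^{1/\alpha}]\}+b$; to make this work you would have to extend the convergence statements to such mixed, signed initial data, and you must supply the a.s.\ positivity of $v_1(0)$ for the continuous mapping step (which you do, via Mueller-type positivity — a point the paper leaves implicit). The paper avoids Cram\'er--Wold altogether: its method gives convergence in $L^{2m}$ on a common probability space (the coupling of Theorem \ref{thm:couple}) for the two fields $u^{(1)}$ and $u^{(2)}$ driven by the same noise, so joint convergence of the pair is automatic; this is then transferred to $(w^{(1)},w^{(2)})$ by the comparison lemmas, and to the partition functions by time reversal and a spatial translation of the noise. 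If you do develop the mixed-initial-data convergence, note that by linearity it decomposes into the constant and Dirac pieces, at which point you are effectively re-deriving the coupled $L^{2m}$ statements — so the coupling route is both shorter and what actually delivers the ``same white noise'' identification without extra symmetry arguments.
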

\begin{remark}%Note that $v$ is stationary in space and therefore $v_1(0)\stackrel{d}{=} v_1(x)$ for all $x$.
One can show that $\int [g_1^{(x)}(0)/v_1(0)] \,\d x=1$.
\end{remark}
The first statement in the theorem implies that $\log Z_n$ has fluctuations of order $1$ in the intermediate disorder regime. The last statement says that the polymer paths have fluctuations of order $n^{1/\alpha}$ and satisfy a {\it random} local limit theorem. Recall that when $\alpha=2$ and the disorder is not scaled one expects $n^{1/3}$ fluctuations for $\log Z_n$ and $n^{2/3}$ fluctuations for the polymer
paths.

We end this section with a brief description of the notation and outline of the paper. We will use indices $i,j \in \Z_+$ for discrete time, $k,l \in \Z$ for discrete space. The letters $q,r,s,t$ will usually be used for continuous time $\R_+$,while the variables $x,y,z,w$ will usually be used  for continuous space $\R$. The notation $\bP, \bE$ is used to denote probability and expectation for the random variables $\xi$, while $P, E$ will be used for the probability and expectation over paths of random walks with transition kernel $\P$. As already indicated $\mP, \mE$ will be used when we integrate out all the random variables involved. We will denote by $\|\cdot \|_m$ the norm in $L^m(\mP),\, 1\le m<\infty$. Constants $C$ could vary from line to line. We will use
the notation $c, c_1, c_2, \cdots $ to represent constants which remain constant within a proof but might change
across different proofs. 

In Section \ref{sec:exis} we prove existence and uniqueness for solutions to \eqref{eq:dis:sc}, and obtain moment bounds required in the proof of Theorem \ref{thm:main} Section \ref{sec:main} deals with the proof of our main Theorem \ref{thm:main}. Section \ref{sec:char} deals with the proof of Theorem \ref{thm:char}. In Section \ref{sec:tight} we provide conditions for tightness to hold in
Theorems \ref{thm:main} and \ref{thm:char}. The proof of the first statement in Theorem \ref{thm:part} is in Section \ref{sec:part:1}. Section \ref{sec:ext} is devoted to several extensions including addition of a drift in \eqref{eq:dis:sc}, Dirac initial condition in \eqref{eq:dis:sc} and the proofs of the last two statements in Theorem \ref{thm:part}. The appendices contain a local limit theorem crucial for our arguments, as well as bounds needed for the proof of Proposition \ref{prop:u-U}.

\section{Existence and Uniqueness} \label{sec:exis}
In this section we show the existence and uniqueness of solutions to \eqref{eq:dis:sc}. The ideas are quite 
standard, see for example \cite{foon-khos-12}, but we present them here to show that we can obtain 
moment bounds which are uniform in $n$. We shall need these in the proof of Theorem \ref{thm:main}. Recall that 
a solution to \eqref{eq:dis:sc} satisfies
\be \label{eq:dis:sc:sol}
u_{i+1}(k) = \sum_l \P_{i+1}(k,l) \, u_0(l) + \sum_{j=0}^i \sum_l \P_{i-j}(k,l) \cdot \sigma\big(u_j(l)\big)\cdot\frac{\xi_j(l)}{n^{(\alpha-1)/2\alpha}}. 
\ee
We call the second term the {\it noise term} because it depends on the noise $\xi$. We call the first term the {\it non-noise term} although the initial profile $u_0$ is allowed to be random.
\begin{theorem} \label{thm:u} For initial profiles $u_0 = u_0^{(n)}$ to \eqref{eq:dis:sc} with $\sup_{k, n} \mE |u_0(k)|^2<\infty$, the following statements hold.
\begin{enumerate}
\item There exists a unique solution to equation
\eqref{eq:dis:sc} such 
that for any $T>0$ 
\be\label{eq:mom:unif}
\sup_{i\le [n T], \, k \in \Z} \mE | u_i(k) |^2 <\infty.
\ee
Furthermore the above bound holds uniformly in $n\ge 1$.
\item Let $m\in \N$ and suppose further that $\xi$ has $2m$ moments. Assume also that $\sup_{k, n} \mE\big|u_0(k)\big|^{2m}<\infty$. Then for any $T>0$ the following holds uniformly in $n \ge 1$
\be \label{eq:thm:um}
\sup_{i\le [n T], \, k \in \Z} \mE\big| u_i(k) \big|^{2m} <\infty.
\ee
\end{enumerate}
\end{theorem}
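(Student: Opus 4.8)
The plan is to prove both parts by a Picard-type fixed point argument in the space of adapted processes with finite $\sup_{i\le[nT],k}\mE|\cdot|^{2m}$ norm, together with a Gronwall-type iteration in the time index to propagate the bound. Write the solution map via \eqref{eq:dis:sc:sol}: given a candidate $u$, define $(\Phi u)_{i+1}(k)$ to be the right-hand side of \eqref{eq:dis:sc:sol}. I would first treat the $m=1$ case directly. For the non-noise term, $\sum_l \P_{i+1}(k,l)u_0(l)$ is an average of the $u_0(l)$, so by Jensen $\mE|\sum_l \P_{i+1}(k,l)u_0(l)|^2 \le \sum_l \P_{i+1}(k,l)\mE|u_0(l)|^2 \le \sup_{k,n}\mE|u_0(k)|^2$, uniformly in $i,k,n$. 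For the noise term, the summands over $(j,l)$ are, conditionally on the past, martingale increments: $\xi_j(l)$ is independent of $\sigma(u_j(l))$ (since $u_j$ depends only on $\xi_{j'}$ with $j'<j$) and of the earlier $\xi$'s. Hence the $L^2$ norm squared of the noise term equals
\be
\frac{1}{n^{(\alpha-1)/\alpha}} \sum_{j=0}^i \sum_l \P_{i-j}(k,l)^2 \, \mE\big[\sigma(u_j(l))^2\big],
\ee
using $\mE\xi_j^2=1$ and orthogonality. Bounding $\sigma(x)^2 \le C(1+x^2)$ via the Lipschitz property, this is at most $\frac{C}{n^{(\alpha-1)/\alpha}}\sum_{j=0}^i \big(\sup_l \mE|u_j(l)|^2 + 1\big)\sum_l \P_{i-j}(k,l)^2$. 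The key analytic input is that $\sum_l \P_{s}(l)^2 = \P_{2s}(0)$ is the return probability, which by the local limit theorem (Theorem \ref{llt} in the appendix, or just the classical local CLT for domain-of-attraction-of-Stable$(\alpha)$ walks) is $O(s^{-1/\alpha})$ for $s\ge 1$ and $O(1)$ for $s=0$. Therefore $\frac1{n^{(\alpha-1)/\alpha}}\sum_{j=0}^{[nT]} \P_{2(i-j)}(0) = \frac1{n^{(\alpha-1)/\alpha}}\sum_{r=0}^{[nT]}\P_{2r}(0) \le \frac{C}{n^{(\alpha-1)/\alpha}}\big(1 + \sum_{r=1}^{[nT]} r^{-1/\alpha}\big) \le \frac{C}{n^{(\alpha-1)/\alpha}} (nT)^{1-1/\alpha} = C\,T^{1-1/\alpha}$, which is \emph{bounded uniformly in $n$} — this is precisely why the noise is scaled by $n^{-(\alpha-1)/2\alpha}$, and the discrete heat kernel decay exactly compensates the scaling. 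Setting $f(i) := \sup_{l}\mE|u_i(l)|^2$, the above gives a discrete inequality of the form $f(i+1) \le C_0 + \frac{C}{n^{(\alpha-1)/\alpha}}\sum_{j=0}^i \P_{2(i-j)}(0)\,(f(j)+1)$; a discrete Gronwall lemma (noting the weights $\frac1{n^{(\alpha-1)/\alpha}}\P_{2r}(0)$ are summable to a constant over $r\le nT$) then yields $\sup_{i\le[nT]}f(i) \le C(T) < \infty$ uniformly in $n$. Existence and uniqueness follow the same way: one shows $\Phi$ is a contraction after iterating enough times, or builds the solution directly time-step by time-step from \eqref{eq:dis:sc} (which is explicit: $u_{i+1}$ is a function of $u_0,\dots,u_i$ and $\xi_0,\dots,\xi_i$), so existence/uniqueness is essentially automatic and only the moment bound requires work.

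For part (2), the higher-moment bound, I would replace the $L^2$-orthogonality step with the \textbf{Burkholder--Davis--Gundy} (or Rosenthal/Marcinkiewicz--Zygmund) inequality for the martingale-increment sum. Fixing $k$ and $i$, enumerate the pairs $(j,l)$ with $j\le i$ in order of $j$; the partial sums form a martingale, so
\be
\Big\|\sum_{j=0}^i\sum_l \P_{i-j}(k,l)\sigma(u_j(l))\frac{\xi_j(l)}{n^{(\alpha-1)/2\alpha}}\Big\|_{2m}
\le C_m \Big\| \Big(\sum_{j=0}^i\sum_l \P_{i-j}(k,l)^2 \sigma(u_j(l))^2 \frac{\xi_j(l)^2}{n^{(\alpha-1)/\alpha}}\Big)^{1/2}\Big\|_{2m}.
\ee
Now apply the triangle inequality in $L^m$ to the sum inside the square root: the $L^m$ norm of $\sum_{j,l}(\cdots)$ is at most $\sum_{j,l}\P_{i-j}(k,l)^2 n^{-(\alpha-1)/\alpha}\|\sigma(u_j(l))^2\xi_j(l)^2\|_m = \sum_{j,l}\P_{i-j}(k,l)^2 n^{-(\alpha-1)/\alpha}\|\sigma(u_j(l))\|_{2m}^2 \|\xi_j(l)\|_{2m}^2$, where independence of $\xi_j(l)$ from $u_j(l)$ was used again and $\|\xi_j(l)\|_{2m}<\infty$ by hypothesis. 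With $g(i):=\sup_l \mE|u_i(l)|^{2m}$ and the Lipschitz bound $\|\sigma(u_j(l))\|_{2m}^2 \le C(1+g(j)^{1/m})$, one arrives at $g(i+1)^{1/m} \le C_0^{1/m} + C\big(\frac1{n^{(\alpha-1)/\alpha}}\sum_{j=0}^i \P_{2(i-j)}(0)(1+g(j)^{1/m})\big)$ — structurally identical to the $m=1$ inequality with $g^{1/m}$ in place of $f$ — and the same discrete Gronwall argument closes the bound uniformly in $n$. Uniqueness in this class is inherited from part (1).

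The main obstacle — really the only nontrivial point — is obtaining the heat-kernel sum bound $\sup_n \frac1{n^{(\alpha-1)/\alpha}}\sum_{r=0}^{[nT]}\P_{2r}(0) < \infty$ with the correct dependence on $T$, which rests on the local limit estimate $\P_{2r}(0) = O(r^{-1/\alpha})$ uniformly for the class of walks in Assumption \ref{cond1}; this is exactly what the appendix local limit theorem \ref{llt} is for, and I would invoke it here. Everything else — the martingale structure of the noise term, BDG, the Lipschitz/linear-growth bound on $\sigma$, Jensen for the non-noise term, and the discrete Gronwall iteration — is routine bookkeeping, though one must be a little careful that all constants are genuinely independent of $n$ (only $T$ and $m$ and the moments of $\xi$, $u_0$ enter).
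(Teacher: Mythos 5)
Your strategy is essentially the paper's: exploit the martingale structure of the noise term ($L^2$-orthogonality for $m=1$, Burkholder for general $m$), reduce everything to the kernel sum $n^{-(\alpha-1)/\alpha}\sum_{r\le nT}\sum_l\P_r(l)^2$, which is bounded uniformly in $n$ via the local limit theorem, and close with a Gronwall-type iteration in $i$. The paper runs the same estimate through Picard iterates (to get existence) and proves the $2m$-th moment bound by a multinomial expansion of the inner sum over $l$, using that products containing a lone $\xi$ vanish; your alternative for part (2) --- Burkholder over the $(j,l)$-indexed martingale-difference array, then Minkowski in $L^m$ on the square function and independence to factor out $\|\xi\|_{2m}^2$ --- is a valid and cleaner route given $\mE|\xi|^{2m}<\infty$ (modulo a routine truncation in $l$ to justify the infinite sum). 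Your observation that existence and uniqueness are essentially automatic because \eqref{eq:dis:sc} is an explicit recursion in $i$ is also fair, provided you note that the infinite spatial sums are well defined within the class \eqref{eq:mom:unif}; the paper instead uses a contraction argument. One small slip: $\sum_l\P_s(l)^2$ equals $\P_{2s}(0)$ only for symmetric walks; in general it is the collision probability $P(X_s=\tilde X_s)$, i.e.\ the return probability of the symmetrized walk $Y=X-\tilde X$, which is exactly what the paper estimates (Corollary \ref{cor:green} applied to $Y$), and the $O(s^{-1/\alpha})$ bound still holds, so this is only a misidentification.

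The one step that is not justified as written is the Gronwall closure. Knowing only that the weights $b_r=Cn^{-(\alpha-1)/\alpha}P(Y_r=0)$ sum to a constant $B(T)$ uniformly in $n$ does not let you deduce $\sup_{i\le nT}f(i)\le C(T)$ from $f(i+1)\le C_0+\sum_{j\le i}b_{i-j}\bigl(1+f(j)\bigr)$: the total mass $B(T)\asymp T^{(\alpha-1)/\alpha}$ need not be smaller than $1$, and a convolution inequality with kernel mass $B>1$ is in general compatible with growth of $f$ that is geometric in $i$, hence not uniform in $n$. You must use more structure of the kernel. The paper's device is to work with the exponentially weighted quantity $\sup_{i\le nT}e^{-\delta i/n}f(i)$ and choose $\delta$ so large that $\sum_{r\le nT}e^{-\delta r/n}b_r<\tfrac12$ uniformly in $n$; this is possible precisely because $\sum_{r\ge1}e^{-\delta r/n}b_r\lesssim\int_0^T e^{-\delta t}t^{-1/\alpha}\,\d t\to0$ as $\delta\to\infty$, uniformly in $n$. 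Alternatively you can split $[0,[nT]]$ into $O(1)$ windows of length $\epsilon n$, over each of which the kernel mass is $\lesssim\epsilon^{(\alpha-1)/\alpha}<\tfrac12$, and induct over the windows. With that repair (and the symmetrized-walk correction) your proposal is complete and matches the paper's bounds.
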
 

\begin{proof}
We use Picard's iteration scheme to show the existence 
of a solution which satisfies the required bound. Therefore let $w^{(0)}_i(k)= u_0(k)$ and let
\be \label{eq:picard}
w^{(p+1)}_{i+1}(k) = \sum_l \P_{i+1}(k,l) u_0(l) + \sum_{j=0}^i \sum_l \P_{i-j}(k,l) \cdot\sigma\big(w^{(p)}_j(l)\big)\cdot\frac{ \xi_j(l)}{n^{(\alpha-1)/2\alpha}} 
\ee 
In the case $\lip=0$ the convergence of the Picard iterates is immediate. So let us assume that $\lip>0$. From the above one has 
\[
\mE \left|w^{(p+1)}_{i+1}(k)-w^{(p)}_{i+1}(k)\right|^2 \le  \lip^2\sum_{j=0}^{i} \sum_{l \in \Z}\P_{i-j}^2(l-k)\cdot  \frac{\mE \big|w^{(p)}_{j}(l)-w^{(p-1)}_j(l)\big|^2}{n^{(\alpha-1)/\alpha}}.
\]
For a parameter $\delta>0$ define
\be \label{w}
\mathcal{W}^2(p) =  \sup_{k \in \Z,\, i\le [nT]} e^{-\delta i/n}\cdot \mE\left |w^{(p)}_{i}(k)-w^{(p-1)}_i(k)\right|^2.
\ee
The above inequality gives
\be\label{eq:w}
\mathcal{W}^2(p+1) \le \lip^2\cdot \mathcal{W}^2(p)\sum_{j=0}^{[nT]}    \frac{e^{-\delta j/n}\cdot P \big(X_{j}=\tilde{X}_{j}\big)}{n^{(\alpha-1)/\alpha}},
\ee
where $X$ and $\tilde X$ are independent walks with the same transition probabilities $\P$. We now choose $\delta$ to be large enough
\[
\sum_{i=0}^{[nT]} \frac{e^{-\delta i/n}\cdot P \big(X_{i}=\tilde{X}_{i}\big)}{n^{(\alpha-1)/\alpha}} <\frac{1}{2\lip^2}.
\]
The reason we can choose such a $\delta$ is that 
\be \label{eq:mom2:u}
\sum_{i=0}^{[nT]} \frac{P \big(X_{i}=\tilde{X}_{i}\big)}{n^{(\alpha-1)/\alpha}} <\infty
\ee
uniformly in $n\ge 1$ by Corollary \ref{cor:green} (see Appendix \ref{sec:app:A}) applied to $X-\tilde X$; a similar statement to Theorem \ref{llt} holds for $X-\tilde X$.
From here it follows that for this choice of $\delta$
\[
\sum_{ p\ge 1} \mathcal{W}(p) <\infty,
\]
uniformly over $n$. This implies the convergence of $w^{(p)}$ to a random field $u^{*}$ uniformly in the interval $k\in \Z,\, i\le [nT]$. One can then argue that $u^{*}$ satisfies \eqref{eq:dis:sc} in this region by considering the limit of both sides of \eqref{eq:picard}. 
To prove uniqueness assume that there are two solutions $u$ and $\tilde u$ to \eqref{eq:dis:sc} satisfying \eqref{eq:mom:unif}.
Then 
\begin{equation*}
\mE \left|u_{i+1}(k)-\tilde u_{i+1}(k)\right|^2 \le  \lip^2\sum_{j=0}^{i} \sum_{l \in \Z}\P_{i-j}^2(l-k)\cdot  \frac{\mE \big|u_{j}(l)-\tilde u_j(l)\big|^2}{n^{(\alpha-1)/\alpha}}.
\end{equation*}
As before multiply both sides by $e^{-\delta i/n}$, and take supremum
over $k\in \Z$ and $i\le [nT]$ to get a relation similar to \eqref{eq:w}. If we choose $\delta$ to be large enough we get $\mE\vert u_i(k)-\tilde u_i(k)\vert^2=0$ for $k\in \Z,\, i\le [nT]$, thus proving uniqueness.

Let us now prove the second part of the theorem. An application of Burkholder's inequality for discrete time martingales
gives
\be\begin{split} \label{eq:u:m}
&\big\| u_{i+1}(k) \big\|_{2m}^2 \\
&\le c_1+c_2 \bigg\| \sum_{j=0}^i \sum_{l,l'\in \Z} \P_{i-j}(l-k) \P_{i-j}(l'-k) \cdot \sigma\big(u_j(l)\big) \sigma\big(u_j(l')\big)\cdot \frac{\xi_j(l)\xi_j(l')}{n^{(\alpha-1)/\alpha}}\bigg\|_{m} \\
& \le  c_1+c_2 \sum_{j=0}^i \bigg\|\left[\sum_{l\in \Z} \P_{i-j}(l-k) \cdot \sigma\big(u_j(l)\big)\cdot \frac{\xi_j(l)}{n^{(\alpha-1)/2\alpha}}\right]^2\bigg\|_{m},
\end{split}\ee
the last step follows from Minkowski's inequality. Now 
\be \begin{split}\label{eq:mom:4m}
&\mE \left | \left[\sum_{l\in \Z} \P_{i-j}(l-k) \cdot \sigma\big(u_j(l)\big)\cdot \frac{\xi_j(l)}{n^{(\alpha-1)/2\alpha}}\right]^2\right|^{m} \\
&= \mE \bigg[ \sum_{l_1,l_2,\cdots l_{2m}} \prod_{a=1}^{2m} \left\{ \P_{i-j}(l_a-k) \cdot \sigma\big(u_j(l_a)\big) \cdot \frac{\xi_j(l_a)}{n^{(\alpha-1)/2\alpha}}\right\} \bigg].
\end{split}\ee
Clearly $\xi_i(k)$ are independent of $u_j(l), \, l \in \Z,\, j\le i$. Consider now the expectation of each product term in the above expression. Our assumption that  $\bE \xi_j(l)=0$ implies that the only
product terms which have a nonzero contribution contain only powers of  $\xi_j(l)$ greater than $1$,  for any $l$. Thus the above 
can be bound by a constant multiple of
\[
\left(\frac{1}{n^{(\alpha-1)/\alpha}}\right)^{m}\cdot\sum_{b=1}^{m}\; \sum_{\substack{(\beta_1,\cdots, \beta_b) \in \mathbf{N}^b\\ \text{each } \beta_q \ge 2\\ \beta_1+  \cdots \beta_b=2m}} \;\sum_{l_1,l_2, \cdots, l_{b}} \prod_{a=1}^b\left\{ \P_{i-j}^{\beta_a}(l_a-k) \right\} \cdot \left\{ \mE\left[\prod_{a=1}^b\left |\sigma^{\beta_a}\left(u_j(l_a)\right)\right|\right] \right\}.
\]
Note also by Holder's inequality, for any $l_1,l_2,\cdots, l_{b} \in \Z$
\[
\begin{split}
 \mE\left[\prod_{a=1}^b\left |\sigma^{\beta_a}\left(u_j(l_a)\right)\right|\right] & \le \prod_{a=1}^{b}\left\|\sigma\big(u_j(l_a)\big)\right\|^{\beta_a}_{2m}.
\end{split}
\]
We also need the following 
\[
\prod_{a=1}^b\left(\sum_{l_a}\P_{i-j}^{\beta_a}(l_a-k)\left\|\sigma\big(u_j(l_a)\big)\right\|^{\beta_a}_{2m}\right) \le \prod_{a=1}^b\left(\sum_{l_a}\P_{i-j}^{2}(l_a-k)\left\|\sigma\big(u_j(l_a)\big)\right\|^{2}_{2m}\right)^{\beta_a/2}, 
\]
%Our assumption is that the odd powers of $\xi$ are $0$ and so only terms with 
%an even 
which follows from the relation $\sum x_i^r \le (\sum x_i)^r$ valid for any $r\ge 1$ and nonnegative sequences $x_i$. These observations imply that \eqref{eq:mom:4m} can be bound by a constant multiple of 
\[
\left(\frac{1}{n^{(\alpha-1)/\alpha}}\right)^{m}\cdot \left( \sum_l \P_{i-j}^2(l-k)\cdot \left\|\sigma\big(u_j(l)\big)\right\|_{2m}^2\right)^{m}.
\]
If we plug this in \eqref{eq:u:m} one gets
\begin{equation*} \begin{split}
\big\| u_{i+1}(k) \big\|_{2m}^2 &\le c_1+c_3\sum_{j=0}^i \sum_l \P_{i-j}^2(l-k) \frac{ \left\|\sigma\big(u_j(l)\big)\right\|_{2m}^2}{n^{(\alpha-1)/\alpha}}\\
&\le c_1+ c_4 \sum_{j=0}^i \sum_l \P_{i-j}^2(l-k)\cdot \frac{1+ \left\|u_j(l)\right\|_{2m}^2}{n^{(\alpha-1)/\alpha}}.
\end{split}\end{equation*}
Now, for a fixed $\delta>0$ let 
\[
\mathcal{X}(i) :=e^{-\delta i/n} \sup_{k \in \Z} \|u_i(k)\|_{2m}^2.
\]
Our calculations above give us
\be\label{eq:recursive}
\mathcal X(i+1) \le c_1+c_4 \sup_{q\le i} \left[1+\mathcal{X}(q)\right] \cdot \sum_{j=0}^{[nT]}\frac{e^{-\delta j/n} P(X_{j}=\tilde{X}_{j})}{n^{(\alpha-1)/\alpha}}.
\ee
By choosing a $\delta$ large enough so that 
\[
c_4 \sum_{j=0}^{[nT]}\frac{e^{-\delta j/n} P(X_{j}=\tilde{X}_{j})}{n^{(\alpha-1)/\alpha}}<\frac12,
\]
one can obtain \eqref{eq:thm:um} by a recursive application of \eqref{eq:recursive}. This
completes the proof of the theorem.
\end{proof}

\section{Proof of Theorem \ref{thm:main}} \label{sec:main}

Fix $m\in \N$ so that $2m <2+\kappa$. For each $n$ we shall construct a probability space containing copies of the random variables $\xi$, the
white noise $\dot W$, and the initial profile $v_0$ independent of $\xi$ and $\dot W$ so that 
\be \label{eq:couple} \mE\big[ \big\vert  \bar u_t(x) - v_t(x) \big\vert^{2m}\big] \to 0  \quad \text{ as } n \to \infty.  \ee
This would imply the weak convergence stated in the main theorem.

{\it We first prove the result under the assumption that the initial profile $v_0\equiv 0$}. This shall be relaxed
later. The proof will involve several steps which we divide into subsections. To find an upper bound on the rate of convergence in \eqref{eq:couple} we shall need to optimize several quantities over $0<\gamma<\theta< \frac1\alpha$ such that 
\be \label{g:t:cond}
\theta+\gamma< \frac{\min(a,\alpha-1)}{\alpha} \quad \text{and} \quad  \gamma< \Big[(\alpha-1)\wedge \frac1\alpha\Big] \theta.
\ee

\subsection{Adding $\xi$ over blocks}
In the first step we apply a coarse graining
procedure and compare the solution to \eqref{eq:dis:sc} to a random field on a coarser lattice.  Each site in the coarser lattice would correspond to a block of size $[n^\theta]\times[n^\gamma]$
in the original lattice. More precisely site $(j,l)$ (we shall use the first coordinate for time and the second for space) in the coarser lattice will correspond to 
\be \label{eq:jl}
\B_{j}(l):=\big[\,j[n^\theta], (j+1)[n^\theta]\,\big) \times \big[\, l[n^\gamma], (l+1)[n^\gamma]\,\big)
\ee 
in the original lattice. For $i\in \Z_+,\, k \in \Z$ define
\be \label{eq:U}
U_{i[n^\theta]}\big( k [n^\gamma] \big) =  \sum_{j=0}^{i-1} \sum_{l \in \Z} \P_{(i-1-j)[n^\theta]}\big( (l-k)[n^\gamma]\big)\cdot \sigma\big(u_{j[n^\theta]}( l [n^\gamma])\big) \cdot \zeta_j (l),
\ee
where the random variables $\zeta$ are obtained by summing the contribution of the scaled $\xi$ variables
in these blocks:
\be\label{eq:zeta}
\zeta_j(l) = \sum_{(s,y) \in \B_j(l)}
\frac{\xi_s(y)}{n^{(\alpha-1)/2\alpha}}.
\ee
%is the sum of the contribution of the noise in the block $\B_{j}(l)$.
Here again we are suppressing the
dependence of $U=U^{(n)}$ on $n$ in the notation.

The main result of this section is the following theorem where we show that $u$ is close to $U$.
 Choose $ r\in \Z_+$  and $z \in Z$ such that 
\be\begin{split} \label{eq:r:z} &r[n^\theta] \le [nt]<(r+1)[n^\theta], \\
&z[n^\gamma] \le [ xn^{1/\alpha}]-[\mu nt ] <(z+1)[n^\gamma]. 
\end{split}\ee
Recall the definition of $\bar u$ in 
\eqref{u:bar}. 
\begin{theorem} \label{thm:u-U} Fix $T>0$. We have for $\frac{[n^\theta]}{n} \le t\le T, \, x \in \R$
\[ \Big\| \bar u_t(x) - U_{r[n^\theta]}(z[n^\gamma]) \Big\|_{2m}^2 \lesssim \frac{n^{\gamma}}{n^{(\alpha-1)\theta}}+ \frac{n^{\theta+\gamma+o(1)}}{n^{(\alpha-1)/\alpha}}.\]
\end{theorem}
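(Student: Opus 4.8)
The plan is to compare $\bar u_t(x)$ with $U_{r[n^\theta]}(z[n^\gamma])$ by a telescoping estimate that controls two distinct sources of error: first, replacing the time $[nt]$ and space point $[xn^{1/\alpha}]-[\mu nt]$ by their coarse-grained representatives $r[n^\theta]$ and $z[n^\gamma]$; and second, replacing the fine convolution structure in \eqref{eq:dis:sc:sol} (with $v_0\equiv 0$) by the block-summed structure in \eqref{eq:U}. Since we are in the $v_0\equiv 0$ case, both $\bar u$ and $U$ are purely noise terms, so everything reduces to $L^{2m}$ estimates on martingale-type sums, and the main tool will be Burkholder's inequality together with the moment bound \eqref{eq:thm:um} from Theorem \ref{thm:u} and the local limit theorem/Green's function bounds of the appendix (Corollary \ref{cor:green}).

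First I would set up the key algebraic identity: $U_{i[n^\theta]}(k[n^\gamma])$, after expanding $\zeta_j(l)$ via \eqref{eq:zeta}, is a sum over $(s,y)$ in the blocks $\B_j(l)$ of terms $\P_{(i-1-j)[n^\theta]}((l-k)[n^\gamma])\cdot\sigma(u_{j[n^\theta]}(l[n^\gamma]))\cdot\xi_s(y)/n^{(\alpha-1)/2\alpha}$, whereas $u_{[nt]}([\cdot])$ is the analogous sum with the ``correct'' kernel $\P_{[nt]-1-s}(\cdot-y)$ and the ``correct'' coefficient $\sigma(u_s(y))$. So the difference $\bar u_t(x)-U_{r[n^\theta]}(z[n^\gamma])$ decomposes, roughly, into: (i) a piece where the kernel is the fine one but the time/space indices of $\P$ differ because $[nt]\ne r[n^\theta]$ and $[xn^{1/\alpha}]-[\mu nt]\ne z[n^\gamma]$; (ii) a piece where we have quantized the \emph{source} times $s\mapsto j[n^\theta]$ and source locations $y\mapsto l[n^\gamma]$ inside the kernel $\P$; (iii) a piece where we have replaced $\sigma(u_s(y))$ by $\sigma(u_{j[n^\theta]}(l[n^\gamma]))$, which by the Lipschitz property of $\sigma$ costs $\lip\cdot\|u_s(y)-u_{j[n^\theta]}(l[n^\gamma])\|_{2m}$; and (iv) the ``boundary'' contribution of the at most $[n^\theta]$ time steps between $r[n^\theta]$ and $[nt]$, which is a short sum and is handled directly by Burkholder plus \eqref{eq:thm:um}, producing the $n^{\theta+\gamma+o(1)}/n^{(\alpha-1)/\alpha}$ term (the $n^{o(1)}$ absorbing logarithmic/local-limit factors, and $n^\theta\cdot n^\gamma$ being the block volume times the per-step $L^2$ cost $n^{-(\alpha-1)/\alpha}$).

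For pieces (i)–(iii), after applying Burkholder and Minkowski exactly as in the proof of Theorem \ref{thm:u} (to reduce an $L^{2m}$-norm-squared of a martingale sum to $\sum_{s}\sum_y (\text{kernel difference})^2 \cdot \|\sigma(u_s(y))\|_{2m}^2 / n^{(\alpha-1)/\alpha}$ plus the Lipschitz term), the problem becomes: bound $\sum_{s,y}\big[\P_{[nt]-1-s}(\ldots) - \P_{(i-1-j)[n^\theta]}(\ldots)\big]^2$ and similar squared differences. Here the local limit theorem (Theorem \ref{llt}) gives $\P_i(w)\approx n_i^{-1/\alpha}\,p(w/n_i^{1/\alpha})$ with a stable density $p$, so the squared difference of kernels over a time-shift of size $O(n^\theta)$ out of a total time $O(n)$, or a space-shift of size $O(n^\gamma)$ out of a scale $O(n^{1/\alpha})$, is controlled using Hölder-type regularity of $p$ and its derivatives; summing over $s\le [nt]$ and $y\in\Z$ and dividing by $n^{(\alpha-1)/\alpha}$ yields the stated $n^\gamma/n^{(\alpha-1)\theta}$ bound, while the conditions \eqref{g:t:cond} on $\gamma,\theta$ are exactly what make the two error exponents both positive and the $\mathcal D(z)=O(|z|^{\alpha+a})$ correction in Assumption \ref{cond1} negligible. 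The Lipschitz term (iii) is the one requiring care: it feeds $\|u_s(y)-u_{j[n^\theta]}(l[n^\gamma])\|_{2m}$, which I would bound by an \emph{a priori} space-time regularity (Hölder/oscillation) estimate for $u$ over blocks of size $[n^\theta]\times[n^\gamma]$ — either proved here or cited from the existence section — giving a bound of order (block size)$^{\text{exponent}}$ that is again controlled by \eqref{g:t:cond}; then one closes the estimate by a Gronwall/recursive argument in the coarse time index $i$, exactly analogous to the $\mathcal X(i+1)\le c_1 + c_4\sup_{q\le i}[1+\mathcal X(q)]\cdot(\ldots)$ recursion in the proof of Theorem \ref{thm:u}.

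\textbf{Main obstacle.} The hard part will be piece (iii) together with the self-referential nature of the bound: the difference $\bar u - U$ is being estimated in terms of $\sigma$ evaluated along $u$ at quantized points, so one needs a genuine \emph{modulus of continuity in $L^{2m}$} for the discrete field $u_i(k)$ in both time and space that is uniform in $n$ at the block scale $[n^\theta]\times[n^\gamma]$, and one must feed this back through a discrete Gronwall argument without losing the gain. Getting the regularity exponents to match the budget in \eqref{g:t:cond} — in particular handling the interplay between the spatial scale $n^{1/\alpha}$, the $\alpha$-stable kernel's Hölder regularity, and the extra $n^{a}$-room from $\mathcal D(z)$ — is where the real work lies; the boundary term (iv) and the kernel-shift terms (i)–(ii) are comparatively mechanical once the local limit theorem estimates of the appendix are in hand.
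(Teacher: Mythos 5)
Your overall decomposition coincides with the paper's: the theorem is proved there by combining Proposition \ref{prop:u:hol} (your pieces (i) and (iv): moving the evaluation point from $([nt],[xn^{1/\alpha}]-[\mu nt])$ to $(r[n^\theta],z[n^\gamma])$) with Proposition \ref{prop:u-U} (your pieces (ii) and (iii): quantizing the kernel and the argument of $\sigma$ inside the Duhamel sum), both run through Burkholder's inequality and the uniform moment bounds of Theorem \ref{thm:u}, as you propose. One correction of structure: no Gronwall/recursive closing is needed here, and your worry about self-referentiality rests on a misreading of \eqref{eq:U}. The field $U$ is defined with $\sigma\big(u_{j[n^\theta]}(l[n^\gamma])\big)$, i.e.\ $\sigma$ evaluated along the \emph{original} field $u$ at coarse points, not along $U$ itself; so once a block-scale $L^{2m}$ modulus of continuity for $u$ is available (this is exactly Proposition \ref{prop:u:hol}, proved directly from \eqref{eq:dis:sc:sol} and Theorem \ref{thm:u}, independently of $U$), the Lipschitz term (iii) is bounded by inserting that estimate into the Burkholder sum and using \eqref{eq:mom2:u}. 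The Gronwall argument only enters later in the paper, when comparing with the discretised SPDE $\bar v$.

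The substantive gap is in the step you call ``comparatively mechanical''. After Burkholder, each of the individual squared kernel sums is \emph{not} small: for instance $n^{-(\alpha-1)/\alpha}\sum_{j}\sum_l \P_j^2(l)=n^{-(\alpha-1)/\alpha}\sum_j P(X_j=\tilde X_j)$ is of order one, so the stated bound must come from cancellation between diagonal and cross terms, and pointwise H\"older regularity of the stable density $p$ does not by itself produce it. For the H\"older part the paper obtains the cancellation exactly, by rewriting the sums as meeting probabilities of two independent walks and using the potential kernel of $Y=X-\tilde X$ with $\bar a(x)\lesssim |x|^{\alpha-1}$ (Lemma \ref{pker:bd}) together with $E|X_n-\mu n|^{\alpha-1}\lesssim n^{(\alpha-1)/\alpha}$ (Lemma \ref{lem:xunif}); for the block-quantization part the cancellation is carried out in Lemmas \ref{lem:2_1} and \ref{lem:2_2} of Appendix B, where the local limit theorem is applied so that the leading Green-function terms (both equal to $\tfrac{\alpha\tilde p_1(0)}{\alpha-1}\big((r[n^\theta]-1)/n\big)^{(\alpha-1)/\alpha}$) cancel, supplemented by the large-deviation cut $|w|\le c_3 n^\theta$ of Lemma \ref{lem:ld}. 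Note in particular that near the top of the $j$-sum the fine kernel $\P_{r[n^\theta]-j-1}$ and the block kernel $\P_{(r-1-i)[n^\theta]}$ sit at comparable small times and are not pointwise close relative to their size; their contribution is controlled by the smallness of the Green-function mass over those times, not by regularity of $p$. A completed proof along your lines must organize this cancellation explicitly (also, the $n^{\theta+\gamma+o(1)}/n^{(\alpha-1)/\alpha}$ term arises from these cross-term estimates and the $i=0$ block, not from the boundary time steps, which are absorbed into the first term via Corollary \ref{cor:green}).
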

\begin{proof} The proof is obtained by combining Propositions \ref{prop:u:hol} and \ref{prop:u-U} below, and our restrictions on the parameters $\theta$ and $\gamma$ in \eqref{g:t:cond}.
\end{proof}
We first recall a few facts about general one-dimensional recurrent
random walks which we shall apply to $Y=X-\tilde X$, where $X$ and $\tilde X$ are
independent random walks with transition probabilities $\P(k,l)$. Note first that the random walk $Y$ is symmetric. The potential
kernel of the random walk $Y$ is the function
$
\bar a(x) := \lim_{n\to\infty} \bar a_n(x)
$
where
\bes
\bar a_n(x) :=\sum_{j=0}^{n} P(Y_j=0) - \sum_{j=0}^n P(Y_j=x).
\ees
One can show that the function $\bar a(x)$ exists (Propositions 28.5 and 28.8 in \cite{spitzer}) and further that $\bar a_n(x)$ is increasing in $n$ for each $x$. We will need the following lemmas.
\begin{lemma} \label{pker:bd}
The potential kernel $\bar a$ of the $Y$ walk satisfies
\bes
\left|  \bar a(x) \right| \lesssim |x|^{\alpha-1}.
\ees
\end{lemma}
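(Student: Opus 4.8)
The plan is to work in Fourier space, using the precise behaviour near the origin of the characteristic function of $Y=X-\tilde X$. Since $X,\tilde X$ are i.i.d.\ with characteristic function $\phi$, the walk $Y$ has characteristic function $\psi(z):=|\phi(z)|^2\in[0,1]$, and because $|\tilde\phi|=|\phi|$, Assumption \ref{cond1} gives
\[
\psi(z)=|\tilde\phi(z)|^2 = 1-2\nu|z|^\alpha+O(|z|^{\alpha+a})\qquad (z\to 0);
\]
squaring \eqref{eq:char} only introduces extra terms of order $|z|^{2\alpha}$, which are negligible because $2\alpha>\alpha+a$ (this is where $\alpha>1>a$ is used). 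Together with the aperiodicity hypothesis $\{z:|\phi(z)|=1\}=\{0\}$, which forces $1-\psi(z)>0$ for $z\in[-\pi,\pi]\setminus\{0\}$, this furnishes constants $\delta,c,c'>0$ with $1-\psi(z)\ge c|z|^\alpha$ for $|z|\le\delta$ and $1-\psi(z)\ge c'$ for $\delta\le|z|\le\pi$.

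First I would record the Fourier representation $\bar a(x)=\frac1{2\pi}\int_{-\pi}^\pi\frac{1-\cos(zx)}{1-\psi(z)}\,\d z$. Indeed, Fourier inversion on $\Z$ gives $\bar a_n(x)=\frac1{2\pi}\int_{-\pi}^\pi(1-\cos zx)\sum_{j=0}^n\psi(z)^j\,\d z=\frac1{2\pi}\int_{-\pi}^\pi\frac{(1-\cos zx)(1-\psi(z)^{n+1})}{1-\psi(z)}\,\d z$, and since $0\le\psi\le1$ the integrand is nonnegative and increases to $\frac{1-\cos zx}{1-\psi(z)}$ as $n\to\infty$; monotone convergence yields the formula (and re-proves finiteness of $\bar a(x)$, consistently with Propositions 28.5 and 28.8 of \cite{spitzer}).

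Then I would split the integral at $|z|=\delta$. The region $\delta\le|z|\le\pi$ contributes at most $2/c'$, an $O(1)$ term that is harmless since it suffices to treat $|x|\ge1$ (while $\bar a(0)=0$). On $\{|z|\le\delta\}$ I would use $1-\cos(zx)\lesssim\min(1,(zx)^2)$ and $1-\psi(z)\ge c|z|^\alpha$, and split once more at $|z|=1/|x|$ (taking $|x|\ge1/\delta$, the remaining finitely many $x$ being trivial):
\[
\int_0^{1/|x|}\frac{(zx)^2}{z^\alpha}\,\d z+\int_{1/|x|}^\delta\frac{\d z}{z^\alpha}\ \lesssim\ |x|^2\,|x|^{\alpha-3}+|x|^{\alpha-1}\ \lesssim\ |x|^{\alpha-1},
\]
where the first integral converges at $0$ because $1<\alpha\le 2<3$ and the second because $\alpha>1$. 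Combining the two regions gives $|\bar a(x)|\lesssim 1+|x|^{\alpha-1}\lesssim|x|^{\alpha-1}$ for $|x|\ge 1$, as desired. The computation is routine once the representation is in place; the only step needing any care is justifying that representation — the interchange of limit and integral — and there the nonnegativity $\psi=|\phi|^2\ge0$ together with the monotonicity of $\bar a_n$ in $n$ noted in the excerpt does all the work.
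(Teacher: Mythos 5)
Your proof is correct and follows essentially the same route as the paper: the Fourier representation of the potential kernel of $Y$, the lower bound $1-|\phi(z)|^2\gtrsim|z|^\alpha$ (which you obtain by splitting at a small $\delta$ and invoking aperiodicity, while the paper states the two-sided bound on all of $[-\pi,\pi]$), and the elementary estimate $\int_{-\pi}^{\pi}\frac{1-\cos(xz)}{|z|^\alpha}\,\d z\lesssim|x|^{\alpha-1}$. The only differences are presentational: you pass to the limit in $n$ first via monotone convergence and write out the final integral computation explicitly, whereas the paper bounds $\bar a_n(x)$ uniformly in $n$ and leaves that computation implicit.
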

\begin{proof} The characteristic function of $Y_1$ is $\vert \phi(z)\vert^2$. By Fourier inversion
\begin{align*}
\bar a_n(x) &= \frac{1}{2\pi} \int_{-\pi}^{\pi} \frac{1-e^{-\mathrm i x z}}{1-\vert \phi(z)\vert^2}\cdot \Big[1-\vert \phi(z)\vert^{2(n+1)}\Big] \d z \\
&= \frac{1}{2\pi} \int_{-\pi}^{\pi} \frac{1-\cos(x z)}{1-\vert \phi(z)\vert^2}\cdot \Big[1-\vert \phi(z)\vert^{2(n+1)}\Big] \d z 
\end{align*}
By the assumptions on the characteristic function $\phi$, we have the bound
\[ c_1 |z|^\alpha\le1- \vert \phi(z) \vert^2 \le c_2 |z|^\alpha  \]
for $|z| \le \pi$. Therefore uniformly in $n$
\begin{align*}
\bar a_n(x) & \lesssim \int_{-\pi}^{\pi} \frac{\vert 1-\cos(xz)\vert}{|z|^\alpha} \d z 
\end{align*}
which is of order $|x|^{\alpha-1}$.
\end{proof}
\begin{lemma}\label{lem:xunif} The following holds 
\be \label{eq:xunif}
E\left[\left|X_n -\mu n\right|^{\alpha-1}\right] \lesssim n^{(\alpha-1)/\alpha}.
\ee
The above also holds when $X$ is replaced by $Y$ and $\mu$ by $0$.
\end{lemma}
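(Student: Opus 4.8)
Since $1<\alpha\le 2$ we must control a \emph{fractional} moment of order $\alpha-1\le 1$, i.e. an exponent strictly below $\alpha$; the target rate $n^{(\alpha-1)/\alpha}$ is exactly the one predicted by the domain‑of‑attraction normalization, but elementary sub‑additivity bounds are not sharp in this range of exponents, so the plan is to argue through the Fourier representation of fractional moments. Concretely, for every $x\in\R$,
\[
|x|^{\alpha-1}=K^{-1}\int_0^\infty\frac{1-\cos(xt)}{t^{\alpha}}\,\d t,\qquad K:=\int_0^\infty\frac{1-\cos u}{u^{\alpha}}\,\d u\in(0,\infty),
\]
the finiteness of $K$ being precisely where $1<\alpha<3$ enters. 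Writing $\varphi(t):=e^{-\mathrm i\mu t}\phi(t)$ for the characteristic function of the centered step $X_1-\mu$ (so, using that $\phi$ extends $2\pi$‑periodically, $\varphi^n$ is the characteristic function of $X_n-\mu n$), Tonelli's theorem gives
\[
E\big[|X_n-\mu n|^{\alpha-1}\big]=K^{-1}\int_0^\infty\frac{1-\Re\,\varphi(t)^n}{t^{\alpha}}\,\d t,
\]
with nonnegative integrand $1-\Re\varphi(t)^n=E\big[1-\cos(t(X_n-\mu n))\big]\ge 0$; it therefore suffices to bound the last integral by $n^{(\alpha-1)/\alpha}$.

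The key pointwise estimate I would establish is
\[
1-\Re\,\varphi(t)^n\le\min\big(2,\;c\,n\,|t|^{\alpha}\big)\qquad(|t|\le\pi),
\]
for a constant $c$ independent of $n$ and $t$: the bound by $2$ is trivial, and for the other one use the telescoping identity $1-\varphi^n=(1-\varphi)\sum_{j=0}^{n-1}\varphi^j$ together with $|\varphi|\le 1$ to get $|1-\varphi(t)^n|\le n\,|1-\varphi(t)|$, and then Assumption~\ref{cond1}, which on $[-\pi,\pi]$ gives $\varphi(t)=\tilde\phi(t)=1-\nu|t|^\alpha+\mathcal D(t)$ with $\mathcal D(t)=O(|t|^{\alpha+a})$ near $0$, so $|1-\varphi(t)|\lesssim|t|^\alpha$ for $|t|$ small, while for $|t|$ bounded away from $0$ the crude bound $|1-\varphi(t)|\le 2\lesssim|t|^\alpha$ is enough. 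On $\{|t|>\pi\}$ I keep only $1-\Re\varphi(t)^n\le 2$.

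It then remains to split $\int_0^\infty=\int_0^\pi+\int_\pi^\infty$. The tail $\int_\pi^\infty 2t^{-\alpha}\,\d t$ is a finite constant, hence $\lesssim n^{(\alpha-1)/\alpha}$ since that exponent is positive and $n\ge 1$. For the inner part, splitting at $t_n:=(2/(cn))^{1/\alpha}\in(0,\pi)$ (the case of small $n$ where $t_n\ge\pi$ contributing only a bounded constant), the integrand $\frac{1-\Re\varphi(t)^n}{t^\alpha}$ is at most $cn$ on $(0,t_n)$ and at most $2t^{-\alpha}$ on $(t_n,\pi)$, so
\[
\int_0^\pi\frac{1-\Re\,\varphi(t)^n}{t^{\alpha}}\,\d t\;\le\; c\,n\,t_n+\frac{2}{\alpha-1}\,t_n^{-(\alpha-1)}\;\lesssim\;n^{(\alpha-1)/\alpha},
\]
because $nt_n\asymp n^{(\alpha-1)/\alpha}$ and $t_n^{-(\alpha-1)}\asymp n^{(\alpha-1)/\alpha}$. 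Collecting the pieces gives $E[|X_n-\mu n|^{\alpha-1}]\lesssim n^{(\alpha-1)/\alpha}$. The statement for $Y=X-\tilde X$ follows from the identical computation with $\varphi(t)^n$ replaced by the real characteristic function $|\phi(t)|^{2n}$ (and no centering, as $EY_1=0$), using the bound $1-|\phi(t)|^2\le c_2|t|^\alpha$ on $[-\pi,\pi]$ already recorded in the proof of Lemma~\ref{pker:bd} in place of $|1-\varphi(t)|\lesssim|t|^\alpha$. I do not expect a genuine obstacle here; the only subtlety is that $X_n-\mu n$ is not lattice‑valued, which is why one works with its honest characteristic function $\varphi^n$ rather than substituting $\tilde\phi^n$ directly — these agree only on $[-\pi,\pi]$, but that is exactly the range in which the expansion of $\tilde\phi$ is used.
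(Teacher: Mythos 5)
Your proposal is correct and uses essentially the same method as the paper: the cosine-integral representation of the fractional moment $|x|^{\alpha-1}$ combined with the expansion of the (centered) characteristic function from Assumption \ref{cond1}, and the same reduction for $Y$ via $1-|\phi(z)|^2\lesssim|z|^\alpha$. The only difference is cosmetic: the paper rescales to $R_n=(X_n-\mu n)/n^{1/\alpha}$ and deduces $\sup_n E|R_n|^{\alpha-1}<\infty$ from convergence of the integral to the stable limit, whereas you bound the unrescaled integral directly via $|1-\varphi(t)^n|\le n|1-\varphi(t)|\lesssim n|t|^\alpha$ and a split at $t\asymp n^{-1/\alpha}$, which is a slightly more explicit, quantitative finish of the same argument.
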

\begin{proof} Let us denote
\[ R_n = \frac{X_n-n \mu }{n^{1/\alpha}}.\]
One can check that for any $\delta>0$
\[ \int_0^\infty \frac{1-\cos(zr)}{z^{1+\delta}} dz= c(\delta) |r|^\delta, \quad r \in \R, \]
for some constant $c(\delta)$. Thus 
\bes
\begin{split}
E\big[|R_n|^\delta\big] &= \frac{1}{c(\delta)} \int_0^\infty \frac{1- E \cos (z R_n)}{z^{1+\delta}}\, dz  \\
&= \frac{1}{c(\delta)} \int_0^\infty \frac{1- \text{Re }E\big(e^{izR_n}\big)}{z^{1+\delta}} \\
&= \frac{1}{c(\delta)} \int_0^\infty \frac{1-\text{Re }\big(\big[\tilde\phi (z/n^{1/\alpha})\big]^n\big)}{z^{1+\delta}}\, dz.
\end{split}
\ees
By our assumption \eqref{eq:char} on the characteristic function one can argue that for $0<\delta<\alpha$ we have
$E[|R_n|^\delta]<\infty$ for all $n$ and further
\[ \lim_{n \to \infty} E\big[|R_n|^\delta\big] =\frac{1}{c(\delta)}\int_0^\infty\frac{1- e^{-\nu |z|^\alpha}}{z^{1+\delta}}\, d z.\]
This of course implies $\sup_n E\big[|R_n|^\delta\big]<\infty$. We can choose $\delta=\alpha-1$ to complete the proof of 
\eqref{eq:xunif}. We have a similar bound for $Y$ with $\mu$ replaced by $0$ because the characteristic
function of $Y_1$ satisfies \eqref{eq:char} with $\nu$ replaced by $2\nu$.
\end{proof} 

The following should be thought of a H\"older continuity estimate for $u$. It is the first step in our proof 
of Theorem \ref{thm:u-U}.
\begin{proposition} \label{prop:u:hol} We have the following bound  valid for $t \ge \frac{[n^\theta]}{n}$
\[ \left\|\bar u_t(x)- u_{r[n^\theta]}(z[n^\gamma])\right\|_{2m}^2 \lesssim \frac{1}{n^{(\frac1\alpha-\theta)(\alpha-1)}} .\]
\end{proposition}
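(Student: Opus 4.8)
The plan is to split the increment at the intermediate point, reduce each piece — via the mild representation and Burkholder's inequality — to a deterministic sum of squared increments of the transition kernel, and then read off the potential kernel $\bar a$ of $Y=X-\tilde X$ from those sums. We are in the case $u_0\equiv 0$, so by \eqref{eq:dis:sc:sol} the field $u$ coincides with its noise term. Writing $k_1=[xn^{1/\alpha}]-[\mu nt]$, I would use the decomposition
\[
\bar u_t(x)-u_{r[n^\theta]}(z[n^\gamma])=\big[u_{[nt]}(k_1)-u_{[nt]}(z[n^\gamma])\big]+\big[u_{[nt]}(z[n^\gamma])-u_{r[n^\theta]}(z[n^\gamma])\big],
\]
in which, by \eqref{eq:r:z}, the first bracket is a spatial increment over $w:=k_1-z[n^\gamma]\in[0,[n^\gamma])$ lattice sites at the fixed time $[nt]$, and the second a temporal increment over $\Delta:=[nt]-r[n^\theta]\in[0,[n^\theta])$ time steps at the fixed site $z[n^\gamma]$. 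It is enough to bound each bracket in $\|\cdot\|_{2m}$.

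For each bracket I would expand using \eqref{eq:dis:sc:sol}, apply Burkholder's inequality in the time index $j$, and run the same chain of estimates as in the proof of Theorem \ref{thm:u} (the kernel differences that now replace $\P_{i-j}(\cdot)$ enter only through their squares, so that argument applies unchanged). Together with the uniform bound $\sup_{j\le[nT],\,l,\,n}\|\sigma(u_j(l))\|_{2m}\lesssim 1$, which follows from \eqref{eq:thm:um} and $|\sigma(y)|\le|\sigma(0)|+\lip|y|$, this yields
\[
\big\|u_{[nt]}(k_1)-u_{[nt]}(z[n^\gamma])\big\|_{2m}^2\lesssim\frac{1}{n^{(\alpha-1)/\alpha}}\sum_{p=0}^{[nt]-1}\sum_{l\in\Z}\big[\P_p(l)-\P_p(l+w)\big]^2
\]
for the spatial piece, and, treating the ranges $j<r[n^\theta]$ and $j\ge r[n^\theta]$ of the sum separately and reindexing,
\[
\big\|u_{[nt]}(z[n^\gamma])-u_{r[n^\theta]}(z[n^\gamma])\big\|_{2m}^2\lesssim\frac{1}{n^{(\alpha-1)/\alpha}}\bigg[\sum_{p=0}^{r[n^\theta]-1}\sum_{l\in\Z}\big[\P_{p+\Delta}(l)-\P_p(l)\big]^2+\sum_{p=0}^{\Delta-1}\sum_{l\in\Z}\P_p^2(l)\bigg]
\]
for the temporal piece.

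It remains to estimate these kernel sums. Expanding the square and using $\sum_l\P_p(l)^2=P(Y_p=0)$ and $\sum_l\P_p(l)\P_p(l+w)=P(Y_p=w)$ (here $Y$ is symmetric) gives $\sum_l[\P_p(l)-\P_p(l+w)]^2=2[P(Y_p=0)-P(Y_p=w)]$, so that by monotonicity of $\bar a_n$ and Lemma \ref{pker:bd},
\[
\sum_{p=0}^{[nt]-1}\sum_{l}\big[\P_p(l)-\P_p(l+w)\big]^2=2\,\bar a_{[nt]-1}(w)\le 2\,\bar a(w)\lesssim|w|^{\alpha-1}\le n^{\gamma(\alpha-1)}.
\]
For the temporal sum, Chapman--Kolmogorov gives $\P_{p+\Delta}(l)-\P_p(l)=\sum_{l'}\P_\Delta(l')\,[\P_p(l-l')-\P_p(l)]$, whence by Jensen's inequality (as $\P_\Delta(\cdot)$ is a probability) $[\P_{p+\Delta}(l)-\P_p(l)]^2\le\sum_{l'}\P_\Delta(l')[\P_p(l-l')-\P_p(l)]^2$; summing over $l$ and $p$, applying the identity above and Lemmas \ref{pker:bd} and \ref{lem:xunif},
\[
\sum_{p=0}^{r[n^\theta]-1}\sum_l\big[\P_{p+\Delta}(l)-\P_p(l)\big]^2\le 2\sum_{l'}\P_\Delta(l')\,\bar a(l')=2\,E\big[\bar a(X_\Delta)\big]\lesssim E\big[|X_\Delta|^{\alpha-1}\big]\lesssim\Delta^{\alpha-1}\le n^{\theta(\alpha-1)},
\]
the drift giving the dominant $\Delta^{\alpha-1}$ scale, while $\sum_{p=0}^{\Delta-1}\sum_l\P_p^2(l)=\sum_{p=0}^{\Delta-1}P(Y_p=0)\lesssim\Delta^{1-1/\alpha}\le n^{\theta(\alpha-1)}$ by the local limit theorem \ref{llt}. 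Dividing by $n^{(\alpha-1)/\alpha}$ and using $\gamma<\theta<\tfrac{1}{\alpha}$, each contribution is $\lesssim n^{(\alpha-1)(\theta-1/\alpha)}=n^{-(1/\alpha-\theta)(\alpha-1)}$, which is the claimed bound.

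The hard part is the temporal increment. The spatial increment plugs directly into the potential-kernel identity, whereas the time increment $\P_{p+\Delta}-\P_p$ must first be rewritten as a $\P_\Delta$-average of spatial increments so that the same identity applies; and one then has to notice that it is $E[|X_\Delta|^{\alpha-1}]\asymp\Delta^{\alpha-1}$ — controlled by the drift $\mu$ rather than by the stable fluctuations of size $\Delta^{1/\alpha}$ — that pins the exponent $\theta$ (and not $\gamma$) into the statement. A minor additional point is that Burkholder's and Minkowski's inequalities must be applied to the temporal bracket with its summand in the two forms corresponding to $j<r[n^\theta]$ and $j\ge r[n^\theta]$; this is handled exactly as in the proof of Theorem \ref{thm:u}.
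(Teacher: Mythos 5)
Your proof is correct, and it rests on the same three ingredients as the paper's argument — Burkholder's inequality combined with the uniform moment bound of Theorem \ref{thm:u}, the potential-kernel estimate $|\bar a(x)|\lesssim |x|^{\alpha-1}$ of Lemma \ref{pker:bd}, and Lemma \ref{lem:xunif} — but it organizes the computation differently. You first split the increment by the triangle inequality into a pure spatial increment at time $[nt]$ and a pure temporal increment at the site $z[n^\gamma]$, and then treat the resulting kernel sums deterministically: the exact identity $\sum_l[\P_p(l)-\P_p(l+w)]^2=2[P(Y_p=0)-P(Y_p=w)]$ sums over $p$ to $\bar a_{[nt]-1}(w)\le \bar a(w)$ for the spatial piece, while Chapman--Kolmogorov plus Jensen converts the temporal kernel difference into the average $E\,\bar a(X_\Delta)\lesssim E|X_\Delta|^{\alpha-1}\lesssim \Delta^{\alpha-1}$. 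The paper instead expands $\bar u_t(x)-u_{r[n^\theta]}(z[n^\gamma])$ in one go, splitting only the Duhamel sum at $j=r[n^\theta]$ so that the time and space shifts occur simultaneously, and then uses the Markov property to introduce auxiliary independent walks $Z\stackrel{d}{=}Y$ and $W\stackrel{d}{=}X$, bounding the cross terms by $E\,\bar a\big(Z_{[nt]-r[n^\theta]}\big)+E\,\bar a\big(W_{[nt]-r[n^\theta]}-[xn^{1/\alpha}]+[\mu nt]+z[n^\gamma]\big)$; these are your $E\,\bar a(X_\Delta-w)$-type quantities in probabilistic guise, and in both proofs the dominant term $n^{\theta(\alpha-1)}$ comes from the drift $\mu\Delta$ over the window $\Delta<[n^\theta]$, the $\gamma$-contribution being absorbed since $\gamma\le\theta$. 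Your version is a bit more modular (the spatial estimate is exactly the computation the paper reuses in the tightness section) at the price of two Burkholder applications instead of one; in a full write-up you should make explicit that $P(Y_p=0)\ge P(Y_p=w)$ by symmetry of $Y$ (as in Lemma \ref{lem:green}), so the partial sums $\bar a_N(w)$ indeed increase to $\bar a(w)$, that $\sum_{p<\Delta}P(Y_p=0)\lesssim \Delta^{(\alpha-1)/\alpha}$ is precisely \eqref{eq:1}, i.e.\ Corollary \ref{cor:green} applied to $Y$, and that the subadditivity $(a+b)^{\alpha-1}\le a^{\alpha-1}+b^{\alpha-1}$ used to isolate the drift requires $1<\alpha\le 2$, which holds throughout.
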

\begin{proof} We write
\begin{align*}
&\bar u_t(x) - u_{r[n^\theta]}(z[n^\gamma])= \sum_{j=r[n^\theta]}^{[nt]-1}\sum_{l\in \Z} \P_{[nt]-j-1}\big(l+[\mu nt] -[xn^{1/\alpha}]\big) \cdot \sigma\big(u_j(l)\big) \cdot \frac{\xi_j(l)}{n^{(\alpha-1)/2\alpha}}  \\
&\quad + \sum_{j=0}^{r[n^\theta]-1} \sum_{l \in \Z} \Big[ \P_{[nt]-j-1}\big(l+[\mu nt] -[xn^{1/\alpha}]\big) - \P_{r[n^\theta]-j-1}\big(l- z[n^\gamma]\big)\Big]\cdot \sigma\big(u_j(l)\big) \cdot \frac{\xi_j(l)}{n^{(\alpha-1)/2\alpha}}. 
\end{align*}
An application of Burkholder's inequality along with the bound in Theorem \ref{thm:u} then gives
\be\begin{split} 
& \left\| \bar u_t(x) - u_{r[n^\theta]}\big(z[n^\gamma]\big) \right\|_{2m}^2 \\
&\lesssim \sum_{j=r[n^\theta]}^{[nt]-1}\sum_{l\in \Z} \frac{\P^2_{[nt]-j-1}\big(l+[\mu nt] -[xn^{1/\alpha}]\big) }{n^{(\alpha-1)/\alpha}} \\
& \quad + \sum_{j=0}^{r[n^\theta]-1} \sum_{l \in \Z} \frac{\left[ \P_{[nt]-j-1}\big(l+[\mu nt] -[xn^{1/\alpha}]\big) - \P_{r[n^\theta]-j-1}\big(l- z[n^\gamma]\big)\right]^2}{n^{(\alpha-1)/\alpha}}  
\end{split}
\ee
For fixed time $j$ and spatial point $k$ the sum $\sum_l \P_j(l) \P_j (l+k) =P(X_j =\tilde X_j +k)$ where $X$ and $\tilde X$ 
are independent random walks. Therefore the above is
\be
\begin{split}\label{eq:u:holder}
&\lesssim \frac{1}{n^{(\alpha-1)/\alpha}}\sum_{j=0}^{[n^\theta]} P\big(X_j=\tilde X_j\big) \\
&\;\;+ \frac{1}{n^{(\alpha-1)/\alpha}}\sum_{j=0}^{r[n^\theta]-1} P\big(X_{[nt]-j-1}=\tilde X_{[nt]-j-1}\big) + \frac{1}{n^{(\alpha-1)/\alpha}}\sum_{j=0}^{r[n^\theta]-1} P\big(X_{r[n^\theta]-j-1}=\tilde X_{r[n^\theta]-j-1}\big) \\
&\;\;- \frac{2}{n^{(\alpha-1)/\alpha}}\sum_{j=0}^{r[n^\theta]-1} P\big(X_{[nt]-j-1}=\tilde X_{r[n^\theta]-j-1}+[xn^{1/\alpha}]-[\mu nt]-z[n^\gamma]\big).
\end{split}\ee
As already observed in \eqref{eq:mom2:u} one can use Corollary \ref{cor:green}
with $Y=X-\tilde X$ to get
\be \begin{split} \label{eq:1}
 \frac{1}{n^{(\alpha-1)/\alpha}}\sum_{j=0}^{[n^\theta]} P\left(X_j=\tilde X_j\right) %&= \frac{1}{n^{(\alpha-1)/\alpha}} +\frac{1}{n^{(\alpha-1)/\alpha}}\sum_{j=1}^{[n^\theta]} P\left(X_j=\tilde X_j\right) \\
 %&\lesssim \frac{1}{n^{(\alpha-1)/\alpha}} + \frac{1}{n^{(\alpha-1)/\alpha} }\sum_{j=1}^{[n^\theta]}j^{-1/\alpha} \cdot \left[p_1(0) +j^{-a/\alpha}\right] \\
 %&\lesssim \frac{1}{n^{(\alpha-1)/\alpha}} + \frac{1}{n^{(\alpha-1)/\alpha}} \int_1^{n^\theta} u^{-1/\alpha}\d u + \frac{1}{n^{(\alpha-1)/\alpha}} \int_1^{n^\theta} u^{-(1+a)/\alpha}\d u\\
 &\lesssim \frac{1}{n^{(1-\theta)(\alpha-1)/\alpha}}.
\end{split}\ee
We now proceed with \eqref{eq:u:holder}. An application of the Markov property gives
\bes
\begin{split} 
&  \left\|\bar u_t(x) - u_{r[n^\theta]}\big(z[n^\gamma]\big) \right\|_{2m}^2  \\
& \lesssim \frac{1}{n^{(1-\theta)(\alpha-1)/\alpha}} + \frac{1}{n^{(\alpha-1)/\alpha}} \sum_{j=0}^{r[n^\theta]-1}\left[P \big(Y_j = Z_{[nt]-r[n^\theta]}\big) + P(Y_j=0)\right] \\
& \;\;   -  \frac{2}{n^{(\alpha-1)/\alpha}} \sum_{j=0}^{r[n^\theta]-1} P\big(Y_j= W_{[nt]-r[n^\theta]}-[xn^{1/\alpha}]+[\mu nt]+z[n^\gamma]\big),
\end{split}
\ees
where $Z\stackrel{d}{=} Y$  and $W\stackrel{d}{=} X$, and both $Z, W$ are independent of the walks $X$ and $\tilde X$. 
Thus
\be \begin{split} \label{eq:u:holder:2}
& \left\| \bar u_t(x) - u_{r[n^\theta]}\big(z[n^\gamma]\big) \right\|_{2m}^2 \\
& \lesssim \frac{1}{n^{(1-\theta)(\alpha-1)/\alpha}} +  \frac{1}{n^{(\alpha-1)/\alpha}} \left[ E\,\bar a\big(Z_{[nt]-r[n^\theta]}\big)+ E\, \bar a \Big(W_{[nt]-r[n^\theta]}-[xn^{1/\alpha}]+[\mu nt]+z[n^\gamma]\Big)\right] \\
&\lesssim \frac{1}{n^{(1-\theta)(\alpha-1)/\alpha}} +  \frac{1}{n^{(\alpha-1)/\alpha}} \left[ n^{\theta(\alpha-1)} + n^{\gamma(\alpha-1)} \right] \\
& \lesssim \frac{1}{n^{(\frac1\alpha -\theta)(\alpha-1)}}
\end{split}\ee
%\be
%\boxed{\gamma\le\theta<1/\alpha}
%\ee
In the second last inequality we use Lemma \ref{pker:bd} and Lemma \ref{lem:xunif}. Note
also that here we have used that $\gamma\le \theta< \alpha^{-1}$ from \eqref{g:t:cond}.
This completes the proof of the proposition.
\end{proof}

To complete the proof of Theorem \ref{thm:u-U} we estimate the difference between $u_{r[n^\theta]}(z[n^\gamma])$ and $U_{r[n^\theta]}(z[n^\gamma])$ in the following
\begin{proposition} \label{prop:u-U} Fix $T>0$. We have for $1\le r\le [Tn^{1-\theta}]$. 
\[ \left\|u_{r[n^\theta]}(z[n^\gamma]) -U_{r[n^\theta]}(z[n^\gamma])\right\|_{2m}^2 \lesssim \frac{n^{\gamma}}{n^{(\alpha-1)\theta}}+ \frac{n^{\theta+\gamma+o(1)}}{n^{(\alpha-1)/\alpha}}.\]
\end{proposition}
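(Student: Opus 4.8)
The plan is to realize both $u_{r[n^\theta]}(z[n^\gamma])$ and $U_{r[n^\theta]}(z[n^\gamma])$ as martingale sums over the \emph{fine} noise variables $\xi_{j'}(l')$, to compare them coefficient by coefficient, and then to reuse the moment computation from the proof of Theorem \ref{thm:u}. Write $N:=r[n^\theta]$ and $K:=z[n^\gamma]$, and for a fine site $(j',l')$ lying in the block $\B_j(l)$ set $j=j(j'):=\fl{j'/[n^\theta]}$ and $l=l(l'):=\fl{l'/[n^\gamma]}$. Since $v_0\equiv 0$, formula \eqref{eq:dis:sc:sol} gives
\[ u_N(K)=\sum_{j'=0}^{N-1}\sum_{l'\in\Z}\P_{N-1-j'}(l'-K)\,\sigma\big(u_{j'}(l')\big)\cdot\frac{\xi_{j'}(l')}{n^{(\alpha-1)/2\alpha}} , \]
while expanding $\zeta_j(l)$ in \eqref{eq:U} shows that $U_N(K)$ is the same double sum but with the coefficient of $\xi_{j'}(l')/n^{(\alpha-1)/2\alpha}$ replaced by $\P_{(r-1-j)[n^\theta]}\big((l-z)[n^\gamma]\big)\,\sigma\big(u_{j[n^\theta]}(l[n^\gamma])\big)$. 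Hence $u_N(K)-U_N(K)=\sum_{j'=0}^{N-1}\sum_{l'}a_{j',l'}\,\xi_{j'}(l')/n^{(\alpha-1)/2\alpha}$, with no non-noise term, where $a_{j',l'}$ is the difference of these coefficients. Both $u_{j'}(\cdot)$ and $u_{j[n^\theta]}(\cdot)$ are measurable with respect to $\sigma(\xi_s(y):s<j')$ (note $j[n^\theta]\le j'$), and $\xi_{j'}(l')$ is mean zero and independent of it, so summing over $j'$ gives a martingale. Burkholder's and Minkowski's inequalities, followed by the multinomial-expansion argument used to bound \eqref{eq:u:m}--\eqref{eq:mom:4m} in the proof of Theorem \ref{thm:u} — which used only that the $\xi$'s are i.i.d., mean zero, with $2m$ moments, and never the precise form of the coefficients — then yield
\[ \big\|u_N(K)-U_N(K)\big\|_{2m}^2\;\lesssim\;\frac{1}{n^{(\alpha-1)/\alpha}}\sum_{j'=0}^{N-1}\sum_{l'\in\Z}\big\|a_{j',l'}\big\|_{2m}^2 . \]

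Next I would split $a_{j',l'}=A_{j',l'}+B_{j',l'}$, where $A_{j',l'}:=\P_{N-1-j'}(l'-K)\big[\sigma(u_{j'}(l'))-\sigma(u_{j[n^\theta]}(l[n^\gamma]))\big]$ isolates the roughness of $u$ and $B_{j',l'}:=\big[\P_{N-1-j'}(l'-K)-\P_{(r-1-j)[n^\theta]}((l-z)[n^\gamma])\big]\,\sigma(u_{j[n^\theta]}(l[n^\gamma]))$ isolates the roughness of the kernel. For the $A$-part, Lipschitzness of $\sigma$ together with Proposition \ref{prop:u:hol} — which compares $u$ at a fine point with $u$ at the lower corner of its block — gives $\|\sigma(u_{j'}(l'))-\sigma(u_{j[n^\theta]}(l[n^\gamma]))\|_{2m}^2\lesssim \lip^2\,n^{-(\frac1\alpha-\theta)(\alpha-1)}$ uniformly in $l'$, for every block row with $j\ge 1$; the row $j=0$ is handled directly from $u_0\equiv 0$ and the recursion in the proof of Theorem \ref{thm:u}, which for $j'\le[n^\theta]$ bounds $\|u_{j'}(l')\|_{2m}^2\lesssim n^{-(1-\theta)(\alpha-1)/\alpha}$, an even smaller quantity. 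Since $\sum_{l'}\P_{N-1-j'}^2(l'-K)=P(Y_{N-1-j'}=0)$ and $\sum_{s=0}^{N-1}P(Y_s=0)\lesssim N^{(\alpha-1)/\alpha}\lesssim n^{(\alpha-1)/\alpha}$ by Corollary \ref{cor:green} (using $N=r[n^\theta]\lesssim n$), the $A$-contribution to the bound above is $\lesssim n^{-(\frac1\alpha-\theta)(\alpha-1)}$; by \eqref{g:t:cond} and $\alpha\le 2$ one has $\theta(\alpha-1)\le\theta\le\theta+\gamma$, so this is $\le n^{\theta+\gamma}/n^{(\alpha-1)/\alpha}$ and is absorbed in the second term of the claimed bound.

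For the $B$-part, the uniform $L^{2m}$ bound of Theorem \ref{thm:u} gives $\|\sigma(u_{j[n^\theta]}(l[n^\gamma]))\|_{2m}^2\lesssim 1$, so its contribution is at most a constant multiple of the purely deterministic quantity
\[ \frac{1}{n^{(\alpha-1)/\alpha}}\sum_{j'=0}^{N-1}\sum_{l'\in\Z}\Big[\P_{N-1-j'}(l'-K)-\P_{(r-1-j)[n^\theta]}\big((l-z)[n^\gamma]\big)\Big]^2 , \]
a sum of squared increments of the transition kernel over a time shift of at most $[n^\theta]$ and a space shift of at most $[n^\gamma]$. The estimate that this double sum is $\lesssim n^{(\alpha-1)/\alpha}\,\frac{n^\gamma}{n^{(\alpha-1)\theta}}+n^{\theta+\gamma+o(1)}$ is exactly the content of the deterministic bounds collected in the appendix; they rest on Assumption \ref{cond1} and on the local limit theorem \ref{llt} together with its consequences for spatial and temporal differences of $\P_s$. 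Heuristically, the bulk block rows (where $(r-1-j)[n^\theta]$ is large) contribute $n^{(\alpha-1)/\alpha}\,n^\gamma/n^{(\alpha-1)\theta}$ via Hölder regularity of the stable kernel at scale $(n^\gamma,n^\theta)$, while the last block row (where $(r-1-j)[n^\theta]=0$ and the coarse kernel collapses to a point mass at $z$) produces the lossy $n^{\theta+\gamma}$ from the mismatch over the $\approx n^\gamma$ sites of the target block, summed over the $\approx n^\theta$ times in that row. Dividing by $n^{(\alpha-1)/\alpha}$ yields the two stated terms, and combining with the $A$-estimate finishes the proof.

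The main obstacle is precisely this deterministic heat-kernel estimate: one needs uniform-in-$n$, quantitative control of $\P_s(x)-\P_s(x')$ and $\P_s(x)-\P_{s'}(x)$ summed against $x$ over the whole range $0\le s\le N$, and in particular crude but correct bounds for the small-$s$ block rows, where $\P_s$ is nowhere near its limiting stable profile; this is the source of the $n^{o(1)}$ slack and of the apparently lossy $n^{\theta+\gamma}$ term, and it is the reason these computations are relegated to the appendix. Everything else — the martingale reduction, Burkholder, Minkowski/Hölder, and the Green's-function sums — is routine and closely parallels the proof of Theorem \ref{thm:u}.
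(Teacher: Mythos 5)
Your proposal is correct and follows essentially the same route as the paper: the same coefficient-by-coefficient comparison over the fine noise variables (the paper's display \eqref{limit}), the same split of $\sigma(u_{j'}(l'))$ into an increment controlled by the H\"older estimate of Proposition \ref{prop:u:hol} plus a term whose kernel difference is isolated using the uniform moment bound of Theorem \ref{thm:u}, all handled through Burkholder exactly as in \eqref{eq:2}. The deterministic kernel-difference sum you defer to the appendix is precisely what Lemmas \ref{lem:2_1} and \ref{lem:2_2} establish (by expanding the square and exhibiting a cancellation between $O(1)$ Green's-function sums via the local limit theorem, the potential-kernel bound and a large-deviation cutoff, rather than a termwise H\"older bound on the kernel), so your reliance on those bounds mirrors the paper's own proof, and your heuristic for their size should be read as motivation rather than as a substitute for them.
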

\begin{proof}
Recall the definition of the random field $U$ in \eqref{eq:U}. By writing the $\zeta$ variables as the sum of the $\xi$ variables in the blocks we can write the above difference as
\be \begin{split} \label{limit}
%&\sum_{j=0}^{r[n^\theta]-1} \sum_{l \in \Z}\P_{r[n^\theta]-j-1}\big(l- z[n^\gamma]\big)\cdot \sigma\big(u_j(l)\big) \cdot \frac{\xi_j(l)}{n^{(\alpha-1)/2\alpha}} \\
%&\qquad  -\sum_{i=0}^{r-1} \sum_{k \in \Z} \P_{(r-1-i)[n^\theta]}\big( (k-z)[n^\gamma]\big)\cdot \sigma\big(u_{i[n^\theta]}( k [n^\gamma])\big) \cdot \zeta_i (k) \\
&\sum_{i=0}^{r-1} \sum_{k \in \Z} \sum_{j=i[n^\theta]}^{(i+1)[n^\theta]-1} \;\sum_{l=k[n^\gamma]}^{(k+1)[n^\gamma]-1}\P_{r[n^\theta]-j-1}\big(l- z[n^\gamma]\big)\cdot \sigma\big(u_j(l)\big) \cdot \frac{\xi_j(l)}{n^{(\alpha-1)/2\alpha}} \\
&  - \sum_{i=0}^{r-1} \sum_{k \in \Z} \sum_{j=i[n^\theta]}^{(i+1)[n^\theta]-1} \;\sum_{l=k[n^\gamma]}^{(k+1)[n^\gamma]-1} \P_{(r-1-i)[n^\theta]}\big( (k-z)[n^\gamma]\big)\cdot \sigma\big(u_{i[n^\theta]}( k [n^\gamma])\big)  \cdot \frac{\xi_j(l)}{n^{(\alpha-1)/2\alpha}}.
\end{split}\ee
At this stage it is convenient to split $\sigma\left(u_j(l)\right)$ in the first term as
\bes
\sigma\left(u_j(l)\right)=\left[\sigma\left(u_j(l)\right)-\sigma\left(u_{i[n^\theta]}(k[n^\gamma])\right)\right]+\sigma\left(u_{i[n^\theta]}(k[n^\gamma])\right).
\ees
An application of Burkholder's inequality then gives
\be \label{eq:2}
\begin{split}
&\left\| u_{r[n^\theta]}\big(z[n^\gamma]\big)-U_{r[n^\theta]}\big(z[n^\gamma]\big) \right\|_{2m}^2 \\
& \lesssim \frac{1}{n^{(\alpha-1)/\alpha}} \sum_{i,j} \sum_{k,l}\P^2_{r[n^\theta]-j-1}\big(l- z[n^\gamma]\big)\cdot \left\|u_j(l)-u_{i[n^\theta]}\left(k[n^\gamma]\right)\right\|_{2m}^2  \\
& \;\; + \frac{1}{n^{(\alpha-1)/\alpha}} \sum_{i,j} \sum_{k,l}\Big[\P_{r[n^\theta]-j-1}\big(l- z[n^\gamma]\big)- \P_{(r-1-i)[n^\theta]}\big( (k-z)[n^\gamma]\big)\Big]^2\\
&\lesssim \frac{1}{n^{(\frac1\alpha-\theta)(\alpha-1)}} + \frac{1}{n^{(\alpha-1)/\alpha}} \sum_{i,j} \sum_{k,l} \Big[\P_j^2(l) + \P_{i[n^\theta]}^2\big(k[n^\gamma]\big)-2\P_j(l)\P_{i[n^\theta]}\big(k[n^\gamma]\big)\Big],
%& \quad + \frac{[n^\theta][n^\gamma]}{n^{(\alpha-1)/\alpha}} \sum_{i=0}^{r-1} \sum_{k \in \Z} \P^2_{(r-1)[n^\theta]-i[n^\theta]}\big( k[n^\gamma]- z[n^\gamma]\big) \cdot \left \| u_{i[n^\theta]}\big(k[n^\gamma]\big) -  U_{i[n^\theta]}\big(k[n^\gamma]\big)\right\|_2^2
\end{split}
\ee
where the limits of the sums are just as earlier; we did this to simplify notation. The 
first term in the last line is a bound on the first term of the previous step, and 
this comes from the H\"older continuity estimate in Proposition \ref{prop:u:hol} and \eqref{eq:mom2:u}.
From this point on we focus on the last term in \eqref{eq:2} which can be written as
\be\begin{split} \label{eq:2*}
%&\frac{1}{n^{(\alpha-1)/\alpha}} \sum_{i,j} \sum_{k,l} \Big[\P_j^2(l) + \P_{i[n^\theta]}^2\big(k[n^\gamma]\big)-2\P_j(l)\P_{i[n^\theta]}\big(k[n^\gamma]\big)\Big] \\
&  \frac{1}{n^{\frac{(\alpha-1)}{\alpha}}} \sum_{i,j} \sum_{k,l} \P_j(l) \cdot \Big[ \P_j(l)-\P_{i[n^\theta]}\big(k[n^\gamma]\big)\Big]+\frac{1}{n^{\frac{(\alpha-1)}{\alpha}}} \sum_{i,j} \sum_{k,l} \P_{i[n^\theta]}\big(k[n^\gamma]\big)\cdot \Big[ \P_{i[n^\theta]}\big(k[n^\gamma]\big)-\P_j(l)\Big].
\end{split}\ee
In order to keep the exposition simpler for the reader we bound the above two terms in Lemmas \ref{lem:2_1} and \ref{lem:2_2} in Appendix B. The proof of this Proposition is complete once we take into account our restrictions on $\gamma$ and $\theta$ in \eqref{g:t:cond}.
\end{proof}

\subsection{Coupling with white noise}
 Here we approximate $\zeta_j(l)$ in the block $\B_j(l)$ by replacing it by white noise in the rescaled box 
\be \label{eq:bjl}
\tilde{\B}_j(l)  :=\left[\,\frac{j[n^\theta]}{n}, \frac{(j+1)[n^\theta]}{n}\,\right) \times \left[\,\frac{ l[n^\gamma]}{n^{1/\alpha}},\frac{ (l+1)[n^\gamma]}{n^{1/\alpha}}\,\right),\; j\in \Z_+,\, l \in \Z.
\ee
It is this step which allows us to construct copies of $u$ and $v$ on the same space as in \eqref{eq:couple}. For this we shall need the following coupling theorem.
\begin{theorem} \label{thm:couple}
We can construct random variables $\xi'_i(k),\, i \in \Z_+,\, k \in \Z$ and $\zeta'_j(l),\, j \in \Z_+,\, l \in \Z$ and a white noise $W'$ on the same probability space such that the following hold
\begin{enumerate}
\item $\xi'$ is distributed as $\xi$ and $\zeta'$ is distributed as $\zeta$. 
\item For fixed $j \in \Z_+,\, l \in \Z$, we have that $\big(W'_j(l),\, \zeta'_j(l)\big)$ is independent of the collection $\left(W'_i(k),\, \zeta'_i(k)\right),\, i \ne j, \, k\ne l$ where
\bes
W'_j(l) := W'\left(\tilde{\B}_j(l)\right).
\ees
\item For any real number $m$ such that $0<2m<2+\kappa$ and $j \in Z_+,\, l \in \Z$ we have 
\be \label{eq:z-W}
\mE\left[\left|\zeta_j'(l)- n^{1/\alpha}W_j'(l)\right|^{2m}\right] \lesssim n^{-m\{\alpha-1-\alpha(\theta+\gamma)\}/\alpha}n^{-(\theta+\gamma)\cdot\min(1,\,\kappa)/2}.
\ee
\end{enumerate}
\end{theorem}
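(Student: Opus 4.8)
The plan is to build the coupling block by block, using the classical Skorokhod-type / Komlós–Major–Tusnády strong approximation for partial sums of i.i.d. variables. Recall that $\zeta_j(l) = n^{-(\alpha-1)/2\alpha}\sum_{(s,y)\in\B_j(l)}\xi_s(y)$ is a normalized sum of $N := [n^\theta][n^\gamma]$ i.i.d. mean-zero, variance-one random variables with $(2+\kappa)$-th moments, so $\zeta_j(l)$ has variance $N/n^{(\alpha-1)/\alpha}$. On the other side, $n^{1/\alpha}W_j'(l) = n^{1/\alpha}W'(\tilde\B_j(l))$ is Gaussian with variance $n^{2/\alpha}\cdot \frac{[n^\theta]}{n}\cdot\frac{[n^\gamma]}{n^{1/\alpha}} = N/n^{(\alpha-1)/\alpha}$ as well, so the two have the same variance; this is precisely why we multiply $W_j'(l)$ by $n^{1/\alpha}$. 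Since the $\tilde\B_j(l)$ are disjoint, distinct boxes give independent $W_j'(l)$, and we may do the coupling in each box independently of the others, which gives the independence claimed in part (2). So everything reduces to a single-box estimate: given i.i.d. $\xi_1,\dots,\xi_N$ as above, construct a Gaussian $G$ with variance $N$ on the same space so that $\mE|\sum_{a=1}^N \xi_a - G|^{2m} \lesssim$ (something), then rescale by $n^{-(\alpha-1)/2\alpha}$.

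The key input is a quantitative strong approximation theorem: there is a coupling of $S_N := \sum_{a=1}^N\xi_a$ with a normal $G\sim\mathcal N(0,N)$ such that, when the $\xi$ have a finite $p$-th moment with $2<p\le 3$ (here $p = 2+\kappa$, possibly capped at $3$), one has $\mE|S_N - G|^{p'} \lesssim N^{\,p'/p}$ for $0<p'\le p$; see the Komlós–Major–Tusnády / Sakhanenko-type results. Equivalently $\|S_N-G\|_{p'}\lesssim N^{1/p}$. Applying this with $p=2+\min(1,\kappa)$ (the truncation at $3$ is because i.i.d. sums are only approximable at rate $N^{1/3}$ in general even with more moments) and $p' = 2m$, and then multiplying through by the scaling factor $n^{-(\alpha-1)/2\alpha}$, I get
\be
\mE\big|\zeta_j(l) - n^{1/\alpha}W_j'(l)\big|^{2m} \lesssim n^{-m(\alpha-1)/\alpha}\cdot N^{\,2m/(2+\min(1,\kappa))}.
\ee
Now substitute $N = [n^\theta][n^\gamma] \asymp n^{\theta+\gamma}$. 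Writing $N^{2m/(2+\min(1,\kappa))} = N^{m}\cdot N^{-m\min(1,\kappa)/(2+\min(1,\kappa))}$ and comparing exponents, one checks $N^m \asymp n^{m(\theta+\gamma)}$ so that $n^{-m(\alpha-1)/\alpha}N^m = n^{-m\{\alpha-1-\alpha(\theta+\gamma)\}/\alpha}$, matching the first factor on the right side of \eqref{eq:z-W}; and the leftover factor $N^{-m\min(1,\kappa)/(2+\min(1,\kappa))}\asymp n^{-(\theta+\gamma)m\min(1,\kappa)/(2+\min(1,\kappa))}$ is at most $n^{-(\theta+\gamma)\min(1,\kappa)/2}$ after absorbing the $m$-dependent constant (or one simply records the sharper $m$-dependent exponent — the statement as written suffices). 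This yields \eqref{eq:z-W}.

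The main obstacle is purely bookkeeping precision in invoking the right strong-approximation theorem: one needs a version valid for a \emph{fixed} number $N$ of summands (not an a.s. statement about a growing sequence) that simultaneously (a) produces a Gaussian of exactly the right variance, and (b) controls the $2m$-th moment of the discrepancy with the correct power of $N$ under only $2+\kappa$ moments. The cleanest route is the single-sum Komlós–Major–Tusnády inequality (or Sakhanenko's): there exists $G\sim\mathcal N(0,N)$ on an enlarged probability space with $\|S_N-G\|_{p}\lesssim N^{1/p}$ when $\mE|\xi_1|^p<\infty$, $2<p\le 3$; for $p'<p$ this follows by Lyapunov. One must take a little care that the coupling can be performed box-by-box — this is immediate because within each box the relevant $\xi$'s are a fresh independent copy, and the target white-noise increments are independent across boxes, so the enlargement of the probability space is done coordinatewise. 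Everything else (rescaling, computing the exponents, checking $\alpha-1-\alpha(\theta+\gamma)>0$ which holds by \eqref{g:t:cond} since $\theta+\gamma<(\alpha-1)/\alpha$) is routine arithmetic.
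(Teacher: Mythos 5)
Your construction of the coupling (box-by-box, matching variances, independence across boxes) is sound in outline, but the moment estimate you obtain is not the one claimed in \eqref{eq:z-W}, and the gap is a power of $n$, not a constant. A Sakhanenko/KMT-type single-sum coupling under $p=2+\min(1,\kappa)$ moments gives $\|S_N-G\|_{p}\lesssim N^{1/p}$, hence in the normalized (unit-variance) scale $\mE|Y-Z|^{2m}\lesssim N^{-m\min(1,\kappa)/(2+\min(1,\kappa))}$ with $N=[n^\theta][n^\gamma]$, whereas \eqref{eq:z-W} requires the $m$-independent factor $N^{-\min(1,\kappa)/2}$. Your exponent dominates the required one only when $2m\geq 2+\min(1,\kappa)$. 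Since the theorem is stated for every real $m$ with $0<2m<2+\kappa$, this fails for all small $m$; worse, when $\kappa\leq 1$ the admissibility condition $2m<2+\kappa$ makes your exponent strictly weaker than the claimed one for \emph{every} admissible $m$ (and for $\kappa>1$ it fails for $m<1+\tfrac12$, including the basic case $m=1$). The discrepancy $n^{-(\theta+\gamma)[\min(1,\kappa)/2-m\min(1,\kappa)/(2+\min(1,\kappa))]}$ cannot be ``absorbed into the $m$-dependent constant'', and recording ``the sharper $m$-dependent exponent'' is not an option, because your exponent is the weaker one in exactly the regime the theorem covers. (Your weaker bound would still suffice to make the error in Proposition \ref{prop:V-U} vanish, so the qualitative conclusion of Theorem \ref{thm:main} would survive, but it does not prove the stated inequality, and the paper explicitly uses \eqref{eq:z-W} to quantify rates.)

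The paper's route avoids this: it uses the quantile (inverse-CDF) coupling $\hat\zeta=F^{-1}(\Phi(\cdot))$ of the normalized block sum $Y$ with a standard normal $Z$, invokes a \emph{non-uniform} Berry--Esseen bound $|\tilde F(x)-\Phi(x)|\lesssim \big[(1+|x|^{2+\kappa})\,n^{(\theta+\gamma)\min(1,\kappa)/2}\big]^{-1}$, and then uses the fact that for this monotone coupling $\mE|Y-Z|^{2m}\lesssim\int_{\R}|x|^{2m-1}|\tilde F(x)-\Phi(x)|\,\d x$, which yields $\mE|Y-Z|^{2m}\lesssim n^{-(\theta+\gamma)\min(1,\kappa)/2}$ uniformly over all $2m<2+\kappa$; rescaling gives \eqref{eq:z-W}. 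If you want to keep your strong-approximation framework you must replace the Lyapunov step by this kind of quantile-coupling/non-uniform Berry--Esseen argument for the low moments. Separately, note that the theorem also requires constructing the individual $\xi'_i(k)$ jointly with $\zeta'$ and a \emph{full} white noise $W'$ (not just its box aggregates): the paper does this by disintegrating the law of $(\xi,\zeta)$ along $\zeta$ and gluing independent box copies of the white noise; your proposal should at least say how the $\xi'$ inside each box and the white noise beyond its box integral are generated (conditionally, given the coupled pair), though this part is routine to patch.
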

\begin{proof} Parts 1 and 2 can be proved similar to Lemma 1 in \cite{kana}. Start with a white noise $\hat W=\left(\hat W(e_1), \hat W(e_2), \cdots\right)$ on $\tilde{B}_0(0)$ constructed on some space $(\Omega, Q)$. Here $e_1,e_2,\cdots$ form a basis of $L^2\left(\tilde B_0(0)\right)$.  Denote by $F$ the 
distribution of $\zeta_j(l)$ and let $\Phi$ denote the standard normal distribution. Define the inverse 
function of $F$ by 
\bes
F^{-1}(x)= \sup_{F(u)\le x} u.
\ees
For a standard normal random variable $Z$ we have $\Phi(Z) \stackrel{d}{=} U(0,1)$ and
therefore 
\bes
\hat\zeta_0(0):=F^{-1}\left(\Phi\left[\sqrt{\frac{n^{1+\frac1\alpha}}{[n^\theta][n^\gamma]}} \;\hat W\left(\tilde{B}_0(0)\right)\right]\right) \stackrel{d}{=} F.
\ees
 This gives a way of coupling $\zeta_0(0)$ and $W_0(0)$. We now construct the random 
variables $\xi$ in the box $B_0(0)$ as follows. Define probability measures $R$ on $\R^{[n^\theta]\cdot [n^\gamma]}\times \R$ and $\bar R$ on $\R\times \R^{\mathbf{N}}$ by
\bes
\begin{split}
R(C\times D) &= P\left(\left(\xi_{i}(k)\right)_{\{(i,k) \in B_0(0)\}} \in C,\, \zeta_0(0) \in D\right) \\
\bar R(D\times E) &= P\left(\hat\zeta_0(0) \in D,\, \hat W \in E\right) 
\end{split}
\ees
Since $R(C\times \cdot)$ and $\bar R(\cdot \times E)$ are absolutely continuous with respect to the probability measure $S(\cdot)=R(\R^{[n^\theta]\cdot [n^\gamma]}\times \cdot)= \bar R(\cdot\times\R^{\mathbf{N}}) $ we have 
\bes\begin{split}
R(C\times D) &= \int_{D} p_{C}(y) S(dy) \\
\bar R(D\times E) &= \int_{D} q_E(y) S(dy)
\end{split}\ees
for measurable functions $p_C$ and $q_E$. We now construct the probability measure $\bar Q$ on $\bar\Omega=\R^{[n^\theta]\cdot [n^\gamma]}\times \R\times \R^{\mathbf{N}}$ by 
\bes
\bar Q( C\times D\times E) := \int_{D} p_C(y) q_{E}(y) S(dy).
\ees
We have constructed random variables $\xi',\, \zeta', \, W'$ corresponding to the box $B_0(0)$. To construct for the whole space we consider $\Z_+\times \Z$ i.i.d copies of $\bar\Omega\times\bar Q$. For any $(j,l)$ the $\zeta$'s, $\xi$'s constructed for the $(j,l)$'th copy would correspond to $B_j(l)$. Similarly the white noise for the $(j,l)$'th copy would correspond to $\tilde B_j(l)$. The white noise on $\R_+\times \R $ is created by ``gluing" together the white noise in the different boxes. To be precise for any $f \in L^2(\R_+\times \R)$ define
\[ W'(f) = \sum_{(j, l) \in \Z_+ \times \Z} W'\big( f \cdot \mathbf 1_{B_j(l)}\big). \] 
One can check that this is a white noise on $\R_+\times \R $.

Let us next prove part 3 of the theorem. Denote by $\tilde F$ the distribution function of the random variable
\bes
Y:=\sum_{(i,k) \in \B_j(l)} \frac{\xi'_i(k)}{\sqrt{[n^\theta][n^\gamma]}} = \frac{n^{(\alpha-1)/2\alpha}}{\sqrt{[n^\theta][n^\gamma]}}\cdot \zeta'_j(l).
\ees
Note that this random variable has mean $0$ and variance $1$ since we have normalized by
the square root of the size of the box $\B_j(l)$. We shall compare $Y$ with 
\bes
Z= \frac{n^{(\alpha+1)/2\alpha}}{\sqrt{[n^\theta][n^\gamma]}} W_j'(l)
\ees
which has the standard normal distribution. A modified version of the Berry Essen theorem (see \cite{boro}, \cite{osip}, \cite{naga}) gives
\bes
\left|\tilde F(x)-\Phi(x)\right| \lesssim \frac{1}{[1+|x|^{2+\kappa}]\cdot n^{(\theta+\gamma)\cdot\min(1,\,\kappa)/2}}, \quad x\in \R.
\ees 
As observed in \cite{ebra} for $2m<2+\kappa$
\bes
\mE|Y-Z|^{2m} \lesssim \int_{\R}  |x|^{2m-1} \left|\tilde F(x)-\Phi(x)\right| \d x  \lesssim \frac{1}{n^{(\theta+\gamma)\min(1,\,\kappa)/2}}.
\ees
The inequality \eqref{eq:z-W} follows easily from this. 
\end{proof}
\begin{remark}From this point on in the proof of Theorem \ref{thm:main} we shall assume that we are working on the probability space constructed above. We shall also
ignore the superscript in the random fields $\xi',\, \zeta'$ and $W'$. 
\end{remark}
In the next approximating process we replace $\zeta$ by the coupled white noise. Again the approximating process will be defined
on the coarser lattice. Let 
\be \begin{split} \label{eq:V}
 V_{i[n^\theta]}\left( k [n^\gamma] \right)  =  \sum_{j=0}^{i-1} \sum_{l \in \Z} \P_{(i-1-j)[n^\theta]}\big( (l-k)[n^\gamma]\big)\cdot \sigma\big(u_{j[n^\theta]}\left(l [n^\gamma]\right)\big) \cdot n^{1/\alpha}W_j (l)
\end{split}\ee
We show now that $V$ is close to $U$. 
\begin{proposition}\label{prop:V-U} Fix any $T>0$. We have uniformly for $1\le r\le [Tn^{1-\theta}],\, k \in \Z$ 
\[\left\|V_{r[n^\theta]}\left( k [n^\gamma] \right)-U_{r[n^\theta]}\left( k [n^\gamma] \right)\right\|_{2m}^2\lesssim \frac{1}{n^{(\theta+\gamma)\cdot \min(1,\,\kappa)/(2m)}} \]
\end{proposition}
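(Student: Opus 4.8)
The plan is to realize the difference as a martingale in the coarse time index and run Burkholder's inequality, following the template of the proof of Theorem~\ref{thm:u}. Writing $M_j:=(r-1-j)[n^\theta]$ and
\[
D_j:=\sum_{l\in\Z}\P_{M_j}\big((l-k)[n^\gamma]\big)\,\sigma\big(u_{j[n^\theta]}(l[n^\gamma])\big)\,\varepsilon_j(l),\qquad \varepsilon_j(l):=n^{1/\alpha}W_j(l)-\zeta_j(l),
\]
we have $V_{r[n^\theta]}(k[n^\gamma])-U_{r[n^\theta]}(k[n^\gamma])=\sum_{j=0}^{r-1}D_j$. With respect to the $\sigma$-field of blocks $0,\dots,j$, the kernels $\P_{M_j}(\cdot)$ are deterministic, $u_{j[n^\theta]}(\cdot)$ is measurable with respect to the blocks strictly before~$j$, and the errors $\varepsilon_j(l)$ are mean zero, mutually independent in $l$, and independent of those earlier blocks --- this is exactly parts~1 and~2 of Theorem~\ref{thm:couple}. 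Hence $\{D_j\}_j$ is a martingale difference sequence, and Burkholder's inequality followed by Minkowski reduces the claim to $\big\|\sum_{j}D_j\big\|_{2m}^2\lesssim\sum_{j=0}^{r-1}\|D_j\|_{2m}^2$.

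For a single $j$ I would expand $\mE|D_j|^{2m}$ over the $2m$-fold spatial sum. Because $\varepsilon_j(\cdot)$ has mean zero and the $\varepsilon_j(l)$ are independent across $l$ and independent of the $\sigma(u_{j[n^\theta]}(l[n^\gamma]))$, only those summands in which each spatial index is repeated at least twice survive, exactly as in \eqref{eq:u:m}--\eqref{eq:mom:4m}. Bounding the moments of $\varepsilon_j(l)$ by \eqref{eq:z-W} and the power-mean inequality, the norms $\|\sigma(u_{j[n^\theta]}(l[n^\gamma]))\|_{2m}$ by a constant (Lipschitz growth of $\sigma$ together with the uniform-in-$n$ bound of Theorem~\ref{thm:u}, available here since $u_0\equiv 0$), and the kernel sums via $\sum_l\P_{M}^{\beta}\big((l-k)[n^\gamma]\big)\le\big(\sum_l\P_{M}^{2}(l[n^\gamma])\big)^{\beta/2}$ for $\beta\ge 2$, I expect to arrive at
\[
\mE|D_j|^{2m}\lesssim E_n\,g_{M_j}^{\,m},\qquad g_M:=\sum_{l\in\Z}\P_M^2(l[n^\gamma]),
\]
with $E_n$ the right-hand side of \eqref{eq:z-W}; taking $m$-th roots gives $\|D_j\|_{2m}^2\lesssim E_n^{1/m}\,g_{M_j}$.

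It then remains to sum $\sum_{j=0}^{r-1}g_{M_j}=1+\sum_{i=1}^{r-1}g_{i[n^\theta]}$. The inequality $\gamma<\theta/\alpha$ implied by \eqref{g:t:cond} guarantees $[n^\gamma]\ll[n^\theta]^{1/\alpha}\le(i[n^\theta])^{1/\alpha}$ for every $i\ge1$, so by the local limit theorem in the appendix (Theorem~\ref{llt}) the sublattice sum $g_{i[n^\theta]}=\sum_l\P_{i[n^\theta]}^2(l[n^\gamma])$ is essentially a Riemann sum over the refined lattice $[n^\gamma]\Z$ and obeys $g_{i[n^\theta]}\lesssim(i[n^\theta])^{-1/\alpha}/[n^\gamma]$. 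Summing over $i\le r-1\le[Tn^{1-\theta}]$ and using $1/\alpha<1$ yields $\sum_i g_{i[n^\theta]}\lesssim n^{(\alpha-1)/\alpha-\theta-\gamma}$, which absorbs the $+1$ since $\theta+\gamma<(\alpha-1)/\alpha$, again from \eqref{g:t:cond}. Therefore $\big\|V_{r[n^\theta]}(k[n^\gamma])-U_{r[n^\theta]}(k[n^\gamma])\big\|_{2m}^2\lesssim E_n^{1/m}\,n^{(\alpha-1)/\alpha-\theta-\gamma}$, and because $E_n^{1/m}=n^{-(\alpha-1)/\alpha+(\theta+\gamma)}\,n^{-(\theta+\gamma)\min(1,\kappa)/(2m)}$ the polynomial prefactors cancel and we land exactly on $n^{-(\theta+\gamma)\min(1,\kappa)/(2m)}$.

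The main obstacle I anticipate is this last step, the coarse-lattice Green's-function estimate $g_M\lesssim M^{-1/\alpha}/[n^\gamma]$ and its summation: one needs a local limit theorem precise enough (with summable tails) to compare $\sum_l\P_M^2(l[n^\gamma])$ with its continuum analogue, and one must exploit that the spatial block width $[n^\gamma]$ sits below the scale $[n^\theta]^{1/\alpha}$ over which the walk spreads in one time block --- which is precisely the role played by the restriction $\gamma<\theta/\alpha$ in \eqref{g:t:cond}. The martingale and moment bookkeeping is otherwise a direct transcription of the argument already used for Theorem~\ref{thm:u}, the one genuinely new ingredient being the sharp coupling bound \eqref{eq:z-W}.
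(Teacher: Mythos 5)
Your argument is correct and is essentially the paper's own proof written out in detail: the difference $V-U$ is a martingale sum over the coarse time blocks, Burkholder together with the moment expansion already used for Theorem \ref{thm:u} reduces everything to the coupling bound \eqref{eq:z-W} multiplied by the coarse-lattice sum $\sum_{j}\sum_{l}\P^2_{(r-1-j)[n^\theta]}\big((l-k)[n^\gamma]\big)$, and this sum is controlled via the local limit theorem --- the paper simply cites \eqref{eq:mom2:u} and the treatment of the second term of \eqref{eq:2} (Appendix B) for that last step rather than redoing it. One small caveat: your pointwise claim $\sum_l\P^2_{i[n^\theta]}\big(l[n^\gamma]\big)\lesssim (i[n^\theta])^{-1/\alpha}/[n^\gamma]$ need not hold for every $i$ once the error terms of Theorem \ref{llt} are retained (for small $i$ the term of order $(i n^\theta)^{-(1+a)/\alpha}$ may dominate when $a\theta<\alpha\gamma$), but the summed estimate you actually use does hold under \eqref{g:t:cond}, since $\gamma<a/\alpha$, exactly as carried out in Lemmas \ref{lem:2_1} and \ref{lem:2_2}.
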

\begin{proof} By our assumption \eqref{g:t:cond} on $\theta$ and $\gamma$ we have $\theta+\gamma<(\alpha-1)/\alpha$.
Therefore by Burkholder's inequality and \eqref{eq:z-W}
\bes\begin{split} \label{eq:V-U}
 \left\|V_{r[n^\theta]}\left( k [n^\gamma] \right)-U_{r[n^\theta]}\left( k [n^\gamma] \right)\right\|_{2m}^2 &\lesssim
 \sum_{j=0}^{r-1} \sum_{l \in \Z} \P^2_{(r-1-j)[n^\theta]}\big( (l-k)[n^\gamma]\big)\cdot \frac{n^{\theta+\gamma}}{n^{(\alpha-1)/\alpha}n^{(\theta+\gamma)\cdot\min(1,\,\kappa)/2m}} \\
& \lesssim \frac{1}{n^{(\theta+\gamma)\cdot \min(1,\,\kappa)/2m}}, 
\end{split}\ees
the last line follows because of \eqref{eq:mom2:u} and the fact that second term on the second line of \eqref{eq:2} goes to $0$.
\end{proof}
 
\subsection{Replacing transition probabilities by the Stable($\alpha$) density}
Next we replace the random walk kernel $\P$ by the transition density $p$ for the Stable($\alpha$) process. We can choose integers $a_j$ such that 
\bes
a_j[n^\gamma] \le \mu j[n^\theta]<(a_j+1) [n^\gamma],
\ees
and on the coarser lattice for $i\ge 2,\, k \in \Z$ let 
\be \begin{split} \label{eq:vbar}
  \bar{V}_{i[n^\theta]}\left( k [n^\gamma] \right) &=  \sum_{j=0}^{i-2} \sum_{l \in \Z} p_{\frac{(i-1-j)[n^\theta]}{n}}\left(\frac{l[n^\gamma]-(k+a_{i-1}-a_j)[n^\gamma]}{n^{1/\alpha}}\right)\cdot \sigma\big(u_{j[n^\theta]}\left(l [n^\gamma]\right)\big) \cdot W_j (l) \\
  &= \sum_{j=0}^{i-2} \sum_{l \in \Z} p_{\frac{(i-1-j)[n^\theta]}{n}}\left(\frac{l[n^\gamma]-(k+a_{i-1})[n^\gamma]}{n^{1/\alpha}}\right)\cdot \sigma\big(u_{j[n^\theta]}\left((l-a_j) [n^\gamma]\right)\big) \cdot W_j (l-a_j), 
\end{split}\ee
the second line following from the first by a simple translation. Note that the sum over $j$ runs from $0$ to $i-2$ ; this avoids the singularity of $p$ at $t=0, x=0$.% The random field $\bar V$ essentially looks at the system along the time-space coordinates $(t, -\mu t)$. 
%This random field is also constant over intervals of the form \eqref{eq:jl}. 

%Recall that $ r\in \Z_+$  such that $r[n^\theta] \le [nt]<(r+1)[n^\theta]$ and $z \in \Z$ such that 
%$z[n^\gamma] \le [ xn^{1/\alpha}]-[\mu nt] <(z+1)[n^\gamma]$. %Above $w \in \Z$ such that $w[n^\gamma] \le [-\mu nt ]<(w+1)%[n^\gamma]$.  The following is the main result of this section
\begin{proposition}\label{prop:V-Vbar} Fix $T>0$.
We have uniformly for $2\le r\le [Tn^{1-\theta}]$
\bes
\big\|V_{r[n^\theta]}\left( z [n^\gamma]\right) -\bar{V}_{r[n^\theta]}\left( z [n^\gamma]\right) \big\|_{2m}^2 \lesssim \frac{n^{\theta+\gamma+o(1)}}{n^{\min(\alpha-1,\,a)/\alpha}}.
\ees
\end{proposition}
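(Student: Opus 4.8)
The plan is to compare $V$ and $\bar V$ termwise: both are built from the same noise $W_j(l)$ and the same coefficients $\sigma(u_{j[n^\theta]}(l[n^\gamma]))$, so the difference is driven purely by the discrepancy between the random-walk kernel $\P_{(i-1-j)[n^\theta]}((l-k)[n^\gamma])$ (after accounting for the drift via the centering $[\mu j[n^\theta]]$) and the rescaled Stable$(\alpha)$ density $n^{-1/\alpha}p_{(i-1-j)[n^\theta]/n}\big((l-k-a_{i-1}+a_j)[n^\gamma]/n^{1/\alpha}\big)$. Since $W_j(l)$ for distinct $(j,l)$ are independent and the $\sigma(u)$ factors are bounded in $L^{2m}$ uniformly in $n$ by Theorem \ref{thm:u}, Burkholder's inequality reduces the problem to bounding
\[
\frac{1}{n^{(\alpha-1)/\alpha}}\sum_{j=0}^{r-2}\sum_{l\in\Z}\Big[\P_{(r-1-j)[n^\theta]}\big((l-z)[n^\gamma]\big)-\tfrac{[n^\gamma]}{n^{1/\alpha}}\, p_{\frac{(r-1-j)[n^\theta]}{n}}\big(\tfrac{(l-z-a_{r-1}+a_j)[n^\gamma]}{n^{1/\alpha}}\big)\cdot\tfrac{n^{1/\alpha}}{[n^\gamma]}\Big]^2
\]
(with appropriate bookkeeping of the factor $n^{1/\alpha}$ attached to $W$ versus the $\P$-kernel), times a constant. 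The heart of the matter is therefore a quantitative local limit theorem for the $\P$ walk against the Stable density.

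The key steps, in order, are: (i) expand $\|V-\bar V\|_{2m}^2$ via Burkholder and Minkowski exactly as in the proofs of Propositions \ref{prop:u-U} and \ref{prop:V-U}, pulling out $\sup\|\sigma(u_{j[n^\theta]}(\cdot))\|_{2m}^2\lesssim 1$; (ii) invoke the local limit theorem of Appendix \ref{sec:app:A} (Theorem \ref{llt}), which under Assumption \ref{cond1} — in particular the error term $\mathcal D(z)=O(|z|^{\alpha+a})$ in \eqref{eq:char} — gives a bound of the form $\big|\P_M(w)-M^{-1/\alpha}p_1(w M^{-1/\alpha})\big|\lesssim M^{-1/\alpha}\cdot M^{-\min(a,\alpha-1)/\alpha}\cdot(\text{tail factor})$ for $M=(r-1-j)[n^\theta]$, where the extra $\alpha-1$ comes from the discretization/centering error $|\mu j[n^\theta]-a_j[n^\gamma]+\dots|$; (iii) square this, sum over $l\in\Z$ (using that the $\ell^1$ mass of both kernels is $O(1)$ and the $\ell^\infty$ bound above, so $\sum_l(\cdot)^2\lesssim M^{-1/\alpha}\cdot M^{-2\min(a,\alpha-1)/\alpha}$ — one factor of the sup bound times the total mass); (iv) sum over $j$ from $0$ to $r-2$, i.e. over $M$ running through multiples of $[n^\theta]$ up to $r[n^\theta]\le Tn$, replacing the sum by an integral $\int_{[n^\theta]}^{Tn}M^{-1/\alpha-2\min(a,\alpha-1)/\alpha}\,dM/[n^\theta]$; and (v) combine with the prefactor $n^{-(\alpha-1)/\alpha}$ and the conversion factors between the $\P$-normalization and the density normalization (which produce the net $n^{\theta+\gamma}$), and absorb the logarithmic corrections and boundary terms into the $n^{o(1)}$.

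The main obstacle is step (ii): obtaining a local limit theorem that is uniform enough across all the relevant time scales $M=(r-1-j)[n^\theta]$, from $M\asymp n^\theta$ up to $M\asymp n$, and that simultaneously controls both the ``bulk'' error (from $\mathcal D(z)=O(|z|^{\alpha+a})$, giving the $a/\alpha$ rate) and the ``drift discretization'' error arising because $\mu j[n^\theta]$ is not an integer multiple of $[n^\gamma]$ and must be rounded to $a_j[n^\gamma]$ — a displacement of size $O([n^\gamma])$ in the argument of $p$, which by the $\alpha$-stable scaling and the Lipschitz/Hölder regularity of $p$ contributes an error of relative size $O\big(([n^\gamma]/M^{1/\alpha})^{\alpha-1}\big)$, hence the $(\alpha-1)/\alpha$ rate competing with $a/\alpha$ to give the $\min(\alpha-1,a)/\alpha$ exponent in the claim. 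Assembling these two sources of error and checking that the worst case (smallest $M$, i.e. $j$ near $r-2$) is controlled — which is exactly what forces the restriction on $\gamma$ and $\theta$ in \eqref{g:t:cond} — is where the real work lies; the summations in steps (iii)--(v) are routine geometric/integral estimates once the pointwise bound is in hand. I would defer the precise local-limit-theorem estimate to the appendix lemmas already cited (in the spirit of how Proposition \ref{prop:u-U} defers to Lemmas \ref{lem:2_1} and \ref{lem:2_2}) and present only the reduction and the summation here.
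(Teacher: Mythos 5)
Your overall route is the paper's: Burkholder plus $\sup_{j,l}\|\sigma(u_{j[n^\theta]}(l[n^\gamma]))\|_{2m}\lesssim 1$ reduces the claim to the deterministic quantity $\frac{n^{\theta+\gamma}}{n^{1+1/\alpha}}\sum_{j}\sum_{l}\big\{n^{1/\alpha}\P_{j[n^\theta]}(l[n^\gamma])-p_{j[n^\theta]/n}\big((l-a_{r-1}+a_{r-1-j})[n^\gamma]n^{-1/\alpha}\big)\big\}^2$, which is then estimated via Theorem \ref{llt} and a sum over time scales (plus a boundary term of order $n^{\theta+\gamma}/n^{(\alpha-1)/\alpha}$, which you omit, coming from the fact that $V$ sums $j$ up to $i-1$ while $\bar V$ in \eqref{eq:vbar} stops at $i-2$; it is harmless but must be noted). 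The gap is in your quantitative steps (ii)--(iii). Theorem \ref{llt} does not deliver the pointwise rate $M^{-1/\alpha}M^{-\min(a,\alpha-1)/\alpha}$: taking $(1-b)/\alpha=\gamma$ it gives, writing $D_l:=\P_M(l[n^\gamma])-M^{-1/\alpha}p_1(\cdot\,M^{-1/\alpha})$ with $M=j[n^\theta]$, the bound $\sup_l|D_l|\lesssim M^{-(1+a)/\alpha}+n^{\gamma}M^{-2/\alpha}$; the rounding by $a_\cdot[n^\gamma]$ enters \emph{linearly} through the gradient bound on $p$ (the $n^{-b/\alpha}t^{-2/\alpha}$ term), not through a H\"older-$(\alpha-1)$ modulus, and even your own H\"older computation would yield $n^{\gamma(\alpha-1)}M^{-1}$, so the clean form you state silently drops the factor $n^{\gamma(\alpha-1)}$. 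Relatedly, the exponent $\min(\alpha-1,a)/\alpha$ in the statement is not generated by the rounding error at all: it appears when the $a$-term $M^{-(1+a)/\alpha}$ is summed over $M=j[n^\theta]$ up to order $n$ against the prefactor $n^{\theta+\gamma}n^{-(\alpha-1)/\alpha}$ (for $a<\alpha-1$ the large-$M$ end dominates and gives $n^{\gamma-a/\alpha}$; for $a\ge\alpha-1$ the $j$-sum converges and the prefactor's $(\alpha-1)/\alpha$ caps the rate).

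Step (iii) as written would not survive scrutiny: ``one factor of the sup bound times the total mass'' gives $\sum_l D_l^2\le(\sup_l|D_l|)\sum_l|D_l|\lesssim\sup_l|D_l|$, i.e.\ a single power of the sup bound, because the $\ell^1$ mass of the difference is only $O(1)$ (from $\sum_l\P_M(l[n^\gamma])\le1$ and the Riemann sum for $p$, the latter needing $\gamma\le\theta/\alpha$ from \eqref{g:t:cond}); your claimed $M^{-1/\alpha}M^{-2\min(a,\alpha-1)/\alpha}$ would require an $\ell^1$ local-limit rate, i.e.\ tail control that Theorem \ref{llt} does not provide. The one-power bound is in fact exactly what the paper uses, and it already closes the estimate: inserting $\sup_l|D_l|\lesssim M^{-(1+a)/\alpha}+n^\gamma M^{-2/\alpha}$ into $\frac{n^{\theta+\gamma}}{n^{(\alpha-1)/\alpha}}\sum_{j=1}^{r-1}$ gives $n^{\gamma+o(1)}n^{-\min(a,\alpha-1)/\alpha}+n^{2\gamma+o(1)}n^{-(\alpha-1)/\alpha}$, which lies within the claimed bound thanks to \eqref{g:t:cond}. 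So your plan is repairable along the lines you intend, but as stated the pointwise bound, the $\ell^2$ bound, and the final summation (deferred as ``routine'') do not fit together to produce the asserted rate; the fix is to use the actual two-term error of Theorem \ref{llt} and the single-power $\ell^1$/$\ell^\infty$ interpolation, then do the $j$-sum explicitly as above.
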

\begin{proof}
 Indeed using the first expression in \eqref{eq:vbar} 
\bes
\begin{split} \label{eq:V-Vbar}
&\left\| V_{r[n^\theta]}\left( z [n^\gamma] \right) -\bar{V}_{r[n^\theta]}\left( z [n^\gamma] \right) \right\|_{2m}^2 \\
& \; \lesssim \frac{n^{\theta+\gamma}}{n^{1-\frac{1}{\alpha}}}%\sum_{j=r-[n^\phi]}^{r-1} \sum_{l \in \Z} \P^2_{(r-1-j)[n^\theta]} \left((l-z)[n^\gamma]\right) \\%\cdot \left\| u_{j[n^\theta]} \left(l[n^\gamma] \right) \right\|_m^2 \\
+ \frac{n^{\theta+\gamma}}{n^{1+\frac{1}{\alpha}}}\sum_{j=1}^{r-1} \sum_{l \in \Z} \left\lbrace n^{\frac{1}{\alpha}}\P_{j[n^\theta]} \left(l[n^\gamma]\right) -p_{\frac{j[n^\theta]}{n}}\left(\frac{(l-a_{r-1}+a_{r-1-j})[n^\gamma]}{n^{1/\alpha}}\right)\right \rbrace^2 %\\
%&\hspace{5cm} \times\left\| \bar{V}_{\frac{j[n^\theta]}{n}}\left((l-w)[n^\gamma]n^{-1/\alpha}\right)\right\|_m^2% \\
%& \quad +  \frac{n^{\theta+\gamma}}{n^{1+\frac{1}{\alpha}}}\sum_{j=0}^{r-[n^\phi]-1} \sum_{l \in \Z} \P^2_{(r-1-j)[n^\theta]} \left((l-z)[n^\gamma]\right) %\\
%&\hspace{5cm}\times\left\|V_{\frac{j[n^\theta]}{n}}\left( l [n^\gamma]n^{-1/\alpha} \right) -\bar{V}_{\frac{j[n^\theta]}{n}}\left( (l-w) [n^\gamma]n^{-1/\alpha} \right)  \right\|_2^2
\end{split}
\ees
Let us focus on the second term. This is equal to 
 \bes\begin{split} \label{eq:p-P}
 &\frac{n^{\theta+\gamma}}{n^{1+\frac{1}{\alpha}}}\sum_{j=1}^{r-1} \sum_{l \in \Z} \bigg[ n^{\frac{2}{\alpha}}\P^2_{j[n^\theta]} \left(l[n^\gamma]\right) +p^2_{\frac{j[n^\theta]}{n}}\left(\frac{(l-a_{r-1}+a_{r-1-j})[n^\gamma]}{n^{1/\alpha}}\right)\\
 &\hspace{3cm}- 2n^{\frac{1}{\alpha}}\P_{j[n^\theta]} \left(l[n^\gamma]\right)p_{\frac{j[n^\theta]}{n}}\left(\frac{(l-a_{r-1}+a_{r-1-j})[n^\gamma]}{n^{1/\alpha}}\right)\bigg].
\end{split} \ees
 Firstly taking $(1-b)/\alpha=\gamma$ in Theorem \ref{llt} (see Appendix \ref{sec:app:A}) %({\bf we are again using the crude bound \eqref{crude}} )
 \bes
 \begin{split}
 & \frac{n^{\theta+\gamma}}{n^{1+\frac{1}{\alpha}}}\sum_{j=1}^{r-1} \sum_{l \in \Z}n^{\frac{1}{\alpha}}\P_{j[n^\theta]} \left(l[n^\gamma]\right)  \cdot \left\vert n^{\frac{1}{\alpha}}\P_{j[n^\theta]} \left(l[n^\gamma]\right)-p_{\frac{j[n^\theta]}{n}}\left(\frac{(l-a_{r-1}+a_{r-1-j})[n^\gamma]}{n^{1/\alpha}}\right)\right\vert \\
 &\lesssim  \frac{n^{\theta+\gamma}}{n^{(\alpha-1)/\alpha}}\sum_{j=1}^{r-1} \left[\frac{1}{(jn^\theta)^{(1+a)/\alpha}} +\frac{n^\gamma}{(jn^\theta)^{2/\alpha}}\right] \\
 &\lesssim \frac{n^{\gamma+o(1)}}{n^{\min(\alpha-1,\,a)/\alpha}}+ \frac{n^{2\gamma+o(1)}}{n^{(\alpha-1)/\alpha}}.
 %\frac{n^{\theta+\gamma}}{n^{1-\frac2\alpha} n^{\frac{2\theta}{\alpha}}n^{\frac{\min(a,1-\alpha\gamma)}{\alpha}}}\sum_{j=1}^{r-1} \frac{1}{j^{\frac{2}{\alpha}}} 
 %&\lesssim \frac{n^\gamma}{n^{\frac{\min(a,1-\alpha\gamma)}{\alpha}}}
 \end{split}
 \ees
 %if $1+a<\alpha$ (it would be less otherwise).
% Note here that since $(\alpha-1)/\alpha\le 1/2$ one does not have to put an extra condition on $\gamma$.
 Also we control
\bes
\begin{split}
& \frac{n^{\theta+\gamma}}{n^{1+\frac{1}{\alpha}}}\sum_{j=1}^{r-1} \sum_{l \in \Z} p_{\frac{j[n^\theta]}{n}}\left(\frac{(l-a_{r-1}+a_j)[n^\gamma]}{n^{1/\alpha}}\right)  \cdot \left\vert n^{\frac{1}{\alpha}}\P_{j[n^\theta]} \left(l[n^\gamma]\right)-p_{\frac{j[n^\theta]}{n}}\left(\frac{(l-a_{r-1}+a_j)[n^\gamma]}{n^{1/\alpha}}\right)\right\vert \\
& \lesssim \frac{n^{\theta}}{n^{(\alpha-1)/\alpha}}\sum_{j=1}^{r-1} \left[\frac{1}{(jn^\theta)^{(1+a)/\alpha}} +\frac{n^\gamma}{(jn^\theta)^{2/\alpha}}\right]\sum_{l \in \Z} \frac{n^\gamma}{n^{1/\alpha}}p_{\frac{j[n^\theta]}{n}}\left(l[n^\gamma]n^{-1/\alpha}\right) \\
&\lesssim \frac{n^{\theta}}{n^{(\alpha-1)/\alpha}}\sum_{j=1}^{r-1} \left[\frac{1}{(jn^\theta)^{(1+a)/\alpha}} +\frac{n^\gamma}{(jn^\theta)^{2/\alpha}}\right] \\
&\lesssim \frac{n^{o(1)}}{n^{\min(\alpha-1,\,a)/\alpha}}+ \frac{n^{\gamma+o(1)}}{n^{(\alpha-1)/\alpha}}.
%&\lesssim \frac{n^{o(1)}}{n^{\min (\alpha,\,1+a)/\alpha}} + \frac{n^{\gamma+o(1)}}{n}.
 %&\lesssim \frac{n^\theta }{n^{1-\frac2\alpha}n^{\frac{2\theta}{\alpha}}n^{\frac{\min(a,1-\alpha\gamma)}{\alpha}}} \sum_{j=1}^{r-1} \frac{1}{j^{\frac{2}{\alpha}}} \sum_{l \in \Z} \frac{n^\gamma}{n^{\frac1\alpha}} \cdot p_{\frac{j[n^\theta]}{n}}\left(l[n^\gamma]n^{-1/\alpha}\right) \\
%&\lesssim \frac{n^\theta }{n^{1-\frac2\alpha}n^{\frac{2\theta}{\alpha}}n^{\frac{\min(a,1-\alpha\gamma)}{\alpha}}}\sum_{j=1}^{r-1} \frac{1}{j^{\frac{2}{\alpha}}} %\\
%&\lesssim \frac{1}{n^{\frac{\min(a,1-\alpha\gamma)}{\alpha}}}
 \end{split}
 \ees
% if $1+a<\alpha$ (it would be less otherwise). We have used above that $\mathbf {\gamma\le \frac{\theta}{\alpha}}$ in showing that the sum over $l$ can be bounded by a constant. This takes care of equation \eqref{eq:V-Vbar}.
For the Riemann sum approximation of the density $p$ we need $\gamma \le \theta/\alpha$ which we
have assumed in \eqref{g:t:cond}.
%\be
%\boxed{\gamma\le \theta/\alpha}
%\ee
\end{proof}

\subsection{Comparison with the stochastic heat equation}
In this last step we compare our approximation $\bar V$ with the stochastic heat equation \eqref{eq:she} with initial profile $0$, with respect to a white noise $\mathcal W$ constructed as 
\bes
\mathcal W_j(l) = W_j(l-a_j).
\ees
Thus $\mathcal W$ is constructed from $W$ by spatially shifting the noise $W$ in 
the time region $\big[j[n^\theta]/n, (j+1)[n^\theta]/n\big)$ 
by $a_j[n^\gamma]$. One can check that $\mathcal W$ is a white noise as follows. For $f \in L^2(\R_+\times \R)$ write
\bes \begin{split} \mathcal W(f) & =\sum_{j=0}^\infty \int_{\frac{j[n^\theta]}{n}}^{\frac{(j+1)[n^\theta]}{n}} \int_{\R} f(t, x)\,\mathcal W(dx,\, ds)\\
&= \sum_{j=0}^\infty  \int_{\frac{j[n^\theta]}{n}}^{\frac{(j+1)[n^\theta]}{n}} \int_{\R} f(t, x+a_j)\, W(dx,\, ds)
\end{split}
\ees
It now follows that $\mathcal W(f)$ is Gaussian with mean $0$ and $\mE[ \mathcal W(f)\cdot \mathcal W(g) ]=(f,g)$ for $f, g \in L^2(\R_+\times \R)$. The unique solution to \eqref{eq:she} with this white noise $\mathcal W$ when the initial profile is $0$ is 
\be \label{stoch_part}
v_t(x) = \int_0^t \int_{\R} p_{t-s}(y-x) \sigma\big(v_s(y)\big) \mathcal W(\d s\,\d y).
\ee
The following H\"older continuity estimate will be very useful for us. 
\begin{lemma}[\cite{foon-khos-09}] \label{holder} For any $p\ge 2$
\bes
\left\|  v_t(x)-v_s(y)\right\|_{p}^2 \lesssim |x-y|^{\alpha-1} +|s-t|^{\frac{\alpha-1}{\alpha}}.
\ees
\end{lemma}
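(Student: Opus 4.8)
This is the mild-solution regularity estimate of \cite{foon-khos-09}, and I would reprove it directly from the mild formulation \eqref{stoch_part} using the Burkholder--Davis--Gundy inequality for stochastic integrals against white noise, the Lipschitz bound on $\sigma$, an a priori uniform moment bound, and Fourier-side estimates on the stable heat kernel. The one input that must be in hand before starting is
\[ \sup_{0\le s\le T,\; z\in\R} \left\| v_s(z) \right\|_p < \infty \quad \text{for every } T>0, \]
which follows from the same contraction argument used to construct the mild solution (alternatively one re-derives it by the Gronwall-type argument of Theorem \ref{thm:u}); granting it, $\|\sigma(v_s(z))\|_p^2\lesssim 1+\|v_s(z)\|_p^2\lesssim 1$ uniformly on $[0,T]\times\R$. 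Throughout I use that the Fourier transform of $p_r$ is $e^{-\nu r|\xi|^\alpha}$ and that $1<\alpha\le 2$.

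For the spatial increment, fix $t\le T$ and write $v_t(x)-v_t(y)=\int_0^t\int_{\R}\bigl[p_{t-s}(z-x)-p_{t-s}(z-y)\bigr]\,\sigma\bigl(v_s(z)\bigr)\,\mathcal W(\d s\,\d z)$. Burkholder's inequality together with Minkowski's integral inequality and the moment bound (exactly as in the proof of Proposition \ref{prop:u:hol}) gives
\[ \left\|v_t(x)-v_t(y)\right\|_p^2 \lesssim \int_0^t\int_{\R}\bigl[p_r(z-x)-p_r(z-y)\bigr]^2\,\d z\,\d r, \]
and by Plancherel in $z$ and then integrating in $r$, this equals a constant times $\int_{\R}\frac{1-e^{-2\nu t|\xi|^\alpha}}{2\nu|\xi|^\alpha}\,\bigl|1-e^{\mathrm i\xi(x-y)}\bigr|^2\,\d\xi \le c\int_{\R}|\xi|^{-\alpha}\bigl|1-e^{\mathrm i\xi(x-y)}\bigr|^2\,\d\xi$. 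The change of variables $\xi\mapsto\xi/|x-y|$ turns this into $c\,|x-y|^{\alpha-1}\int_{\R}|\eta|^{-\alpha}\,|1-e^{\mathrm i\eta}|^2\,\d\eta$, and the last integral is finite precisely because $\alpha>1$ (convergence at $\infty$) and $\alpha<3$ (convergence at $0$). Hence $\|v_t(x)-v_t(y)\|_p^2\lesssim|x-y|^{\alpha-1}$.

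For the temporal increment, take $s<t\le T$ and split
\[ v_t(x)-v_s(x)=\int_0^s\!\!\int_{\R}\bigl[p_{t-r}(z-x)-p_{s-r}(z-x)\bigr]\sigma(v_r(z))\,\mathcal W(\d r\,\d z) +\int_s^t\!\!\int_{\R}p_{t-r}(z-x)\sigma(v_r(z))\,\mathcal W(\d r\,\d z)=:I_1+I_2. \]
For $I_2$, Burkholder and the moment bound give $\|I_2\|_p^2\lesssim\int_0^{t-s}\int_{\R}p_r(z)^2\,\d z\,\d r$; since $\int_{\R}p_r(z)^2\,\d z=\frac1{2\pi}\int_{\R}e^{-2\nu r|\xi|^\alpha}\,\d\xi=c\,r^{-1/\alpha}$ this is $\lesssim\int_0^{t-s}r^{-1/\alpha}\,\d r=c\,(t-s)^{(\alpha-1)/\alpha}$, using $\alpha>1$ for integrability at $r=0$. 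For $I_1$, after the substitution $r\mapsto s-r$ and Plancherel in $z$,
\[ \|I_1\|_p^2\lesssim\int_0^s\!\!\int_{\R}e^{-2\nu r|\xi|^\alpha}\bigl(1-e^{-\nu(t-s)|\xi|^\alpha}\bigr)^2\,\d\xi\,\d r \le c\int_{\R}\frac{\bigl(1-e^{-\nu(t-s)|\xi|^\alpha}\bigr)^2}{|\xi|^\alpha}\,\d\xi, \]
and the scaling $\xi\mapsto\xi\,(t-s)^{-1/\alpha}$ gives $c\,(t-s)^{(\alpha-1)/\alpha}\int_{\R}|\eta|^{-\alpha}(1-e^{-\nu|\eta|^\alpha})^2\,\d\eta$, the integral again being finite for $1<\alpha\le 2$. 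Combining $I_1,I_2$ yields $\|v_t(x)-v_s(x)\|_p^2\lesssim(t-s)^{(\alpha-1)/\alpha}$, and the triangle inequality $\|v_t(x)-v_s(y)\|_p\le\|v_t(x)-v_t(y)\|_p+\|v_t(y)-v_s(y)\|_p$ finishes the proof.

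There is no deep obstacle here: the argument is the standard Picard-type moment computation, and the only genuine prerequisite is the uniform-in-$(s,z)$ $L^p$ bound on $v$, which has to be established first --- it is circular-looking but is supplied by the fixed-point construction of the mild solution. The remaining work is the bookkeeping of the stable-kernel integrals, where one simply has to keep track that the relevant Fourier integrals converge, and they do exactly in the range $1<\alpha\le 2$ forced on us throughout the paper. If one wants the estimate for a nonzero bounded initial profile, one adds to the above the elementary bound on the increments of the deterministic term $(p_t\ast v_0)(x)$, which is of the same orders.
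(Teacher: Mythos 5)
Your proof is correct, and it is essentially the standard argument: the paper itself states this lemma by citation to \cite{foon-khos-09} without giving a proof, and the Burkholder--Minkowski reduction followed by Plancherel computations on the stable kernel (with the scalings $\xi\mapsto\xi/|x-y|$ and $\xi\mapsto\xi(t-s)^{-1/\alpha}$ producing the exponents $\alpha-1$ and $(\alpha-1)/\alpha$) is exactly how the estimate is obtained in that reference, with the uniform $L^p$ bound on $v$ supplied by the fixed-point construction as you note. Your closing remark about the initial profile also matches the paper's remark that for general $v_0$ the estimate pertains only to the noise term \eqref{stoch_part}, which is the case actually used here.
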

\begin{remark} For general initial profile $v_0$,  the solution $v_t(x)$ is obtained by adding $(p_t*v_0)(x)$ to the right hand side
of \eqref{stoch_part}. Then the conclusion of Lemma \ref{holder} is valid only for the {\it noise term} \eqref{stoch_part}.
\end{remark}
We shall first consider a spatial discretisation of $v$ as in \cite{foon-jose-li} and \cite{jose-khos-muel}, and divide space into intervals of length $[n^\gamma]/n^{1/\alpha}$. Let 
\be\label{eq:tildeV}
\tilde{v}_t\left(\frac{k[n^\gamma]}{n^{1/\alpha}}\right) = \int_0^{t-\frac{[n^\theta]}{n}} \sum_{l \in \Z} p_{t-s} \left(\frac{(l-k)[n^\gamma]}{n^{1/\alpha}}\right) \sigma\left(v_s\left(\frac{l[n^\gamma]}{n^{1/\alpha}}\right)\right)\, \d B_s^{(n)}(l)
\ee
where 
\bes
B_s^{(n)}(l) =\mathcal W\left([0,s]\times \left[\frac{l[n^\gamma]}{n^{1/\alpha}},\,\frac{(l+1)[n^\gamma]}{n^{1/\alpha}}\right)\right).
\ees
One has the following
%It follows from the arguments in \cite{foon-jose-li} and \cite{jose-khos-muel} that
\begin{lemma} \label{lem:v:s} We have
\be \label{v-tildev}
\left\|v_t(x) - \tilde v_t\left(\frac{[n^\gamma]}{n^{1/\alpha}}\left[\frac{xn^{1/\alpha}}{[n^\gamma]}\right]\right)\right\|_{2m}^2 \lesssim \frac{n^{\theta(\alpha-1)/\alpha}}{n^{(\alpha-1)/\alpha}}.
\ee
\end{lemma}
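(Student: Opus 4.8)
The plan is to split the error $v_t(x)-\tilde v_t(\cdot)$ into three pieces: the effect of rounding the spatial argument to the mesh, the effect of truncating the time integral away from the singularity at $s=t$, and the genuine discretization error of the integrand on $[0,t-[n^\theta]/n]$. Write $h:=[n^\gamma]/n^{1/\alpha}$ for the mesh, $\tau:=[n^\theta]/n$ for the cutoff, $k_n:=[xn^{1/\alpha}/[n^\gamma]]$ and $x_n:=k_n h$ for the rounded point, so that the quantity inside the norm is $\tilde v_t(x_n)$. Starting from the mild formula \eqref{stoch_part},
\[
v_t(x)-\tilde v_t(x_n) = \big(v_t(x)-v_t(x_n)\big) + \int_{t-\tau}^{t}\!\!\int_{\R} p_{t-s}(y-x_n)\sigma(v_s(y))\,\mathcal W(\d s\,\d y) + \int_0^{t-\tau}\!\!\int_{\R} \mathcal E_s(y)\,\mathcal W(\d s\,\d y),
\]
where, with $l(y)$ the index of the mesh cell containing $y$, $\mathcal E_s(y):=p_{t-s}(y-x_n)\sigma(v_s(y))-p_{t-s}((l(y)-k_n)h)\sigma(v_s(l(y)h))$; it then suffices to bound the $L^{2m}(\mP)$ norm of each of the three terms.

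The first two are quick. Since $v_0\equiv0$ the field $v$ coincides with its noise term, so Lemma \ref{holder} applies and gives $\|v_t(x)-v_t(x_n)\|_{2m}^2\lesssim |x-x_n|^{\alpha-1}\le h^{\alpha-1}\lesssim n^{\gamma(\alpha-1)}/n^{(\alpha-1)/\alpha}$. For the time-cutoff term I would use Burkholder's inequality (in the form used repeatedly above), the uniform bound $\sup_{s\le T,\,y}\|\sigma(v_s(y))\|_{2m}\lesssim1$ coming from Lipschitz growth of $\sigma$ and the moment bound for $v$, and the scaling identity $\int_\R p_r^2\asymp r^{-1/\alpha}$, to get $\big\|\int_{t-\tau}^t\!\int_\R p_{t-s}(y-x_n)\sigma(v_s(y))\mathcal W(\d s\,\d y)\big\|_{2m}^2\lesssim\int_0^{\tau}u^{-1/\alpha}\,\d u\asymp\tau^{(\alpha-1)/\alpha}\asymp n^{\theta(\alpha-1)/\alpha}/n^{(\alpha-1)/\alpha}$. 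This term already has the claimed order, and the time truncation is precisely what makes the integral finite.

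The work is in the discretization term. Burkholder gives $\big\|\int_0^{t-\tau}\!\int_\R\mathcal E_s(y)\mathcal W(\d s\,\d y)\big\|_{2m}^2\lesssim\int_0^{t-\tau}\!\int_\R\|\mathcal E_s(y)\|_{2m}^2\,\d y\,\d s$, and I would write $\mathcal E_s(y)=\big[p_{t-s}(y-x_n)-p_{t-s}((l(y)-k_n)h)\big]\sigma(v_s(y))+p_{t-s}((l(y)-k_n)h)\big[\sigma(v_s(y))-\sigma(v_s(l(y)h))\big]$. For the $\sigma$-difference piece, the Lipschitz bound and Lemma \ref{holder} give $\|\sigma(v_s(y))-\sigma(v_s(l(y)h))\|_{2m}^2\lesssim|y-l(y)h|^{\alpha-1}\le h^{\alpha-1}$, and integrating $y$ cell by cell leaves $h^\alpha\sum_l p_{t-s}^2((l-k_n)h)$, a Riemann sum (using unimodality of $p_{t-s}$) bounded by $h^{\alpha-1}(t-s)^{-1/\alpha}$ once $h\lesssim(t-s)^{1/\alpha}$ — which holds for $s\le t-\tau$ because $\gamma\le\theta/\alpha$; integrating in $s$ returns $\lesssim h^{\alpha-1}$. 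For the kernel-difference piece, translating by the lattice point $x_n=k_nh$ turns $\int_\R|p_{t-s}(y-x_n)-p_{t-s}((l(y)-k_n)h)|^2\,\d y$ into $\int_\R|p_r(w)-p_r(h[w/h])|^2\,\d w$ with $r=t-s$; bounding $|p_r(w)-p_r(h[w/h])|\le\int_{h[w/h]}^{h([w/h]+1)}|p_r'(u)|\,\d u$, applying Cauchy--Schwarz on each cell and summing gives $\int_\R|p_r(w)-p_r(h[w/h])|^2\,\d w\le h^2\int_\R|p_r'|^2\asymp h^2 r^{-3/\alpha}$ (the last step by Plancherel, $\widehat{p_r'}(z)=\mathrm iz\,e^{-\nu r|z|^\alpha}$). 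Integrating over $s\in[0,t-\tau]$, and using that $3/\alpha>1$ so the integral is dominated by its lower endpoint, this contributes $\lesssim h^2\tau^{-(3-\alpha)/\alpha}$, and a short computation shows $h^2\tau^{-(3-\alpha)/\alpha}\lesssim n^{\theta(\alpha-1)/\alpha}/n^{(\alpha-1)/\alpha}$ exactly when $\gamma\le\theta/\alpha$.

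Putting the three bounds together and invoking $\gamma<\theta/\alpha$ from \eqref{g:t:cond} (which also supplies $h\lesssim\tau^{1/\alpha}$ and $n^{\gamma(\alpha-1)}\le n^{\theta(\alpha-1)/\alpha}$), every term is $\lesssim n^{\theta(\alpha-1)/\alpha}/n^{(\alpha-1)/\alpha}$, the time-cutoff term being the dominant one. The only ingredient beyond the standard toolkit (Burkholder, the scaling of $p$, and Lemma \ref{holder}) is the $L^2$ smoothness estimate $\int_\R|p_r'|^2\lesssim r^{-3/\alpha}$ for the stable kernel, together with the need to absorb the resulting negative power of $r$ into the cutoff $\tau=[n^\theta]/n$; the main obstacle I anticipate is organizational — keeping the mesh, the truncation, and the parameter constraints \eqref{g:t:cond} consistently aligned across the three terms — rather than any single difficult estimate.
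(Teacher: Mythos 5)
Your proof is correct, and it is in substance the argument the paper intends: the paper's own proof simply cites the spatial-discretisation arguments of \cite{foon-jose-li} and \cite{jose-khos-muel}, observes that the only change is the larger time cutoff $[n^\theta]/n$ (legitimate since $\alpha\gamma\le\theta$), and identifies the dominant error as $\int_0^{[n^\theta]/n}s^{-1/\alpha}\,\d s \asymp n^{\theta(\alpha-1)/\alpha}/n^{(\alpha-1)/\alpha}$, which is exactly your time-cutoff term. Your decomposition, the use of Lemma \ref{holder} for the rounding and the $\sigma$-difference, the bound $\int_{\R}|p_r'|^2\lesssim r^{-3/\alpha}$ for the kernel increment, and the verification that $\gamma\le\theta/\alpha$ (which \eqref{g:t:cond} guarantees) absorbs every subordinate term, correctly fill in the details the paper delegates to those references.
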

\begin{proof}In \cite{foon-jose-li} and \cite{jose-khos-muel} we consider the integral in \eqref{eq:tildeV} from $0$ to $t-\frac{n^{\alpha\gamma}}{n}$. Here we are looking at a smaller interval since $\gamma\alpha\le \theta$. If one looks at the arguments in those papers we see we have to control $\int_0^{n^\theta/n} s^{-1/\alpha}\,\d s$ which gives the bound in \eqref{v-tildev}.
\end{proof}
Next we discretise $\tilde v$ in time. Let 
\be \label{eq:barv}
\bar{v}_{\frac{i[n^\theta]}{n}}\left(\frac{k[n^\gamma]}{n^{1/\alpha}}\right)=\sum_{j=0}^{i-2} \sum_{l \in \Z} p_{\frac{(i-1-j)[n^\theta]}{n}}\left(\frac{(l-k)[n^\gamma]}{n^{1/\alpha}}\right)\cdot \sigma\left(v_{\frac{j[n^\theta]}{n}}\left(\frac{l [n^\gamma]}{n^{1/\alpha}}\right)\right) \cdot\mathcal{W}_j (l)
\ee
for $i\ge 2$ and $0$ otherwise, where 
\bes
\mathcal{W}_j(l) = \mathcal{W}\left(\left[\frac{j[n^\theta]}{n}, \,\frac{(j+1)[n^\theta]}{n}\right)\times \left[\frac{l[n^\gamma]}{n^{1/\alpha}},\,\frac{(l+1)[n^\gamma]}{n^{1/\alpha}}\right)\right).
\ees
We will need the following lemma
\begin{lemma}[\cite{kolo}] \label{lem:p:der} We have
\[
\left\vert \frac{\partial}{\partial t} p_t(x)\right\vert \lesssim \frac{p_t(x/2)}{t}.
\]
\end{lemma}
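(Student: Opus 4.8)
The plan is to reduce the bound to a scale‑invariant statement about the Stable($\alpha$) density at time $1$, and then to invoke the standard decay estimates for that density and its first derivative. Recall that $p_t$ has Fourier transform $e^{-\nu t|z|^\alpha}$, so by the self‑similarity $p_t(x)=t^{-1/\alpha}g(xt^{-1/\alpha})$ with $g:=p_1$. Differentiating this identity in $t$ (the Euler relation for the scaling $(t,x)\mapsto(\lambda t,\lambda^{1/\alpha}x)$) yields the exact formula
\[
t\,\partial_t p_t(x) \;=\; -\frac1\alpha\Big(p_t(x)+x\,\partial_x p_t(x)\Big),
\]
so it suffices to prove the two bounds $p_t(x)\lesssim p_t(x/2)$ and $|x\,\partial_x p_t(x)|\lesssim p_t(x/2)$. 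The first is immediate from unimodality of symmetric stable laws: $g$ is radially nonincreasing, hence $p_t(x)=t^{-1/\alpha}g(xt^{-1/\alpha})\le t^{-1/\alpha}g\big(\tfrac12 xt^{-1/\alpha}\big)=p_t(x/2)$.

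For the second bound, set $y=xt^{-1/\alpha}$. Since $x\,\partial_x p_t(x)=t^{-1/\alpha}\,y\,g'(y)$ and $p_t(x/2)=t^{-1/\alpha}g(y/2)$, the claim is equivalent to the $t$‑free inequality $|y\,g'(y)|\lesssim g(y/2)$. I would establish this from the representations $g(y)=\frac1{2\pi}\int_{\R}e^{-\mathrm iyz-\nu|z|^\alpha}\,\d z$ and $g'(y)=\frac1{2\pi}\int_{\R}(-\mathrm iz)e^{-\mathrm iyz-\nu|z|^\alpha}\,\d z$. For $|y|\le1$ there is nothing to do: $g$ and $g'$ are bounded (the integrands are rapidly decreasing) and $g$ is bounded below. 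For $|y|\ge1$ one uses the classical decay estimates for symmetric stable densities, namely $g(y)\asymp|y|^{-1-\alpha}$ when $\alpha<2$ (Gaussian tails when $\alpha=2$) and $|g'(y)|\lesssim|y|^{-2-\alpha}$; these follow by integrating by parts in the Fourier integrals on $\{|z|>1/|y|\}$ and estimating $\{|z|\le1/|y|\}$ directly, the only subtlety being the nonsmoothness of $|z|^\alpha$ at $z=0$, whose leading singular contribution $z|z|^\alpha$ Fourier‑transforms to a multiple of $|y|^{-2-\alpha}$. Then $|y\,g'(y)|\lesssim|y|^{-1-\alpha}\lesssim g(y/2)$, using $g(y/2)\gtrsim(1+|y|)^{-1-\alpha}$.

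The case $\alpha=2$ is cleanest to check directly: $g(y)=c\,e^{-y^2/4\nu}$ gives $y\,g'(y)=-\tfrac{y^2}{2\nu}g(y)$, and since $\tfrac{u}{2\nu}e^{-u/4\nu}\lesssim e^{-u/16\nu}$ for $u\ge0$ we get $|y\,g'(y)|\lesssim e^{-y^2/16\nu}=c^{-1}g(y/2)$; this case also makes transparent why the factor $1/2$ in $p_t(x/2)$ is genuinely necessary (the polynomial prefactor $x^2/t$ cannot be absorbed into $p_t(x)$ itself, only into a slightly fatter Gaussian). I expect the only real work to be the derivative bound $|g'(y)|\lesssim(1+|y|)^{-2-\alpha}$ for $\alpha\in(1,2)$, i.e. controlling the oscillatory integral for $g'$ near the singularity of $|z|^\alpha$; this can be handled by the splitting above, or bypassed entirely by quoting the asymptotic expansions of stable densities as in \cite{kolo}, which deliver the pointwise bounds on $g$ and $g'$ directly.
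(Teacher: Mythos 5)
Your proposal is correct, but it takes a different route from the paper. The paper's proof is essentially a citation: for $\alpha<2$ it invokes estimate (3.7) of \cite{kolo}, which bounds $\vert\partial_t p_t(x)\vert$ directly (and in fact gives the inequality without the factor $1/2$ on the right), and for $\alpha=2$ it does the same Gaussian computation you carry out. You instead derive the self-similarity identity $p_t(x)=t^{-1/\alpha}g(xt^{-1/\alpha})$, $g=p_1$, differentiate it to get the Euler relation $t\,\partial_t p_t(x)=-\tfrac1\alpha\big(p_t(x)+x\,\partial_x p_t(x)\big)$, and thereby reduce the lemma to two $t$-free statements at time one: $g(y)\le g(y/2)$ (unimodality of symmetric stable laws) and $\vert y\,g'(y)\vert\lesssim g(y/2)$, the latter following from the classical decay $g(y)\asymp\vert y\vert^{-1-\alpha}$ and $\vert g'(y)\vert\lesssim\vert y\vert^{-2-\alpha}$ when $\alpha<2$, and from your explicit Gaussian estimate when $\alpha=2$. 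Your reduction is sound (the scaling identity and the unimodality and tail asymptotics you quote are all standard), and it buys a more self-contained argument that only uses spatial bounds on the stable density and its first derivative, makes visible that the factor $1/2$ is genuinely needed only in the Gaussian case (consistent with the paper's remark), and would extend mechanically to higher time derivatives. What the paper's route buys is brevity: Kolokoltsov's estimates already state the time-derivative bound in the needed form, so nothing has to be re-derived. The one ingredient you rightly flag as "the real work," the bound $\vert g'(y)\vert\lesssim(1+\vert y\vert)^{-2-\alpha}$ for $\alpha\in(1,2)$, must either be proved by the Fourier-integral splitting you sketch or be cited from the asymptotic expansions of stable densities (again available in \cite{kolo}); with that in hand your proof is complete.
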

\begin{proof} For $\alpha<2$ one just has to look at estimate (3.7) in \cite{kolo}. In this case
the inequality is true even without the factor of $1/2$ in the right.  For $\alpha=2$
this is just a simple computation involving the Gaussian density.
\end{proof}
The following gives an error bound on the temporal discretisation of $\tilde v$.
\begin{lemma} \label{lem:v:t}  We have for any $k \in \Z$ and $r\in \Z_+$ satisfying \eqref{eq:r:z} 
\[ \left\|\tilde{v}_t\left(\frac{k[n^\gamma]}{n^{1/\alpha}}\right)-\bar{v}_{\frac{r[n^\theta]}{n}}\left(\frac{k[n^\gamma]}{n^{1/\alpha}}\right)  \right\|_{2m}^2  \lesssim  \frac{n^{\theta(\alpha-1)/\alpha}}{n^{(\alpha-1)/\alpha}}. \]
\end{lemma}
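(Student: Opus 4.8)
The plan is to realize the difference as a single stochastic integral against the white noise $\mathcal W$ and then split the integrand into two parts. Set $x_l:=l[n^\gamma]/n^{1/\alpha}$ and $t_j:=j[n^\theta]/n$, and for $s\in[t_j,t_{j+1})$ write $j(s):=j$, while $l(y)$ denotes the index with $y\in[x_{l(y)},x_{l(y)+1})$. Recalling \eqref{eq:tildeV}, \eqref{eq:barv} and that $\mathcal W_j(l)=\int_{t_j}^{t_{j+1}}\d B^{(n)}_s(l)$, both $\tilde v_t(x_k)$ and $\bar v_{r[n^\theta]/n}(x_k)$ are Walsh stochastic integrals against $\mathcal W$ with adapted integrands, over $[0,t-[n^\theta]/n]$ and $[0,t_{r-1}]$ respectively. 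By \eqref{eq:r:z} one has $\tfrac{[n^\theta]}{n}\le t-t_{r-1}\le\tfrac{2[n^\theta]+1}{n}$, so these two ranges differ by a set of measure $\lesssim\tfrac{[n^\theta]}{n}$ on which $t-s\ge\tfrac{[n^\theta]}{n}$; after Burkholder's inequality the contribution of this sliver is at most a constant times $\int(t-s)^{-1/\alpha}\,\d s\lesssim\tfrac{[n^\theta]}{n}\cdot(\tfrac{n}{[n^\theta]})^{1/\alpha}=(\tfrac{[n^\theta]}{n})^{(\alpha-1)/\alpha}$, which already matches the claimed bound. On the common range $[0,t_{r-1}]$ the integrand difference at $(s,y)$ equals
\[
p_{t-s}(x_{l(y)}-x_k)\big[\sigma(v_s(x_{l(y)}))-\sigma(v_{t_{j(s)}}(x_{l(y)}))\big]+\big[p_{t-s}(x_{l(y)}-x_k)-p_{t_{r-1}-t_{j(s)}}(x_{l(y)}-x_k)\big]\,\sigma(v_{t_{j(s)}}(x_{l(y)})),
\]
isolating the error from freezing the time argument of $\sigma(v_\cdot)$ in each block from the error from freezing the time argument of the heat kernel. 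By Burkholder followed by Minkowski's inequality in $L^m$, it then suffices to bound the space--time $L^2$-integral of the squared $\|\cdot\|_{2m}$-norms of these two terms.

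For the first term I would use that $\sigma$ is Lipschitz together with the H\"older estimate of Lemma \ref{holder} to get $\|\sigma(v_s(x_l))-\sigma(v_{t_{j(s)}}(x_l))\|_{2m}^2\lesssim|s-t_{j(s)}|^{(\alpha-1)/\alpha}\le(\tfrac{[n^\theta]}{n})^{(\alpha-1)/\alpha}$, and then bound the remaining factor by $\int_0^{t_{r-1}}\sum_l p^2_{t-s}(x_l-x_k)\tfrac{[n^\gamma]}{n^{1/\alpha}}\,\d s\lesssim\int_0^t(t-s)^{-1/\alpha}\,\d s\lesssim T^{(\alpha-1)/\alpha}$; here the Riemann-sum bound $\sum_l p^2_\tau(x_l-x_k)\tfrac{[n^\gamma]}{n^{1/\alpha}}\lesssim\tau^{-1/\alpha}$ is valid because of the constraint $\alpha\gamma\le\theta$ in \eqref{g:t:cond}, exactly as in the proof of Proposition \ref{prop:V-Vbar}. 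This bounds the first term by a constant times $n^{\theta(\alpha-1)/\alpha}/n^{(\alpha-1)/\alpha}$.

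For the second term I would first note that $\sup_{s\le T,\,x}\|\sigma(v_s(x))\|_{2m}<\infty$. For $s$ in block $j\le r-2$, \eqref{eq:r:z} gives that $\tau_1:=t-s$ and $\tau_2:=t_{r-1}-t_j=(r-1-j)\tfrac{[n^\theta]}{n}$ satisfy $\tau_2\le\tau_1\lesssim\tau_2$ and $\tau_1-\tau_2\lesssim\tfrac{[n^\theta]}{n}$, so by the gradient estimate of Lemma \ref{lem:p:der} and the mean value theorem,
\[
\big|p_{\tau_1}(w)-p_{\tau_2}(w)\big|\lesssim(\tau_1-\tau_2)\sup_{\tau_2\le\tau\le\tau_1}\frac{p_\tau(w/2)}{\tau}\lesssim\frac{1}{r-1-j}\sup_{\tau_2\le\tau\le\tau_1}p_\tau(w/2).
\]
Squaring, summing over $l$ with weight $\tfrac{[n^\gamma]}{n^{1/\alpha}}$ (another Riemann-sum bound, giving $\lesssim(r-1-j)^{-2}\tau_2^{-1/\alpha}$ via standard stable-density estimates for $\sup_{\tau\asymp\tau_2}p_\tau(\cdot)$), and integrating $\d s$ over each block (a factor $\tfrac{[n^\theta]}{n}$), the contribution of the second term is, writing $m'=r-1-j$, at most a constant times
\[
\sum_{m'=1}^{r-1}\frac{[n^\theta]}{n}\cdot\frac{1}{m'^2}\cdot\Big(\frac{m'[n^\theta]}{n}\Big)^{-1/\alpha}=\Big(\frac{[n^\theta]}{n}\Big)^{(\alpha-1)/\alpha}\sum_{m'\ge1}\frac{1}{m'^{\,2+1/\alpha}}\lesssim\frac{n^{\theta(\alpha-1)/\alpha}}{n^{(\alpha-1)/\alpha}},
\]
the series converging since $2+1/\alpha>1$. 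Adding the three contributions proves the lemma. I expect the second term to be the main obstacle: one must use the gradient bound of Lemma \ref{lem:p:der} to turn the $O(\tfrac{[n^\theta]}{n})$ error in the frozen kernel time into the summable decaying weight $\tfrac{1}{r-1-j}$ in the block index, and then check both that the resulting block sum converges and that it produces precisely the exponent $(\alpha-1)/\alpha$ of $\tfrac{[n^\theta]}{n}$ in the statement; the rest is bookkeeping with the definitions and with the constraint $\alpha\gamma\le\theta$.
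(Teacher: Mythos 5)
Your proof is correct and follows essentially the same route as the paper: the same decomposition into the error from freezing the time argument of $\sigma(v_\cdot)$ (handled via the H\"older estimate of Lemma \ref{holder}) and the error from freezing the kernel time (handled via Lemma \ref{lem:p:der}), with the same Riemann-sum use of $\gamma\le\theta/\alpha$. The only differences are cosmetic bookkeeping: you treat the endpoint mismatch of the integration ranges as an explicit sliver term and bound the kernel increment by the mean value theorem with a sup over the block, whereas the paper writes it as $\int \d q/q\,p_q$ and applies Cauchy--Schwarz before integrating over the block; both yield the exponent $(\alpha-1)/\alpha$.
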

\begin{proof}Using the temporal H\"older continuity supplied in Lemma \ref{holder}
\be\begin{split}\label{eq:vbar-vn}
& \left\|\tilde{v}_t\left(\frac{k[n^\gamma]}{n^{1/\alpha}}\right)-\bar{v}_{\frac{r[n^\theta]}{n}}\left(\frac{k[n^\gamma]}{n^{1/\alpha}}\right)  \right\|_{2m}^2 \\
& \lesssim  \left(\frac{n^\theta}{n}\right)^{\frac{\alpha-1}{\alpha}}\int_0^{t-\frac{[n^\theta]}{n}} \d s\,\sum_{l\in \Z} \frac{[n^\gamma]}{n^{1/\alpha}}\cdot p^2_{t-s} \left(\frac{l[n^\gamma]}{n^{1/\alpha}}\right) \\
&\;\; +  \sum_{i=1}^{r} \int_{\frac{i[n^{\theta}]}{n}}^{\frac{(i+1)[n^{\theta}]}{n}} \d s \sum_{l\in \Z} \frac{[n^\gamma]}{n^{1/\alpha}}\left[ p_s\left(\frac{l[n^\gamma]}{n^{1/\alpha}}\right)-p_{\frac{i[n^\theta]}{n}}\left(\frac{l[n^\gamma]}{n^{1/\alpha}}\right)\right]^2
\end{split}\ee
The first term in \eqref{eq:vbar-vn} can be bound by 
\bes\begin{split}
 %&\left(\frac{n^\theta}{n}\right)^{\frac{\alpha-1}{\alpha}}\int_0^{t-\frac{[n^\theta]}{n}} \d s\,\sum_{l\in \Z} \frac{[n^\gamma]}{n^{1/\alpha}}\cdot p^2_{t-s} \left(\frac{l[n^\gamma]}{n^{1/\alpha}}\right) \\
 &\lesssim  \left(\frac{n^\theta}{n}\right)^{\frac{\alpha-1}{\alpha}} \int_{\frac{[n^\theta]}{n}}^t \d s\, \int_{\R} \d x\, p_s^2(x) \\
 &\lesssim \left(\frac{n^\theta}{n}\right)^{\frac{\alpha-1}{\alpha}}\int_{\frac{[n^\theta]}{n}}^t  \frac{\d s}{s^{1/\alpha}} \\
 & \lesssim \frac{n^{\theta(\alpha-1)/\alpha}}{n^{(\alpha-1)/\alpha}}.
\end{split}\ees
Using Lemma \ref{lem:p:der} we can bound the second term in \eqref{eq:vbar-vn} by
\bes
\begin{split}
%&\sum_{i=1}^r \int_{\frac{i[n^{\theta}]}{n}}^{\frac{(i+1)[n^{\theta}]}{n}} \d s \sum_{l\in \Z} \frac{[n^\gamma]}{n^{1/\alpha}}\left[ p_s\left(\frac{l[n^\gamma]}{n^{1/\alpha}}\right)-p_{\frac{i[n^\theta]}{n}}\left(\frac{l[n^\gamma]}{n^{1/\alpha}}\right)\right]^2 \\
&\lesssim \sum_{i=1}^r \int_{\frac{i[n^{\theta}]}{n}}^{\frac{(i+1)[n^{\theta}]}{n}} \d s \sum_{l\in \Z} \frac{[n^\gamma]}{n^{1/\alpha}} \left[\int_{\frac{i[n^\theta]}{n}}^s \frac{\d q}{q}\, p_q\left(\frac{l[n^\gamma]}{2n^{1/\alpha}}\right) \right]^2 \\
&\lesssim \frac{n^\theta}{n}\sum_{i=1}^r \int_{\frac{i[n^{\theta}]}{n}}^{\frac{(i+1)[n^{\theta}]}{n}} \d s \sum_{l\in \Z} \frac{[n^\gamma]}{n^{1/\alpha}} \int_{\frac{i[n^\theta]}{n}}^s \frac{\d q}{q^2}\, p^2_q\left(\frac{l[n^\gamma]}{2n^{1/\alpha}}\right) \\
&\lesssim \left(\frac{n^\theta}{n}\right)^2\sum_{i=1}^r \int_{\frac{i[n^{\theta}]}{n}}^{\frac{(i+1)[n^{\theta}]}{n}}   \frac{\d q}{q^2}\,  \sum_{l\in \Z} \frac{[n^\gamma]}{n^{1/\alpha}}\,p^2_q\left(\frac{l[n^\gamma]}{2n^{1/\alpha}}\right) \\
&\lesssim  \left(\frac{n^\theta}{n}\right)^2 \int_{\frac{[n^\theta]}{n}}^t \frac{\d q}{q^{2+\frac1\alpha}}\\
&\lesssim \frac{n^{\theta(\alpha-1)/\alpha}}{n^{(\alpha-1)/\alpha}}.
\end{split}
\ees
We have again used $\gamma \le \theta/\alpha$ for the Riemann sum approximation in removing 
the sum in the third last line. This completes the proof of the lemma.
\end{proof}

We now collect all our estimates in this section to give a bound on the difference between $\bar u_t^{(n)}(x)$ and
$v_t(x)$.
\begin{theorem}
Suppose that $v_0\equiv 0$, and fix $T\ge 0$. We then  have the following uniform bound valid for $2\frac{[n^\theta]}{n}\le t\le T$ 
\be \label{eq:opt} \Big \| \bar u_t^{(n)}(x) - v_t(x) \Big\|_{2m}^2 \lesssim \frac{n^{\gamma}}{n^{(\alpha-1)\theta}}+\frac{n^{\theta+\gamma+o(1)}}{n^{\min(a, \alpha-1)/\alpha}}+\frac{1}{n^{(\theta+\gamma)\cdot \min(1,\,\kappa)/(2m)}}.\ee
\end{theorem}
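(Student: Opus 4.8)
The plan is to interpolate between $\bar u_t(x)$ and $v_t(x)$ along the tower of auxiliary fields constructed in this section, estimating each link in $L^{2m}(\mP)$. With $r,z$ as in \eqref{eq:r:z}, set $y_\ast:=(z+a_{r-1})[n^\gamma]/n^{1/\alpha}$; by \eqref{eq:r:z} and the choice of $a_{r-1}$ one has $|y_\ast-x|\lesssim n^{\theta-1/\alpha}+n^{\gamma-1/\alpha}$, so by Proposition \ref{prop:u:hol} and Lemma \ref{holder} we may replace $x$ by $y_\ast$ throughout, up to errors swallowed by the right side of \eqref{eq:opt} under \eqref{g:t:cond}. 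The triangle inequality together with $\big(\sum_{i=1}^{6}a_i\big)^{2}\le 6\sum_{i=1}^6 a_i^{2}$ then reduces \eqref{eq:opt} to bounding six rungs: $\|\bar u_t(x)-U_{r[n^\theta]}(z[n^\gamma])\|_{2m}^2$ (Theorem \ref{thm:u-U}); $\|U_{r[n^\theta]}-V_{r[n^\theta]}\|_{2m}^2$ (Proposition \ref{prop:V-U}); $\|V_{r[n^\theta]}-\bar V_{r[n^\theta]}\|_{2m}^2$ (Proposition \ref{prop:V-Vbar}); $\|\bar V_{r[n^\theta]}-\bar v_{r[n^\theta]/n}\|_{2m}^2$ (new); $\|\bar v_{r[n^\theta]/n}-\tilde v_t\|_{2m}^2$ (Lemma \ref{lem:v:t}); $\|\tilde v_t-v_t(y_\ast)\|_{2m}^2$ (Lemma \ref{lem:v:s})---spatial arguments $z[n^\gamma]$, respectively $y_\ast$, suppressed. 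Five of these are already established, and summing their bounds produces exactly the three terms on the right of \eqref{eq:opt} up to the Hölder-type contributions $n^{-(1-\theta)(\alpha-1)/\alpha}$ (from the fifth and sixth rungs) and $n^{\theta+\gamma+o(1)}/n^{(\alpha-1)/\alpha}$ (from the first), both dominated by the middle term under \eqref{g:t:cond} since $\min(a,\alpha-1)\le\alpha-1$. Only the fourth rung needs a new argument.

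For the fourth rung, comparing \eqref{eq:vbar} with \eqref{eq:barv} shows that $\bar V$ and $\bar v$ are driven by the \emph{same} Gaussian increments $\mathcal W_j(l)=W_j(l-a_j)$, with $\Var\,\mathcal W_j(l)=[n^\theta][n^\gamma]/n^{1+1/\alpha}$, and the \emph{same} Stable($\alpha$) kernel $p$; with $\bar v$ centred at $y_\ast$ so the kernel arguments coincide, the only difference is that the multiplier at block $(j,l)$ is $\sigma$ at $u_{j[n^\theta]}((l-a_j)[n^\gamma])$ for $\bar V$ and $\sigma$ at $v_{j[n^\theta]/n}(l[n^\gamma]/n^{1/\alpha})$ for $\bar v$. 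Applying Burkholder in the independent increments $\mathcal W_j(l)$, the moment bookkeeping in the proof of Theorem \ref{thm:u}, the Lipschitz bound $|\sigma(a)-\sigma(b)|\le\lip|a-b|$, and the Riemann-sum estimate $\sum_l\frac{[n^\gamma]}{n^{1/\alpha}}\,p_\tau^2(\cdot)\lesssim\tau^{-1/\alpha}$ (legitimate since $\alpha\gamma\le\theta$ by \eqref{g:t:cond}) gives, with $\Delta(j):=\sup_l\|u_{j[n^\theta]}(l[n^\gamma])-v_{j[n^\theta]/n}(l[n^\gamma]/n^{1/\alpha})\|_{2m}^2$,
\[
\big\|\bar V_{r[n^\theta]}(z[n^\gamma])-\bar v_{r[n^\theta]/n}(y_\ast)\big\|_{2m}^2\;\lesssim\;\lip^2\sum_{j=0}^{r-2}\frac{[n^\theta]}{n}\Big(\frac{(r-1-j)[n^\theta]}{n}\Big)^{-1/\alpha}\Delta(j).
\]
Since $a_j[n^\gamma]$ lies within $[n^\gamma]$ of $\mu j[n^\theta]$, $u_{j[n^\theta]}((l-a_j)[n^\gamma])$ equals $\bar u_{j[n^\theta]/n}$ evaluated within one lattice unit of $l[n^\gamma]/n^{1/\alpha}$, so Proposition \ref{prop:u:hol} and Lemma \ref{holder} yield $\Delta(j)\lesssim\sup_y\|\bar u_{j[n^\theta]/n}(y)-v_{j[n^\theta]/n}(y)\|_{2m}^2+n^{-(1-\theta)(\alpha-1)/\alpha}$, with $\Delta(0)=0$ because $u_0\equiv v_0\equiv 0$.

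This exhibits the self-improving structure of the Gronwall/uniqueness arguments in Section \ref{sec:exis}. Put $\Psi_\delta(t):=\sup_{0\le s\le t,\,y\in\R}e^{-\delta s}\|\bar u_s(y)-v_s(y)\|_{2m}^2$, finite a priori by Theorem \ref{thm:u} and the uniform moment bounds for $v$. Inserting the bound for $\Delta(j)$ into the fourth rung, carrying the factor $e^{-\delta s}$ through all six rungs, and taking the supremum over $y$ and $s\le t$ gives
\[
\Psi_\delta(t)\lesssim\Big(\frac{n^{\gamma}}{n^{(\alpha-1)\theta}}+\frac{n^{\theta+\gamma+o(1)}}{n^{\min(a,\alpha-1)/\alpha}}+\frac{1}{n^{(\theta+\gamma)\min(1,\kappa)/(2m)}}\Big)+C\lip^2\,\Psi_\delta(t)\sum_{j=0}^{r-2}e^{-\delta\frac{(r-1-j)[n^\theta]}{n}}\frac{[n^\theta]}{n}\Big(\frac{(r-1-j)[n^\theta]}{n}\Big)^{-1/\alpha}.
\]
Because $\alpha>1$, the $j$-sum is bounded by $\int_0^{T}e^{-\delta u}u^{-1/\alpha}\,du$, which tends to $0$ as $\delta\to\infty$; choosing $\delta$ so large that $C\lip^2$ times this sum is below $\tfrac12$ (the device already used at \eqref{eq:recursive}) absorbs the last term onto the left, and undoing the weight costs only the constant $e^{\delta T}$. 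This gives \eqref{eq:opt} on $2[n^\theta]/n\le t\le T$, the range in which $\bar V$ and $\bar v$ are defined. The main obstacle is making the fourth rung close: one must recognise that $\bar V$ and $\bar v$ differ only through the $\sigma$-argument, re-express the resulting error in terms of $\Psi_\delta$ itself, and tune the temporal weight $\delta$ so that the singular but integrable kernel $\tau^{-1/\alpha}$ is controlled uniformly in $n$.
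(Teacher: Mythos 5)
Your proposal is correct and follows essentially the same route as the paper: the same chain of approximations $u \to U \to V \to \bar V \to \bar v \to \tilde v \to v$ with the already-established propositions and lemmas, the same key observation that $\bar V$ and $\bar v$ are driven by the identical noise increments and kernel so that only the $\sigma$-arguments differ (handled by the Lipschitz property), and the same Gronwall-type closure, which you implement via the exponential weight $e^{-\delta s}$ as in Section 2 while the paper invokes Gronwall directly on the coarse-time recursion \eqref{eq:gronwall}. The only point to add is a one-line bound for the portion $s<2[n^\theta]/n$ of the supremum defining your $\Psi_\delta$, where the six-rung decomposition is unavailable but, since $v_0\equiv 0$, one has directly $\|\bar u_s(y)\|_{2m}^2+\|v_s(y)\|_{2m}^2\lesssim n^{-(1-\theta)(\alpha-1)/\alpha}$, which is absorbed by the middle term of \eqref{eq:opt}.
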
 
\begin{proof}Recall $r, z$ from \eqref{eq:r:z}. Firstly we apply Minkowski's inequality
\be
\begin{split} \label{eq:final}
& \left\| u_{r[n^\theta]}\left(z[n^\gamma] \right) -  {v}_{\frac{r[n^\theta]}{n}}\left(\frac{(z+a_r)[n^\gamma]}{n^{1/\alpha}}\right)\right\|_{2m}^2 \\
&\lesssim  \left\| u_{r[n^\theta]}\left(z[n^\gamma]\right) - \bar V_{r[n^\theta]}(z[n^\gamma])\right\|_{2m}^2 \\
&\;\;+\left\| {v}_{\frac{r[n^\theta]}{n}}\left(\frac{(z+a_{r-1})[n^\gamma]}{n^{1/\alpha}}\right)-{v}_{\frac{r[n^\theta]}{n}}\left(\frac{(z+a_{r})[n^\gamma]}{n^{1/\alpha}}\right)\right\|_{2m}^2\\
&\;\;+\left\| {v}_{\frac{r[n^\theta]}{n}}\left(\frac{(z+a_{r-1})[n^\gamma]}{n^{1/\alpha}}\right)-\bar{v}_{\frac{r[n^\theta]}{n}}\left(\frac{(z+a_{r-1})[n^\gamma]}{n^{1/\alpha}}\right)\right\|_{2m}^2\\
&\;\; + \left\|\bar V_{r[n^\theta]}(z[n^\gamma])-\bar{v}_{\frac{r[n^\theta]}{n}}\left(\frac{(z+a_{r-1})[n^\gamma]}{n^{1/\alpha}}\right)\right\|_{2m}^2 
\end{split}
\ee
By Lemmas \ref{lem:v:s} and \ref{lem:v:t} of this subsection as well as Propositions \ref{prop:u-U}, \ref{prop:V-U}, \ref{prop:V-Vbar} we can bound the above by
\be\begin{split} \label{eq:gronwall}
&\lesssim \frac{n^{\gamma}}{n^{(\alpha-1)\theta}}+\frac{n^{\theta+\gamma+o(1)}}{n^{\min(a, \alpha-1)/\alpha}}+ \frac{1}{n^{(\theta+\gamma)\cdot \min(1,\,\kappa)/(2m)}}\\
&\;\;+ \frac{n^{\gamma+\theta}}{n^{1+\frac1\alpha}}\sum_{j=0}^{r-2}\sum_{l\in \Z}p^2_{\frac{(r-1-j)[n^\theta]}{n}}\left(\frac{(l-k)[n^\gamma]}{n^{1/\alpha}}\right)\cdot \left\|{v}_{\frac{j[n^\theta]}{n}}\left(\frac{l [n^\gamma]}{n^{1/\alpha}}\right)-u_{j[n^\theta]}\left((l-a_j)[n^\gamma]\right)\right\|_{2m}^2.
\end{split}
\ee
An application of Gronwall's inequality then gives
\bes
 \left\| u_{r[n^\theta]}\left(z[n^\gamma] \right) -  {v}_{\frac{r[n^\theta]}{n}}\left(\frac{(z+a_r)[n^\gamma]}{n^{1/\alpha}}\right)\right\|_{2m}^2 \lesssim \frac{n^{\gamma}}{n^{(\alpha-1)\theta}}+\frac{n^{\theta+\gamma+o(1)}}{n^{\min(a, \alpha-1)/\alpha}}+ \frac{1}{n^{(\theta+\gamma)\cdot \min(1,\,\kappa)/(2m)}}.
\ees
We can now use the H\"older continuity esitmates in Lemma \ref{holder} and Proposition \ref{prop:u:hol} to conclude the proof.
%\bes
%\left\|\bar u_t^{(n)}(x) - {v}_t(x)\right\|_{2m}^2 \lesssim  \frac{n^{\gamma+o(1)}}{n^{(\alpha-1)\theta}}+\frac{n^{\theta+\gamma+o(1)}}%{n^{\min(a, \alpha-1)/\alpha}}+ \frac{1}{n^{(\theta+\gamma)\cdot \min(1,\,\kappa)/(2m)}}.
%\ees
%This completes the proof.
\end{proof}

We now complete the proof of Theorem \ref{thm:main}.
\begin{proof}[Proof of Theorem \ref{thm:main}]
The case of $v_0\equiv 0$ has already been covered. For general $v_0$, the solution to \eqref{eq:she} with white noise $\mathcal W$ is 
\[ v_t(x) = (p_t*v_0)(x) +\int_0^t \int_{\R} p_{t-s}(y-x) \sigma\big(v_s(y)\big) \mathcal W(\d s\,\d y).\]
We have already shown in \eqref{eq:opt} that the difference of the noise terms above and in \eqref{eq:dis:sc:sol} goes to $0$ in $\|\cdot \|_{2m}$ norm.  It is thus enough to show that the  difference of the $\|\cdot\|_{2m}$ of the non-noise terms goes to $0$, or
\[ \left \| E_{z[n^\gamma]} \, v_0\left(\frac{X_{r[n^\theta]}}{n^{1/\alpha}}\right) - \Big(p_{\frac{r[n^\theta]}{n}}* v_0\Big)\left(\frac{(z+a_r)[n^\gamma]}{n^{1/\alpha}}\right)\right\|_{2m}^2 \rightarrow 0.\]
To see this first note that the expression inside $\|\cdot \|_{2m}$ goes
to $0$ almost surely by the weak convergence of the centered $X_n$ to a Stable($\alpha$) random variable,
and by the smoothing properties of the Stable($\alpha$) density. The dominated convergence theorem can then
be applied to show the above. This then implies
\[ \left\| u_{r[n^\theta]}\left(z[n^\gamma] \right) -  {v}_{\frac{r[n^\theta]}{n}}\left(\frac{(z+a_r)[n^\gamma]}{n^{1/\alpha}}\right)\right\|_{2m}^2\to 0\]
and the conclusion of the theorem follows from Lemma \ref{holder} and Proposition \ref{prop:u:hol}.
\end{proof}

\section{Proof of Theorem \ref{thm:char}} \label{sec:char}
In order to couple the initial profile $\eta$ with a Brownian motion we need the following proposition 
whose proof is similar to that of Theorem \ref{thm:couple}. For $0<\theta'<1/2$ let
\bes t_l=\frac{l[n^{\theta'}]}{n},\qquad \bar \zeta_l = \sum_{k=(l-1)[n^{\theta'}]}^{l[n^{\theta'}]-1} \frac{\eta(k)}{\sqrt{n}}.
\ees
We have
\begin{proposition} [\cite{kana}]\label{prop:couple:2}  For each $n\in \N$, one can construct a copy of $\eta$ and a two sided Brownian motion $B$ with variance $\lambda$ on a probability space, so that for $m\in \N$ with $2m <2+\kappa'$
\be \label{eq:couple:2}
\mE\left[\left \vert  \bar \zeta_l - \left(B_{t_l}-B_{t_{l-1}}\right)\right \vert^{2m}\right] \lesssim n^{-m(1-\theta')}n^{-\theta'\cdot\frac{\min(1,\kappa')}{2}}.
\ee
\end{proposition}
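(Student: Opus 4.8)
The plan is to follow the proof of Theorem~\ref{thm:couple} almost verbatim, collapsing the temporal dimension: in place of the space-time blocks $\B_j(l)$ we use the purely spatial blocks $I_l:=\{k\in\Z:\,(l-1)[n^{\theta'}]\le k<l[n^{\theta'}]\}$, $l\in\Z$, and the role of the rescaled white-noise boxes is played by the increments of $B$ over the grid $\{t_l\}_{l\in\Z}$. Since $\bar\zeta_l$ depends only on $\{\eta(k):k\in I_l\}$, the blocks $I_l$ partition $\Z$, and the increments $B_{t_l}-B_{t_{l-1}}$ over disjoint intervals are independent, it suffices to construct, for each fixed $l$, a coupling of $\big((\eta(k))_{k\in I_l},\,B_{t_l}-B_{t_{l-1}}\big)$ with the correct marginals, take these couplings independent over $l\in\Z$, form the two-sided Gaussian walk on $\{t_l\}$ by adding up the independent increments, and finally extend it to a genuine two-sided Brownian motion with variance $\lambda$ by filling in mutually independent Brownian bridges on the intervals $[t_{l-1},t_l]$.

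For a single block, set $N:=[n^{\theta'}]$ and let $F_N$ be the distribution function of $Y:=(\lambda N)^{-1/2}\sum_{k\in I_l}\eta(k)$, which has mean $0$ and variance $1$; let $\Phi$ be the standard normal distribution function. Exactly as in parts~1 and~2 of the proof of Theorem~\ref{thm:couple} (with the measure $R$ now supported on $\R^{N}\times\R$ and with no white-noise coordinate), start from a standard normal $\hat Z_l$, declare $B_{t_l}-B_{t_{l-1}}:=(\lambda N/n)^{1/2}\hat Z_l$, put $\bar\zeta_l:=(\lambda N/n)^{1/2}F_N^{-1}\big(\Phi(\hat Z_l)\big)$ so that $\bar\zeta_l$ has the right law and is a deterministic function of $\hat Z_l$, and then reconstruct $\{\eta(k):k\in I_l\}$ by sampling from its conditional distribution given $\bar\zeta_l$ using an independent randomisation. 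This produces the desired coupling, and the copy of $\eta$ obtained by repeating over all $l$ has the correct law (one may moreover take an independent product with the space carrying $\xi$ and $\dot W$).

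It remains to prove the moment bound. Writing $Z:=(n/\lambda N)^{1/2}(B_{t_l}-B_{t_{l-1}})=\hat Z_l$, which is standard normal and coupled to $Y$ through the common quantile $\Phi(\hat Z_l)$, the non-uniform Berry--Esseen theorem for sums of $N$ i.i.d.\ random variables with a finite $(2+\kappa')$-th moment (\cite{boro},\cite{osip},\cite{naga}) gives
\bes
\left\vert F_N(x)-\Phi(x)\right\vert\lesssim\frac{1}{\big(1+\vert x\vert^{2+\kappa'}\big)\,N^{\min(1,\,\kappa')/2}},\qquad x\in\R.
\ees
As in \cite{ebra}, for $2m<2+\kappa'$ the quantile coupling then yields
\bes
\mE\vert Y-Z\vert^{2m}\lesssim\int_{\R}\vert x\vert^{2m-1}\left\vert F_N(x)-\Phi(x)\right\vert\,\d x\lesssim\frac{1}{N^{\min(1,\,\kappa')/2}},
\ees
the integral converging precisely because $2m<2+\kappa'$. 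Since $\bar\zeta_l-(B_{t_l}-B_{t_{l-1}})=(\lambda N/n)^{1/2}(Y-Z)$ and $N=[n^{\theta'}]$ is of order $n^{\theta'}$, we conclude
\bes
\mE\left[\left\vert\bar\zeta_l-(B_{t_l}-B_{t_{l-1}})\right\vert^{2m}\right]\lesssim\Big(\frac{N}{n}\Big)^m\frac{1}{N^{\min(1,\,\kappa')/2}}\lesssim n^{-m(1-\theta')}\,n^{-\theta'\min(1,\,\kappa')/2},
\ees
which is \eqref{eq:couple:2}.

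None of these ingredients is individually hard, so the main point requiring care is the bookkeeping: one must check that the block-by-block construction genuinely assembles into a single i.i.d.\ family $\eta=\{\eta(k):k\in\Z\}$ together with a bona fide two-sided Brownian motion $B$ of variance $\lambda$, i.e.\ that the conditional-independence structure used for each block survives the gluing over $l$; and one must track the dependence of the Berry--Esseen constant on the number of summands $N$ (rather than treating $N$ as a fixed constant), since it is exactly this dependence that produces the factor $n^{-\theta'\min(1,\kappa')/2}$.
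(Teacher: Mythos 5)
Your proposal is correct and is essentially the paper's intended argument: the paper proves Proposition \ref{prop:couple:2} by adapting the proof of Theorem \ref{thm:couple} (quantile coupling of the normalized block sum with a standard normal, the non-uniform Berry--Esseen bound with rate $N^{-\min(1,\kappa')/2}$, the moment inequality from \cite{ebra} under $2m<2+\kappa'$, and independent gluing over blocks), exactly as you do. Your scaling bookkeeping, $(\lambda N/n)^m N^{-\min(1,\kappa')/2}\asymp n^{-m(1-\theta')}n^{-\theta'\min(1,\kappa')/2}$ with $N=[n^{\theta'}]$, matches \eqref{eq:couple:2}.
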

We can now provide the 
\begin{proof}[Proof of Theorem \ref{thm:char}] A look at the proof of Theorem \ref{thm:u} tells us that existence and uniqueness of $u=u^{(n)}$ holds. However the bounds in \eqref{eq:mom:unif} and \eqref{eq:thm:um} do not hold uniformly in $i\le [nT], k \in \Z$, although it is easily checked that it continues to hold for the noise term. Let us show instead that the $\|\cdot\|_{2m}$ norm of the non-noise
term is finite. We split
\[ \sum_l \P_{i+1}(k, l)\cdot  u_0(l) = \sum_l \P_{i+1}(k, l) \cdot \big[u_0(l) - u_0(k+[i\mu]) + u_0(k+[i\mu])\big].\]
Therefore
\[\big\| \sum_l \P_{i+1}(k, l)\cdot u_0(l) \big\|_{2m}^2 \lesssim \big\| u_0(k +[i\mu])\big\|_{2m}^2 + \sum_l \P_{i+1}(k, l) \cdot \frac{|l-k-[i\mu]|}{\sqrt n},\]
which is finite thanks to our assumptions \eqref{eta} and Lemma \ref{lem:xunif}.
%Existence and uniqueness in $L^{2m}$ of the {\it noise term} (the second term on the right hand side of %\eqref{eq:dis:sol} with scaled $\xi$) follows just as in the proof of the theorem. As for the other term in \eqref{eq:dis:sol} (we call it %the {\it non-noise} term since it does not depend on $\xi$)
%\bes
%\big\| \sum_l \P_{i+1}(k,l) u_0(l) \big\|_{2m} \le  \left\| u_0(k) \right\|_{2m}+ \sum_l \P_{i+1}(k,l)  \left\| u_0(l)-u_0(k) \right\|_{2m}.
%\ees
%For our initial profile we have $\left\| u_0(l)-u_0(k) \right\|_{2m} \lesssim |l-k|$ uniformly in $n$, which is enough since the random %walk has finite first moment. .

A careful reader of the proof of Theorem \ref{thm:main} would have observed that our assumption 
of bounded initial profile was only needed so that we had
\bes \sup_{j \le [nT],\, l \in \Z }\left\| \sigma\big(u_j(l)\big)\right\|_{2m} < \infty. \ees
While this does not hold for general Lipschitz $\sigma$ if $u_0$ is unbounded in $L^{2m}$, it clearly holds in the case $\sigma(x)$ is bounded uniformly in $x$. Therefore to complete the proof of Theorem \ref{thm:char}
we just need to approximate the non-noise term. As before we divide the proof into several steps. \\
\textbf{Step 1:} First we collect the initial random variables $n^{-1/4}\eta$ into groups of size $[n^{\theta'}]$. Thus 
\bes\begin{split} \label{step1}
&\sum_l \P_{[nt]}\left([x\sqrt n]-[\mu nt],\,l\right) \cdot\Big\lbrace n^{-1/4}\sum_{i=1}^l \eta(i)\Big\rbrace\\
& =\sum_k \sum_{l=k[n^{\theta'}]}^{(k+1)[n^{\theta'}]-1} \P_{[nt]}\left([x\sqrt n]-[\mu nt],\,l\right)  \cdot\Big\lbrace n^{-1/4}\sum_{i=1}^{k[n^{\theta'}]} \eta(i)\Big\rbrace \\
&\;\;+ \sum_k  \bigg[\sum_{l=k[n^{\theta'}]}^{(k+1)[n^{\theta'}]-1} \P_{[nt]}\left([x\sqrt n]-[\mu nt],\,l\right)  \cdot\bigg\lbrace n^{-1/4}\sum_{i=k[n^{\theta'}]+1}^{l} \eta(i)\bigg\rbrace\bigg] .
\end{split}
\ees
Consider the second term in the right hand side. The expression in square brackets is independent of $k$. 
Thus using Burkholder's inequality one gets the following bound for $\|\cdot\|_{2m}^2$ of the second term
\bes
\begin{split}\label{step1:2}
&\lesssim \frac{1}{\sqrt n} \sum_k \sum_{l,l'=k[n^{\theta'}]}^{(k+1)[n^{\theta'}]-1}\P_{[nt]}\left([x\sqrt n]-[\mu nt],\,l\right) \cdot \P_{[nt]}\left([x\sqrt n]-[\mu nt],\,l'\right) \\
&\hspace{4cm}\times\bigg\|\sum_{i=k[n^{\theta'}]+1}^l \eta(i)\cdot \sum_{j=k[n^\theta]+1}^{l'} \eta(j) \bigg\|_m \\
&\lesssim \frac{n^{\theta'}}{\sqrt n} P \left(\left\vert X_{[nt]}-\tilde X_{[nt]} \right \vert \le n^{\theta'}\right),
\end{split}
\ees
where one uses the Cauchy-Schwarz inequality to obtain the last step. This goes to $0$ as $n\to \infty$ since $\theta'<1/2$. \\
\textbf{Step 2:}  Next we replace the sum over $\eta$ in blocks by the Brownian motion constructed in Proposition \ref{prop:couple:2}.
\bes\begin{split}
&\sum_k \sum_{l=k[n^\theta]}^{(k+1)[n^{\theta'}]-1} \P_{[nt]}\left([x\sqrt n]-[\mu nt],\, l\right)  \cdot \bigg\lbrace n^{-1/4} \sum_{i=1}^{k[n^{\theta'}]} \eta(i)\bigg\rbrace \\
&= \sum_{k,l} n^{1/4} \P_{[nt]}\left([x\sqrt n]-[\mu nt],\, l\right) \cdot  B_{\frac{k[n^{\theta'}]}{n}} \\
&\;\; +\sum_{k,l} n^{1/4} \P_{[nt]}\left([x\sqrt n]-[\mu nt],\, l\right) \bigg[\sum_{i=1}^k\bar \zeta_i- B_{\frac{k[n^{\theta'}]}{n}}\bigg]
\end{split}\ees
where the limits of the sum are as in the first line. Using the Cauchy-Schwarz inequality we bound $\|\cdot \|_{2m}^2$ of the second term as follows 
\bes
\begin{split}
& \lesssim \sqrt n \bigg\lbrace \sum_{k,l}  \P_{[nt]}\left([x\sqrt n]-[\mu nt],\, l\right)\cdot \bigg\|\sum_{i=1}^k \bar\zeta_i- B_{\frac{k[n^{\theta'}]}{n}}\bigg\|_{2m}\bigg\rbrace^2 \\
& \lesssim \sqrt n \sum_{k, l} \P_{[nt]}\left([x\sqrt n]-[\mu nt],\, l\right) \cdot \bigg\|\sum_{i=1}^k \bar\zeta_i- B_{\frac{k[n^{\theta'}]}{n}}\bigg\|_{2m}^2. 
\end{split}
\ees
We now use \eqref{eq:couple:2} to bound this
\bes
\begin{split}
& \lesssim \sqrt n \sum_{k, l} \P_{[nt]}\left([x\sqrt n]-[\mu nt],\, l\right) \cdot |k| \cdot \frac{n^{\theta'(1-\frac{\min(1,\kappa')}{2m})}}{n} \\
& \lesssim \frac{1}{n^{\frac{\theta'\min(1,\kappa')}{2m}}} \cdot E_{[x\sqrt n]} \left(\frac{\left|X_{[nt]}- [\mu nt]\right|}{\sqrt n}\right) \\
& \lesssim \frac{1}{n^{\frac{\theta'\min(1,\kappa')}{2m}}},
\end{split}
\ees
where we used Lemma \ref{lem:xunif} in the last line. The above also goes to $0$ with $n$.\\
\textbf{Step 3:} As in the proof of Theorem \ref{thm:main} we now substitute the transition probability of the random walk by the heat kernel.
\bes
\begin{split}
& \sum_k \sum_{l=k[n^{\theta'}]}^{(k+1)[n^{\theta'}]-1}  n^{1/4} \,\P_{[nt]}\left([x\sqrt n]-[\mu nt],\, l\right) \cdot  B_{\frac{k[n^{\theta'}]}{n} } \\
& =[n^{\theta'}]\sum_k  p_t \left(\frac{k[n^{\theta'}]-[x\sqrt n]}{\sqrt n}\right)  \cdot  n^{-1/4} B_{\frac{k[n^{\theta'}]}{n}} \\
&\;\; +   \sum_k \sum_{l=k[n^{\theta'}]}^{(k+1)[n^{\theta'}]-1}  \left[\sqrt n \,\P_{[nt]}\left([x\sqrt n]-[\mu nt],\, l\right)  - p_t \left(\frac{k[n^{\theta'}]-[x\sqrt n]}{\sqrt n}\right) \right]\cdot n^{-1/4} B_{\frac{k[n^{\theta'}]}{n}}.
\end{split}
\ees
We now bound $\|\cdot \|_{2m}^2$ of the second term on the right hand side. This gives an upper bound of 
\be
\begin{split} \label{step3}
& \frac{1}{\sqrt n} \bigg\lbrace  \sum_{k, l} \bigg \vert \sqrt n \,\P_{[nt]}\left([x\sqrt n]-[\mu nt],\, l\right)  - p_t \bigg(\frac{k[n^{\theta'}]-[x\sqrt n]}{\sqrt n}\bigg) \bigg \vert \cdot \bigg\| B_{\frac{k[n^{\theta'}]}{n}}\bigg\|_{2m}\bigg \rbrace^2  \\
& \lesssim \sum_{k, l}\bigg\vert \sqrt n \,\P_{[nt]}\left([x\sqrt n]-[\mu nt],\, l\right)  - p_t \bigg(\frac{k[n^{\theta'}]-[x\sqrt n]}{\sqrt n}\bigg) \bigg \vert \cdot \frac{|k|n^{\theta'}}{n}
\end{split}
\ee
where we used the Cauchy-Schwarz inequality along with the bound $\big \| B_{\frac{k[n^{\theta'}]}{n}}\big\|_{2m}^2 \lesssim \frac{kn^{\theta'}}{n}$. Fix $\epsilon>0$ such that $2\epsilon < \min(a/2,\, (1-2\theta')/2)$. Continuing with our bound \eqref{step3} we use Theorem \ref{llt} to get 
\bes
\begin{split}
&\lesssim \sum_k \sum_{|l - [x\sqrt n]| \le n^{\frac12+\epsilon}} \frac{|k|n^{\theta'}}{n}\cdot \left[\frac{1}{n^{a/2}}+ \frac{1}{n^{(1-2\theta')/2}}\right] \\
&\quad + \sum_k \sum_{|l-[x\sqrt n]|>n^{\frac12+\epsilon}} \frac{|k|n^{\theta'}}{n}\cdot \left[\sqrt n \, \P_{[nt]}\left([x\sqrt n] -[\mu nt],\, l\right) + p_t \left(\frac{k[n^{\theta'}]-[x\sqrt n]}{\sqrt n}\right)\right] \\
& \lesssim \frac{n^{1+2\epsilon}}{n}\cdot \left[\frac{1}{n^{a/2}}+ \frac{1}{n^{(1-2\theta')/2}}\right]  + \sum_{k:\, |kn^{\theta'} -x\sqrt n| >n^{\frac12+\epsilon}} \frac{n^{\theta'}}{\sqrt n}\cdot \frac{|k|n^{\theta'}}{\sqrt n} \cdot  p_t \left(\frac{k[n^{\theta'}]-[x\sqrt n]}{\sqrt n}\right)\\
&\quad +  \frac{1}{\sqrt n} E\left[\left(\big\vert X_{[nt]}-\mu [nt]\big \vert + n^{\theta'}\right)\cdot \ind\big\lbrace |X_{[nt]}-\mu [nt]|>n^{\frac12+\epsilon} \big\rbrace\right]   
%&\lesssim \frac{n^{1+2\epsilon}}{n}\cdot \left[\frac{1}{n^{a/2}}+ \frac{1}{n^{(1-2\theta')/2}}\right] + \frac{n^{\theta'}}{\sqrt n}\cdot e^{-c_1n^{2\epsilon}} +\frac{1}{n^{\epsilon}}
\end{split}
\ees
A bound for the third term comes from
applying H\"older's inequality along with Chebyshev's inequality:
\bes
\begin{split}&
\frac{1}{\sqrt{n}} E\left[\vert X_{[nt]}-\mu nt\big \vert \cdot \ind\big\lbrace |X_{[nt]}-\mu nt|>n^{\frac12+\epsilon} \big\rbrace\right]\\
&\le \frac{1}{\sqrt{n}} E\left[\vert X_{[nt]}-\mu nt\big \vert^2\right]^{1/2}\cdot P\big[ |X_{[nt]}-\mu nt|>n^{\frac12+\epsilon} \big]^{1/2}\\
& \lesssim n^{-\epsilon},
\end{split}
\ees
while the second term decays much more quickly because of the exponential decay of the heat kernel.\\
\textbf{Step 4:} To complete the proof of the theorem it is enough for us to show 
\bes
[n^{\theta'}]\sum_k p_t \left(\frac{k[n^{\theta'}]-[x\sqrt n]}{\sqrt n}\right)  \cdot  \frac{B_{\frac{k[n^{\theta'}]}{n}}}{n^{1/4} }\approx \int_{\R} p_t(x-y) \tilde{B}(y) \, \d y
\ees
where 
\bes
\tilde{B}(y) = n^{\frac14} B_{\frac{y}{\sqrt n}}
\ees
is another Brownian motion. As the reader might have guessed this is just a Riemann sum argument. We just need
to bound 
\be \begin{split} \label{eq:4}
 &\bigg \| \sum_k \int_{\frac{k[n^{\theta'}]}{\sqrt n}}^{\frac{(k+1)[n^{\theta'}]}{\sqrt n}} p_t(y-x) \cdot \bigg\lbrace \tilde B(y)-\tilde B\bigg(\frac{k[n^{\theta'}]}{\sqrt n}\bigg)\bigg\rbrace \d y \bigg\|_{2m}^2 \\
& \; + \bigg\|\sum_k\int_{\frac{k[n^{\theta'}]}{\sqrt n}}^{\frac{(k+1)[n^{\theta'}]}{\sqrt n}} \bigg\lbrace p_t(y-x) -p_t\bigg(\frac{k[n^{\theta'}]-[x\sqrt n]}{\sqrt n}\bigg)\bigg\rbrace\cdot \tilde B\bigg(\frac{k[n^{\theta'}]}{\sqrt n} \bigg) \d y  \bigg\|_{2m}^2.
\end{split}\ee
For the first term we use the independence of Brownian increments as well as Burkholder's inequality to get a 
bound of 
\bes \begin{split}
 & \bigg\| \sum_k \bigg[ \int_{\frac{k[n^{\theta'}]}{\sqrt n}}^{\frac{(k+1)[n^{\theta'}]}{\sqrt n}} p_t(y-x) \bigg\lbrace \tilde B(y)-\tilde B\bigg(\frac{k[n^{\theta'}]}{\sqrt n}\bigg)\bigg\rbrace \,\d y\bigg]^2\bigg\|_{m} \\
& \lesssim \frac{n^{\theta'}}{\sqrt n}\sum_k \int_{\frac{k[n^{\theta'}]}{\sqrt n}}^{\frac{(k+1)[n^{\theta'}]}{\sqrt n}} p_t(y-x) \bigg \| \tilde B(y)-\tilde B\bigg(\frac{k[n^{\theta'}]}{\sqrt n}\bigg) \bigg\|_{2m}^2 \,\d y \\
& \lesssim \frac{n^{2\theta'}}{ n}.
\end{split}\ees
Next we have the following bound on $\|\cdot\|_{2m}^2$ for the second term in \eqref{eq:4}
\bes
\begin{split}
%& \bigg\|\sum_k\int_{\frac{k[n^{\theta'}]}{\sqrt n}}^{\frac{(k+1)[n^{\theta'}]}{\sqrt n}} \bigg\lbrace p_t(y-x) -p_t\bigg(\frac{k[n^{\theta'}]-[x\sqrt n]}{\sqrt n}\bigg)\bigg\rbrace\tilde B\bigg(\frac{k[n^{\theta'}]}{\sqrt n} \bigg) \d y \bigg\|_{2m}^2 \\
& \lesssim \bigg[\sum_k\int_{\frac{k[n^{\theta'}]}{\sqrt n}}^{\frac{(k+1)[n^{\theta'}]}{\sqrt n}} \bigg| p_t(y-x) -p_t\bigg(\frac{k[n^{\theta'}]-[x\sqrt n]}{\sqrt n}\bigg)\bigg|\cdot \bigg \|\tilde B\bigg(\frac{k[n^{\theta'}]}{\sqrt n} \bigg)\bigg\|_{2m}\d y \bigg]^2 \\
& \lesssim  \bigg[\sum_k\int_{\frac{k[n^{\theta'}]}{\sqrt n}}^{\frac{(k+1)[n^{\theta'}]}{\sqrt n}} \bigg| p_t(y-x) -p_t\bigg(\frac{k[n^{\theta'}]-[x\sqrt n]}{\sqrt n}\bigg)\bigg|\cdot \frac{n^{{\theta'}/2}\sqrt{|k|}}{n^{1/4}} \, \d y \bigg]^2\\
& \lesssim \frac{1}{t} \cdot \frac{n^{2\theta'}}{ n}  \left[\int p_t\big((y-x)/2\big) \sqrt{y}\, \d y \right]^2\\
& \lesssim  \frac{n^{2\theta'}}{n}.
\end{split}
\ees
%We have used above that 
%\bes
%\partial_x p_t(x) \le \frac{1}{\sqrt t}\, q_t(x),
%\ees
%where $q$ is the heat kernel with another viscosity constant.
 We have shown that \eqref{eq:4} goes to $0$ with $n$ and this completes the proof of Theorem \ref{thm:char}.
\end{proof}

\section{Tightness} \label{sec:tight}
In this section we discuss the issue of tightness in Theorem \ref{thm:main}. %A similar argument
%can be applied for Theorem \ref{thm:char}. 
\begin{theorem} \label{thm:tight} Suppose the initial profile $v_0$ satisfies 
\[ \left\| v_0(x)- v_0(y) \right\|_{2m} \lesssim |x-y|^\zeta\]
for some $0<\zeta<\alpha/2$, all $x, y\in \R$ and some 
\be \label{eq:m} m> \frac{\max(2\alpha, \alpha+1)}{\min (2\zeta, \alpha-1)}.\ee
Then the process $\bar u_t(x), \, t\in \R_+, \, x\in \R$ is
tight in every compact subset of $\R_+\times \R$.
\end{theorem}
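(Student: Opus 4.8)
The plan is to establish tightness via the Kolmogorov--Chentsov criterion applied to the rescaled field $\bar u_t(x)$, which requires a moment bound of the form $\mE\big|\bar u_t(x)-\bar u_s(y)\big|^{2m}\lesssim \big(|x-y|^{\alpha-1}+|t-s|^{(\alpha-1)/\alpha}\big)^{m}$ uniformly in $n$, together with an analogous bound at a single point so that one has enough integrability to conclude tightness in $C(K)$ for compact $K\subset\R_+\times\R$. The exponent condition \eqref{eq:m} on $m$ is exactly what makes the resulting Hölder exponent strictly positive after accounting for the two-dimensional parameter space (one needs $m\cdot\min(2\zeta,\alpha-1)/\alpha$, or the analogous quantity, to exceed the dimension count $2$ in the relevant places; the $\max(2\alpha,\alpha+1)$ in the numerator comes from the time and space contributions, and from the initial-profile contribution via $\zeta$).

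First I would separate $\bar u_t(x)$ into its non-noise term $E_{[xn^{1/\alpha}]-[\mu nt]}\,v_0(X_{[nt]}/n^{1/\alpha})$ and its noise term. For the noise term, the estimate is essentially already in hand: the spatial and temporal Hölder continuity of the \emph{discrete} noise term, uniformly in $n$, is precisely what Proposition \ref{prop:u:hol} (and its proof via Burkholder's inequality, Theorem \ref{thm:u}, Lemma \ref{pker:bd}, Lemma \ref{lem:xunif}, and the Green's function bound \eqref{eq:mom2:u}) delivers --- one repeats that computation with general spatial increments $x,y$ and general time increments $s,t$ rather than the specific block endpoints, obtaining $\big\|\text{noise}_t(x)-\text{noise}_s(y)\big\|_{2m}^2\lesssim |x-y|^{\alpha-1}+|t-s|^{(\alpha-1)/\alpha}$ uniformly in $n$. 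Here one uses that $\sup_{i\le[nT],k}\mE|u_i(k)|^{2m}<\infty$ uniformly in $n$ (Theorem \ref{thm:u}), which in turn requires $\sup_{k,n}\mE|u_0^{(n)}(k)|^{2m}<\infty$ --- and the hypothesis $\|v_0(x)-v_0(y)\|_{2m}\lesssim|x-y|^\zeta$ together with a value at one point (implicit, or one assumes $\sup_x\mE|v_0(x)|^{2m}<\infty$) gives this via $u_0^{(n)}(k)=v_0(k/n^{1/\alpha})$.

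For the non-noise term, the required increment bound comes from combining the local limit theorem (Theorem \ref{llt}), which makes $\P_{[nt]}$ close to the stable density $p_t$, with the Hölder regularity of $v_0$ and the smoothing of the stable semigroup: $\big\|(p_t\!*\!v_0)(x)-(p_s\!*\!v_0)(y)\big\|_{2m}$ is controlled by $|x-y|^{\zeta\wedge 1}$ in space, and by $|t-s|$ acting through $\partial_t p$ (Lemma \ref{lem:p:der}) in time, giving a time exponent of order $(\zeta/\alpha)\wedge\cdots$; the discrete-to-continuous error is $o(1)$ uniformly on compacts by Theorem \ref{llt}. One then takes the minimum of all exponents appearing --- $\alpha-1$ and $2\zeta$ essentially --- which is where $\min(2\zeta,\alpha-1)$ enters \eqref{eq:m}.

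The main obstacle I anticipate is bookkeeping rather than conceptual: one must carry the increment estimates uniformly in $n$ through the same chain of approximations used in Section \ref{sec:main} (block sums, white-noise coupling, replacement of $\P$ by $p$), checking at each stage that a genuine \emph{two-point} Hölder bound survives, not merely the one-point moment bound; and one must verify that the constants do not blow up as $t\downarrow 0$ on the compact set, handling the parabolic scaling near the initial time (where the term $\int_0^{t}s^{-1/\alpha}\,\d s$-type integrals appear, as in Lemmas \ref{lem:v:s} and \ref{lem:v:t}). Once the uniform-in-$n$ moment increment bound $\mE|\bar u_t(x)-\bar u_s(y)|^{2m}\lesssim\big(|x-y|^2+|t-s|\big)^{(m/\alpha)\min(2\zeta,\alpha-1)}$-type estimate is in place, Kolmogorov's tightness criterion in two parameters applies directly because \eqref{eq:m} guarantees the exponent exceeds the critical value, and tightness on each compact $K$ follows.
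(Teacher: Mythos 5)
There is a genuine gap in your plan, and it is structural rather than bookkeeping. The field $\bar u^{(n)}_t(x)=u^{(n)}_{[nt]}([xn^{1/\alpha}]-[\mu nt])$ is piecewise constant on space--time cells of size $n^{-1}\times n^{-1/\alpha}$, so the uniform-in-$n$ two-point bound you propose, $\mE|\bar u_t(x)-\bar u_s(y)|^{2m}\lesssim(|x-y|^{\alpha-1}+|t-s|^{(\alpha-1)/\alpha})^{m}$ for \emph{all} pairs, cannot hold: take $(t,x)$ and $(s,y)$ in adjacent cells at distance much smaller than any power of $n^{-1}$; the increment is a fixed cell-to-cell jump whose $2m$-th moment does not shrink with $|(t,x)-(s,y)|$, while the right-hand side does. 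This is visible in the estimates one actually gets (and which the paper records): the increment bounds necessarily carry additive lattice terms of order $n^{-(\alpha-1)/\alpha}+n^{-2\zeta/\alpha}$ that are not dominated by $|x-y|^{\alpha-1}+|t-s|^{(\alpha-1)/\alpha}$ when the separation is below the lattice scale. Consequently the classical Kolmogorov--Chentsov criterion does not apply directly to $\bar u^{(n)}$, and your plan is missing the idea needed to bridge this: a two-scale tightness criterion. The paper uses exactly such a criterion (Lemma \ref{lem:tight}, from \cite{kuma}): one proves the H\"older-type moment bound only for separations $|(t,x)-(s,y)|>\delta_n=n^{-1}$, where the additive $n^{-}$ terms are absorbed, and one controls the sub-$\delta_n$ modulus $w_{\delta_n}(\bar u)$ separately by a union bound over the $\sim n^{1+1/\alpha}$ lattice points in the compact set, using Chebyshev with the single-step increment moments.

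This missing step is also where your reading of the exponent condition \eqref{eq:m} goes wrong. The term $2\alpha$ in $\max(2\alpha,\alpha+1)$ comes from requiring the H\"older exponent $m\min(2\zeta,\alpha-1)/\alpha$ to exceed $2$ (the dimension of the parameter space) in the large-separation bound; the term $\alpha+1$ does not come from ``time and space contributions and the initial profile via $\zeta$'' but precisely from the union bound over $n\cdot n^{1/\alpha}$ lattice points: one needs $n^{1+1/\alpha}\cdot n^{-m\min(2\zeta,\alpha-1)/\alpha}\to 0$, i.e.\ $m>(\alpha+1)/\min(2\zeta,\alpha-1)$. Your treatment of the individual increment estimates (redoing Proposition \ref{prop:u:hol} for general increments, using Theorem \ref{thm:u}, Lemma \ref{pker:bd}, Lemma \ref{lem:xunif} for the noise term, and the random-walk or semigroup representation with the $\zeta$-H\"older hypothesis for the non-noise term) is in the right spirit and close to what the paper does, but without the two-scale criterion and the sub-lattice-scale union bound the argument as proposed does not close.
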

\begin{remark} One can check that the following tightness arguments will hold for Theorem \ref{thm:char} with $\zeta=1/2$.
\end{remark}
We show tightness for the process $\bar u_t(x) =\bar u_t^{(n)}(x)$ in the time-space 
box $[0,1]\times [0,1]$, although the argument is valid for any compact subset of $\R_+ \times \R$. Define the modulus of continuity
\bes
w_{\delta}(\bar u) = \sup_{\substack{(t,x),\, (s,y) \in [0,1]^2,\\  |(t,x)-(s,y)|<\delta}}  \left\vert \bar u_t(x) - \bar u_s(y) \right\vert.
\ees
Tightness of the process $\bar u$ will follow from the following
\begin{lemma}[\cite{kuma}] \label{lem:tight} Suppose there is a sequence $\delta_n \downarrow 0$ such that the following hold. 
\begin{enumerate}
\item There is $\psi>0$, $\lambda>2$ and a constant $c_1$ such that for all large enough $n$
\bes
\mE \left ( \left \vert \bar u_t(x) -\bar u_s(y) \right \vert^\psi \right) \le c_1 \left| (t,x) -(s,y)\right|^\lambda,
\ees
for all $(t,x), (s,y) \in [0,1]^2$ and $\left| (t,x) -(s,y)\right|> \delta_n$.
\item For all $\epsilon, \rho>0$,
\bes
\mP\left[w_{\delta_n}(\bar u) > \epsilon\right]< \rho
\ees
for all large $n$.
\end{enumerate}
Then for all $\epsilon, \, \rho>0$ there is a $0<\delta<1$ such that 
\bes
\mP\left[w_{\delta}(\bar u) > \epsilon\right]< \rho
\ees
for all large $n$.
\end{lemma}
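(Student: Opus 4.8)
The plan is to run the classical dyadic chaining argument of Kolmogorov--Chentsov type, cut off at the scale $\delta_n$: hypothesis (1) supplies the usual moment input at all scales coarser than $\delta_n$, while hypothesis (2) absorbs everything finer. Write $\bar u(p)=\bar u_t(x)$ for $p=(t,x)\in[0,1]^2$. For $k\ge 0$ let $D_k$ be the set of points of $[0,1]^2$ whose two coordinates lie in $2^{-k}\Z$, and for $p\in[0,1]^2$ fix a nearest point $\pi_k(p)\in D_k$, so that $|p-\pi_k(p)|\le\sqrt2\,2^{-k}$ and $|\pi_{k+1}(p)-\pi_k(p)|\le 2\sqrt2\,2^{-k}$. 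Let $K_n$ be the largest integer with $2^{-K_n}>\delta_n$, so that $\delta_n<2^{-K_n}\le 2\delta_n$. Given a target scale $\delta=2^{-j}$ and any $p,q\in[0,1]^2$ with $|p-q|<\delta$, I would telescope $\bar u(p)-\bar u(q)$ along the two chains $p,\pi_{K_n}(p),\pi_{K_n-1}(p),\dots,\pi_j(p)$ and $q,\pi_{K_n}(q),\dots,\pi_j(q)$ together with the single bridging increment $\bar u(\pi_j(p))-\bar u(\pi_j(q))$; since $|\pi_j(p)-\pi_j(q)|\le(1+2\sqrt2)2^{-j}$, this last increment is a sum of a bounded number of increments between neighbouring points of $D_j$.

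The increments between a grid point of $D_k$ (or $D_{k+1}$) and one of its finitely many neighbours are controlled by a union bound: at level $k$ there are $O(2^{2k})$ such pairs, and for $j\le k\le K_n$ any such pair is at distance $\asymp 2^{-k}>\delta_n$, so hypothesis (1) applies and gives $\mE|\bar u(p')-\bar u(q')|^\psi\lesssim 2^{-\lambda k}$. Fixing $\beta$ with $0<\beta<(\lambda-2)/\psi$ — possible exactly because $\lambda>2$ — and thresholds $a_k=\eta\,2^{-\beta k}$, Markov's inequality yields
\[
\mP\big[\exists\,k\in[j,K_n]\ \text{and an adjacent pair }p',q'\ \text{at that level with}\ |\bar u(p')-\bar u(q')|>a_k\big]\ \lesssim\ \eta^{-\psi}\sum_{k\ge j}2^{(2-\lambda+\beta\psi)k}\ \lesssim\ \eta^{-\psi}2^{-(\lambda-2-\beta\psi)j}.
\]
On the complement of this event, every increment along the two chains, as well as the bridging increment, is dominated by a fixed multiple of $\sum_{k\ge j}a_k\lesssim\eta\,\delta^{\beta}$, while the two residual pieces $\bar u(p)-\bar u(\pi_{K_n}(p))$ and $\bar u(q)-\bar u(\pi_{K_n}(q))$ are each at most $w_{2\sqrt2\,\delta_n}(\bar u)\le 3\,w_{\delta_n}(\bar u)$, using the elementary bound $w_{m\delta}(\bar u)\le\lceil m\rceil\,w_\delta(\bar u)$ (connect two points at distance $<m\delta$ by $\lceil m\rceil$ equally spaced points of the segment joining them, which stays in $[0,1]^2$ by convexity). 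Hence on this good event $w_\delta(\bar u)\lesssim\eta\,\delta^{\beta}+w_{\delta_n}(\bar u)$.

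To conclude, I would fix $\epsilon,\rho>0$, take $\eta=1$, and then choose $j$ (hence $\delta=2^{-j}$) so large that $\eta\,\delta^{\beta}$ is a small fraction of $\epsilon$ and $\eta^{-\psi}2^{-(\lambda-2-\beta\psi)j}$ is a small fraction of $\rho$; the latter is possible precisely because $\lambda-2-\beta\psi>0$. For all $n$ large enough one has $\delta_n<\delta$, so $K_n>j$ and the chains are nonempty, and by hypothesis (2) one may also assume $\mP[w_{\delta_n}(\bar u)>\epsilon/2]<\rho/2$. Intersecting with the good event then gives $\mP[w_\delta(\bar u)>\epsilon]<\rho$ for all large $n$, which is the claim.

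The main obstacle is the bookkeeping around the cutoff scale. Since hypothesis (1) is assumed \emph{only} at separations exceeding $\delta_n$, the chaining must terminate at a level $K_n$ with $2^{-K_n}>\delta_n$, and one must check that the leftover displacement from a point to its projection onto $D_{K_n}$ remains $O(\delta_n)$ so that hypothesis (2) can absorb it; simultaneously $\eta$ and $j$ must be tuned so that the ``size'' estimate $\sum_k a_k\lesssim\eta\,\delta^{\beta}$ and the ``probability'' estimate $\eta^{-\psi}2^{-(\lambda-2-\beta\psi)j}$ are both small, which is exactly where $\lambda>2$ is used. Everything else is the standard dyadic chaining computation.
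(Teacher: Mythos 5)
The paper never proves this lemma: it is quoted from \cite{kuma} and used as a black box in the proof of Theorem \ref{thm:tight}, so there is no internal argument to compare yours with; judged on its own, your proof is correct and is the natural one. Truncating the dyadic Kolmogorov--Chentsov chaining at the level $K_n$ with $\delta_n<2^{-K_n}\le 2\delta_n$ is exactly what makes hypothesis (1) usable, since every nearest-neighbour pair of $D_k$ with $j\le k\le K_n$ is separated by at least $2^{-K_n}>\delta_n$, while the leftover displacements $|p-\pi_{K_n}(p)|,\,|q-\pi_{K_n}(q)|\lesssim\delta_n$ are absorbed by hypothesis (2) through the elementary bound $w_{2\sqrt2\,\delta_n}(\bar u)\le 3\,w_{\delta_n}(\bar u)$; the choice $0<\beta<(\lambda-2)/\psi$ makes the bad-event probability $O\big(\eta^{-\psi}2^{-(\lambda-2-\beta\psi)j}\big)$ uniformly in $n$, and $\delta=2^{-j}$ is fixed independently of $n$, as the conclusion requires. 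Two points of bookkeeping you should make explicit, neither of which is a real gap: the chain increments $\bar u(\pi_{k+1}(p))-\bar u(\pi_k(p))$ must, like the bridging term, be decomposed into a bounded number of nearest-neighbour steps of $D_{k+1}$, because only those pairs are guaranteed to be more than $\delta_n$ apart (the two projections themselves can be arbitrarily close); and since the good-event estimate reads $w_\delta(\bar u)\le C\big(\eta\,\delta^\beta+w_{\delta_n}(\bar u)\big)$ with an absolute constant $C$, hypothesis (2) should be invoked at level $\epsilon/(2C)$ rather than $\epsilon/2$ --- harmless, as (2) is assumed for every $\epsilon,\rho>0$.
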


\begin{proof}[Proof of Theorem \ref{thm:tight}]
Let us check the first condition in Lemma \ref{lem:tight}. We use the triangle inequality
\be \label{kolm} \big\| \bar u_t(x) - \bar u_s(y)\big\|_{2m}^2 \le 2\big\| \bar u_t(x) - \bar u_t(y)\big\|_{2m}^2 + 2 \big\| \bar u_t(y) - \bar u_s(y)\big\|_{2m}^2 \ee
and bound each of the two terms on the right, starting with the first. Recall from \eqref{eq:dis:sc:sol} that the solution $u$
is the sum of two terms, and it is 
enough to look at each of them separately. The contribution of the noise term, the one involving $\xi$, gives a bound 
\bes\begin{split}
& \lesssim \frac{1}{n^{(\alpha-1)/\alpha}} \sum_{j=0}^{[nt]-1} \sum_k \left[\P_j(k-[xn^{1/\alpha}])-\P_j(k-[yn^{1/\alpha}])\right]^2 \\
%&\lesssim \frac{1}{n^{(\alpha-1)/\alpha}} \sum_{j=0}^{[nt]-1} \left[P(Y_j=0) - P(Y_j= [xn^{1/\alpha}]-[yn^{1/\alpha}])\right] \\
&\lesssim \frac{1}{n^{(\alpha-1)/\alpha}}\, \bar a\left([xn^{1/\alpha}]-[yn^{1/\alpha}])\right)  \\
&\lesssim |x-y|^{\alpha-1} + \frac{1}{n^{(\alpha-1)/\alpha}}.
\end{split}\ees
The contribution of the non-noise term in \eqref{eq:dis:sc:sol} corresponding to the initial profile is 
\bes \begin{split}
&\lesssim \left\| \sum_l \P_{[nt]}(l+[\mu nt]) \cdot \left[u_0\big(l+[xn^{1/\alpha}]\big)-u_0\big(l+[yn^{1/\alpha}]\big)\right]\right\|_{2m}^2 \\
&\lesssim |x-y|^{2\zeta} +\frac{1}{n^{2\zeta/\alpha}}.
\end{split}\ees
We have obtained 
\be \label{kolm:space} \big\| \bar u_t(x) - \bar u_t(y)\big\|_{2m}^2 \lesssim |x-y|^{\alpha-1} + \frac{1}{n^{(\alpha-1)/\alpha}}+|x-y|^{2\zeta} +\frac{1}{n^{2\zeta/\alpha}}. \ee
Let us next consider the second term in \eqref{kolm}. By arguments similar to those in Proposition \ref{prop:u:hol} one can see that the contribution from the noise term 
\bes\begin{split}
&\lesssim \frac{1}{n^{(\alpha-1)/\alpha}} \sum_{j=[ns]}^{[nt]-1} P(Y_j=0)\\
&\qquad  + \frac{1}{n^{(\alpha-1)/\alpha}} \sum_{j=0}^{[ns]-1} \sum_k \left[\P_{[nt]-j-1}(l+[\mu nt]) -\P_{[ns]-j-1}(l+[\mu ns])\right]^2 \\
&\lesssim \left(\frac{[nt]-[ns]}{n}\right)^{(\alpha-1)/\alpha} + \frac{1}{n^{(\alpha-1)/\alpha}} \Big[E\, \bar a\left(W_{[nt]-[ns]}+[\mu nt]-[\mu ns]\right) + E\,\bar a\left(Z_{[nt]-[ns]}\right)\Big] \\
&\lesssim |t-s|^{(\alpha-1)/\alpha} + \frac{1}{n^{(\alpha-1)/\alpha}}.
\end{split}\ees
The contribution of the non-noise term
\bes\begin{split}
&\lesssim \Big[\sum_l \P_{[nt]-[ns]}\big([yn^{1/\alpha}]-[\mu nt],\, l\big) \cdot \big\| u_{[ns]}\big(l\big)-u_{[ns]}\big([yn^{1/\alpha}]-[\mu ns]\big) \big\|_{2m} \Big]^2 \\
& \lesssim \sum_l \P_{[nt]-[ns]}\big([yn^{1/\alpha}]-[\mu nt],\, l\big) \cdot \big\| u_{[ns]}\big(l\big)-u_{[ns]}\big([yn^{1/\alpha}]-[\mu ns]\big) \big\|_{2m}^2 \\ 
& \lesssim \frac{1}{n^{(\alpha-1)/\alpha}} + \frac{1}{n^{2\zeta/\alpha}} + E \Big|X_{[nt]-[ns]}-[\mu nt]+[\mu ns]\Big|^{\alpha-1} +E \Big|X_{[nt]-[ns]}-[\mu nt]+[\mu ns]\Big|^{2\zeta}\\
& \lesssim |t-s|^{(\alpha-1)/\alpha} + \frac{1}{n^{(\alpha-1)/\alpha}}+|t-s|^{2\zeta/\alpha} +\frac{1}{n^{2\zeta/\alpha}}.
%&\lesssim \Big[E \Big|n^{-1/\alpha} \cdot \Big(X_{[nt]-[ns]}-[\mu nt] +[\mu ns]\Big)\Big|^{\eta}\Big]^2 \\
%&\lesssim |t-s|^{2\eta/\alpha} + \frac{1}{n^{2\eta/\alpha}} 
\end{split}\ees
The last line is because \eqref{eq:xunif} also holds with $\alpha-1$ replaced by $2\zeta$; this can be seen from following the proof of Lemma \ref{lem:xunif}. We have obtained 
\be \label{kolm:time} \big\| \bar u_t(y) - \bar u_s(y)\big\|_{2m}^2 \lesssim |t-s|^{(\alpha-1)/\alpha} + \frac{1}{n^{(\alpha-1)/\alpha}}+|t-s|^{2\zeta/\alpha} +\frac{1}{n^{2\zeta/\alpha}}. \ee
Combining \eqref{kolm:space} and \eqref{kolm:time} we obtain
\[\mE\big[\big\vert \bar u_t(x) -\bar u_s(y)\big\vert^{2m}\big] \le \big\vert (t,x)-(s,y)\big\vert^{m\cdot \min [2\zeta,\,\alpha-1]/\alpha} \]
when $|(t,x)-(s,y)|> \delta_n=n^{-1}$.  Because of our condition on $m$ the exponent is greater than $2$ and this verifies condition 1 in Lemma \ref{lem:tight}.

We next check the second condition in Lemma \ref{lem:tight}.  Since we have chosen $\delta_n=n^{-1}$ we just need to consider the maximum over $u_i(k)-u_j(l)$ where $|i-j|\le 1,\, |k-l|\le 1$.
There are of the order $n\times n^{1/\alpha}$ such points. Thus
\bes\begin{split}
\mP\left[w_{n^{-1}}(\bar u)>\epsilon\right] & \lesssim n^{1+\frac1\alpha}\sup_{|i-j|\le 1,\, |k-l|\le 1} \mP\left[\left|u_i(k)-u_j(l)\right|>\epsilon\right] \\
& \lesssim n^{1+\frac1\alpha}\, \sup_{|i-j|\le 1,\, |k-l|\le 1} \frac{\mE\left[\left|u_i(k)-u_j(l)\right|^{2m}\right]}{\epsilon^{2m}} \\
&\lesssim \frac{ n^{1+\frac1\alpha}}{\epsilon^{2m}} \cdot \frac{1}{n^{m\cdot \min [2\zeta ,\alpha-1]/\alpha}},
\end{split}\ees
where we used \eqref{kolm:space} and \eqref{kolm:time} for the last line. The above goes to $0$ with $n$ due to our restriction on $m$. Condition 2 of Lemma \ref{lem:tight} is verified.
\end{proof}

\section{Proof of part 1 of Theorem \ref{thm:part}} \label{sec:part:1}
Consider the martingale $M_n$ in \eqref{part:sc} (we ignore the superscript $\tilde \xi$). By the independence of the environment
this has the same distribution as the 
 \be  \label{part:rev}
M^{\leftarrow}_{(n,k)}=\frac{E_k \exp \left( \sum_{i=0}^n \beta\cdot \tilde\xi_{n-i}(X_i)\right)}{\left[\bE e^{ \beta\tilde\xi}\right]^{n+1}},
\ee
where $\E_k$ is the expectation of paths of the random walk starting at $k$. It is therefore enough to prove Theorem \ref{thm:part} for $M^{\leftarrow}_{(n,k)}$. Consider the random field 
\be \label{eq:w}
w_i(k)= \frac{E_k \exp \left( \sum_{j=0}^i \beta \cdot \tilde\xi_{i-j}(X_j)\right)}{\left[\bE e^{ \beta\tilde\xi}\right]^{i+1}},\quad i \ge 0,\, k \in \Z.
\ee
Using the Markov property we obtain 
\be \label{eq:w:she} 
\begin{split}
w_{i+1}(k)& = \sum_l \P(k,l)\, w_i(l) \left(\frac{e^{ \beta\tilde\xi_{i+1}(k)}}{\bE e^{ \beta \tilde\xi}}\right) \\
& = \sum_l \P(k,l)\, w_i(l) +\sum_l \P(k,l)\, w_i(l) \left(\frac{e^{ \beta \tilde\xi_{i+1}(k)}}{\bE e^{ \beta  \tilde\xi}}-1\right).
\end{split}
\ee
with initial profile 
\[ w_0(k) = \frac{e^{\beta \tilde\xi_0(k) }}{\mathbb E e^{\beta \tilde\xi}}.\]
Although \eqref{eq:w:she} is not of the form \eqref{eq:dis:sc} one can check that the statements of Theorem 
\ref{thm:u} hold for $w$; to see this one could follow the arguments of the proof of Theorem \ref{thm:u}. Indeed a solution to 
\eqref{eq:w:she} is 
\[ w_{i+1}(k)= \sum_l \P_{i+1}(k, l) w_0(l) + \sum_{j=0}^i \sum_l \P_{i-j}(k, l) \sum_y \P(l, y) w_j(y) \cdot \Big[\frac{e^{\beta \tilde \xi_{j+1}(y)}}{\bE e^{\beta \tilde \xi}}-1\Big]. \]
Therefore by Burkholder's inequality
\be
\begin{split}
&  \big\|w_{i+1}(k) \big\|_{2m}^2 \\
&\lesssim \sum_l \P_{i+1}(k, l) \cdot \big\|w_0(l) \big\|_{2m}^2 +\sum_{j=0}^i \sum_l \P_{i-j}^2(k, l) \cdot \Big\| \sum_y \P(l, y) w_j(y) \cdot \Big[\frac{e^{\beta \tilde \xi_{j+1}(y)}}{\bE e^{\beta \tilde \xi}}-1\Big]\Big\|_{2m}^2 \\
&\lesssim \sum_l \P_{i+1}(k, l) \cdot \big\|w_0(l) \big\|_{2m}^2 +\sum_{j=0}^i \sum_l \P_{i-j}^2(k, l) \frac{\sup_y \big\| w_j(y)\big\|_{2m}^2}{n^{(\alpha-1)/\alpha}},
\end{split}
\ee
thanks to the independence of $w_j(y)$ and $\tilde \xi_{j+1}(y)$, and the first bound in \eqref{eq:exp-lin}. This argument shows that we obtain the same moment bounds for $w$ as in Theorem \ref{thm:u}.

To obtain the first part of Theorem \ref{thm:part} we shall show that our random field $w$ is {\it close} to the solution $u$ of
\be \begin{split} \label{eq:u}
u_{i+1}(k) &= \sum_l \P(k, l) \,u_i(l) + \beta \tilde \xi_{i+1}(k) \cdot u_i(k) \\
u_0(k) &=1.
\end{split}
\ee
We start with an estimate of the spatial H\"older continuity of $u$.
\begin{lemma} \label{lem:u:k-l}The following holds
\[\sup_{i\le n} \big\| u_i(k)-u_i(l)\big\|_{2m}^2 \lesssim \frac{|l-k|^{\alpha-1}}{n^{(\alpha-1)/\alpha}}. \]
\end{lemma}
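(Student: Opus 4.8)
The plan is to run the same martingale/moment estimate that underlies the proof of Theorem~\ref{thm:u}. Since $\sum_y\P_{i+1}(k,y)=1$ and $u_0\equiv 1$, the mild form of \eqref{eq:u} is
\[ u_{i+1}(k)=1+\sum_{j=0}^{i}\sum_{y\in\Z}\P_{i-j}(k,y)\cdot\beta\,\tilde\xi_{j+1}(y)\cdot u_j(y), \]
so the constant non-noise term cancels in the difference and
\[ u_{i+1}(k)-u_{i+1}(l)=\sum_{j=0}^{i}\sum_{y\in\Z}\big[\P_{i-j}(k,y)-\P_{i-j}(l,y)\big]\cdot\beta\,\tilde\xi_{j+1}(y)\cdot u_j(y). \]
For fixed $i$, the inner sum over $y$ at stage $j$ is mean zero given $\sigma(\xi_s(\cdot):s\le j)$, since $\tilde\xi_{j+1}(\cdot)$ is i.i.d.\ mean zero and independent of the $\sigma(\xi_s(\cdot):s\le j)$-measurable field $u_j$; hence these are martingale differences in $j$. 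Burkholder's inequality for discrete martingales followed by Minkowski's inequality then gives
\[ \big\|u_{i+1}(k)-u_{i+1}(l)\big\|_{2m}^2\lesssim\sum_{j=0}^{i}\Big\|\Big(\sum_{y}\big[\P_{i-j}(k,y)-\P_{i-j}(l,y)\big]\,\beta\,\tilde\xi_{j+1}(y)\,u_j(y)\Big)^{2}\Big\|_{m}. \]

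Next I would expand the $m$-th power exactly as in \eqref{eq:u:m}--\eqref{eq:mom:4m}: kill every product term containing a first power of some $\tilde\xi_{j+1}(y)$ using $\bE\,\xi=0$, apply H\"older to the remaining $u_j$ factors, and use $\sum_y x_y^{r}\le(\sum_y x_y)^{r}$ for $r\ge1$ and $x_y\ge0$, the only change from Theorem~\ref{thm:u} being that one must first pass to $\big|\P_{i-j}(k,y)-\P_{i-j}(l,y)\big|$ since the kernel difference is signed (harmless, every surviving exponent being $\ge2$). Absorbing $\beta^2$ and the bound $\sup_{j\le n,\,y}\|u_j(y)\|_{2m}^2<\infty$ — which holds by Theorem~\ref{thm:u}, as \eqref{eq:u} is of the form \eqref{eq:dis:sc} with the Lipschitz $\sigma(x)=\beta x$ and bounded initial data $u_0\equiv1$ — into the implicit constant, this yields
\[ \big\|u_{i+1}(k)-u_{i+1}(l)\big\|_{2m}^2\lesssim\frac{1}{n^{(\alpha-1)/\alpha}}\sum_{j=0}^{i}\sum_{y\in\Z}\big[\P_{i-j}(k,y)-\P_{i-j}(l,y)\big]^{2}. \]

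It remains to recognise the spatial double sum as a potential-kernel increment. By translation invariance, for the symmetrized walk $Y=X-\tilde X$ one has $\sum_y\P_j(k,y)^2=P(Y_j=0)$ and $\sum_y\P_j(k,y)\P_j(l,y)=P(Y_j=k-l)$, so $\sum_y[\P_j(k,y)-\P_j(l,y)]^2=2\big[P(Y_j=0)-P(Y_j=k-l)\big]$ and therefore
\[ \sum_{j=0}^{i}\sum_{y\in\Z}\big[\P_{i-j}(k,y)-\P_{i-j}(l,y)\big]^{2}=2\,\bar a_{i}(k-l)\le 2\,\bar a(k-l)\lesssim|k-l|^{\alpha-1}, \]
using that $\bar a_n(x)$ increases to $\bar a(x)$ together with Lemma~\ref{pker:bd}. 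Since this bound is uniform in $i$, taking the supremum over $i\le n$ proves the lemma (the case $i=0$ being trivial). I do not anticipate a real obstacle; the only point requiring a little care is the signed kernel difference in the moment expansion, dealt with by the absolute-value remark above.
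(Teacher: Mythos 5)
Your proof is correct and follows essentially the same route as the paper: Burkholder (plus the uniform moment bound from Theorem~\ref{thm:u}) reduces the difference to $n^{-(\alpha-1)/\alpha}\sum_{j}\sum_y\big[\P_j(k,y)-\P_j(l,y)\big]^2$, which is then identified with the potential kernel and bounded via Lemma~\ref{pker:bd}. You simply spell out the steps (mild form, martingale structure, signed-kernel absolute values, $\bar a_i\le\bar a$) that the paper leaves implicit.
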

\begin{proof} From \eqref{eq:thm:um} the $2m$'th moment of $u_i(k)$ is bounded for $i \le n,\, k \in \Z$.
Therefore one can argue
\bes\begin{split} \big\| u_i(k)-u_i(l)\big\|_{2m}^2 &\lesssim \frac{1}{n^{(\alpha-1)/\alpha}} \sum_{j=0}^{i-1} \sum_y \left(\P_{i-1-j} (k,y) -\P_{i-1-j}(l,y)\right)^2 \\
& \lesssim \frac{\bar a(l-k)}{n^{(\alpha-1)/\alpha}}.
\end{split}\ees
The result follows from Lemma \ref{pker:bd}.
\end{proof}
Consider now the random field $\tilde w$ defined as 
\be \label{eq:wtilde}
\tilde w_{i+1}(k) = \sum_l \P(k,l)\, \tilde w_i(l) +\sum_l \P(k,l)\, \tilde w_i(l) \cdot \beta \tilde\xi_{i+1}(k)
\ee 
with initial profile $1$. The statements of Theorem \ref{thm:u} hold for $\tilde w$ also. We first compare $\tilde w$ with $u$. 
\begin{lemma} The following holds
\bes
\sup_{i\le n, \, k \in \Z} \big\| u_i(k)-\tilde w_i(k)\big\|_{2m}^2 \lesssim \frac{1}{n^{(\alpha-1)/\alpha}}.
\ees
\end{lemma}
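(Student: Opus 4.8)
The plan is to estimate the difference $D_i(k):=u_i(k)-\tilde w_i(k)$ directly; its equation turns out to be a linear recursion whose only genuine source term is the one-step spatial oscillation of $u$, which Lemma \ref{lem:u:k-l} controls at exactly the order $n^{-(\alpha-1)/\alpha}$. Subtracting \eqref{eq:wtilde} from \eqref{eq:u} (and using that a solution of \eqref{eq:u} satisfies $u_{i+1}(k)=\sum_l\P(k,l)u_i(l)+\beta\tilde\xi_{i+1}(k)u_i(k)$) one obtains
\[ D_{i+1}(k)=\sum_l\P(k,l)\,D_i(l)+\beta\,\tilde\xi_{i+1}(k)\Big[u_i(k)-\sum_l\P(k,l)\tilde w_i(l)\Big]. \]
Since $D_0\equiv 0$, Duhamel's formula gives $D_{i+1}(k)=\sum_{j=0}^i\sum_l\P_{i-j}(k,l)\,\beta\tilde\xi_{j+1}(l)\,h_j(l)$, where $h_j(l):=u_j(l)-\sum_{l'}\P(l,l')\tilde w_j(l')$. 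The algebraic point is the splitting $h_j(l)=g_j(l)+\sum_{l'}\P(l,l')D_j(l')$ with $g_j(l):=\sum_{l'}\P(l,l')\big[u_j(l)-u_j(l')\big]$ a genuine source that does not involve $D$, so the first step is to check that $g$ is small of the right order.

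By Lemma \ref{lem:u:k-l} together with Lemma \ref{lem:xunif} (applied with exponent $(\alpha-1)/2$ in place of $\alpha-1$, which is legitimate since $(\alpha-1)/2\le\alpha-1$ and $\mu$ is finite),
\[\|g_j(l)\|_{2m}\le\sum_{l'}\P(l,l')\,\|u_j(l)-u_j(l')\|_{2m}\lesssim\frac{1}{n^{(\alpha-1)/2\alpha}}\sum_{l'}\P(l,l')\,|l'-l|^{(\alpha-1)/2}\lesssim\frac{1}{n^{(\alpha-1)/2\alpha}},\]
so $\sup_{j,l}\|g_j(l)\|_{2m}^2\lesssim n^{-(\alpha-1)/\alpha}$. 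Next, set $A_i:=\sup_k\|D_i(k)\|_{2m}^2$. Since $\tilde\xi_{j+1}(l)$ is mean zero and independent of the $\sigma(\xi_s:s\le j)$-measurable quantity $h_j(l)$, the Duhamel sum over $j$ is a martingale, so Burkholder's and Minkowski's inequalities together with the same Wiener-chaos/Hölder bound that carries one from \eqref{eq:u:m} to \eqref{eq:mom:4m} (with $\sigma(u_j(l))$ replaced by $h_j(l)$, whose $L^{2m}$ norms are uniformly bounded by Theorem \ref{thm:u} for $u$ and $\tilde w$) yield
\[\|D_{i+1}(k)\|_{2m}^2\lesssim\frac{1}{n^{(\alpha-1)/\alpha}}\sum_{j=0}^i\sum_l\P_{i-j}^2(k,l)\,\|h_j(l)\|_{2m}^2.\]
Using $\|h_j(l)\|_{2m}^2\lesssim\|g_j(l)\|_{2m}^2+\sup_{l'}\|D_j(l')\|_{2m}^2\lesssim n^{-(\alpha-1)/\alpha}+A_j$, together with $\sum_l\P_{i-j}^2(k,l)=P(Y_{i-j}=0)$ for $Y=X-\tilde X$ and \eqref{eq:mom2:u}, this collapses to
\[A_{i+1}\lesssim\frac{1}{n^{(\alpha-1)/\alpha}}+\sum_{j=0}^i\frac{P(Y_{i-j}=0)}{n^{(\alpha-1)/\alpha}}\,A_j,\qquad 0\le i\le n-1.\]
I would then close the estimate with the weighted discrete Gronwall argument already used in Theorem \ref{thm:u}: setting $\mathcal A(i):=e^{-\delta i/n}A_i$ and choosing $\delta$ large enough that $c\sum_{j=0}^{[nT]}e^{-\delta j/n}P(Y_j=0)/n^{(\alpha-1)/\alpha}\le\tfrac12$ uniformly in $n$ (possible by splitting the sum at $j=\lfloor\epsilon n\rfloor$ and using that $\sum_{j\le\epsilon n}P(Y_j=0)\lesssim(\epsilon n)^{(\alpha-1)/\alpha}$), the recursion becomes $\mathcal A(i+1)\le c\,n^{-(\alpha-1)/\alpha}+\tfrac12\sup_{j\le i}\mathcal A(j)$; since $\mathcal A(0)=0$ this forces $\sup_{i\le n}\mathcal A(i)\lesssim n^{-(\alpha-1)/\alpha}$, hence $\sup_{i\le n,\,k}\|D_i(k)\|_{2m}^2\le e^{\delta}\sup_{i\le n}\mathcal A(i)\lesssim n^{-(\alpha-1)/\alpha}$, which is the claim.

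The only step that needs genuine care is verifying that the $2m$-th moment computation of Theorem \ref{thm:u} goes through verbatim with $h_j(l)$ in the role of $\sigma(u_j(l))$; this works because that computation uses nothing about $\sigma(u_j(l))$ beyond the Hölder factorization of products and uniform membership in $L^{2m}$, and $\sup_{j\le n,\,l}\|h_j(l)\|_{2m}<\infty$ by the uniform moment bounds for $u$ and $\tilde w$. Conceptually, the one idea that makes the whole estimate close is the recognition that the mismatch between $u$ and $\tilde w$ is driven purely by the one-step oscillation $u_j(l)-u_j(l')$, controlled by Lemma \ref{lem:u:k-l} at precisely the target order $n^{-(\alpha-1)/\alpha}$, while every remaining contribution feeds back into $D$ itself and is absorbed by the Gronwall step.
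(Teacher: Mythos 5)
Your argument is correct and follows essentially the same route as the paper: the same Duhamel representation of $u-\tilde w$, the same splitting of the mismatch into the same-point difference plus the one-step spatial increment controlled by Lemma \ref{lem:u:k-l}, then Burkholder and a weighted Gronwall recursion. The only differences are cosmetic (you bound the source via Minkowski with exponent $(\alpha-1)/2$ rather than Jensen with exponent $\alpha-1$, and you spell out the choice of $\delta$ and the iteration that the paper leaves implicit), so nothing is missing.
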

\begin{proof}
We can check that 
\bes
\tilde w_{i+1}(k) - u_{i+1}(k) = \beta \sum_{j=0}^i \sum_l \P_{i-j}(k,\, l) \cdot \tilde\xi_{j+1}(l) \sum_{y} \P(l, \, y) \big[\tilde w_j(y) - u_j(l)\big].
\ees
Split $\tilde w_j(y)- u_j(l)=[\tilde w_j(y)- u_j(y)]+[u_j(y)-u_j(l)]$ to get
\be \label{eq:u-wtilde}
\begin{split}
\sup_{k \in \Z} \big\| u_{i+1}(k)-\tilde w_{i+1}(k)\big\|_{2m}^2 &\lesssim \frac{1}{n^{(\alpha-1)/\alpha}} \sum_{j=0}^i \sum_l \P_{i-j}^2(k, \, l) \sum_y \P(l, \, y) \cdot \big\|u_j(y) -u_j(l) \big\|_{2m}^2 \\
& \quad + \frac{1}{n^{(\alpha-1)/\alpha}} \sum_{j=0}^i \sum_l \P_{i-j}^2(k, \, l) \cdot \sup_{y \in \Z} \big\|u_j(y) -\tilde w_j(y) \big\|_{2m}^2.
\end{split}
\ee
The lemma follows by using Gronwall's inequality and Lemma \ref{lem:u:k-l}.
\end{proof}

Next consider the random field $w^*$ given by 
\be \label{eq:wstar}
 w^*_{i+1}(k) = \sum_l \P(k,l)\,  w^*_i(l) +\sum_l \P(k,l)\,  w^*_i(l) \cdot \left[\frac{e^{\beta \tilde\xi_{i+1}(k)}}{\bE e^{\beta \tilde\xi}}-1 \right]
\ee 
with initial profile $1$. Once again Theorem \ref{thm:u} holds for $w^*$ also. We compare $w^*$ with $\tilde w$ below.
\begin{lemma}
The following holds 
\bes
\sup_{i\le n,\, k \in \Z} \big\| \tilde w_i(k)- w^*_i(k)\big\|_{2m}^2 \lesssim \frac{1}{n^{(\alpha-1)/\alpha}}.
\ees
\end{lemma}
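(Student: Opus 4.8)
The plan is to repeat, almost verbatim, the scheme of the two preceding lemmas and of the proof of Theorem~\ref{thm:u}: bound the difference $e_i(k):=\tilde w_i(k)-w^*_i(k)$ by a Burkholder estimate followed by a Gronwall/weighted-supremum argument. The only genuinely new point is a quantitative estimate on the gap between the exponential multiplier in \eqref{eq:wstar} and its linearisation in \eqref{eq:wtilde}. First I would introduce
\[ D_{i}(k):=\Big[\frac{e^{\beta\tilde\xi_{i}(k)}}{\bE e^{\beta\tilde\xi}}-1\Big]-\beta\tilde\xi_{i}(k), \]
observe that $\bE D_i(k)=0$ (both $\beta\tilde\xi$ and $e^{\beta\tilde\xi}/\bE e^{\beta\tilde\xi}-1$ are centred), and, subtracting \eqref{eq:wstar} from \eqref{eq:wtilde} while using $e_0\equiv 0$, write the mild identity
\[ e_{i+1}(k)=\sum_{j=0}^i\sum_l\P_{i-j}(k,l)\Big[\beta\tilde\xi_{j+1}(l)\sum_y\P(l,y)\,e_j(y)-D_{j+1}(l)\sum_y\P(l,y)\,w^*_j(y)\Big]. \]

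The key estimate I would then establish is $\|D_i(k)\|_{2m}\lesssim \beta^2\,n^{-(\alpha-1)/\alpha}$, hence $\|D_i(k)\|_{2m}^2\lesssim n^{-2(\alpha-1)/\alpha}$. This is where the exponential-moment hypothesis of Theorem~\ref{thm:part} enters: a second-order Taylor expansion of $x\mapsto e^{\beta x}$ gives the pointwise bound $|D_i(k)|\lesssim (\beta\tilde\xi_i(k))^2 e^{|\beta\tilde\xi_i(k)|}+\big|1-\bE e^{\beta\tilde\xi}\big|\cdot|\beta\tilde\xi_i(k)|$, and since $\tilde\xi_i(k)=\xi_i(k)\,n^{-(\alpha-1)/2\alpha}$ with $\xi$ possessing all exponential (hence all polynomial) moments, taking $L^{2m}$-norms produces the claimed $\beta^2 n^{-(\alpha-1)/\alpha}$ (this is of the type recorded in \eqref{eq:exp-lin}). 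I would also recall $\|\beta\tilde\xi_i(k)\|_{2m}^2\lesssim n^{-(\alpha-1)/\alpha}$ and, since the proof of Theorem~\ref{thm:u} applies unchanged to $w^*$ (whose initial profile is the bounded constant $1$), that $\sup_{j\le n,\,y}\|w^*_j(y)\|_{2m}^2\lesssim 1$.

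With these in hand the rest is routine. For each fixed $j$ the variables $\{\beta\tilde\xi_{j+1}(l),\,D_{j+1}(l)\}_{l\in\Z}$ are mean zero, independent over $l$, and independent of $\{e_j(y),\,w^*_j(y)\}_y$; so Burkholder's inequality exactly as in \eqref{eq:u:m}--\eqref{eq:mom:4m}, together with Minkowski's, Hölder's and Jensen's inequalities, yields
\[ \big\|e_{i+1}(k)\big\|_{2m}^2\lesssim\sum_{j=0}^i\sum_l\P_{i-j}^2(k,l)\Big[\frac{\sup_y\|e_j(y)\|_{2m}^2}{n^{(\alpha-1)/\alpha}}+\frac{1}{n^{2(\alpha-1)/\alpha}}\Big]. \]
Since $\sum_l\P_{i-j}^2(k,l)=P(X_{i-j}=\tilde X_{i-j})$ and $\sum_{j=0}^{[nT]}P(X_j=\tilde X_j)\lesssim n^{(\alpha-1)/\alpha}$ by \eqref{eq:mom2:u}, the $D$-term contributes $\lesssim n^{-(\alpha-1)/\alpha}$. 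For the remaining term I would set $\mathcal E(i):=e^{-\delta i/n}\sup_{k}\|e_i(k)\|_{2m}^2$ and choose $\delta$ large enough that $\sum_{j=0}^{[nT]}e^{-\delta j/n}P(X_j=\tilde X_j)/n^{(\alpha-1)/\alpha}<\tfrac12$ (possible, again by \eqref{eq:mom2:u}); this turns the recursion into $\mathcal E(i+1)\lesssim n^{-(\alpha-1)/\alpha}+\tfrac12\sup_{q\le i}\mathcal E(q)$, whence $\sup_{i\le n}\mathcal E(i)\lesssim n^{-(\alpha-1)/\alpha}$ and, undoing the bounded weight, $\sup_{i\le n,\,k\in\Z}\|e_i(k)\|_{2m}^2\lesssim n^{-(\alpha-1)/\alpha}$.

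The only real obstacle is the estimate on $D_i(k)$: one must verify both that the linearisation error of $e^{\beta\tilde\xi}/\bE e^{\beta\tilde\xi}$ is truly $O(n^{-(\alpha-1)/\alpha})$ in $L^{2m}$ and --- \emph{crucially} --- that it is centred, since an uncancelled bias of that size would, after summing against $\sum_j P(X_j=\tilde X_j)\asymp n^{(\alpha-1)/\alpha}$, fail to be small. Everything downstream is a direct transcription of estimates already established in the paper.
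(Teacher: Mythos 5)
Your proposal is correct and takes essentially the same route as the paper: the paper's proof decomposes the difference so that the linearisation error $\big[\beta\tilde\xi+1-e^{\beta\tilde\xi}/\bE e^{\beta\tilde\xi}\big]$ multiplies $\tilde w$ and the multiplier $e^{\beta\tilde\xi}/\bE e^{\beta\tilde\xi}-1$ multiplies the difference (you do the symmetric rearrangement, putting the error against $w^*$ and $\beta\tilde\xi$ against the difference, which is equally valid since both fields have uniform $L^{2m}$ bounds), and your key estimate $\|D_i(k)\|_{2m}^2\lesssim n^{-2(\alpha-1)/\alpha}$, including its centredness, is exactly the second bound in \eqref{eq:exp-lin} that the paper invokes. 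The remaining steps — Burkholder, the Green-function bound \eqref{eq:mom2:u}, and the weighted-supremum/Gronwall recursion — coincide with the paper's argument.
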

\begin{proof}
We write 
\bes\begin{split}
\tilde w_{i+1}(k) - w^*_{i+1}(k) & = \sum_{j=0}^i \sum_l\P_{i-j}(k, \, l)\left[\beta \tilde\xi_{j+1}(l)+1- \frac{e^{\beta \tilde\xi_{j+1}(l)}}{\bE e^{\beta \tilde\xi}}\right]\cdot \sum_y \P(l,\,y) \tilde w_j(y) \\
&\quad + \sum_{j=0}^i \sum_l\P_{i-j}(k, \, l)\left[\frac{e^{\beta \tilde\xi_{j+1}(l)}}{\bE e^{\beta \tilde\xi}}-1 \right]\cdot \sum_y \P(l,\,y) \left[w^*_j(y)- \tilde w_j(y)\right]
\end{split}\ees
One can check that for any $m\ge 1$ 
\be \label{eq:exp-lin}
\begin{split}
\left\|\frac{e^{\beta \tilde\xi}}{\bE e^{\beta \tilde\xi}}-1\right\|_{2m}^2 &\lesssim \frac{1}{n^{(\alpha-1)/\alpha}}, \quad \text{and}\\
\left\|\frac{e^{\beta \tilde\xi}}{\bE e^{\beta \tilde\xi}}-1-\beta\tilde\xi\right\|_{2m}^2 &\lesssim \frac{1}{n^{2(\alpha-1)/\alpha}}.
\end{split}
\ee
Using this we obtain for $i\le n$
\bes
\sup_{k \in \Z} \big\| \tilde w_{i+1}(k)- w^*_{i+1}(k)\big\|_{2m}^2 \lesssim \frac{1}{n^{(\alpha-1)/\alpha}} + \frac{1}{n^{(\alpha-1)/\alpha}}\sum_{j=0}^i \sum_l \P_{i-j}^2(k, \, l) \cdot \sup_{y \in \Z} \big\|\tilde w_j(y) -w^*_j(y) \big\|_{2m}^2.
\ees
The lemma follows by an application of Gronwall's inequality.
\end{proof} 
 
Finally we compare $w^*$ with $w$ in \eqref{eq:w:she}.
\begin{lemma} The following holds
\bes
\sup_{i\le n, \, k \in \Z} \big\| w^*_i(k)-w_i(k)\big\|_{2m}^2 \lesssim \frac{1}{n^{(\alpha-1)/\alpha}}.
\ees
\end{lemma}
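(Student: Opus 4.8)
The key observation is that $w^*$ and $w$ satisfy the \emph{same} recursion \eqref{eq:w:she} (equivalently \eqref{eq:wstar}), and differ only in their initial profiles: $w^*_0\equiv 1$, while $w_0(k)=e^{\beta\tilde\xi_0(k)}/\bE e^{\beta\tilde\xi}$. Hence the difference $d_i(k):=w^*_i(k)-w_i(k)$ solves \eqref{eq:w:she} with the (random) initial profile
\[ d_0(k)=1-\frac{e^{\beta\tilde\xi_0(k)}}{\bE e^{\beta\tilde\xi}}.\]
The point is that $\bE\, d_0(k)=0$ and the $d_0(k),\,k\in\Z$, are i.i.d., so by the first inequality in \eqref{eq:exp-lin} we have $\|d_0(k)\|_{2m}^2\lesssim n^{-(\alpha-1)/\alpha}$. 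Using the solution formula for \eqref{eq:w:she} displayed above, $d$ satisfies
\[ d_{i+1}(k)=\sum_l\P_{i+1}(k,l)\,d_0(l)+\sum_{j=0}^i\sum_l\P_{i-j}(k,l)\Big[\frac{e^{\beta\tilde\xi_{j+1}(l)}}{\bE e^{\beta\tilde\xi}}-1\Big]\sum_y\P(l,y)\,d_j(y).\]

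I would then bound the two terms on the right separately. The first term is a sum of the i.i.d.\ mean-zero variables $d_0(l)$, so Burkholder's inequality gives
\[ \Big\|\sum_l\P_{i+1}(k,l)\,d_0(l)\Big\|_{2m}^2\lesssim \|d_0\|_{2m}^2\sum_l\P_{i+1}^2(k,l)\le \|d_0\|_{2m}^2\lesssim \frac{1}{n^{(\alpha-1)/\alpha}}.\]
For the second term one argues exactly as in the preceding comparison lemmas: conditionally on the $\sigma$-algebra $\mathcal F_j$ generated by $\{\tilde\xi_s(\cdot):s\le j\}$ the summands over $l$ are independent and mean zero (since $\tilde\xi_{j+1}$ is independent of $\mathcal F_j$ while $d_j$ is $\mathcal F_j$-measurable), and the sum over $j$ is a martingale sum for the filtration $(\mathcal F_{j+1})_j$. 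Two applications of Burkholder's inequality, Minkowski's inequality, the independence of $d_j(y)$ from $\tilde\xi_{j+1}(l)$, Jensen's inequality for the probability measure $\P(l,\cdot)$, and the first bound in \eqref{eq:exp-lin} yield
\[ \sup_{k\in\Z}\big\|d_{i+1}(k)\big\|_{2m}^2\lesssim \frac{1}{n^{(\alpha-1)/\alpha}}+\frac{1}{n^{(\alpha-1)/\alpha}}\sum_{j=0}^i P\big(X_{i-j}=\tilde X_{i-j}\big)\cdot\sup_{y\in\Z}\big\|d_j(y)\big\|_{2m}^2.\]

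Finally I would close this with the weighted Gronwall argument from the proof of Theorem \ref{thm:u}: set $\mathcal D(i)=e^{-\delta i/n}\sup_{k}\|d_i(k)\|_{2m}^2$, and choose $\delta$ so large that $n^{-(\alpha-1)/\alpha}\sum_{j=0}^{n}e^{-\delta j/n}P(X_j=\tilde X_j)$ — which is finite uniformly in $n$ by Corollary \ref{cor:green} applied to $X-\tilde X$ (cf.\ \eqref{eq:mom2:u}) — is below half the reciprocal of the implied constant. The recursion above then becomes $\mathcal D(i+1)\le C\,n^{-(\alpha-1)/\alpha}+\tfrac12\sup_{j\le i}\mathcal D(j)$; since $\mathcal D(0)=\|d_0\|_{2m}^2\lesssim n^{-(\alpha-1)/\alpha}$ and each $\mathcal D(i)$ is finite (Theorem \ref{thm:u} applies to both $w^*$ and $w$), a short induction gives $\sup_{i\le n}\mathcal D(i)\lesssim n^{-(\alpha-1)/\alpha}$, hence $\sup_{i\le n,\,k\in\Z}\|d_i(k)\|_{2m}^2\lesssim n^{-(\alpha-1)/\alpha}$. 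There is no serious obstacle here — the argument is a near-verbatim repetition of the preceding lemmas — but the one point that needs care, and the reason the scheme works at all, is that the unequal initial profiles contribute only the term $\sum_l\P_{i+1}(k,l)d_0(l)$, which is of the same small order $n^{-(\alpha-1)/\alpha}$ as every other error term precisely because $d_0$ has mean zero; were it not mean zero this term would be $O(1)$ and the comparison would fail.
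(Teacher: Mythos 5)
Your proposal is correct and follows essentially the same route as the paper: the same Duhamel-type decomposition of $w_i-w^*_i$ into the initial-profile term $\sum_l \P_{i+1}(k,l)\,d_0(l)$ plus the martingale term, Burkholder together with the first bound in \eqref{eq:exp-lin} to produce the recursion $\sup_k\|d_{i+1}(k)\|_{2m}^2\lesssim n^{-(\alpha-1)/\alpha}+n^{-(\alpha-1)/\alpha}\sum_{j\le i}P(X_{i-j}=\tilde X_{i-j})\sup_y\|d_j(y)\|_{2m}^2$, and Gronwall to close. Your explicit weighted (exponential-in-$i/n$) Gronwall step is exactly the device from Theorem \ref{thm:u} that the paper's terse ``Gronwall's inequality'' refers to, and your remark about the mean-zero initial difference being the crucial point is accurate.
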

\begin{proof}
We now have
\bes \begin{split}
w_{i+1}(k)-w^*_{i+1}(k) & = \sum_l \P_{i+1}(k,\,l) \cdot \left[\frac{e^{\beta \tilde\xi_0(l)}}{\bE e^{\beta \tilde\xi}}-1\right] \\
&\quad+ \sum_{j=0}^i \sum_l \P_{i-j}(k, \, l) \left[\frac{e^{\beta \tilde\xi_{j+1}(l)}}{\bE e^{\beta \tilde\xi}}-1\right] \cdot \sum_y \P(l,\,y) \big[w_j(l) -w^*_j(l) \big].
\end{split}\ees
Burkholder's inequality along with the first inequality in \eqref{eq:exp-lin} shows
\bes
\sup_{k \in \Z} \big\|  w_{i+1}(k)- w^*_{i+1}(k)\big\|_{2m}^2 \lesssim \frac{1}{n^{(\alpha-1)/\alpha}} + \frac{1}{n^{(\alpha-1)/\alpha}}\sum_{j=0}^i \sum_l \P_{i-j}^2(k, \, l) \cdot \sup_{y \in \Z} \big\| w_j(y) -w^*_j(y) \big\|_{2m}^2.
\ees
We can now use Gronwall's inequality to complete the proof.
\end{proof}
Our main result Theorem \ref{thm:main} shows that $u_1(0)$ of \eqref{eq:u} converges in distribution to $v_1(0)$ of \eqref{eq:she} 
with $\sigma(x)=\beta x$ and initial profile $1$. Furthermore the moments of $u$ converges to that of $v$. As a consequence of the previous lemmas and the discussion at the beginning of this section, the same holds for $M_n$. This completes the proof of
Theorem \ref{thm:part}. \qed

\section{Extensions} \label{sec:ext}
\subsection{Addition of a drift term}
We describe in this section an approximation to 
\be \label{eq:she:drift}
\partial_t v = -\nu(-\Delta)^{\alpha/2}\,v_t(x) + b(v_t(x))+ \sigma(v_t(x)) \cdot \dot{W}(t,x).
\ee
where $b:\R\to\R$ is also Lipschitz continuous with Lipschitz coefficient $\text{Lip}_b$. Recall that we scale
time by $n$, which suggests an approximation of the form
\be \label{eq:dis:sc:drift}
u^{(n)}_{i+1}(k)= \sum_{l\in \Z} \P(k,l) \, u_i^{(n)}(l) + b\big(u_i^{(n)}(k)\big)\cdot \frac{1}{n}+\sigma\big(u_i^{(n)}(k)\big)\cdot \frac{\xi_i(k)}{n^{(\alpha-1)/2\alpha}}.
\ee
We have the following
\begin{theorem} \label{thm:drift}
Let the conditions in Assumption \ref{cond1} hold, and fix an integer $m\ge 1$ so that $2m<2+\kappa$. Let $v_0$ be a continuous (random) function so that $\sup_x \mE |v_0(x)|^{2m}<\infty$ that is independent of $\xi$ and $\dot W$. Let $u^{(n)}$ be the solution to 
\eqref{eq:dis:sc:drift} with initial profile $u_0^{(n)}$. Then for each $t> 0, \, x\in \R$ we have $\bar u_t^{(n)}(x) \Rightarrow v_t(x)$, where $v$ is the solution to \eqref{eq:she:drift} with initial profile $v_0$. Furthermore we have 
\bes
\mE\big|\bar u^{(n)}_t(x) \big|^{2m} \to \mE\big| v_t(x)\big|^{2m} \quad \text{as } n \to \infty.
\ees
\end{theorem}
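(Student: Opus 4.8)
The plan is to mirror the proof of Theorem \ref{thm:main} step by step, treating the extra drift term as a lower-order, smoother perturbation that never requires martingale (Burkholder) estimates and whose approximation errors are absorbed by the same Gronwall argument. First I would establish the analogue of Theorem \ref{thm:u} for \eqref{eq:dis:sc:drift}: a solution satisfies
\[
u_{i+1}(k) = \sum_l \P_{i+1}(k,l)\,u_0(l) + \frac1n\sum_{j=0}^i\sum_l \P_{i-j}(l-k)\,b\big(u_j(l)\big) + \sum_{j=0}^i\sum_l \P_{i-j}(l-k)\,\sigma\big(u_j(l)\big)\frac{\xi_j(l)}{n^{(\alpha-1)/2\alpha}}.
\]
Since $\sum_l \P_{i-j}(l-k)=1$ and $b$ is Lipschitz, the $\|\cdot\|_{2m}$-norm of the drift term is bounded by $\frac{c}{n}\sum_{j=0}^i\big(1+\sup_l\|u_j(l)\|_{2m}\big)$. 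Combining this with the estimate for the noise term already carried out in the proof of Theorem \ref{thm:u}, Picard iteration and a discrete Gronwall inequality give existence, uniqueness and the moment bounds \eqref{eq:mom:unif}, \eqref{eq:thm:um} uniformly in $n$; the drift only adds a harmless extra summand to the recursion.

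Next I would run through the coarse-graining and coupling scheme of Section \ref{sec:main}. The noise term is handled verbatim: Theorem \ref{thm:u-U}, Propositions \ref{prop:V-U}, \ref{prop:V-Vbar}, and Lemmas \ref{lem:v:s}, \ref{lem:v:t} never used anything about $b$ and go through unchanged. For the drift term one introduces the parallel approximations, namely summing the factors $1/n$ over blocks, replacing $\sum_{l\in\text{block}}\P_{i-j}$ by the Riemann weight (resp.\ the integral of $p$), and replacing $b(u_j(l))$ within a block by $b(u_{i[n^\theta]}(k[n^\gamma]))$. Each replacement error is estimated directly by Minkowski's inequality together with the H\"older estimate Proposition \ref{prop:u:hol} for $u$ (and Lemma \ref{holder} for $v$), the local limit theorem \ref{llt}, and the fact that $\frac1n\sum_{j=0}^{[nt]}1$ is uniformly bounded. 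Because the drift carries the extra factor $n^{-1}$ rather than $n^{-(\alpha-1)/\alpha}$, these errors are of strictly lower order than those already controlled, and the Riemann-sum step again only needs $\gamma\le\theta/\alpha$ from \eqref{g:t:cond}; thus the discrete drift term is close, in $\|\cdot\|_{2m}$, to the coarse-lattice Riemann sum $\sum_j \frac{[n^\theta]}{n}\,\big(p_{\cdot}*b(v_{j[n^\theta]/n})\big)(\cdot)$, which in turn is close to $\int_0^t \big(p_{t-s}*b(v_s)\big)(x)\,\d s$.

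Finally I would collect all the bounds as in the last theorem of Section \ref{sec:main}. The resulting inequality for $\big\|u_{r[n^\theta]}(z[n^\gamma]) - v_{r[n^\theta]/n}((z+a_r)[n^\gamma]/n^{1/\alpha})\big\|_{2m}^2$ now carries two source terms involving $\|u_j-v_j\|_{2m}^2$: the usual one with kernel $\frac1{n^{1+1/\alpha}}\sum_l p^2_{\cdot}(\cdot)$ from the noise, and a new one with kernel $\frac1n\sum_l p_{\cdot}(\cdot)\,\text{Lip}_b^2$ from the drift, both summable, so Gronwall's inequality applies and the left side tends to $0$ with an explicit rate. This gives $\bar u_t(x)\Rightarrow v_t(x)$ and convergence of the $2m$-th moments; the general-$v_0$ case reduces to $v_0\equiv0$ exactly as in the proof of Theorem \ref{thm:main}, using the weak convergence of the centered random walk to handle the non-noise term.

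I expect the main (and essentially only) obstacle to be organizational: checking that each of the many approximation steps for the drift term produces an error no larger than that of the corresponding noise-term step, so that none of the constraints in \eqref{g:t:cond} needs to be strengthened. Once that bookkeeping is in place, no new idea beyond those of Sections \ref{sec:exis}--\ref{sec:main} is required.
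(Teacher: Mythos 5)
Your overall skeleton matches the paper's: redo Theorem \ref{thm:u} with the drift handled by Cauchy--Schwarz, run the coarse-graining/coupling scheme of Section \ref{sec:main} with a parallel drift term added to $U$, $V$, $\bar V$, $\tilde v$, $\bar v$, and close with Gronwall. The gap is in your claim that the drift replacement errors are ``of strictly lower order'' and can be ``estimated directly'' from the factor $n^{-1}$ and the boundedness of $\frac1n\sum_{j\le [nt]}1$. The drift errors are not square-function quantities: they involve \emph{first} powers of kernel differences, e.g.
\[
\frac1n\sum_{j=0}^{r[n^\theta]-1}\sum_{l\in\Z}\Big\vert \P_{[nt]-j-1}\big(l+[\mu nt]-[xn^{1/\alpha}]\big)-\P_{r[n^\theta]-j-1}\big(l-z[n^\gamma]\big)\Big\vert ,
\]
and the inner sum is a total-variation--type distance which is merely bounded by $2$; for $j$ small relative to the temporal and spatial offsets the two kernels are essentially disjoint, so the term is of order $1$, and the prefactor $1/n$ only yields an $O(1)$ total, not something vanishing. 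The potential-kernel/Green-function machinery that controlled the squared differences in Propositions \ref{prop:u:hol} and \ref{prop:u-U} does not apply here. The paper instead discards the indices $j\le[\epsilon n]$ at cost $O(\epsilon)$, restricts the convolution variable via the large-deviation bound (Lemma \ref{lem:ld}), uses the local limit theorem \ref{llt} to get pointwise convergence of the kernel differences for macroscopic $j$, and concludes by dominated convergence; this gives convergence but no rate, so your parenthetical claim of ``an explicit rate'' for Theorem \ref{thm:drift} is also unwarranted as stated.

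A second missing ingredient is the domination needed for those DCT arguments on the coarse lattice: the drift terms there involve sums such as $\frac{[n^\theta][n^\gamma]}{n}\sum_{k}\P_{i[n^\theta]}(k[n^\gamma])$, i.e.\ the kernel sampled only on the sublattice $[n^\gamma]\Z$, and their uniform boundedness is not automatic. The paper proves a dedicated estimate (Lemma \ref{lem:periodic}, $\sum_k\P_{i[n^\theta]}(k[n^\gamma])\lesssim n^{-\gamma}$), which uses Theorem \ref{llt} together with a heavy-tail large-deviation bound and requires the \emph{additional} parameter constraint $\gamma<a\theta$ (and $\gamma<\alpha\theta$ in the $\bar V$ comparison) beyond \eqref{g:t:cond} — contrary to your expectation that no constraint needs strengthening. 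So while your plan is the right one in outline, the two points above — the $\epsilon$-splitting/LLT/dominated-convergence treatment of the $L^1$-type drift errors, and the sublattice kernel bound with its extra condition on $\gamma,\theta$ — are genuine ideas of the paper's proof that your proposal does not supply.
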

The proof of this theorem follows closely the proof of Theorem \ref{thm:main}. We provide the outline of the proof and leave the details to the reader. We start by checking that the conclusions of Theorem \ref{thm:u} continue to hold. Using the Cauchy-Schwarz inequality for the drift term we can check that the Picard iterates $w^{(p+1)}_{i+1}(k)$ in \eqref{eq:picard} satisfy
\bes
\begin{split}
\mE \left|w^{(p+1)}_{i+1}(k)-w^{(p)}_{i+1}(k)\right|^2 &\le \text{Lip}_b^2 \sum_{j=0}^i \sum_l \P_{i-j}(l-k)\cdot  \frac{\mE \big|w^{(p)}_{j}(l)-w^{(p-1)}_j(l)\big|^2}{n}\\
&\;\;+  \lip^2\sum_{j=0}^{i} \sum_{l \in \Z}\P_{i-j}^2(l-k)\cdot  \frac{\mE \big|w^{(p)}_{j}(l)-w^{(p-1)}_j(l)\big|^2}{n^{(\alpha-1)/\alpha}}.
\end{split}
\ees
With $\mathcal W(p)$ defined as in \eqref{w} we obtain now 
\[
\mathcal{W}^2(p+1) \le \text{Lip}_b^2 \cdot \mathcal{W}^2(p) \sum_{j=0}^{[nT]} \frac{e^{-\delta j/n}}{n} +  \lip^2\cdot \mathcal{W}^2(p)\sum_{j=0}^{[nT]}    \frac{e^{-\delta j/n}\cdot P \big(X_{j}=\tilde{X}_{j}\big)}{n^{(\alpha-1)/\alpha}}.
\]
We now choose $\delta$ large enough so that we obtain
\[
\mathcal W^{2}(p+1) \le \frac{1}{2} \mathcal W^{2}(p).\]
As before we can show the existence of a unique solution to \eqref{eq:dis:sc:drift} satisfying \eqref{eq:mom:unif}.
The second statement in Theorem \ref{thm:u} follows along similar lines. 

As in the proof of Theorem \ref{thm:main} we start with initial profile $v_0\equiv 0$, and impose conditions on $\gamma$ 
and $\theta$ as in \eqref{g:t:cond}. Our definition of $U$ in \eqref{eq:U} now includes an additional term
\be \label{eq:U:drift}\sum_{j=0}^{i-1} \sum_{l\in \Z}\P_{(i-1-j)[n^\theta]}\big((l-k)[n^\gamma]\big) \cdot b\big(u_{j[n^\theta]}(l[n^\gamma])\big)\cdot \frac{[n^\theta][n^\gamma]}{n}.\ee
We first consider Proposition \ref{prop:u:hol}. With the addition of the drift in \eqref{eq:dis:sc:drift} the difference between 
$\bar u_t(x)$ and $u_{r[n^\theta]}(z[n^\gamma])$ gives extra terms
\bes
\begin{split}
&\frac{1}{n} \sum_{j=r[n^\theta]}^{[nt]-1}\sum_{l\in \Z} \P_{[nt]-j-1}\big(l+[\mu nt] -[xn^{1/\alpha}]\big) \cdot b\big(u_j(l)\big)   \\
&\quad +\frac{1}{n} \sum_{j=0}^{r[n^\theta]-1} \sum_{l \in \Z} \Big[ \P_{[nt]-j-1}\big(l+[\mu nt] -[xn^{1/\alpha}]\big) - \P_{r[n^\theta]-j-1}\big(l- z[n^\gamma]\big)\Big]\cdot b\big(u_j(l)\big). 
\end{split}
\ees
The $\|\cdot\|_{2m}$ norm of the first term is of order $n^\theta/n$, while that of the second term is of order
\bes
\begin{split}
&\frac{1}{n} \sum_{j=0}^{r[n^\theta]-1} \sum_{l \in \Z} \Big\vert \P_{[nt]-r[n^\theta]+j}\big(l+[\mu nt] -[xn^{1/\alpha}]\big) - \P_{j}\big(l- z[n^\gamma]\big)\Big\vert \\
& \lesssim \frac{1}{n} \sum_{j=0}^{r[n^\theta]-1}  \sum_{w\in\Z} \P_{[nt]-r[n^\theta]}(w)  \cdot \sum_{l \in \Z}\Big\vert \P_{j}\big(l+[\mu nt] -[xn^{1/\alpha}]-w\big) - \P_{j}\big(l- z[n^\gamma]\big)\Big\vert.
\end{split}
\ees
Fix any $\epsilon>0$. We can remove the terms from $j=0$ to $j=[\epsilon n]$ with an error of order $\epsilon$. Also due to Lemma  \ref{lem:ld} we might restrict to $|w|\le c_3n^\theta$ up to a vanishing
error in $n$. Now consider the above expression over the range $j\ge [\epsilon n]$ and $|w|\le c_3n^\theta$. By Theorem \ref{llt} for each $l \in \Z$ and $j, w$ in the above ranges the difference goes to $0$ with $n$. Moreover we can bound the difference by the sum and get an expression which is uniformly bounded in $n$. Therefore by the dominated convergence theorem and the arbitrariness of $\epsilon$ we can conclude that the above expression goes to $0$ with $n$.

Before we move on to Proposition \ref{prop:u-U} we state a lemma we shall need.
\begin{lemma} \label{lem:periodic} Suppose additionally that $\gamma<a\theta$. Then for $i \ge 1$ we have
\[ \sum_{k\in \Z} \P_{i[n^\theta]} \big(k[n^\gamma]\big) \lesssim \frac{1}{n^\gamma}.\]
\end{lemma}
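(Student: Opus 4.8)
The plan is to pass to Fourier variables and use discrete Poisson summation. Write $d:=[n^\gamma]$ and $m:=i[n^\theta]$. Since $\sum_x\P_m(x)\,e^{\mathrm i xz}=\phi(z)^m$ and $\tfrac1d\sum_{j=0}^{d-1}e^{2\pi\mathrm i xj/d}=\ind\{d\mid x\}$, one gets
\[
\sum_{k\in\Z}\P_m(kd)\;=\;\frac1d\sum_{j=0}^{d-1}\phi\!\Big(\frac{2\pi j}{d}\Big)^{\!m},
\]
the left side being a convergent sum of nonnegative terms bounded by $\sum_x\P_m(x)=1$, and $\phi$ being extended $2\pi$-periodically. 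The term $j=0$ contributes $\phi(0)^m=1$, which already gives the desired order $1/d\asymp n^{-\gamma}$, so it suffices to prove $\big|\sum_{j=1}^{d-1}\phi(2\pi j/d)^m\big|\lesssim 1$ uniformly in $n$ and in $i\ge1$.

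I would split this sum according to $\mathrm{dist}(2\pi j/d,\,2\pi\Z)$. Fix $\delta_0>0$ small. When this distance is at least $\delta_0$, strong aperiodicity gives $\rho:=\sup_{\delta_0\le|z|\le\pi}|\phi(z)|<1$, so the corresponding terms sum to at most $d\,\rho^m\le n^\gamma\rho^{[n^\theta]}$, which is super-polynomially small. When $r:=\min(j,d-j)$ satisfies $2\pi r/d\le\delta_0$, the expansion \eqref{eq:char} together with $|\phi|=|\tilde\phi|$ yields $|\phi(2\pi j/d)|\le 1-\tfrac\nu2(2\pi r/d)^\alpha\le e^{-c_0(r/d)^\alpha}$ for $\delta_0$ small, with $c_0=\tfrac\nu2(2\pi)^\alpha$; since each $r$ occurs for at most two values of $j$,
\[
\sum_{\substack{1\le j\le d-1\\ 2\pi r/d\le\delta_0}}\big|\phi(2\pi j/d)\big|^m\;\le\;2\sum_{r\ge1}e^{-\lambda r^\alpha},\qquad \lambda:=\frac{c_0 m}{d^\alpha}.
\]
Using $r^\alpha\ge r$ for $r\ge1$, this is $\le\sum_{r\ge1} e^{-\lambda r}\le(e^\lambda-1)^{-1}$ when $\lambda\ge1$, and $\le 1+\int_0^\infty e^{-\lambda x^\alpha}\,\d x\lesssim\lambda^{-1/\alpha}=c_0^{-1/\alpha}\,d\,m^{-1/\alpha}$ when $\lambda<1$; in both cases it is $\lesssim 1+d\,m^{-1/\alpha}$.

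Finally, for $n$ large $m=i[n^\theta]\ge[n^\theta]\ge\tfrac12 n^\theta$, hence $d\,m^{-1/\alpha}\lesssim n^{\gamma-\theta/\alpha}$, and \eqref{g:t:cond} forces $\gamma<\theta/\alpha$ (the extra hypothesis $\gamma<a\theta$ only makes this stronger), so $d\,m^{-1/\alpha}\lesssim1$. Combining the three pieces gives $\big|\sum_{j=1}^{d-1}\phi(2\pi j/d)^m\big|\lesssim1$, and therefore $\sum_{k}\P_{i[n^\theta]}(k[n^\gamma])\le C/[n^\gamma]\lesssim n^{-\gamma}$, uniformly in $i\ge1$. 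The only delicate point is the near-$0$ estimate: one must control the Gaussian-type tail by the single scale $\lambda=c_0 m/d^\alpha$ and verify that $\lambda$ is bounded below (equivalently $d^\alpha\lesssim m$), which is exactly where the constraint between $\gamma$ and $\theta$ enters. For walks that are aperiodic but not strongly aperiodic (e.g.\ the simple random walk) one would additionally treat the frequencies near $\pm\pi$, where $|\phi|$ again decays like $e^{-c|z\mp\pi|^\alpha}$, by the same computation.
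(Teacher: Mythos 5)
Your argument is correct, and it takes a genuinely different route from the paper. The paper proves the lemma in physical space: it splits the sum at the spatial scale $(in^\theta)^{(1+a)/\alpha}$, applies the local limit theorem (Theorem \ref{llt}) in the bulk — the Riemann sum of the stable density over the mesh $[n^\gamma]$ gives the $n^{-\gamma}$, and the LLT error of order $(in^\theta)^{-(1+a)/\alpha}$ per site also totals $n^{-\gamma}$ — and controls the complement by the heavy-tail estimate $P\big(|X_{i[n^\theta]}|\ge (in^\theta)^{(1+a)/\alpha}\big)\lesssim (in^\theta)^{-a}$ obtained from Lemma \ref{lem:doa} and \cite{heyd} (Chebyshev when $\alpha=2$); the hypothesis $\gamma<a\theta$ is used precisely to make this tail term $\lesssim n^{-\gamma}$. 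You instead work entirely on the Fourier side: the orthogonality relation mod $[n^\gamma]$ gives the exact identity $\sum_k\P_m(k[n^\gamma])=\frac{1}{[n^\gamma]}\sum_{j}\phi(2\pi j/[n^\gamma])^m$, the zero frequency yields the claimed order $n^{-\gamma}$, strong aperiodicity kills the frequencies away from $2\pi\Zb$, and the expansion \eqref{eq:char} handles the frequencies near zero, with the sum $\sum_r e^{-\lambda r^\alpha}$ bounded once $\lambda=c_0\,i[n^\theta]/[n^\gamma]^\alpha$ is not too small, i.e.\ once $\gamma\le\theta/\alpha$. That inequality is indeed supplied by \eqref{g:t:cond}, which is in force where the lemma is invoked, and it is in any case unavoidable for the stated bound (for $\mu=0$, $i=1$ the left side is at least $\P_{[n^\theta]}(0)\asymp n^{-\theta/\alpha}$); note, however, that your parenthetical is loosely worded — $\gamma<a\theta$ alone does not imply $\gamma<\theta/\alpha$ when $a>1/\alpha$, so the inequality really comes from \eqref{g:t:cond}, not from the lemma's extra hypothesis, which your proof simply does not need. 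What each approach buys: yours is more elementary and self-contained (no LLT, no domain-of-attraction tail estimate, uniformity in $i\ge1$ is immediate, and the hypothesis $\gamma<a\theta$ becomes superfluous), while the paper's keeps all lattice estimates routed through the same local limit theorem and tail lemmas it already uses elsewhere, at the cost of invoking the extra hypothesis.
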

\begin{proof}
We shall use Theorem \ref{llt} for $|k|\le n^{-\gamma}(in^\theta)^{(1+a)/\alpha}$ and a large deviation estimate for 
$|k|>n^{-\gamma}(in^\theta)^{(1+a)/\alpha}$. One checks that 
\bes
\begin{split}
& \sum_{k} \P_{i[n^\theta]} \big(k[n^\gamma]\big) \\
& \lesssim \sum_{k} \Big[\frac{1}{(in^\theta)^{1/\alpha}}\cdot p_1\left(\frac{kn^\gamma}{(in^\theta)^{1/\alpha}}\right) + \frac{1}{(in^\theta)^{(1+a)/\alpha}}\Big] + P\left(\big|X_{i[n^\theta]}\big|\ge (in^\theta)^{(1+a)/\alpha}\right) \\
&\lesssim \frac{1}{n^\gamma}  + P\left(\big|X_{i[n^\theta]}\big|\ge (in^\theta)^{(1+a)/\alpha}\right).
\end{split}
\ees
For $1<\alpha<2$ one can use the result in \cite{heyd}
along with Lemma \ref{lem:doa} to conclude
\bes \begin{split} P\left(\big|X_{i[n^\theta]}\big|\ge (in^\theta)^{(1+a)/\alpha}\right) &\le (in^\theta) P\left(\big|X_{1}\big|\ge (in^\theta)^{(1+a)/\alpha}\right)   \lesssim \frac{1}{(in^\theta)^a}\lesssim \frac{1}{n^\gamma}.
\end{split}\ees
One can use Chebyshev's inequality to get a similar bound when $\alpha=2$.
\end{proof}
We now move on to Proposition \ref{prop:u-U}. The $\|\cdot \|_{2m}$ norm of the difference of
$u_{r[n^\theta]}(z[n^\gamma])$ and $U_{r[n^\theta]}(z[n^\gamma])$ arising from the drift terms in 
\eqref{eq:dis:sc:drift} and \eqref{eq:U:drift} is bound above by
\be \label{eq:u-U:drift}
\begin{split}
& \lesssim \frac{1}{n} \sum_{i,j} \sum_{k,l}\P_{r[n^\theta]-j-1}\big(l- z[n^\gamma]\big)\cdot \left\|u_j(l)-u_{i[n^\theta]}\left(k[n^\gamma]\right)\right\|_{2m}  \\
& \quad + \frac{1}{n} \sum_{i,j} \sum_{k,l}\Big\vert\P_{r[n^\theta]-j-1}\big(l- z[n^\gamma]\big)- \P_{(r-1-i)[n^\theta]}\big( (k-z)[n^\gamma]\big)\Big\vert
\end{split}
\ee
The first
term goes to $0$ by the H\"older continuity of $u$ argued above and earlier in Proposition \ref{prop:u:hol}. The second term also 
goes to $0$ by an argument similar to above. Due to Lemma \ref{lem:periodic}, the second expression in \eqref{eq:u-U:drift} is uniformly bounded in $n$. Therefore
we can apply the dominated convergence theorem to conclude that it goes to $0$.

To the random field $V$ in \eqref{eq:V} we add the drift term \eqref{eq:U:drift}. Therefore the difference in Proposition \ref{prop:V-U} arising from the drift terms is $0$. 

The random field $\bar V$ has an additional term 
\[\frac{[n^\theta] [n^\gamma]}{n^{1+\frac1\alpha}}\cdot\sum_{j=0}^{i-2} \sum_{l \in \Z} p_{\frac{(i-1-j)[n^\theta]}{n}}\left(\frac{l[n^\gamma]-(k+a_{i-1}-a_j)[n^\gamma]}{n^{1/\alpha}}\right)\cdot b\big(u_{j[n^\theta]}\left(l [n^\gamma]\right)\big).   \]
The bound on the $\|\cdot\|_{2m}$ norm arising from the difference of this from the drift term in $V$ is
\be \label{eq:V-Vbar:drift} \frac{n^{\theta+\gamma}}{n^{1+\frac{1}{\alpha}}}\sum_{j=1}^{r-1} \sum_{l \in \Z} \left| n^{\frac{1}{\alpha}}\P_{j[n^\theta]} \left(l[n^\gamma]\right) -p_{\frac{j[n^\theta]}{n}}\left(\frac{(l-a_{r-1}+a_{r-1-j})[n^\gamma]}{n^{1/\alpha}}\right) \right|\ee
We claim that the expression is uniformly
bounded. Indeed the sum of the terms involving the density $p$ is uniformly bounded by a Riemann sum argument.
As for the sum of the terms involving the transition kernel $\P$, it is also uniformly bounded by Lemma \ref{lem:periodic}
if we assume $\gamma<\alpha\theta$.
The convergence of \eqref{eq:V-Vbar:drift} to $0$ thus follows from the dominated convergence theorem, similar to what was argued earlier in this outline.

Finally we consider 
\[v_t(x) = \int_0^t \int_{\R} p_{t-s}(y-x) \sigma\big(v_s(y)\big) \mathcal W(\d s\,\d y) + \int_0^t \int_{\R} p_{t-s}(y-x) b\big(v_s(y)\big) \d s\,\d y. \]
One can again show that $v$ is H\"older continuous in both space and time.  We can similarly define the random fields $\tilde v$ and $\bar v$ by including 
a drift term to each of them. The drift terms we add to the definition of $\tilde v$ and $\bar v$ in equations \eqref{eq:tildeV} and 
\eqref{eq:barv} are 
\bes
\begin{split}
& \frac{[n^\gamma]}{n^{1/\alpha}} \int_0^{t-\frac{[n^\theta]}{n}} \sum_{l \in \Z} p_{t-s} \left(\frac{(l-k)[n^\gamma]}{n^{1/\alpha}}\right)\cdot b\left(v_s\left(\frac{l[n^\gamma]}{n^{1/\alpha}}\right)\right)\, \d s,\quad \text{and} \\
& \frac{[n^\theta][n^\gamma]}{n^{1+\frac1\alpha}}\sum_{j=0}^{i-2} \sum_{l \in \Z} p_{\frac{(i-1-j)[n^\theta]}{n}}\left(\frac{(l-k)[n^\gamma]}{n^{1/\alpha}}\right)\cdot b\left(v_{\frac{j[n^\theta]}{n}}\left(\frac{l [n^\gamma]}{n^{1/\alpha}}\right)\right)
\end{split}
\ees
respectively. One can show again that both
\[\left\|v_t(x) - \tilde v_t\left(\frac{[n^\gamma]}{n^{1/\alpha}}\left[\frac{xn^{1/\alpha}}{[n^\gamma]}\right]\right)\right\|_{2m} \quad \text{and} \quad\left\|\tilde{v}_t\left(\frac{k[n^\gamma]}{n^{1/\alpha}}\right)-\bar{v}_{\frac{r[n^\theta]}{n}}\left(\frac{k[n^\gamma]}{n^{1/\alpha}}\right)  \right\|_{2m}^2 \]
go to $0$ with $n$. Collecting all our bounds we can then use \eqref{eq:final} and apply Gronwall's inequality to prove 
Theorem \ref{thm:drift} in the case $v_0\equiv 0$. The case of general $v_0$ follows as earlier.

\subsection{Dirac initial condition} \label{sec:dirac}
%In this section we make the further assumption
%\begin{assumption}
%\label{cond2} Away from zero, 
%$\mathcal{D}(z)$ is $d+3$ times continuously differentiable and for all $k\le d+3$
%\be \label{eq:cond2}%\begin{split}
%\mathcal{E}_{k,i}(z) = O(|z|^{\alpha+a-k}) \quad \text{as} \quad|z|\to 0.
%\end{split}
%\ee
%\end{assumption}
Consider \eqref{eq:dis:sc} with initial profile 
\be \label{dirac:initial} u_0^{(n)}(k)=n^{1/\alpha} \cdot \ind\{k=0\}.  \ee
Our main result of this section is the following 
\begin{theorem} \label{thm:dirac}
Let the conditions in Assumption \ref{cond1} hold, and suppose $|\sigma(x)|\le \lip |x|$ for all $x$. Fix an integer $m\ge 1$ so that $2m<2+\kappa$. Let $u^{(n)}$ be the solution to 
\eqref{eq:dis:sc} with initial profile \eqref{dirac:initial}. Then for each $t> 0, \, x\in \R$ we have $\bar u_t^{(n)}(x) \Rightarrow v_t(x)$, where $v$ is the solution to \eqref{eq:she} with initial profile $\delta_0$. Furthermore we have 
\bes 
\mE\big|\bar u^{(n)}_t(x) \big|^{2m} \to \mE\big| v_t(x)\big|^{2m} \quad \text{as } n \to \infty.
\ees
\end{theorem}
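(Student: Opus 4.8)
The plan is to prove Theorem~\ref{thm:dirac} by following the proof of Theorem~\ref{thm:main}, keeping track of two changes. First, the non-noise term is now the \emph{deterministic} quantity $n^{1/\alpha}\,\P_{[nt]}\big([xn^{1/\alpha}]-[\mu nt]\big)$, which converges to $p_t(x)$ by Theorem~\ref{llt}; this replaces the term $(p_t*v_0)(x)$ and requires no probabilistic input. Second, the uniform moment bound \eqref{eq:thm:um} of Theorem~\ref{thm:u} is unavailable, since $u_0^{(n)}(0)=n^{1/\alpha}\to\infty$, and must be replaced by a \emph{fundamental-solution} moment estimate that is singular at time $0$.

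The first step is therefore to prove, for every $T>0$ and every $m$ with $2m<2+\kappa$, a bound of the form
\be \label{eq:dirac:apriori}
\big\|u_i(k)\big\|_{2m}^2 \lesssim \big(n^{1/\alpha}\,\P_i(k)\big)^2 + \Lambda\!\left(\tfrac{i}{n}\right), \qquad 1\le i\le[nT],\ k\in\Z,
\ee
uniformly in $n\ge1$, where $\Lambda$ is a fixed function with $\int_0^T\Lambda(s)\,\d s<\infty$; this is the discrete analogue of $\|v_t(x)\|_{2m}^2\lesssim p_t(x)^2+\Lambda(t)$ for the mild solution of \eqref{eq:she} started from $\delta_0$. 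The hypothesis $|\sigma(x)|\le\lip|x|$ enters here in two ways: it forces $\sigma(0)=0$, so that the noise cannot create mass where there is none, and it gives $\|\sigma(u_j(l))\|_{2m}^2\lesssim\lip^2\|u_j(l)\|_{2m}^2$ with \emph{no} additive constant. Feeding this into the Burkholder--Minkowski computation \eqref{eq:u:m}--\eqref{eq:mom:4m} produces the homogeneous renewal inequality
\be \label{eq:dirac:renewal}
\big\|u_{i+1}(k)\big\|_{2m}^2 \lesssim \big(n^{1/\alpha}\,\P_{i+1}(k)\big)^2 + \frac{\lip^2}{n^{(\alpha-1)/\alpha}}\sum_{j=0}^{i}\sum_{l\in\Z}\P_{i-j}^2(l-k)\,\big\|u_j(l)\big\|_{2m}^2,
\ee
which I would solve by iteration, bounding the resulting convolution series with Theorem~\ref{llt} and Corollary~\ref{cor:green}; the singularity of $\Lambda$ at $0$ comes from the contribution of small $j$, exactly as $\int_0^t\!\int_\R p_{t-s}(y-x)^2 p_s(y)^2\,\d y\,\d s$ is singular at $t=0$.

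With \eqref{eq:dirac:apriori} in hand, I would then rerun the argument of Section~\ref{sec:main}, replacing throughout the uniform bound $\sup_{j\le[nT],\,l}\|\sigma(u_j(l))\|_{2m}^2<\infty$ by the weighted bound $\|\sigma(u_j(l))\|_{2m}^2\lesssim\lip^2\big[n^{2/\alpha}\P_j^2(l)+\Lambda(j/n)\big]$. Concretely, in the coarse-graining estimates of Theorem~\ref{thm:u-U} and of Propositions~\ref{prop:u-U}, \ref{prop:V-U} and \ref{prop:V-Vbar}, and in the Gronwall step \eqref{eq:gronwall}, one carries the weight $n^{2/\alpha}\P_j^2(l)+\Lambda(j/n)$ along; because $\sigma$ is homogeneously Lipschitz there are no stray additive constants, the reweighted sums are again controlled by Theorem~\ref{llt}, Corollary~\ref{cor:green} and $\int_0^T\Lambda(s)\,\d s<\infty$, and each error term still tends to $0$ as $n\to\infty$ for every fixed $t>0$, $x\in\R$. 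On the continuum side, $v_t(x)=p_t(x)+\int_0^t\!\int_\R p_{t-s}(y-x)\sigma(v_s(y))\,\mathcal W(\d s\,\d y)$ satisfies the same fundamental-solution moment bound, and Lemma~\ref{holder} applies to its noise part, so assembling the pieces on the coupled probability space of Theorem~\ref{thm:couple}, exactly as in the derivation of \eqref{eq:opt}, gives $\mE\big|\bar u_t^{(n)}(x)-v_t(x)\big|^{2m}\to0$, which yields both the weak convergence and the convergence of moments.

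I expect the main obstacle to be the first step: establishing the singular-at-$0$ bound \eqref{eq:dirac:apriori} uniformly in $n$, i.e.\ correctly solving \eqref{eq:dirac:renewal}, and then verifying that \emph{every} error estimate of Section~\ref{sec:main}, now multiplied by the $t$-singular weights of \eqref{eq:dirac:apriori}, remains $o(1)$ as $n\to\infty$. An alternative that sidesteps the singular bound, at the cost of an additional limiting argument, is to restart both the discrete and the continuum equations at a small time $\epsilon>0$: by \eqref{eq:dirac:apriori} the profile $u^{(n)}_{[n\epsilon]}(\cdot)$ is bounded in $L^{2m}$ uniformly in $n$, so Theorem~\ref{thm:main} applies from time $\epsilon$ onward, and one then lets $\epsilon\downarrow0$ using the continuity of $v$ at $t=0$ in the $\delta_0$-sense together with the uniform tightness supplied by the arguments of Section~\ref{sec:tight}.
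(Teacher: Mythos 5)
Your primary route --- carrying a singular weight $\|u_j(l)\|_{2m}^2\lesssim n^{2/\alpha}\P_j^2(l)+\Lambda(j/n)$ through all of Section~\ref{sec:main} --- has a genuine gap: the claim that ``each error term still tends to $0$ as $n\to\infty$'' fails for the contributions of blocks near time zero, and no choice of integrable $\Lambda$ repairs this. Concretely, the first term of \eqref{eq:2} requires the H\"older estimate of Proposition~\ref{prop:u:hol} for $\big\|u_j(l)-u_{i[n^\theta]}(k[n^\gamma])\big\|_{2m}$ over \emph{all} blocks; with the Dirac profile this quantity is of order $n^{1/\alpha}\P_j(l)$ for small $j$ (spatio--temporal H\"older continuity genuinely fails near $t=0$), and a direct computation with your a priori bound shows that the blocks with $j\le\epsilon n$ contribute to \eqref{eq:2} an amount of order
$\frac{n^{2/\alpha}}{n^{(\alpha-1)/\alpha}}(nt)^{-2/\alpha}\sum_{j\le \epsilon n} j^{-1/\alpha}\asymp \epsilon^{(\alpha-1)/\alpha}t^{-2/\alpha}$,
which is independent of $n$: it is small only as $\epsilon\to0$, never $o(1)$ in $n$. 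So an explicit excision of the time interval $[0,\epsilon]$ is unavoidable; reweighting alone cannot rescue the coarse-graining and Gronwall steps. (Separately, the exact additive form of your a priori bound, with $\Lambda$ independent of $x$ and integrable, is not obviously achievable uniformly in $x$ for all $1<\alpha\le 2$ and would itself need a chaos-expansion argument; the paper proves only what is needed, namely Proposition~\ref{pr:umbd}: moments uniformly bounded for $\epsilon n\le i\le n$, obtained by comparing the iterated Burkholder expansion term by term with the continuum iterated integrals as in Section 7 of \cite{cara-sun-zygo}, bounding the latter via \cite{chen-dala-2} together with a Plancherel argument showing the iterated integrals are maximized at the spatial origin.)

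Your fallback --- restarting both equations at time $\epsilon$ and invoking Theorem~\ref{thm:main} from there --- is the right instinct but circular as stated: Theorem~\ref{thm:main} requires the discrete initial profile to be the discretization of a single fixed continuous $v_0$, the same for both equations and independent of the driving noise, whereas the time-$\epsilon$ profiles $u^{(n)}_{[n\epsilon]}(\cdot)$ and $v_\epsilon(\cdot)$ are $n$-dependent, noise-generated and distinct; quantifying their closeness is exactly the original problem at time $\epsilon$. The paper's actual argument (Section~\ref{sec:dirac}) avoids this by \emph{truncating the Duhamel sums rather than restarting the equations}: it defines $\tilde u$ and $\tilde v$ by keeping the original solutions $u_j$, $v_s$ inside the noise term but dropping all noise contributions from times below $\epsilon$, shows (using the iterated-expansion bounds) that $\sup_n\|u-\tilde u\|_{2m}^2$ and $\|v-\tilde v\|_{2m}^2$ vanish as $\epsilon\to0$, notes $n^{1/\alpha}\P_{[nt]}([xn^{1/\alpha}])\to p_t(x)$, and then runs the Section~\ref{sec:main} machinery on $\tilde u,\tilde v$, where all integrands have moments uniformly bounded on $[\epsilon,T]$ by Proposition~\ref{pr:umbd}. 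If you replace your global singular-weight scheme by this truncation (plus the uniform-away-from-zero moment bound), your outline matches the paper's proof.
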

To avoid unnecessarily complicated notation we assume from now on that $\mu=0$. The reader can convince
theirselves that the proof of the above theorem continues to hold for nonzero $\mu$. %To prove the theorem we will need the following stronger version of Theorem \ref{llt} whose 

Due to the singularity of the density $p$ at time $0$ we can obtain uniform moment bounds
on $u$ only if we consider a compact time interval away from $0$. More precisely we have the following
\begin{proposition} \label{pr:umbd} Let $m\in \N$ so that $2m< 2+\kappa$. Fix $\epsilon>0$. We have
\[ \sup_{\epsilon n\le i\le n,\, k \in \Z}\mE | u_{i}(k)|^{2m} <\infty,\]
uniformly in $n$.
\end{proposition}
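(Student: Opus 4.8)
The plan is to split $u_i(k)$ into its non-noise part $n^{1/\alpha}\P_{i+1}(-k)$ (at time $i+1$) and its noise part, and treat them separately. The non-noise part is immediate: by the local limit theorem (Theorem \ref{llt}) one has $\P_m(l)\lesssim m^{-1/\alpha}$ uniformly in $l$, so for $i\ge\epsilon n$ its absolute value is $\lesssim n^{1/\alpha}(\epsilon n)^{-1/\alpha}=\epsilon^{-1/\alpha}$, which is already bounded. It therefore suffices to control $\|u_i(k)\|_{2m}$ for $\epsilon n\le i\le n$, and, exactly as in the proof of Theorem \ref{thm:u}, Burkholder's inequality together with the hypothesis $|\sigma(x)|\le\lip|x|$ gives the basic recursion
\[
\|u_{i+1}(k)\|_{2m}^2\ \lesssim\ n^{2/\alpha}\,\P_{i+1}(k)^2\ +\ \frac{\lip^2}{n^{(\alpha-1)/\alpha}}\sum_{j=0}^{i}\sum_{l\in\Z}\P_{i-j}(l-k)^2\,\|u_j(l)\|_{2m}^2 .
\]

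First I would establish the crude a priori bound $\sup_{i\le nT,\ l\in\Z}\|u_i(l)\|_{2m}^2\lesssim n^{2/\alpha}$, uniformly in $n$: take $\sup_k$ in the recursion, use $\sum_l\P_{i-j}(l-k)^2=P(X_{i-j}=\tilde X_{i-j})$ together with Corollary \ref{cor:green} as in \eqref{eq:mom2:u}, and run the same weighted ($e^{-\delta i/n}$, $\delta$ large) Gronwall argument as in Theorem \ref{thm:u}. This bound alone is not enough --- feeding it back into the recursion at a time $i\ge\epsilon n$ only reproduces $n^{2/\alpha}$, since $\sum_j (i-j)^{-1/\alpha}\cdot n^{2/\alpha}/n^{(\alpha-1)/\alpha}\asymp n^{2/\alpha}$ --- so the heart of the argument is a bootstrap that retains the spatial profile of the solution. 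Concretely, I would prove the existence of a constant $G$ independent of $n$ such that
\[
\|u_j(l)\|_{2m}^2\ \le\ G\Big(n^{2/\alpha}\,\P_j(l)^2\ +\ n^{1/\alpha}\,\varphi_j(l)\Big),\qquad 0\le j\le n,\ l\in\Z,
\]
where $\varphi_j$ is a fixed heat-kernel majorant --- comparable to $(j\vee 1)^{-1/\alpha}$ on $|l|\lesssim (j\vee 1)^{1/\alpha}$, with stable-type tails, normalised so that $\P_1\lesssim\varphi_1$. Here the first summand dominates the non-noise contribution $n^{2/\alpha}\P_j(l)^2$ while the second carries the fluctuations generated by the noise. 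The induction rests on the two spatial sub-convolution estimates
\[
\sum_{l}\P_{i-j}(l-k)^2\,\P_j(l)^2\lesssim\big(\min(j,i-j)\vee1\big)^{-1/\alpha}\varphi_{i+1}(k)^2,\qquad
\sum_{l}\P_{i-j}(l-k)^2\,\varphi_j(l)\lesssim\big((i-j)\vee1\big)^{-1/\alpha}\varphi_{i+1}(k),
\]
which follow from Theorem \ref{llt} by replacing $\P$ with the Stable($\alpha$) density $p$ and then using Chapman--Kolmogorov and scaling. Plugging the ansatz into a weighted version of the recursion, factoring out $e^{\delta(i+1)/n}$, and using
\[
\frac{1}{n^{(\alpha-1)/\alpha}}\sum_{m\ge 1}m^{-1/\alpha}e^{-\delta m/n}\ \lesssim\ \delta^{\,1/\alpha-1}\ \longrightarrow\ 0\quad\text{as }\delta\to\infty
\]
(note $n^{1-1/\alpha}/n^{(\alpha-1)/\alpha}=1$), one closes the induction provided $\delta$ is taken large enough; the cases $j=0,1$ are checked directly.

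With this ansatz the conclusion is immediate: for $\epsilon n\le i\le n$, Theorem \ref{llt} gives $\P_i(l)\lesssim(\epsilon n)^{-1/\alpha}$ and $\varphi_i(l)\lesssim(\epsilon n)^{-1/\alpha}$, hence $\|u_i(l)\|_{2m}^2\le G\big(n^{2/\alpha}(\epsilon n)^{-2/\alpha}+n^{1/\alpha}(\epsilon n)^{-1/\alpha}\big)=G\big(\epsilon^{-2/\alpha}+\epsilon^{-1/\alpha}\big)$, uniformly in $i,l,n$. The main obstacle is the bootstrap step: pinning down the right heat-kernel envelope $\varphi_j$ and verifying the two sub-convolution bounds uniformly over the full range of $j$ (in particular the boundary cases $j\approx0$ and $j\approx i$, and reconciling the polynomial tails of the one-step kernel with the tails of $p$ when $\alpha=2$), and then making the weighted Gronwall recursion close despite the non-noise input $n^{2/\alpha}\P_{i+1}(k)^2$ feeding back at the very same order --- which is exactly why the exponential weight and the largeness of $\delta$ are needed.
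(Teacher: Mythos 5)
Your overall skeleton (crude a priori bound, then a sharper spatially-resolved bound, then conclude for $i\ge \epsilon n$) is reasonable, and it is a genuinely different route from the paper: the paper never sets up a one-step bootstrap with a pointwise envelope, but instead iterates the Burkholder recursion \emph{completely}, obtaining a chaos-type series whose $q$-th term is an iterated sum of products of $\P^2$; following Section~7 of \cite{cara-sun-zygo} each term is compared with the corresponding iterated integral of $p^2$ for the continuum parabolic Anderson model with $\delta_0$ data, the uniform moment bound of \cite{chen-dala-2} is quoted for that object, and a Plancherel argument shows the iterated integrals are maximized at the spatial origin. The mechanism that makes the true answer finite is the summability of the whole series (the time-simplex volumes give factorial-type gains across chaos orders), not a one-step contraction, and this is exactly where your proposal has a genuine gap.

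Concretely, two things are missing. First, your sub-convolution estimates are pointwise-in-space statements and require pointwise upper bounds on $\P_m(x)$ with stable-type tails; Theorem~\ref{llt} only supplies a spatially \emph{uniform} additive error, and Assumption~\ref{cond1} does not give such tails (Lemma~\ref{lem:doa} only yields $P(|X_1|\ge x)\lesssim x^{-\alpha}$, so even your normalisation $\P_1\lesssim\varphi_1$ with $|x|^{-(1+\alpha)}$ tails is unjustified, and nothing controls $\P_m(x)$ in the far tail where $p$ is negligible — e.g.\ the comparison must survive walks with bounded increments). Second, even granting both estimates, the induction does not close with a fixed $G$: feeding the $n^{2/\alpha}\P_j^2$ piece through one noise step and summing $(\min(j,i-j)\vee 1)^{-1/\alpha}$ over $j$ produces a term of order
\[
\frac{n^{2/\alpha}}{n^{(\alpha-1)/\alpha}}\, i^{\,1-1/\alpha}\,\varphi_{i+1}(k)^2
\;=\;\Big(\tfrac{n}{i}\Big)^{(2-\alpha)/\alpha}\, n^{1/\alpha}\varphi_{i+1}(k)\cdot\big(i^{1/\alpha}\varphi_{i+1}(k)\big),
\]
so measured against the budget $G\,n^{1/\alpha}\varphi_{i+1}(k)$ it carries the factor $(n/i)^{(2-\alpha)/\alpha}$, which is unbounded for $\alpha<2$ and $i\ll n$; absorbing it instead into $G\,n^{2/\alpha}\P^2_{i+1}(k)$ requires a pointwise \emph{lower} bound on $\P_{i+1}$ of heat-kernel order (false in the tails and not available from the assumptions), and even in the bulk yields only a factor of order $c_0(i/n)^{(\alpha-1)/\alpha}$, which is not strictly less than one when $i\asymp n$ unless the Burkholder constant happens to be small. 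The exponential weight $e^{-\delta i/n}$ cannot rescue the bad regime: it suppresses only time gaps of order $n/\delta$ and larger, whereas the problematic contributions come from $i\lesssim n/\delta$ and from $j$ within $o(n)$ of $i$, where the source $n^{2/\alpha}\P^2_{i+1}(k)$ re-enters at full strength at every step. So the step you yourself flag as ``the main obstacle'' is not established, and in the stated two-piece form there is real reason to doubt it can be closed; a multi-step iteration capturing the Beta-function/factorial gain — i.e.\ essentially the paper's chaos-expansion comparison — appears to be needed.
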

\begin{proof}
Burkholder's inequality gives for $i\ge 1$
\[
\left\| u_{i}(k)\right\|_{2m}^2 \le  c_0 n^{2/\alpha} \P_{i}^2(k,\,0) + c_0\sum_{j_1=0}^{i-1} \sum_{l_1} \frac{n^{2/\alpha}\P_{i-j_1}^2(k, \, l_1)}{n^{(1+\alpha)/\alpha}} \cdot \left\|u_{j_1}(l_1)\right\|_{2m}^2.\]
If we apply the above recursively we obtain
\be \label{eq:u:recur}
\begin{split}
\left\| u_{i}(k)\right\|_{2m}^2
&\le c_0 n^{2/\alpha} \P_{i}^2(k,\,0) + c_0^2\sum_{j_1=0}^{i-1}  \sum_{l_1}  \frac{n^{2/\alpha}\P_{i-j_1}^2(k, \, l_1)}{n^{(1+\alpha)/\alpha}} \cdot n^{2/\alpha}\P_{j_1}^2(l_1,0) \\
&\quad+ c_0^3\sum_{0\le j_2<j_1<i}\,  \sum_{l_1,\, l_2}  \frac{n^{2/\alpha}\P_{i-j_1}^2(k, \, l_1)}{n^{(1+\alpha)/\alpha}} \cdot \frac{n^{2/\alpha}\P_{j_1-j_2}^2(l_1,l_2)}{n^{(1+\alpha)/\alpha}} \cdot n^{2/\alpha} \P_{j_2}^2(l_2,0)+\cdots 
%&\lesssim p^2_{\frac{i+2}{n}} \big(kn^{-1/\alpha}\big) +\frac{1}{n^{(\alpha+1)/\alpha}}\sum_{j=0}^i\sum_l p^2_{\frac{i-j}{n}}\big((l-k)n^{-1/\alpha}\big) \cdot \left\|u_j(l)\right\|_{2m}^2.
\end{split}\ee
Suppose $i=[c_1 n],\,  k=[c_2n^{1/\alpha}]$ for some $c_1>0$ and $c_2 \in \R$. One can follow the argument in Section 7 of \cite{cara-sun-zygo} to deduce that each of the terms converge to the corresponding iterated integrals involving the density $p$. For example the second term on the right would converge to 
\be \label{eq:it:int} c_0^2 \int_0^{c_1}\d s\,\int_{\R}\d y\,  p_{c_1-s}^2(c_2-y) p_s^2(y). \ee
Furthermore the tail sums are uniformly bounded in $n$. 

Now when we consider the moments of the  parabolic Anderson model $\partial_t v = -\nu(-\Delta)^{\alpha/2} v + c_0 v \dot{W}$
we get an infinite series involving the integrals of $p^2$.  Therefore we can bound the moments of $u_i(k)$ from above by the moments $v_{c_1}(c_2)$.  Equation (4.15) in \cite{chen-dala-2} states that the moments of $v$ are uniformly bounded in a compact subset of $\R_+\times \R$. We next show that the iterated integrals of $p$ above are maximized when $c_2=0$. Indeed let us consider \eqref{eq:it:int}. By Plancheral's theorem this is equal to a constant mulitple of 
\[ \int_0^{c_1} \d s \int_{\R} \d \xi \,  \big\vert\mathcal F\big[p_{c_1-s}(c_2-\cdot) p_s(\cdot)\big](\xi)\big\vert^2,\]
where $\mathcal F$ denotes the Fourier transform in the spatial variable. Using $\mathcal F(fg)=\mathcal F(f)*\mathcal F(g)$
for $L^2$ functions $f$ and $g$ one obtains
\bes\begin{split} &\int_0^{c_1} \d s \int_{\R} \d \xi \,  \big\vert\big\lbrace\mathcal F\big[p_{c_1-s}(c_2-\cdot)\big]*\mathcal F\big[p_s(\cdot)\big]\big\rbrace(\xi)\big\vert^2 \\
&=\int_0^{c_1} \d s \int_{\R} \d \xi \, \big\vert\big\lbrace e^{-\mathrm i c_2 \cdot}e^{-\nu(c_1-s)|\cdot|^\alpha}* e^{-\nu s|\cdot|^\alpha}\big\rbrace (\xi) \big\vert^2 \\
&\lesssim \int_0^{c_1} \d s \int_{\R} \d \xi \, \big\vert \big\lbrace e^{-\nu(c_1-s)|\cdot|^\alpha}* e^{-\nu s|\cdot|^\alpha}\big\rbrace (\xi)\big\vert^2  \\
&= \int_0^{c_1} \d s \int_{\R} \d \xi \,  \big\vert\mathcal F\big[p_{c_1-s}(\cdot) p_s(\cdot)\big](\xi)\big\vert^2.\end{split}
\ees
A similar argument holds for the higher iterated integrals. Therefore the moments of $v$ are in fact uniformly bounded in a compact subset of time. This proves the proposition.
\end{proof}
\begin{proof}[Proof of Theorem \ref{thm:dirac}] We aim to use the arguments in the proof of our main 
theorem \ref{thm:main} here also. For this we remove some terms from the noise term since the moments of $u, v$ are ``large" near
time $0$. Let us make this more precise. Fix a small $\epsilon>0$. We claim that the random field
\[ v_t(x) = p_t(x) + \int_0^t \int_{\R} p_{t-s}(x-y) \sigma\big(v_s(y)\big) W(\d s\, \d y)\]
is close to 
\[ \tilde v_t(x) = p_t(x) + \int_\epsilon^t \int_{\R} p_{t-s}(x-y) \sigma\big(v_s(y)\big) W(\d s\, \d y) \]
in the sense that 
$\big\| v_t(x) - \tilde v_t(x)\big\|_{2m}^2\to 0$ as $\epsilon \to 0$.  Note that
\[  \big\|v_t(x)\big\|_{2m}^2 \le C p_t^2(x) + C \int_0^t \int_{\R} p^2_{t-s}(x-y) \big\|v_s(y)\big\|_{2m}^2 \d s\, \d y, \]
which when iterated gives an infinite series which can be shown to be finite, see \cite{chen-dala-2}. In particular 
\[ \int_0^t \int_{\R} p^2_{t-s}(x-y) \big\|v_s(y)\big\|_{2m}^2 \d s\, \d y <\infty,\]
from which our claim follows. Similarly
\[ u_{[nt]}\big([xn^{1/\alpha}]\big) = n^{1/\alpha}\P_{[nt]}\big([xn^{1/\alpha}]\big) + \sum_{j=0}^{[nt]-1} \sum_l \P_{[nt]-j-1}\big([xn^{1/\alpha}],\,l\big) \cdot \sigma\big(u_j(l)\big) \cdot \frac{\xi_j(l)}{n^{(\alpha-1)/2\alpha}} \]
is close to
\[\tilde u_{[nt]}\big([xn^{1/\alpha}]\big) = n^{1/\alpha}\P_{[nt]}\big([xn^{1/\alpha}]\big) + \sum_{j=[\epsilon n]}^{[nt]-1} \sum_l \P_{[nt]-j-1}\big([xn^{1/\alpha}],\,l\big) \cdot \sigma\big(u_j(l)\big) \cdot \frac{\xi_j(l)}{n^{(\alpha-1)/2\alpha}}\]
in the sense $\sup_n\big\| u_{[nt]}([xn^{-1/\alpha}]) - \tilde u_{[nt]}([xn^{-1/\alpha}])\big\|_{2m}^2\to 0$ as $\epsilon \to 0$. This can be seen by following the argument in Proposition \ref{pr:umbd}.

Note first that $n^{1/\alpha}\P_{[nt]}([xn^{1/\alpha}]) \rightarrow p_t(x)$ as $n \to \infty$. Away from time $t=0$ the moments of $u$ and $v$ are uniformly bounded, that is for each $\epsilon>0$ and uniformly in $n$
\bes \sup_{i\ge\epsilon n, \, k\in \Z}\big\|u_i(k)\big\|_{2m}^2  < \infty,\qquad
 \sup_{i\ge\epsilon n, \, k\in \Z}\big\|v_{\frac{i}{n}}(kn^{-1/\alpha})\big\|_{2m}^2  <\infty. \ees
%This can be seen, for example, from \eqref{u:recur} and statement 4.15 in ***dalang, chen, fractional...***
Thus the arguments in the proof of Theorem \ref{thm:main} can be used and we can show that 
\[ \left\|\tilde u_{[nt]}([xn^{-1/\alpha}]) - \tilde v_t(x)\right\|_{2m}^2 \to 0 \mbox{ as } n\to \infty.\]
The theorem follows from this.
\end{proof}

\subsection{Proof of parts 2 and 3 in Theorem \ref{thm:part}}
Consider now the random field 
\be \label{eq:w:delta}
w_i(k)= \frac{n^{1/\alpha} E_k \left[\mathbf 1\{X_i=0\}\cdot \exp \left( \sum_{j=0}^i \beta \cdot \tilde\xi_{i-j}(X_j)\right)\right]}{\left[\bE e^{ \beta\tilde\xi}\right]^{i+1}},\quad i \ge 0,\, k \in \Z.
\ee
This satisfies \eqref{eq:w:she} but now
with initial profile 
\[ w_0(k) = n^{1/\alpha} \mathbf 1\{k=0\}\cdot \frac{e^{\beta \tilde\xi_0(k) }}{\mathbb E e^{\beta \tilde\xi}}. \]
Following the argument in Section \ref{sec:part:1} we show that $w$ is close to the 
random field $u$ in \eqref{eq:u} with initial profile 
\bes
u_0(k)=n^{1/\alpha}\mathbf \{k=0\}.
\ees
We start with a bound on the H\"older continuity of $u$. One checks
\bes\begin{split}
&\big\|u_i(k)-u_i(l)\big\|_{2m}^2 \\
&\lesssim n^{2/\alpha}\big[\P_i(k,\,0)-\P_i(l,\,0)\big]^2 +\sum_{j=0}^{i-1}\sum_y \frac{n^{2/\alpha}\big[\P_{i-1-j}(k,\,y)- \P_{i-1-j}(l,\,y)\big]^2}{n^{(\alpha+1)/\alpha}} \cdot \big\| u_j(y)\big\|_{2m}^2.
\end{split}
\ees
If we iterate this we obtain 
\be\begin{split} \label{eq:u:k-l:2}
&\big\|u_i(k)-u_i(l)\big\|_{2m}^2 \\
&\lesssim n^{2/\alpha}\big[\P_i(k,\,0)-\P_i(l,\,0)\big]^2 + \sum_{j_1=0}^{i-1}\sum_{y_1} \frac{n^{2/\alpha}\big[\P_{i-1-j_1}(k,\,y_1)- \P_{i-1-j_1}(l,\,y_1)\big]^2}{n^{(\alpha+1)/\alpha}} \cdot n^{2/\alpha} \P_{j_1}^2(y_1, 0)\\
& \; +  \sum_{0\le j_2<j_1<i}\,\sum_{y_1, y_2} \frac{n^{2/\alpha}\big[\P_{i-1-j_1}(k,\,y_1)- \P_{i-1-j_1}(l,\,y_1)\big]^2}{n^{(\alpha+1)/\alpha}} \cdot \frac{  n^{2/\alpha} \P_{j_1-j_2}^2(y_1, y_2)}{n^{(\alpha+1)/\alpha}}\cdot n^{2/\alpha} \P_{j_2}^2(y_2, 0)+\cdots
\end{split}
\ee
From this one can show
\begin{lemma} \label{lem:hol:1step} For each $\epsilon>0$ 
\bes \sup_{\epsilon n\le i \le n,\, k \in \Z} \sum_l \P(k, l) \cdot \big\| u_i(k) - u_i(l) \big\|_{2m}^2 =o(1)\quad \text{as } n\to \infty \ees
\end{lemma}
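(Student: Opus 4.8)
The plan is to use the first Burkholder step behind \eqref{eq:u:k-l:2} --- i.e.\ to stop the iteration after one stage --- and then average over a single step of the walk, exploiting that the one--step kernel $\P(k,\cdot)$ has finite moments of every order below $\alpha$ (recall $\mu=0$). By Burkholder's inequality and the independence of $\tilde\xi_{j+1}$ from $\{u_j(y):y\in\Z\}$,
\[
\big\|u_i(k)-u_i(l)\big\|_{2m}^2\;\lesssim\;n^{2/\alpha}\big[\P_i(k,0)-\P_i(l,0)\big]^2+\sum_{j=0}^{i-1}\sum_{y}\frac{\big[\P_{i-1-j}(k,y)-\P_{i-1-j}(l,y)\big]^2}{n^{(\alpha-1)/\alpha}}\,\big\|u_j(y)\big\|_{2m}^2 .
\]
By translation invariance I fix $k$ and put $w=l-k$. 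Applying $\sum_l\P(k,l)$ and using $\sum_y\big[\P_m(k,y)-\P_m(k+w,y)\big]^2=2\big[P(Y_m=0)-P(Y_m=w)\big]$, with $Y=X-\tilde X$ the symmetrised walk of Lemmas \ref{pker:bd}--\ref{lem:xunif}, the whole estimate reduces to controlling
\[
A_i(k):=n^{2/\alpha}\sum_w\P(w)\big[\P_i(-k)-\P_i(-k-w)\big]^2,\qquad B_i(k):=\frac{1}{n^{(\alpha-1)/\alpha}}\sum_{j=0}^{i-1}M_j\sum_w\P(w)\,g_{i-1-j}(w),
\]
where $M_j:=\sup_y\|u_j(y)\|_{2m}^2$ and $g_m(w):=P(Y_m=0)-P(Y_m=w)\ge0$.

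The key analytic input is a quantitative decay estimate for $g_m$. Fourier inversion gives $g_m(w)=\tfrac1{2\pi}\int_{-\pi}^{\pi}|\phi(z)|^{2m}\big(1-\cos wz\big)\,\d z\ge0$. Using $1-|\phi(z)|^2\asymp|z|^\alpha$ near $0$ (as in the proof of Lemma \ref{pker:bd}) and the strong aperiodicity of Assumption \ref{cond1} (so that $|\phi(z)|^{2m}\lesssim e^{-cm|z|^\alpha}$ on $[-\pi,\pi]$, up to a contribution exponentially small in $m$), and bounding $1-\cos wz$ by $\tfrac12 w^2z^2$ respectively by $2$, one gets
\[
g_m(w)\;\lesssim\;\min\!\big(m^{-1/\alpha},\;w^2m^{-3/\alpha}\big)\;\le\;w^{2\beta}\,m^{-(1+2\beta)/\alpha}\qquad(0\le\beta\le1).
\]
Since $\mu=0$, the domain--of--attraction hypothesis forces $E|X_1|^{2\beta}<\infty$ whenever $2\beta<\alpha$, whence $\sum_w\P(w)g_m(w)\lesssim\min\!\big(1,\,m^{-(1+2\beta)/\alpha}\big)$ for every $\beta\in[0,\alpha/2)$, uniformly in $m$ and $k$. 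For $A_i(k)$ I instead use the local limit theorem (Theorem \ref{llt}): for $i\ge\epsilon n$ one has $n^{1/\alpha}\P_i(-k)=p_{i/n}(-k/n^{1/\alpha})+O_\epsilon(n^{-b/\alpha})$ uniformly in $k$ for some $b>0$, and $p_t$ is Lipschitz with constant $\lesssim_\epsilon1$ for $t\ge\epsilon$; this gives $n^{2/\alpha}\big[\P_i(-k)-\P_i(-k-w)\big]^2\lesssim_\epsilon w^{2\beta}n^{-2\beta/\alpha}+n^{-2b/\alpha}$, hence $A_i(k)\lesssim_\epsilon n^{-2\beta/\alpha}E|X_1|^{2\beta}+n^{-2b/\alpha}\to0$.

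It remains to control $B_i(k)$, for which one needs the short--time growth of the $M_j$. The argument proving Proposition \ref{pr:umbd} --- comparing the chaos expansion \eqref{eq:u:recur} with the iterated integrals of $p^2$, and invoking the moment bounds for the parabolic Anderson model started from $\delta_0$ in \cite{chen-dala-2} --- in fact yields $M_j\lesssim(j/n)^{-2/\alpha}$ for $1\le j\le n$ and $M_0\lesssim n^{2/\alpha}$, the term $p_{j/n}^2$ carrying the dominant singularity because $\alpha>1$. Inserting this and $\sum_w\P(w)g_{i-1-j}(w)\lesssim\min(1,(i-j)^{-(1+2\beta)/\alpha})$, for $\epsilon n\le i\le n$ one finds
\[
B_i(k)\;\lesssim\;\frac{n^{2/\alpha}}{n^{(\alpha-1)/\alpha}}\sum_{j=0}^{i-1}(j\vee1)^{-2/\alpha}\,\min\!\big(1,\,(i-j)^{-(1+2\beta)/\alpha}\big),
\]
a discrete convolution that one estimates by splitting at $j=i/2$. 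Choosing $\beta$ in the window $(\tfrac12,\,\alpha/2)$ --- nonempty precisely because $\alpha>1$ --- one has $2\beta<\alpha$ (so $E|X_1|^{2\beta}<\infty$) and $1+2\beta>2$ (so both the spatial weight $(j\vee1)^{-2/\alpha}$ and the temporal weight are summable, the convolution being of order $n^{-2/\alpha}$, with a harmless $\log n$ when $\alpha=2$); the prefactor then leaves only $n^{-(\alpha-1)/\alpha}$, so $B_i(k)\to0$ uniformly in $i$ and $k$. Combined with $A_i(k)\to0$ this proves the lemma. The genuine obstacle is this last step: under the Dirac initial condition $\|u_j(\cdot)\|_{2m}$ blows up like $(j/n)^{-1/\alpha}$ as $j\to0$, a crude uniform-in-$j$ bound is useless, and the argument only closes because the precise $(j/n)^{-2/\alpha}$ rate is summable (for $\alpha<2$) and because the interpolation exponent $\beta$ can be placed simultaneously above $1/2$ and below $\alpha/2$.
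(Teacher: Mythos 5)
Your proof is correct, but it takes a genuinely different route from the paper's. The paper iterates the difference expansion \eqref{eq:u:k-l:2} to all chaos orders, invokes the Section 7 analysis of Caravenna--Sun--Zygouras to argue that the non-vanishing contributions come only from terms whose successive time increments are of order $n$, and then uses Theorem \ref{llt} to see that the factor $n^{2/\alpha}\big[\P_i(k,y)-\P_i(l,y)\big]^2$ is $o(1)$ at such times, finishing with the uniform moment bounds coming from \eqref{eq:u:recur} and dominated convergence. You instead stop after a single Burkholder/Duhamel step, average against the one-step kernel, and reduce everything to the Fourier estimate $\sum_w\P(w)\big[P(Y_m=0)-P(Y_m=w)\big]\lesssim\min\big(1,m^{-(1+2\beta)/\alpha}\big)$ for $\beta\in(1/2,\alpha/2)$ together with a quantitative small-time moment bound $\sup_k\|u_j(k)\|_{2m}^2\lesssim(j/n)^{-2/\alpha}$, after which a convolution estimate closes the argument; this yields an explicit polynomial rate for the lemma and avoids re-running the term-by-term chaos analysis for the difference field, whereas the paper's route stays qualitative but needs nothing beyond Proposition \ref{pr:umbd} as stated. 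The one point where you assert more than is literally available is precisely that moment bound: Proposition \ref{pr:umbd} only gives boundedness for $\epsilon n\le j\le n$, so the rate $(j/n)^{-2/\alpha}$ down to $j=1$ requires its own (short) proof. It is true, and follows from \eqref{eq:u:recur} by writing $\P_m^2(x,y)\le\big(\sup_z\P_m(z)\big)\P_m(x,y)\lesssim (m\vee 1)^{-1/\alpha}\P_m(x,y)$, collapsing the spatial sums by Chapman--Kolmogorov, and summing the resulting beta-function time sums, whose factorial decay controls the series uniformly in $1\le j\le n$; this in fact re-proves Proposition \ref{pr:umbd} without the comparison to the continuum moments of \cite{chen-dala-2}. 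With that estimate supplied, your choice of $\beta$ strictly between $1/2$ and $\alpha/2$ (possible since $\alpha>1$) makes the remaining bounds go through, including the harmless logarithm at $\alpha=2$, and the conclusion is uniform in $k$ and in $\epsilon n\le i\le n$ as required.
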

\begin{proof} It is enough to show that $\big\| u_i(k) - u_i(l) \big\|_{2m}^2 =o(1)$ for fixed $k$ and $l$ 
when $\epsilon n\le i \le n$. The arguments in Section 7 of \cite{cara-sun-zygo} show that the non vanishing 
contribution to \eqref{eq:u:k-l:2} come from terms where the successive differences $i-j_1$, $j_1-j_2, \cdots$
are of order $n$. Due to Theorem \ref{llt} $n^{2/\alpha}\big[\P_{i}(k, y) -\P_i(l, y)\big]^2$ is of order
$o(1)$ if $i$ is of order $n$. This along with the uniform moment bound of $u$ obtained from \eqref{eq:u:recur} 
is enough to prove the lemma.
\end{proof}

As before we compare first $u$ with $\tilde w$ where $\tilde w$ now satisfies
\eqref{eq:wtilde} but with initial profile $\tilde w_0(k) = n^{1/\alpha} \mathbf 1\{ k=0\}$.
We have
\begin{lemma} We have
\bes
\sup_{k \in \Z} \big\| u_n(k)-\tilde w_n(k) \big\|_{2m}^2 =o(1)\quad \text{as }n\to \infty.
\ees
\end{lemma}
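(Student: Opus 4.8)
The plan is to adapt the proof of the corresponding lemma in Section~\ref{sec:part:1} — the one showing $\sup_{i\le n,\,k}\|u_i(k)-\tilde w_i(k)\|_{2m}^2\lesssim n^{-(\alpha-1)/\alpha}$ — to the Dirac initial condition $u_0(k)=\tilde w_0(k)=n^{1/\alpha}\mathbf 1\{k=0\}$, the only new feature being that here the $2m$-th moments of $u$ and $\tilde w$, hence of $D_i(k):=u_i(k)-\tilde w_i(k)$, are of order $n^{2/\alpha}$ near time zero rather than bounded. Exactly as in Section~\ref{sec:part:1} (the derivation is unchanged since $u_0=\tilde w_0$, using $u_j(l)=\sum_y\P(l,y)u_j(l)$ and the splitting $\tilde w_j(y)-u_j(l)=(\tilde w_j(y)-u_j(y))+(u_j(y)-u_j(l))$), we have $D_0\equiv0$ and
\bes
D_{i+1}(k)=\beta\sum_{j=0}^{i}\sum_{l}\P_{i-j}(k,l)\,\tilde\xi_{j+1}(l)\Big[\sum_{y}\P(l,y)\,D_j(y)+\sum_{y}\P(l,y)\big(u_j(y)-u_j(l)\big)\Big].
\ees

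The key structural point is that, since $D_0\equiv0$, iterating this identity and applying Burkholder's inequality together with the moment bound of \eqref{eq:mom:4m} writes $\sup_k\|D_n(k)\|_{2m}^2$ as a sum of iterated terms, each terminating in a factor $\sum_y\P(l,y)\|u_j(y)-u_j(l)\|_{2m}^2$ — the squared $L^{2m}$-modulus of continuity of $u$ over one $\P$-step, at some time $j$. This is the same kind of iterated expansion treated in Section~7 of \cite{cara-sun-zygo}, already invoked for Proposition~\ref{pr:umbd} and Lemma~\ref{lem:hol:1step}: the only non-negligible contributions come from configurations in which all successive time-increments — and in particular the time $j$ of the terminating factor — are of order $n$; terms with a small increment are killed by $\tfrac{c}{n^{(\alpha-1)/\alpha}}\sum_{m\le\delta n}P(Y_m=0)\lesssim\delta^{\,1-1/\alpha}$, and the time-zero region is tamed because a long time-lag $\P_{n-1-j}$ contributes a factor $n^{-2/\alpha}$ on any bounded spatial set, which absorbs the $n^{2/\alpha}$-size of the time-zero integrand (here one also uses the error term in the local limit theorem~\ref{llt}, as in the proof of Lemma~\ref{lem:hol:1step}). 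For the surviving terms $j$ is of order $n$, so by Lemma~\ref{lem:hol:1step} the terminating factor is $o(1)$; summing the resulting geometric-type series over the depth of the expansion — with an exponential weight as in Theorem~\ref{thm:u}, so that the per-level sum $\tfrac{c}{n^{(\alpha-1)/\alpha}}\sum_m e^{-\delta m/n}P(Y_m=0)$ is $<\tfrac12$ uniformly in $n$ (possible by \eqref{eq:mom2:u}) — yields $\sup_k\|D_n(k)\|_{2m}^2=o(1)$.

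Equivalently, one first removes from the $j$-sum the contribution of $j<[\epsilon n]$, showing by the argument in the proof of Proposition~\ref{pr:umbd} that this changes $\sup_k\|D_n(k)\|_{2m}^2$ by at most $o_\epsilon(1)$ uniformly in $n$ (for such $j$ the lag is of order $n$ and $\P_{n-1-j}$ supplies the $n^{-2/\alpha}$ gain just described), and then runs the Section~\ref{sec:part:1} argument verbatim on the truncated sum: Proposition~\ref{pr:umbd} bounds the moments of $u_j,\tilde w_j$ — hence $\sup_y\|D_j(y)\|_{2m}^2$ — for $j\ge[\epsilon n]$, Lemma~\ref{lem:hol:1step} makes the increment term $o_n(1)$, and the exponential-weight Gronwall argument of Theorem~\ref{thm:u} closes the recursion; letting $n\to\infty$ and then $\epsilon\to0$ finishes the proof.

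The main obstacle is the time-zero part: because the moments of $u$, $\tilde w$ and $D$ blow up near time zero, the simple supremum bounds of Section~\ref{sec:part:1} diverge, and one must retain the spatial profile $\sim n^{1/\alpha}\P_j(\cdot,0)$ and exploit the smoothing of the long time-lag $\P_{n-1-j}$ — precisely the bookkeeping of Section~7 of \cite{cara-sun-zygo} that already underlies Proposition~\ref{pr:umbd} and Lemma~\ref{lem:hol:1step}. Everything else is a routine transcription of the Section~\ref{sec:part:1} proof.
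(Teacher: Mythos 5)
Your proposal is correct and follows essentially the same route as the paper: since the Dirac initial profiles coincide, you iterate the recursion \eqref{eq:u-wtilde} to obtain the expansion \eqref{76} whose terms terminate in the one-step modulus $\sum_y\P(l,y)\|u_j(y)-u_j(l)\|_{2m}^2$, dominate the resulting series by the \eqref{eq:u:recur}-type iterated bound via the Section 7 arguments of \cite{cara-sun-zygo} (which is exactly what absorbs the time-zero blow-up through the long time-lag kernels), and conclude with Lemma \ref{lem:hol:1step} and dominated convergence. The only cosmetic difference is your exponential-weight/geometric-series phrasing (and the truncation-at-$[\epsilon n]$ variant), which in execution reduces to the same domination-plus-termwise-$o(1)$ argument the paper uses.
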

\begin{proof} We iterate \eqref{eq:u-wtilde}, so as to obtain a bound which does not involve the difference between $u$ and $\tilde w$. Thus we obtain 
\be \label{76}
\begin{split}
& \sup_{k \in \Z} \big\| u_n(k)-\tilde w_n(k) \big\|_{2m}^2  \\
& \lesssim \sum_{j_1=0}^{n-1} \sum_l \frac{n^{2/\alpha}\P_{n-1-j_1}^2(k,\,l_1)}{n^{(\alpha+1)/\alpha}} \sum_y \P(l_1,\,y) \cdot \big\| u_{j_1}(y) -u_{j_1}(l_1) \big\|_{2m}^2 \\
&+  \sum_{0\le j_2<j_1\le n-1}\, \sum_{l_1, l_2} \frac{n^{2/\alpha}\P_{n-1-j_1}^2(k,\,l_1)}{n^{(\alpha+1)/\alpha}}\cdot \frac{n^{2/\alpha} \P^2_{j_1-j_2}(l_1, \,l_2)}{n^{(\alpha+1)/\alpha}}\sum_y \P(l_2,\,y) \cdot \big\| u_{j_2}(y) -u_{j_2}(l_2) \big\|_{2m}^2 +\cdots
\end{split}
\ee
Consider the $q$'th sum in above. We have the trivial bound
\[ \big\| u_{j_q}(y) -u_{j_q}(l_q)\big\|_{2m}^2 \lesssim \big\| u_{j_q}(y)\big\|_{2m}^2 + \big\| u_{j_q}(l_q)\big\|_{2m}^2. \]
If we replace this bound in each of the terms in \eqref{76}, and then use the bound \eqref{eq:u:recur} we get an expression which is similar to the expression on the right hand side of \eqref{eq:u:recur} minus the first term. Using arguments similar to those 
in \cite{cara-sun-zygo} one can conclude that this expression is uniformly bounded. Moreover each of the terms in \eqref{76}
go to $0$, thanks to Lemma \ref{lem:hol:1step} and the arbitrariness of $\epsilon$.  Thus, by the dominated convergence theorem
the lemma is proved.
% After the iteration, as in \eqref{eq:u:recur}, we obtain sums involving $j_1>j_2>\cdots$. We use Lemma \ref{lem:hol:1step} when $j_l\ge \epsilon n$. When $j\le \epsilon n$ we bound $\big\| u_j(y)- u_j(l)\big\|_{2m}^2\lesssim\big[\big\| u_j(y)\big\|_{2m}^2 +\big\| u_j(l)\big\|_{2m}^2\big]$, and use \eqref{eq:u:recur} and the arguments from \cite{cara-sun-zygo}. 
\end{proof}
Next we compare $\tilde w$ with $w^*$ in \eqref{eq:wstar} with initial profile
$w^*_0(k)=n^{1/\alpha}\mathbf 1\{k=0\}$. We have
\begin{lemma} We have
\bes
\sup_{k \in \Z} \big\| w_n^*(k)-\tilde w_n(k) \big\|_{2m}^2 =o(1) \quad \text{as }n\to \infty .
\ees
\end{lemma}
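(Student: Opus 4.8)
The plan is to follow the comparison of $\tilde w$ with $w^*$ from the constant--profile case in Section \ref{sec:part:1}, modified to accommodate the blow--up of the moments of $\tilde w$ near time $0$ in the same spirit as the preceding $u$--versus--$\tilde w$ lemma: iterate the recursion and control the resulting series with help from \cite{cara-sun-zygo}. Since the initial profiles of $\tilde w$ and $w^*$ now agree, $\tilde w_0(k)=w^*_0(k)=n^{1/\alpha}\mathbf 1\{k=0\}$, subtracting the mild forms of \eqref{eq:wtilde} and \eqref{eq:wstar} and writing $D_i(k):=\tilde w_i(k)-w^*_i(k)$ gives, exactly as in the displayed identity just before \eqref{eq:exp-lin},
\bes
\begin{split}
D_{i+1}(k)&=\sum_{j=0}^i\sum_l\P_{i-j}(k,l)\Big[\beta\tilde\xi_{j+1}(l)+1-\frac{e^{\beta\tilde\xi_{j+1}(l)}}{\bE e^{\beta\tilde\xi}}\Big]\cdot\sum_y\P(l,y)\,\tilde w_j(y)\\
&\quad+\sum_{j=0}^i\sum_l\P_{i-j}(k,l)\Big[\frac{e^{\beta\tilde\xi_{j+1}(l)}}{\bE e^{\beta\tilde\xi}}-1\Big]\cdot\sum_y\P(l,y)\,D_j(y).
\end{split}
\ees

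I would then iterate this identity in its second sum. Because $D_0\equiv 0$, every surviving term uses the first sum exactly once, so — applying Burkholder's inequality together with the two estimates in \eqref{eq:exp-lin} — the $\|\cdot\|_{2m}^2$ norm of the contribution obtained after $p$ iterations of the second sum and then termination through the first sum is bounded by a constant multiple of a $(p{+}1)$--fold time--space sum of products of $p{+}1$ squared transition kernels $\P^2$ against a final factor $\|\tilde w_{j}(y)\|_{2m}^2$, carrying one prefactor $n^{-2(\alpha-1)/\alpha}$ (from the terminating factor $\beta\tilde\xi+1-e^{\beta\tilde\xi}/\bE e^{\beta\tilde\xi}$) and $p$ prefactors $n^{-(\alpha-1)/\alpha}$ (from the iterated factors $e^{\beta\tilde\xi}/\bE e^{\beta\tilde\xi}-1$). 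Writing $n^{-2(\alpha-1)/\alpha}=n^{-(\alpha-1)/\alpha}\cdot n^{-(\alpha-1)/\alpha}$, pulling one copy of $n^{-(\alpha-1)/\alpha}$ out in front, and bounding each remaining $\|\tilde w_j(y)\|_{2m}^2$ by the recursive estimate for $\tilde w$ with the delta initial profile — the analogue of \eqref{eq:u:recur}, valid for all times — I expect to reach
\bes
\sup_{k\in\Z}\big\|D_n(k)\big\|_{2m}^2\lesssim \frac{1}{n^{(\alpha-1)/\alpha}}\;\Xi_n,
\ees
where $\Xi_n$ is, up to a constant, the iterated--kernel series that appears in \eqref{eq:u:recur} and \eqref{76} and bounds the $2m$-th moment of the discrete parabolic Anderson model started from $n^{1/\alpha}\mathbf 1\{\cdot=0\}$ at time $n$.

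It then only remains to see that $\sup_n\Xi_n<\infty$, for which I would invoke the computation in the proof of Proposition \ref{pr:umbd}: following Section 7 of \cite{cara-sun-zygo}, each iterated kernel converges to the corresponding iterated integral of $p^2$ over a simplex, these integrals are finite on time intervals bounded away from the origin, and the tail sums are uniformly bounded in $n$. Granting this, $\sup_{k}\|D_n(k)\|_{2m}^2\lesssim n^{-(\alpha-1)/\alpha}\to 0$, which gives the claim (in fact with a rate); alternatively one may phrase the final step as dominated convergence — each term of the expansion tends to $0$ thanks to the extra factor $n^{-(\alpha-1)/\alpha}$, while the whole series is dominated, uniformly in $n$, by the parabolic Anderson series — exactly as in the preceding lemma. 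The only real difficulty is bookkeeping: one must track the delta initial profile correctly through the chain of kernels and check that the number of terms produced by the iteration (a polynomial weight in the chain length) does not destroy the $n$--uniform summability. Both points are already handled by the estimates of \cite{cara-sun-zygo} used for Proposition \ref{pr:umbd}, so no new ingredient is needed beyond the extra smallness $n^{-(\alpha-1)/\alpha}$ furnished by the second bound in \eqref{eq:exp-lin}.
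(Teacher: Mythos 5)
Your proposal is correct and follows essentially the paper's own argument: write the recursion for the difference, exploit the extra factor $n^{-(\alpha-1)/\alpha}$ coming from the second bound in \eqref{eq:exp-lin}, note that the moments of $\tilde w$ (resp.\ $w^*$) obey an \eqref{eq:u:recur}-type relation, and iterate down to the common delta initial profile, controlling the resulting iterated-kernel series exactly as in Proposition \ref{pr:umbd} via \cite{cara-sun-zygo}. The only cosmetic difference is that the paper keeps $\|w^*_j\|_{2m}$ in the source term of the recursion whereas you keep $\|\tilde w_j\|_{2m}$, which is immaterial since both fields satisfy the same moment bounds.
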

\begin{proof} We now have 
\bes\begin{split}
\sup_{k \in \Z} \big\| \tilde w_{i+1}(k)- w^*_{i+1}(k)\big\|_{2m}^2 &\lesssim  \frac{1}{n^{(\alpha-1)/\alpha}} \sum_{j=0}^i  \sum_l \frac{n^{2/\alpha}\P_{i-j}^2(k, l)}{n^{(\alpha+1)/\alpha}}\cdot \sum_y \P(l, y) \big\| w_j^*(y) \big\|_{2m}^2\\
&\quad + \frac{1}{n^{(\alpha-1)/\alpha}}\sum_{j=0}^i \sum_l \P_{i-j}^2(k, \, l) \cdot \sup_{y \in \Z} \big\|\tilde w_j(y) -w^*_j(y) \big\|_{2m}^2.
\end{split}
\ees
Again one can check that the moment bounds of $w^*$ satisfy a relation similar to \eqref{eq:u:recur}. Iterating the above proves the lemma.
\end{proof}
Finally we compare $w^*$ with $w$
\begin{lemma} We have
\bes\sup_{k \in \Z} \big \| w_n^*(k) -w_n(k) \big\|_{2m}^2 =o(1) \text{as } n\to \infty.\ees
\end{lemma}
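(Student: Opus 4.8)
The plan is to follow the same route as the three preceding lemmas of this subsection, the extra input being the smallness of the exponential‐linearization error in \eqref{eq:exp-lin}. Set $D_i(k):=w_i(k)-w^*_i(k)$. Subtracting \eqref{eq:wstar} from \eqref{eq:w:she} one sees that $D$ satisfies the same recursion \eqref{eq:w:she} that defines $w$, but started from the initial profile
\[
D_0(k)=n^{1/\alpha}\,\mathbf 1\{k=0\}\cdot\Big[\tfrac{e^{\beta\tilde\xi_0(0)}}{\bE e^{\beta\tilde\xi}}-1\Big].
\]
Passing to the mild (Duhamel) form, applying Burkholder's inequality, and using the first bound in \eqref{eq:exp-lin} for the noise factor together with the same bound applied to $D_0(0)$, I would obtain the recursive inequality
\be
\|D_i(k)\|_{2m}^2 \;\lesssim\; \frac{n^{2/\alpha}\,\P_i^2(k,0)}{n^{(\alpha-1)/\alpha}} \;+\; \sum_{j=0}^{i-1}\sum_{l\in\Z}\frac{n^{2/\alpha}\,\P_{i-j}^2(k,l)}{n^{(\alpha+1)/\alpha}}\,\|D_j(l)\|_{2m}^2 .
\ee
This is exactly the recursion \eqref{eq:u:recur} obeyed by $\|u_i(k)\|_{2m}^2$ (for $u$ the parabolic Anderson model \eqref{eq:u} with Dirac initial datum $u_0(k)=n^{1/\alpha}\mathbf 1\{k=0\}$), except that the source term $n^{2/\alpha}\P_i^2(k,0)$ of \eqref{eq:u:recur} is here carrying the extra factor $n^{-(\alpha-1)/\alpha}$ coming from \eqref{eq:exp-lin}.

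I would then iterate this inequality, as in \eqref{eq:u:recur} and in the previous lemmas. The resulting series has terms in one‑to‑one correspondence with those of the iterated expansion of $\|u_n(k)\|_{2m}^2$; since in both expansions the (Dirac) source factor $n^{2/\alpha}\P^2_{\,\cdot}(\cdot,0)$ appears exactly once, at the bottom of each chain, every term of the $D$‑series equals $n^{-(\alpha-1)/\alpha}$ times the corresponding term of the $u$‑series (up to a harmless shift of time indices by $1$). By the arguments of Section 7 of \cite{cara-sun-zygo} already invoked in the proof of Proposition \ref{pr:umbd} — convergence of each chain to an iterated integral of $p^2$ and uniform boundedness of the tail sums — the iterated expansion of $\sup_{k}\|u_n(k)\|_{2m}^2$ is bounded uniformly in $n$ (Proposition \ref{pr:umbd} with $i=n$). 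Hence
\[
\sup_{k\in\Z}\big\|w^*_n(k)-w_n(k)\big\|_{2m}^2 \;=\; \sup_{k\in\Z}\|D_n(k)\|_{2m}^2 \;\lesssim\; \frac{1}{n^{(\alpha-1)/\alpha}}\cdot\sup_{k\in\Z}\|u_n(k)\|_{2m}^2 \;\lesssim\; \frac{1}{n^{(\alpha-1)/\alpha}}\;\longrightarrow\;0 .
\]

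The one delicate point — and the reason one cannot simply invoke Gronwall's inequality on the displayed recursion — is that the moments of $w,w^*$ (equivalently those of $u$) are not bounded near time $0$, so the source at small times is large; controlling its propagation to time $n$ is exactly what the full iterated chaos expansion of Proposition \ref{pr:umbd} handles, via the fact that chains with a short time‑gap contribute negligibly. Once that uniform bound is in hand, the extra $n^{-(\alpha-1)/\alpha}$ furnished by \eqref{eq:exp-lin} gives the claim. Alternatively, exactly as in the comparison of $u$ with $\tilde w$ above, one may argue term by term that each chain (now carrying the extra $n^{-(\alpha-1)/\alpha}$) tends to $0$ and conclude by dominated convergence against the uniformly summable majorant provided by the $u$‑expansion.
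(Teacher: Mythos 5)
Your proposal is correct and follows essentially the same route as the paper: the paper likewise derives the recursive inequality $\sup_k\|w_{i+1}(k)-w^*_{i+1}(k)\|_{2m}^2 \lesssim n^{2/\alpha}\P_{i+1}^2(k,0)/n^{(\alpha-1)/\alpha} + n^{-(\alpha-1)/\alpha}\sum_{j\le i}\sum_l \P_{i-j}^2(k,l)\sup_y\|w_j(y)-w^*_j(y)\|_{2m}^2$ (the source coming from the initial-profile mismatch via \eqref{eq:exp-lin}) and concludes by iterating, with the iterated series controlled exactly as in Proposition \ref{pr:umbd}. Your observation that the difference solves the same linear recursion with differenced initial data, and your explicit $n^{-(\alpha-1)/\alpha}$ rate, are fully consistent refinements of that argument.
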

\begin{proof} We now have
\bes
\sup_{k \in \Z} \big\|  w_{i+1}(k)- w^*_{i+1}(k)\big\|_{2m}^2 \lesssim \frac{n^{2/\alpha} \P_{i+1}^2(k, 0)}{n^{(\alpha-1)/\alpha}} + \frac{1}{n^{(\alpha-1)/\alpha}}\sum_{j=0}^i \sum_l \P_{i-j}^2(k, \, l) \cdot \sup_{y \in \Z} \big\| w_j(y) -w^*_j(y) \big\|_{2m}^2.
\ees
Iterating the above proves the result.
\end{proof}
Thanks to Theorem \ref{thm:dirac} we know that $u_n([xn^{1/\alpha}])$ converges in distribution to $g_1(x)$, where $\partial_t g=-\nu(-\Delta)^{\alpha/2}g + \beta g \dot{W}$ and $g_0 =\delta_0$. The above lemmas then imply that the same holds for $w_n([xn^{1/\alpha}])$ and consequently for $n^{1/\alpha} M_n^{(\tilde \xi, [-xn^{1/\alpha}])}$. The second part 
of Theorem \ref{thm:part} follows by a spatial translation of the noise.

Denote $u^{(1)}$ be the solution to \eqref{eq:u} with initial profile $1$ and let $u^{(2)}$ be the solution to \eqref{eq:u} with 
initial profile $\delta_0$. Using the same probability space constructed in Theorem \ref{thm:couple} for both $u^{(1)}$
and $u^{(2)}$ we see that $\big(u^{(1)}_n([xn^{1/\alpha}]), u^{(2)}_n([xn^{1/\alpha}]) \big)$ converges jointly in 
distribution to $(v_1(x), g_1(x))$. Because of the previous lemmas the same holds for $\big(
w^{(1)}_n([xn^{1/\alpha}]), w^{(2)}_n([xn^{1/\alpha}]) \big)$, where $w^{(1)}$ solves \eqref{eq:w} and 
$w^{(2)}$ solves \eqref{eq:w:delta}. By time reversal and independence of the random variables $\xi$ we can conclude
that $\big(M_n^{(\tilde \xi)},\, n^{1/\alpha}M_n^{(\tilde \xi, \, [xn^{1/\alpha}]}\big)$ converges jointly to $(v_1(-x), g_1(-x))$. The third statement
of Theorem \ref{thm:part} follows from this and a spatial translation of the noise since $n^{1/\alpha}\P_n^{\tilde\xi, \beta}([xn^{1/\alpha}])$ is simply the ratio of $ n^{1/\alpha}M_n^{(\tilde \xi, \, [xn^{1/\alpha}]}$ and $M_n^{(\tilde \xi)}$. \qed

\subsection{Some more extensions}
There are a few other possible extensions of the above results.
\begin{enumerate}
\item The conditions on $\xi$ can be relaxed. For example it is not necessary for them to be i.i.d.. It is enough that they are independent and satisfy conditions in \eqref{cond:xi} for the weak convergence in Theorem \ref{thm:main} to hold, see for example \cite{boro}. However we need to assume stronger conditions to obtain convergence of high moments. 
\item We could relax the conditions on $\eta$, for example we could take them to be independent with {\it slowly varying} variances (as in equation 2.10 in \cite{bala-rass-sepp}). The arguments in Section \ref{sec:char} show that the contributions of the noise $\xi$ and initial configuration $\eta$ can be treated separately, so one could even consider correlated $\eta$ as in \cite{sepp-zhai}.
\item With some work, it should be possible to extend Theorem \ref{thm:dirac} to more general compactly supported measures.
\item We can consider weak approximations of the stochastic heat equation with spatially colored noise; depending on the noise these can exist in higher dimensions (see \cite{foon-jose-li}). The key ingredient in the proof of our main theorem \ref{thm:main} was the coupling 
theorem \ref{thm:couple}. A starting point would be Section 3.1 of \cite{conu-jose-khos-shiu} where a large family of colored noises are constructed by convoluting appropriate functions with white noise. These include {\it Riesz noises} (see \cite{foon-jose-li}) which interpolate between spatially smooth noise and white noise. The construction also suggests that one should then consider random variables of the form $\sum_x a_x\cdot\xi_i(k+x)$ for appropriate $a_x, x\in \Z^d$ as our discrete noise; here $\xi_i(k)$ are i.i.d. random variables satisfying \eqref{cond:xi} as before. We could also
consider temporally correlated noise. One could then apply the above to study 
intermediate disorder regimes for directed polymers in a correlated random environment. 

\end{enumerate}

\appendix

\section{Appendix: A local limit theorem} \label{sec:app:A}
The following local limit theorem is a modification of Proposition 3.3 in \cite{foon-jose-li}. Below $p_t$ is the density 
for the Stable($\alpha$) process with generator $-\nu(-\Delta)^{\alpha/2}$.
\begin{theorem}\label{llt} Suppose Assumption \ref{cond1} holds. Then for any $0\le b\le 1$ and any $c>0$
\be \label{eq:llt} \sup_{k \in \Z} \;\sup_{|x-(k-\mu[nt])|\le cn^{(1-b)/\alpha}}\left | n^{\frac{1}{\alpha}} \P_{[nt]}(k) - p_{\frac{[nt]}{n}}\left(x n^{-\frac{1}{\alpha}}\right)\right| \lesssim \frac{1}{n^{a/\alpha} t^{(1+a)/\alpha}} + \frac{1}{n^{b/\alpha}t^{2/\alpha}},\ee
uniformly for $1/n\le t\le T$.
\end{theorem}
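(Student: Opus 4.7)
The plan is to prove the local limit theorem by Fourier inversion, following closely the standard comparison of the characteristic function of a centered domain-of-attraction walk with that of the limiting Stable($\alpha$) density. Set $m=[nt]$. Strong aperiodicity from Assumption \ref{cond1} gives $|\phi(z)|\le \rho_\epsilon<1$ on $\{\epsilon\le |z|\le \pi\}$ for each $\epsilon>0$. Writing $\phi(z)=e^{i\mu z}\tilde\phi(z)$ and rescaling $z=\xi/n^{1/\alpha}$ yields
\[n^{1/\alpha}\P_m(k) = \frac{1}{2\pi}\int_{|\xi|\le \pi n^{1/\alpha}} e^{-i\xi(k-\mu m)/n^{1/\alpha}}\tilde\phi(\xi/n^{1/\alpha})^m\,d\xi,\quad p_{m/n}\bigl(xn^{-1/\alpha}\bigr) = \frac{1}{2\pi}\int_{\R} e^{-i\xi x/n^{1/\alpha}} e^{-(m/n)\nu|\xi|^\alpha}\,d\xi.\]
I would fix $\epsilon>0$ small (depending on the constants in \eqref{eq:char}) and decompose the difference in \eqref{eq:llt} into four pieces: a bulk term $\mathrm I_1$ on $|\xi|\le\epsilon n^{1/\alpha}$ comparing $\tilde\phi(\xi/n^{1/\alpha})^m$ with $e^{-(m/n)\nu|\xi|^\alpha}$ against the phase $e^{-i\xi(k-\mu m)/n^{1/\alpha}}$; a bulk term $\mathrm I_2$ on the same range comparing the two phases against the Gaussian-like weight $e^{-(m/n)\nu|\xi|^\alpha}$; and two remainder terms $\mathrm I_3, \mathrm I_4$ on the annulus $\epsilon n^{1/\alpha}<|\xi|\le \pi n^{1/\alpha}$ and the Stable tail $|\xi|>\pi n^{1/\alpha}$.

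For the bulk terms, the expansion \eqref{eq:char} gives $\log\tilde\phi(\xi/n^{1/\alpha}) = -\nu|\xi|^\alpha/n + \mathcal{D}(\xi/n^{1/\alpha})$ with $\mathcal{D}(\xi/n^{1/\alpha})=O(|\xi|^{\alpha+a}/n^{(\alpha+a)/\alpha})$. Once $\epsilon$ is small enough this simultaneously yields the genuine exponential decay $|\tilde\phi(\xi/n^{1/\alpha})|^m \le \exp(-c(m/n)|\xi|^\alpha)$ and, via the elementary bound $|e^{w}-1|\le 2|w|$ applied to $w=m\,\mathcal{D}(\xi/n^{1/\alpha})$, the linearization
\[\bigl|\tilde\phi(\xi/n^{1/\alpha})^m - e^{-(m/n)\nu|\xi|^\alpha}\bigr|\lesssim \frac{t|\xi|^{\alpha+a}}{n^{a/\alpha}}\,e^{-ct|\xi|^\alpha}.\]
Integrating over $\xi$ and substituting $\eta = t^{1/\alpha}\xi$ produces $|\mathrm I_1|\lesssim n^{-a/\alpha}t^{-(1+a)/\alpha}$, which is the first term in \eqref{eq:llt}. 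For $\mathrm I_2$ the phase difference obeys $|e^{-i\xi(k-\mu m)/n^{1/\alpha}}-e^{-i\xi x/n^{1/\alpha}}|\le |\xi||x-(k-\mu m)|/n^{1/\alpha}\le c|\xi|/n^{b/\alpha}$ under the hypothesis on $x$, and the same rescaling gives $|\mathrm I_2|\lesssim n^{-b/\alpha}t^{-2/\alpha}$, matching the second term.

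The remainder terms are easy. On the annulus strong aperiodicity gives $|\tilde\phi(\xi/n^{1/\alpha})|^m\le \rho_\epsilon^{[nt]}$, so $|\mathrm I_3|\lesssim n^{1/\alpha}\rho_\epsilon^{[nt]}$ decays super-polynomially in $n$ once $t\ge 1/n$; similarly $|\mathrm I_4|\lesssim \int_{|\xi|\ge \pi n^{1/\alpha}} e^{-(m/n)\nu|\xi|^\alpha}\,d\xi$ is super-exponentially small. Both are absorbed into the right-hand side of \eqref{eq:llt}. The only nontrivial analytic step is the choice of $\epsilon$ in the bulk estimate: one must verify that $\epsilon$ can be picked independent of $n$, $t$, $k$, $x$ so that the error $m\,\mathcal{D}(\xi/n^{1/\alpha})$ both preserves the leading Gaussian-type decay $\exp(-c(m/n)|\xi|^\alpha)$ and admits the linearization $|e^{w}-1|\le 2|w|$. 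This is the main obstacle, and it is exactly where the quantitative assumption $\mathcal{D}(z)=O(|z|^{\alpha+a})$ is used in an essential way; the remaining work is elementary Fourier bookkeeping.
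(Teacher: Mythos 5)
Your overall strategy is the same as the paper's: Fourier inversion, comparison of $\tilde\phi(z/n^{1/\alpha})^{[nt]}$ with $e^{-\nu([nt]/n)|z|^\alpha}$ on $[-\pi n^{1/\alpha},\pi n^{1/\alpha}]$, and the phase estimate $|e^{-\mathrm iz(k-\mu[nt])n^{-1/\alpha}}-e^{-\mathrm izxn^{-1/\alpha}}|\lesssim |z|\,n^{-b/\alpha}$ producing the second term of \eqref{eq:llt}; your $\mathrm I_2$, $\mathrm I_4$ and the final bookkeeping are fine. The genuine problem is the bulk term $\mathrm I_1$. On $|\xi|\le \epsilon n^{1/\alpha}$ the quantity $w=[nt]\,\mathcal D(\xi/n^{1/\alpha})$ is only controlled by $C[nt]|\xi|^{\alpha+a}n^{-(\alpha+a)/\alpha}$, which at the top of the range is of order $t\,\epsilon^{\alpha+a}\,n$; it is therefore not bounded for any fixed $\epsilon$, the inequality $|e^{w}-1|\le 2|w|$ is unavailable on that whole range, and the verification you defer to your last paragraph (choose $\epsilon$ independent of $n,t,k,x$ so that the linearization holds) cannot succeed as stated. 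Two standard repairs exist. The paper's: linearize only on the $n,t$-dependent region $A_{t,n}=\{|z|\le n^{a/[\alpha(a+\alpha)]}t^{-1/(a+\alpha)}\}$, which is tailored exactly so that $[nt]\,|\mathcal D(z/n^{1/\alpha})|\lesssim 1$ there, and on the complementary mid-range $A_{t,n}^c\cap[-\pi n^{1/\alpha},\pi n^{1/\alpha}]$ bound $|\tilde\phi(z/n^{1/\alpha})|^{[nt]}$ and $e^{-\nu([nt]/n)|z|^\alpha}$ separately, using $|\tilde\phi(z/n^{1/\alpha})|\le 1-c_1|z|^\alpha/n$ on the whole dual cell (this is where strong aperiodicity enters; no separate $\rho_\epsilon$ annulus is needed). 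Alternatively, you can keep your fixed-$\epsilon$ bulk but replace the linearization by $|e^{w}-1|\le |w|e^{|w|}$ together with $|w|\le C\epsilon^{a}\,t|\xi|^\alpha$ (since $|\xi|^a\le\epsilon^a n^{a/\alpha}$ there) and absorb $e^{|w|}$ into the stable decay after shrinking $\epsilon$; this does recover your displayed intermediate estimate, but it is an argument you have not actually made, and as written the step is a gap.

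Two smaller points. The claim that $\mathrm I_3\lesssim n^{1/\alpha}\rho_\epsilon^{[nt]}$ ``decays super-polynomially in $n$ once $t\ge 1/n$'' fails at the bottom of the range: for $t\asymp 1/n$ one has $[nt]=1$ and this bound is of order $n^{1/\alpha}$. It is nonetheless admissible because the right-hand side of \eqref{eq:llt} is itself of order $n^{1/\alpha}$ when $t\asymp 1/n$; the correct assertion is $(nt)^{(1+a)/\alpha}\rho_\epsilon^{[nt]}\lesssim 1$ uniformly over $nt\ge 1$, whence $\mathrm I_3\lesssim n^{-a/\alpha}t^{-(1+a)/\alpha}$, and you should compare against the right-hand side rather than claim absolute smallness. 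Finally, in $\mathrm I_1$, $\mathrm I_2$ and $\mathrm I_4$ you repeatedly use decay of the form $e^{-c([nt]/n)|\xi|^\alpha}$ with $[nt]/n$ replaced by $t$; this needs the (easy, but worth stating) observation that $[nt]/n\ge t/2$ for all $t\ge 1/n$.
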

\begin{proof} To simplify notation we denote $\tilde t=[nt]/n$. We first bound the expression on the left for $x=k-\mu[nt]$. Fourier inversion gives
\bes\begin{split}
p_{\tilde t}(x) = \frac{1}{2\pi} \int_{\R} e^{-\mathrm{i} xz} e^{-\nu \tilde t|z|^\alpha} \d z,
\end{split}\ees
and 
\bes
\P_{[nt]}(k) = \frac{1}{2\pi} \int_{-\pi}^{\pi} e^{-\mathrm{i} k z} \cdot \left[\phi(z)\right]^{[nt]}.
\ees
Therefore
\bes\begin{split}
&(2\pi)\left | n^{\frac{1}{\alpha}} \P_{[nt]}(k) - p_{\tilde t}\left((k-\mu[nt])n^{-\frac{1}{\alpha}}\right)\right| \\
&\qquad \le
\int_{ \left[-\pi n^{1/\alpha},\pi n^{1/\alpha}\right]^c}e^{-\nu \tilde t|z|^\alpha} \d z + \int_{-\pi n^{1/\alpha}}^{\pi n^{1/\alpha}} \left|e^{-\nu \tilde t|z|^\alpha}- \tilde\phi\left(\frac{z}{n^{1/\alpha}}\right)^{[nt]}\right| \d z. 
%&\qquad= \int_{ \left[-\pi n^{1/\alpha},\pi n^{1/\alpha}\right]^c}e^{-\nu t|z|^\alpha} \d z + \int_{-\pi n^{1/\alpha}}^{\pi n^{1/\alpha}} \left|e^{-\nu t|z|^\alpha}- \left[1-\left\lbrace1-\tilde\phi\left(\frac{z}{n^{1/\alpha}}\right)\right\rbrace\right]^{[nt]}\right| \d z 
\end{split}\ees
The first term can be bound just as term $I_2$ in Proposition 3.3 of \cite{foon-jose-li}, and we get
\bes
\int_{ \left[-\pi n^{1/\alpha},\pi n^{1/\alpha}\right]^c}e^{-\nu \tilde t|z|^\alpha} \d z \lesssim \frac{1}{n^{a/\alpha} t^{(1+a)/\alpha}}.
\ees
For the second term we split the region of integration depending on whether or not $z$ is in
\bes
A_{t,n} :=\Big\{z\in \R; \, |z|\le \frac{n^{a/\{\alpha(a+\alpha)\}}}{t^{1/(a+\alpha)}} \Big\}.
 \ees
Our assumptions on the characteristic function $\phi$ imply
\bes
\Big\vert\tilde\phi\big(\frac{z}{n^{1/\alpha}}\big)\Big\vert  \le 1- c_1 \frac{|z|^\alpha}{n}
\ees
on $|z|\le \pi n^{1/\alpha}$, and so it follows as in \cite{foon-jose-li} that
\bes
\int_{A_{t,n}^c\cap [-\pi n^{1/\alpha},-\pi n^{1/\alpha}]} \left|e^{-\nu \tilde t|z|^\alpha}- \tilde\phi\left(\frac{z}{n^{1/\alpha}}\right)^{[nt]}\right| \, \d z  \lesssim \frac{1}{n^{a/\alpha} t^{(1+a)/\alpha}}.
\ees
%We finally consider
%\bes
%\int_{A_{t,n}} \left|e^{-\nu t|z|^\alpha}- \bar\phi\left(\frac{z}{n^{1/\alpha}}\right)^{[nt]}\right| \, \d z .
%\ees
%This is where we make use of our assumption. 
We next need to consider the above integrand over the region $z \in A_{t,n}$. First observe
\bes\begin{split}
e^{-\nu \tilde t|z|^\alpha}- \tilde\phi\left(\frac{z}{n^{1/\alpha}}\right)^{[nt]} &= e^{-\nu t|z|^\alpha} - \exp\left[[nt] \log\tilde\phi\left(\frac{z}{n^{1/\alpha}}\right) \right] \\
&=e^{-\nu t|z|^\alpha} \left\lbrace1-\exp\left[[nt] \log\tilde\phi\left(\frac{z}{n^{1/\alpha}}\right) +\nu \tilde t|z|^\alpha\right] \right\rbrace \\
&= e^{-\nu t|z|^\alpha} \left\lbrace1-\exp\left[[nt] \log\left(1-\nu\frac{|z|^\alpha}{n}+\mathcal{D}\left(\frac{z}{n^{1/\alpha}}\right)\right) +\nu \tilde t|z|^\alpha\right] \right\rbrace. 
\end{split}\ees
It is easy to see that $[nt]\mathcal{D}\big(z/n^{1/\alpha}\big)$  is bounded on $A_{t,n}$,  and so 
\bes\begin{split}
\int_{A_{t,n}\cap [-\pi n^{1/\alpha},-\pi n^{1/\alpha}]} \left|e^{-\nu \tilde t|z|^\alpha}- \tilde\phi\left(\frac{z}{n^{1/\alpha}}\right)^{[nt]}\right| \, \d z %&\lesssim \int_{A_{t,n}} e^{-\nu t|z|^\alpha} \cdot (nt) \left| \frac{z}{n^{1/\alpha}}\right|^{a+\alpha} \\
&\lesssim \int_{\R} e^{-\nu \tilde t|z|^\alpha} \cdot (nt) \left| \frac{z}{n^{1/\alpha}}\right|^{a+\alpha} \\
&\lesssim \frac{1}{n^{a/\alpha} t^{(1+a)/\alpha}}.
\end{split}\ees
We thus have the required bound in the case $x=k-\mu[nt]$. In the case of a general $x$ such that $|x-(k-\mu[nt])|\le n^{(1-b)/\alpha}$ we have
\bes\begin{split}
&\left|p_{\tilde t}(xn^{-\frac1\alpha})-p_{\tilde t}((k-\mu[nt])n^{-\frac1\alpha})\right| \\
 & \le \int_{\R} \left | e^{-\mathrm{i}z(k-\mu[nt])n^{-\frac{1}{\alpha}}}- e^{-\mathrm{i}zxn^{-\frac1\alpha} } \right| e^{-\nu \tilde t|z|^\alpha}\, \d z \\
&\lesssim \int_{\R} 1\wedge \frac{|z|}{n^{b/\alpha}} \cdot e^{-\nu \tilde t|z|^\alpha} \d z \\
&\le \frac{1}{n^{b/\alpha}t^{2/\alpha}}\int_{|w|\le n^{b/\alpha}t^{1/\alpha}} |w| e^{-\nu |w|^\alpha} \d w  +\frac{1}{t^{1/\alpha}} \int_{|w|> n^{b/\alpha}t^{1/\alpha}} e^{-\nu |w|^\alpha}\d w\\
%&\lesssim \frac{1}{n^{b/\alpha}t^{2/\alpha}}\int_{\R} |w| e^{-\nu |w|^\alpha} \d w + \frac{1}{t^{1/\alpha}} \cdot \frac{1}{n^{b/\alpha}t^{1/\alpha}} \\
&\lesssim \frac{1}{n^{b/\alpha}t^{2/\alpha}}.
\end{split}\ees
This completes the proof. 
\end{proof}
The local limit theorem has the following consequence.
\begin{corollary} \label{cor:green} Let Assumption \ref{cond1} hold. Suppose $a_n$ is an integer valued sequence such that 
\bes
\frac{a_n-\mu[nt]}{n^{1/\alpha}} \rightarrow a,
\ees
as $n \to \infty$ for some constant $a$. Then for fixed $t>0$
\bes
\frac{1}{n^{(\alpha-1)/\alpha}} \sum_{i=0}^{[nt]} \P_i(a_n) \rightarrow \int_0^t\frac{\d s}{s^{1/\alpha}}\, p_1\left(\frac{a}{s^{1/\alpha}}\right) \; \text{as } n \to \infty.
\ees
\end{corollary}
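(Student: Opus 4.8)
The plan is to write the sum as a Riemann-sum-type approximation of the target integral and control the error using Theorem \ref{llt}. First I would change variables by setting $s = i/n$, so that
\[
\frac{1}{n^{(\alpha-1)/\alpha}} \sum_{i=0}^{[nt]} \P_i(a_n) = \frac{1}{n}\sum_{i=0}^{[nt]} n^{1/\alpha}\,\P_i(a_n),
\]
which formally matches $\int_0^t n^{1/\alpha}\P_{ns}(a_n)\,\d s$. The idea is then to replace each term $n^{1/\alpha}\P_i(a_n)$ by $p_{i/n}\big((a_n-\mu i)n^{-1/\alpha}\big) = \frac{1}{(i/n)^{1/\alpha}} p_1\big((a_n-\mu i)(i/n)^{-1/\alpha} n^{-1/\alpha}/(i/n)^{-1/\alpha}\big)$, using the scaling $p_t(y) = t^{-1/\alpha} p_1(y t^{-1/\alpha})$; more precisely $n^{1/\alpha}\P_i(a_n) \approx p_{i/n}\big((a_n-\mu i)n^{-1/\alpha}\big)$ by Theorem \ref{llt} applied with $k = a_n$, $[nt]$ replaced by $i$, and $x = a_n - \mu i$ (which forces $b=1$, the no-constraint case, so only the error term $n^{-a/\alpha}(i/n)^{-(1+a)/\alpha}$ appears — the $n^{-b/\alpha}$ term is absent).

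The argument would then split into three pieces. (i) A \emph{small-time cutoff}: for $i \le \epsilon n$, bound $\frac{1}{n}\sum_{i\le \epsilon n} n^{1/\alpha}\P_i(a_n)$ directly; here one can use the uniform bound $n^{1/\alpha}\P_i(a_n) \lesssim p_{i/n}(0) + (\text{error}) \lesssim (i/n)^{-1/\alpha} + \dots$, whose contribution to the sum is $O(\epsilon^{(\alpha-1)/\alpha})$, going to $0$ as $\epsilon \to 0$ uniformly in $n$; the corresponding tail $\int_0^\epsilon s^{-1/\alpha} p_1(a/s^{1/\alpha})\,\d s$ of the limiting integral is likewise $o(1)$ as $\epsilon\to0$. (ii) On the bulk range $\epsilon n \le i \le [nt]$, apply Theorem \ref{llt} to replace $n^{1/\alpha}\P_i(a_n)$ by $p_{i/n}\big((a_n - \mu i)n^{-1/\alpha}\big)$ with total error $\lesssim \frac{1}{n}\sum_{\epsilon n \le i \le nt} n^{-a/\alpha}(i/n)^{-(1+a)/\alpha} \lesssim n^{-a/\alpha} \cdot \epsilon^{-a/\alpha} \to 0$. (iii) For the resulting Riemann sum $\frac{1}{n}\sum_{\epsilon n\le i\le nt} p_{i/n}\big((a_n-\mu i)n^{-1/\alpha}\big)$, use that $(a_n - \mu i)n^{-1/\alpha}$, as $i = ns$ ranges over $[\epsilon, t]$, is close to $(a_n - \mu[nt])n^{-1/\alpha} + \mu([nt]-i)n^{-1/\alpha} \to a + \mu(t-s)$... actually it is cleaner to note $p_{i/n}\big((a_n - \mu i)n^{-1/\alpha}\big) = p_{i/n}\big((a_n - \mu[nt])n^{-1/\alpha} + \mu([nt]-i)n^{-1/\alpha}\big)$, and since $(a_n - \mu[nt])n^{-1/\alpha} \to a$ while $p$ is continuous and bounded away from $t=0$, standard Riemann-sum convergence gives $\frac{1}{n}\sum_{\epsilon n\le i\le nt} p_{i/n}(\cdots) \to \int_\epsilon^t p_s(a)\,\d s = \int_\epsilon^t s^{-1/\alpha} p_1(a/s^{1/\alpha})\,\d s$. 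Letting $\epsilon \to 0$ afterward finishes the proof.

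The main obstacle I expect is handling the singularity of $p_s$ and of the local-limit error term near $s = 0$: Theorem \ref{llt}'s bound blows up like $t^{-(1+a)/\alpha}$, which is not summable down to $i=0$ after dividing by $n$, so the small-time cutoff in step (i) is essential and must be justified by a \emph{separate}, cruder bound on $\sum_{i\le\epsilon n}\P_i(a_n)$ that does not go through Theorem \ref{llt} — for instance via $\P_i(a_n) \le \P_i(0) + |\P_i(a_n)-\P_i(0)|$ or simply the on-diagonal decay $\P_i(0) \lesssim i^{-1/\alpha}$ that follows from the same Fourier estimates used to prove Theorem \ref{llt}. Once the small-time contribution is shown to vanish uniformly in $n$ as $\epsilon\to0$, the rest is a routine dominated-convergence / Riemann-sum argument.
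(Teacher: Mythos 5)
Your proposal follows essentially the same route as the paper's proof: rewrite the sum as $\frac1n\sum_i n^{1/\alpha}\,\P_i(a_n)$, replace each summand by the stable density via Theorem \ref{llt}, use the scaling identity $p_t(x)=c\,p_{c^\alpha t}(cx)$, and conclude with a Riemann sum. Your $\epsilon$-cutoff is a slightly more careful, but not strictly necessary, device: the accumulated LLT error $\frac1n\sum_{i\ge 1} n^{1/\alpha} i^{-(1+a)/\alpha}$ is $O(n^{-a/\alpha})$ or $O(n^{(1-\alpha)/\alpha}\log n)$ and so vanishes without any cutoff, and the limiting integrand $s^{-1/\alpha}p_1(a s^{-1/\alpha})$ is integrable at $0$ since $\alpha>1$, which is what the Riemann-sum step implicitly uses near $s=0$. (Also, taking $b=1$ in Theorem \ref{llt} does not make the second error term disappear; it becomes $n^{-1/\alpha}t^{-2/\alpha}$, but this too is harmless after summation.)

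The one genuine problem is your step (iii), the treatment of the drift. With $i=ns$, $s<t$ fixed, the correction $\mu([nt]-i)n^{-1/\alpha}$ is of order $\mu(t-s)\,n^{(\alpha-1)/\alpha}$, which diverges rather than converging to $\mu(t-s)$; hence $p_{i/n}\big((a_n-\mu i)n^{-1/\alpha}\big)$ is not close to $p_{i/n}(a)$, and the appeal to ``standard Riemann-sum convergence'' silently discards a term that is absent only when $\mu=0$. In fact for $\mu\neq 0$ the stated limit fails outright: $\P_i(a_n)$ is non-negligible only for $i$ within $O(n^{1/\alpha})$ of $[nt]$, so the left-hand side tends to $0$, not to the positive integral. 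So the corollary should be read as a statement about centered walks, and indeed it is only ever applied to the symmetrized walk $Y=X-\tilde X$, which has mean zero; the paper's own one-line proof carries the same implicit restriction, since it plugs the fixed spatial point $(a_n-\mu[nt])n^{-1/\alpha}$ into $p_{i/n}$ for every $i$, which is what Theorem \ref{llt} at time $i$ yields only when $\mu=0$. Once you set $\mu=0$ (equivalently, restrict to the case in which the corollary is used), your argument is correct and matches the paper's; as written for general $\mu$, step (iii) would fail.
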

\begin{proof} We write 
\bes\begin{split}
\frac{1}{n^{(\alpha-1)/\alpha}} \sum_{i=1}^{[nt]} \P_i(a_n) &= \frac{1}{n}\sum_{i=1}^{[nt]}   n^{\frac{1}{\alpha}}\P_i(a_n) \\
%& =\frac{1}{n}\sum_{i=1}^{[nt]}  \frac{1}{(i/n)^{\frac{1}{\alpha}}}\cdot \left[p_1\left(\left(\frac{n}{i}\right)^{1/\alpha}\frac{a_n-\mu[nt]}{n^{1/\alpha}}\right)\right] +o(1).
&= \frac{1}{n} \sum_{i=1}^{[nt]} \left[p_{\frac{i}{n}}\left(\frac{a_n-\mu[nt]}{n^{1/\alpha}}\right)+ \frac{n^{1/\alpha}}{i^{(1+a)/\alpha}}\right]
\end{split}\ees
Use the scaling property: for any $c>0$
\be\label{eq:scale}
p_t(x) = c p_{c^\alpha t}(cx)
\ee
to write the above as
\bes
\frac{1}{n} \sum_{i=1}^{[nt]} \frac{1}{(i/n)^{1/\alpha}}\cdot p_1\left(\left(\frac{n}{i}\right)^{1/\alpha} \cdot\frac{a_n-\mu [nt]}{n^{1/\alpha}}\right) +o(1).
\ees
The rest is a Riemann sum approximation.
\end{proof}

\section{Appendix: Bounds required for Proposition \ref{prop:u-U} } \label{sec:app:B}
The main results of this section are Lemmas \ref{lem:2_1} and \ref{lem:2_2} which were needed in the proof
of Proposition \ref{prop:u-U}.
\begin{lemma} \label{lem:green} The following holds 
\bes
\sup_{n}\sup_{w\in \Z} \frac{[n^\theta]}{n^{(\alpha-1)/\alpha}}\sum_{i=1}^{r-1} P\big(Y_{i[n^\theta]}=w\big) \le \sup_{n} \frac{[n^\theta]}{n^{(\alpha-1)/\alpha}}\sum_{i=1}^{r-1} P\big(Y_{i[n^\theta]}=0\big) <\infty.
\ees
\end{lemma}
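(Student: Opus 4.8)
The plan is to establish the two inequalities separately. For the first inequality, I would use the general fact that a symmetric random walk on $\Z$ (here $Y$, or rather the subsampled walk $i \mapsto Y_{i[n^\theta]}$) has its return probability at $0$ dominating the probability of being at any other point. Concretely, writing the characteristic function of $Y_{[n^\theta]}$ as $|\phi(z)|^{2[n^\theta]}$, which is real and nonnegative on $[-\pi,\pi]$, Fourier inversion gives
\[
P\big(Y_{i[n^\theta]} = w\big) = \frac{1}{2\pi}\int_{-\pi}^{\pi} \cos(wz)\,|\phi(z)|^{2i[n^\theta]}\,\d z \le \frac{1}{2\pi}\int_{-\pi}^{\pi} |\phi(z)|^{2i[n^\theta]}\,\d z = P\big(Y_{i[n^\theta]}=0\big).
\]
Summing over $i$ from $1$ to $r-1$ and multiplying by $[n^\theta]/n^{(\alpha-1)/\alpha}$ gives the first inequality, uniformly in $w$ and $n$.

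For the second inequality — the actual content — I would recognize the sum $\frac{[n^\theta]}{n^{(\alpha-1)/\alpha}}\sum_{i=1}^{r-1} P(Y_{i[n^\theta]}=0)$ as a Riemann-sum-type approximation to the Green's function quantity controlled by Corollary \ref{cor:green}. Indeed, with $r \le [Tn^{1-\theta}]$, the sum $\sum_{i=1}^{r-1} P(Y_{i[n^\theta]}=0)$ is a sum over the arithmetic progression of times $\{[n^\theta], 2[n^\theta], \dots\}$ up to roughly $nT$; since $P(Y_j = 0) = n^{-1/\alpha}(p_{j/n}(0) + o(1))$-type behaviour holds by Theorem \ref{llt} applied to the $Y = X - \tilde X$ walk (which satisfies Assumption \ref{cond1} with $\nu$ replaced by $2\nu$, as noted in Lemma \ref{lem:xunif}), the factor $[n^\theta]$ plays the role of the mesh size $\Delta j$ and $\frac{1}{n^{(\alpha-1)/\alpha}} = \frac{1}{n}\cdot n^{1/\alpha}$ supplies the $\frac1n$ from converting to continuous time together with the $n^{1/\alpha}$ local-limit normalization. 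More directly, I would simply dominate: since each term in the $i$-sum is at most $[n^\theta]^{-1}$ times the sum $\sum_{j=(i-1)[n^\theta]}^{i[n^\theta]-1} P(Y_{j}=0)$ up to a local-limit-theorem comparison — or, even more cleanly, bound $[n^\theta]\sum_{i=1}^{r-1} P(Y_{i[n^\theta]}=0) \lesssim \sum_{j=1}^{[nT]} P(Y_j = 0)$ using monotonicity-type estimates on $j \mapsto P(Y_j=0)$ from the local limit theorem — and then invoke \eqref{eq:mom2:u} (equivalently Corollary \ref{cor:green} with $a_n \equiv 0$, $a = 0$) which gives $\frac{1}{n^{(\alpha-1)/\alpha}}\sum_{j=0}^{[nT]} P(Y_j = 0) \to \int_0^T s^{-1/\alpha} p_1(0)\,\d s < \infty$, hence uniform boundedness in $n$.

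The step I expect to be the main obstacle is making the comparison $[n^\theta]\sum_i P(Y_{i[n^\theta]}=0) \lesssim \sum_j P(Y_j=0)$ rigorous: $j \mapsto P(Y_j = 0)$ need not be monotone, so one cannot literally bound a spaced-out sum times the gap by the full sum. The clean fix is to use Theorem \ref{llt}: for $j = i[n^\theta]$ with $i \ge 1$ one has $n^{1/\alpha}P(Y_{i[n^\theta]}=0) = p_{i[n^\theta]/n}(0) + O\big((i[n^\theta])^{-(1+a)/\alpha}n^{(1-a)/\alpha}\big)$-type error, and since $t \mapsto p_t(0) = c\,t^{-1/\alpha}$ is decreasing, $[n^\theta]\,p_{i[n^\theta]/n}(0) \le \int_{(i-1)[n^\theta]/n}^{i[n^\theta]/n} p_s(0)\,n\,\d s$; summing over $i$ and adding up the error terms (which form a convergent-type series after multiplication by the prefactor, using $\theta < 1/\alpha$ from \eqref{g:t:cond}) yields a bound by $n\int_{0}^{T} p_s(0)\,\d s + o(1) < \infty$ after the $n^{-(\alpha-1)/\alpha}$ normalization is accounted for. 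This is routine once set up, so I would present it compactly, citing Theorem \ref{llt} and Corollary \ref{cor:green}.
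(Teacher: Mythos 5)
Your proof of the first inequality is exactly the paper's: Fourier inversion plus nonnegativity of $\vert\phi(z)\vert^{2j}$. For the finiteness part you take a genuinely different (and heavier) route, and the ``main obstacle'' you flag is in fact not there: for this symmetric walk the map $j\mapsto P(Y_j=0)$ \emph{is} nonincreasing, since $P(Y_j=0)=\frac{1}{2\pi}\int_{-\pi}^{\pi}\vert\phi(z)\vert^{2j}\,\d z$ and $\vert\phi(z)\vert^2\le 1$ --- the very same Fourier positivity you already used for the first inequality. With this observation the spaced-out-sum bound is literal,
\[
[n^\theta]\,P\big(Y_{i[n^\theta]}=0\big)\le \sum_{j=(i-1)[n^\theta]}^{i[n^\theta]-1}P(Y_j=0),
\]
so that $[n^\theta]\sum_{i=1}^{r-1}P(Y_{i[n^\theta]}=0)\le\sum_{j=0}^{[nT]}P(Y_j=0)\lesssim n^{(\alpha-1)/\alpha}$ by \eqref{eq:mom2:u}, and no local limit theorem is needed; this is the paper's two-line argument. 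Your LLT/Riemann-sum alternative does work: the main term is dominated by the integral of $p_s(0)\propto s^{-1/\alpha}$ because that function is decreasing, and the two error terms from Theorem \ref{llt}, after the prefactor $[n^\theta]/n^{(\alpha-1)/\alpha}$, come out to $O(n^{-a/\alpha})$ and $o(1)$ under $\theta<1/\alpha$. But it costs you this error bookkeeping plus an appeal to the (asserted, not restated) analogue of Theorem \ref{llt} for $Y=X-\tilde X$, whereas the monotonicity argument uses only what you had already written down for the first inequality.
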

\begin{proof}The first inequality is a simple consequence of the fact that
\[P(Y_j=w) = \frac{1}{2\pi}\int_{-\pi}^{\pi} e^{-\mathrm i w z} \vert \phi(z)\vert^{2j} \d w \le P(Y_j=0)\]
for any $w$, since the characteristic function of $Y_j$ is $\vert \phi(z)\vert^{2j}$. As for the finiteness of the sum
observe that $P(Y_j=0)$ is decreasing with $j$ and therefore
\[[n^\theta]\sum_{i=1}^{r-1} P\big(Y_{i[n^\theta]}=0\big) \le \sum_{j=0}^n P(Y_j=0) \lesssim n^{(\alpha-1)/\alpha},\]
the last inequality following from \eqref{eq:mom2:u}.
\end{proof}
It follows from Assumption \ref{cond1} on the characteristic function that the distributions function $F(x)$ of
$X_1$ belongs to the {\it domain of normal attraction} of the symmetric stable law with exponent $\alpha$ (see section 35 of
\cite{gned-kolm}). This means that one has to scale the centered $X_n-\mu n$ by a constant multiple of $n^{1/\alpha}$.
One can characterize such distribution functions.
\begin{lemma}[\cite{gned-kolm}] \label{lem:doa} A necessary and sufficient condition for $F$ to be in the domain of normal attraction of 
a Stable($\alpha$) law, $0<\alpha<2$,  is the existence of contants $ c_1,c_2\ge 0,\, c_1+c_2>0$ such that
\begin{align*}
F(x)&= \big(c_1 + f_1(x)\big) \frac{1}{|x|^\alpha}, \qquad \text{for } x<0, \\
F(x)&= 1-\big(c_2+ f_2(x)\big) \frac{1}{|x|^\alpha}, \;\;\text{for } x>0,
\end{align*}
where the functions $f_1$ and $f_2$ satisfy
\[ \lim_{x \to -\infty} f_1(x) = \lim_{x\to \infty} f_2(x)=0. \]
\end{lemma}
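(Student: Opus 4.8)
Since this lemma is quoted verbatim from \cite{gned-kolm}, the paper simply invokes it without proof; to establish it directly, the plan would be the classical characteristic-function argument, as follows. Write $\phi$ for the characteristic function of a single step (whose distribution function is $F$), recall that $X_n$ is a sum of $n$ i.i.d.\ copies of it, and set $a_n=Cn^{1/\alpha}$ for a constant $C>0$ to be fixed. The first step is to apply the L\'evy continuity theorem to reduce the claim that $(X_n-b_n)/a_n$ converges in law to a Stable($\alpha$) variable to the pointwise convergence, for all $t\in\R$,
\[
n\bigl(\phi(t/a_n)-1\bigr)-\mathrm{i}t\,\frac{b_n}{a_n}\ \longrightarrow\ \psi(t):=\int_{\R}\bigl(e^{\mathrm{i}tu}-1-\mathrm{i}tu\,\ind\{|u|\le1\}\bigr)\,\Lambda(\d u),
\]
where $\Lambda$ is the L\'evy measure of a stable law of index $\alpha$, necessarily of the scaling form $\Lambda(\d u)=\alpha c_2'|u|^{-1-\alpha}\d u$ on $(0,\infty)$ and $\alpha c_1'|u|^{-1-\alpha}\d u$ on $(-\infty,0)$.

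For the ``if'' direction I would decompose $n(\phi(t/a_n)-1)=n\int_{|x|\le\varepsilon a_n}(e^{\mathrm{i}tx/a_n}-1)\,\d F(x)+n\int_{|x|>\varepsilon a_n}(e^{\mathrm{i}tx/a_n}-1)\,\d F(x)$. In the large-jump integral, substituting $u=x/a_n$ and using that $1-F$ and $F(-\cdot)$ are regularly varying of index $-\alpha$ together with $a_n^\alpha=C^\alpha n$, one obtains that the rescaled measures $n\,P(X_1/a_n\in\cdot)$ converge vaguely on $\R\setminus\{0\}$ to $\Lambda$ with $c_i'=c_i/C^\alpha$, so this integral tends to $\int_{|u|>\varepsilon}(e^{\mathrm{i}tu}-1)\,\Lambda(\d u)$. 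In the small-jump integral, Taylor expand $e^{\mathrm{i}tx/a_n}-1=\mathrm{i}tx/a_n-\tfrac12 t^2x^2/a_n^2+O(|x/a_n|^3)$: the linear piece $\mathrm{i}t\,n\,E[X_1\ind\{|X_1|\le\varepsilon a_n\}]/a_n$ is absorbed into the centering $b_n$, while the quadratic piece is controlled by Karamata's theorem, $E[X_1^2\ind\{|X_1|\le y\}]\sim\frac{\alpha}{2-\alpha}(c_1+c_2)y^{2-\alpha}$ as $y\to\infty$ --- which is exactly where $\alpha<2$ is used --- giving a contribution of size $O(\varepsilon^{2-\alpha})$ that vanishes as $\varepsilon\downarrow0$. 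Letting $\varepsilon\downarrow0$ identifies the limit as $\psi(t)$, and L\'evy continuity closes this direction.

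For the ``only if'' direction I would run the same equivalence backwards: convergence of the left-hand sides above to $\psi(t)$ forces, by uniqueness of the L\'evy--Khintchine exponent, the vague convergence $n\,P(X_1/a_n\in\cdot)\to\Lambda$; evaluating on half-lines gives $n(1-F(a_nu))\to c_2'u^{-\alpha}$ for every $u>0$, and, $1-F$ being monotone, a Karamata-type inversion upgrades this to a genuine tail equivalence $1-F(x)\sim(c_2'C^\alpha)x^{-\alpha}$, that is $1-F(x)=(c_2+f_2(x))x^{-\alpha}$ with $f_2\to0$, and symmetrically for the left tail. Finally one observes that norming by exactly a constant multiple of $n^{1/\alpha}$, rather than by $n^{1/\alpha}$ times a slowly varying factor, is precisely what the adjective \emph{normal} in ``domain of normal attraction'' encodes, and it corresponds to $c_1,c_2$ being honest limits rather than merely the leading order of a slowly varying function.

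The step I expect to be the main obstacle is the small-jump region: proving, uniformly in $n$, that the truncated second moment $E[X_1^2\ind\{|X_1|\le\varepsilon a_n\}]/a_n^2$ is $O(\varepsilon^{2-\alpha})$ needs Karamata's theorem, and one must also check that the linear terms arising from the various $t$ are reconciled into a single centering sequence $b_n$. On the necessity side the non-routine point is the regular-variation inversion, passing from the pointwise limits $n(1-F(a_nu))\to c_2'u^{-\alpha}$ back to the tail equivalence $1-F(x)\sim c_2x^{-\alpha}$. Both are classical, and are carried out in \cite{gned-kolm}, \S35, which is why in the paper it suffices to cite that reference.
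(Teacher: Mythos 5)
The paper gives no proof of this lemma: it is quoted verbatim from \cite{gned-kolm} (\S 35), and your outline is exactly the classical characteristic-function/triangular-array argument carried out there --- vague convergence of $n\,P(X_1/a_n\in\cdot)$ to the stable L\'evy measure together with Karamata control of the truncated second moment for sufficiency, and the monotone sandwich between $a_n$ and $a_{n+1}$ (with $a_{n+1}/a_n\to 1$) to invert the pointwise limits into the exact tail asymptotics for necessity. Your sketch is correct, matches the cited source, and rightly identifies that the constant norming $a_n=Cn^{1/\alpha}$ is what distinguishes the domain of \emph{normal} attraction, so nothing further is needed.
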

We use the above lemma to deduce the following large deviation estimate.
\begin{lemma} \label{lem:ld} There exists a constant $c_3>0$ such that 
\[ P \big(\vert X_{[nt]-r[n^\theta]}\vert \ge c_3n^\theta\big)\lesssim  \frac{1}{n^{(\alpha-1)\theta}}.\]
\end{lemma}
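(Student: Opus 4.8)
The plan is to control the tail probability $P(|X_N| \ge c_3 n^\theta)$ for $N = [nt] - r[n^\theta]$ by a union bound over the $N$ increments, using that $N \lesssim n$ while $n^\theta$ is a power of $n$, so that we are genuinely in a large-deviation regime. First I would note that since $r[n^\theta] \le [nt] < (r+1)[n^\theta]$, we have $0 \le N < [n^\theta]$, and in particular $N \le n^\theta$; in fact $N \lesssim n$ trivially, which is all we need. The elementary inequality
\[
P\big(|X_N| \ge c_3 n^\theta\big) \le P\Big(\max_{1\le j\le N}|X_j - X_{j-1}| \ge \tfrac{c_3}{2} n^\theta\Big) + P\big(|X_N| \ge c_3 n^\theta,\ \max_j |X_j-X_{j-1}| < \tfrac{c_3}{2}n^\theta\big)
\]
is one route, but it is cleaner to use the standard one-big-jump heuristic for heavy-tailed walks: by subadditivity of the tail (or by the result of \cite{heyd} already invoked in the proof of Lemma \ref{lem:periodic}),
\[
P\big(|X_N| \ge c_3 n^\theta\big) \lesssim N\cdot P\big(|X_1| \ge c_3' n^\theta\big)
\]
for a suitable constant $c_3'$, valid uniformly for $N$ in the relevant range.

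Next I would invoke Lemma \ref{lem:doa}: since Assumption \ref{cond1} forces $F$ to be in the domain of normal attraction of the Stable($\alpha$) law, we have $P(|X_1| \ge x) = (c_1 + c_2 + f_1(-x) + f_2(x))\,x^{-\alpha} \lesssim x^{-\alpha}$ as $x \to \infty$. Plugging $x = c_3' n^\theta$ gives
\[
P\big(|X_1| \ge c_3' n^\theta\big) \lesssim \frac{1}{n^{\alpha\theta}},
\]
and combining with $N \lesssim n$ yields
\[
P\big(|X_N| \ge c_3 n^\theta\big) \lesssim \frac{n}{n^{\alpha\theta}} = \frac{1}{n^{\alpha\theta - 1}}.
\]
Since $\alpha\theta - 1 \ge (\alpha-1)\theta$ would require $\theta \ge 1$, which is false, I should instead be slightly more careful: use that $N < [n^\theta] \le n^\theta$, so that $N \cdot n^{-\alpha\theta} \le n^{\theta} \cdot n^{-\alpha\theta} = n^{-(\alpha-1)\theta}$. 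This is exactly the claimed bound $\lesssim n^{-(\alpha-1)\theta}$. For the case $\alpha = 2$, where Lemma \ref{lem:doa} does not directly apply, I would instead use Chebyshev's inequality on $X_N - \mu N$ together with the variance bound $\Var(X_N) \lesssim N \lesssim n^\theta$ (recall $\mu = 0$ is not assumed here, but $\mu N \lesssim n^\theta$ as well, so $|X_N| \ge c_3 n^\theta$ forces $|X_N - \mu N| \gtrsim n^\theta$ for $c_3$ large), giving $P(|X_N|\ge c_3 n^\theta) \lesssim n^\theta / n^{2\theta} = n^{-\theta} \le n^{-(\alpha-1)\theta}$ since $\alpha-1 = 1$.

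The only mild obstacle is getting the one-big-jump bound $P(|X_N|\ge x) \lesssim N\, P(|X_1|\ge x')$ with explicit uniformity in $N$ over the range $N \le n^\theta$; this is standard (and the identical estimate is used in the proof of Lemma \ref{lem:periodic} via \cite{heyd}), so I would simply cite it. Everything else is a direct substitution of the tail asymptotics from Lemma \ref{lem:doa}, and the bookkeeping with $N \le [n^\theta]$ rather than $N \lesssim n$ is what produces the sharp exponent $(\alpha-1)\theta$.
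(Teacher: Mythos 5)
Your argument is essentially the paper's own proof: for $1<\alpha<2$ you bound $P(|X_N|\ge c_3 n^\theta)\lesssim N\,P(|X_1|\ge c_3' n^\theta)$ via the result of \cite{heyd}, feed in the tail $\lesssim x^{-\alpha}$ from Lemma \ref{lem:doa}, and use $N=[nt]-r[n^\theta]<[n^\theta]$ to get the exponent $(\alpha-1)\theta$, while for $\alpha=2$ you use Chebyshev exactly as the paper does (the only point you leave implicit is that Assumption \ref{cond1} with $\alpha=2$ gives $X_1$ a finite second moment, which the paper justifies by citing an exercise in \cite{durr-fourth}). This is correct, and the bookkeeping correction from $N\lesssim n$ to $N<[n^\theta]$ is precisely what the paper's bound relies on.
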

\begin{proof} For $1<\alpha<2$ we use the result in \cite{heyd}. This gives for $c_3>|\mu|$
\begin{align*}
P(|X_{[nt]-r[n^\theta]}| \ge c_3n^\theta)& \lesssim n^\theta P(|X_1| \ge c_3 n^\theta)\\
&  \lesssim \frac{n^\theta}{n^{\alpha \theta}},
\end{align*}
from the previous lemma. For $\alpha=2$, one can use Exercise 3.3.19 in \cite{durr-fourth}
to conclude that $X_1$ has second moments.  One then uses Chebyschev's inequality to prove the lemma.
\end{proof}

For the rest of this section we provide the proofs of the bounds required for the two terms in \eqref{eq:2*}. Below
is the bound on the first term.
\begin{lemma} \label{lem:2_1} The first term in \eqref{eq:2*} has the bound
\[ \frac{1}{n^{(\alpha-1)/\alpha}} \Big\vert\sum_{i,j} \sum_{k,l} \P_j(l) \cdot \Big[ \P_j(l)-\P_{i[n^\theta]}\big(k[n^\gamma]\big)\Big]\Big\vert \lesssim \frac{1}{n^{(\alpha-1)\theta}} + \frac{n^{\theta+o(1)}}{n^{(\alpha-1)/\alpha}},\]
where the limits of the summation are as in \eqref{limit}.
\end{lemma}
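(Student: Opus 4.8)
The plan is to reindex the sum so that it compares a fine‑scale kernel with its value at the corner of the coarse block, and then to estimate that difference by the local limit theorem together with smoothness of the stable density. Writing $j\mapsto r[n^\theta]-j-1$, $l\mapsto l-z[n^\gamma]$ and relabelling the block indices (exactly as in the passage from \eqref{limit} to the last line of \eqref{eq:2}), the left‑hand side of Lemma \ref{lem:2_1} becomes
\[
\frac{1}{n^{(\alpha-1)/\alpha}}\Big|\sum_{i=0}^{r-1}\sum_{k\in\Z}\sum_{j=i[n^\theta]}^{(i+1)[n^\theta]-1}\sum_{l=k[n^\gamma]}^{(k+1)[n^\gamma]-1}\P_j(l)\big[\P_j(l)-\P_{i[n^\theta]}(k[n^\gamma])\big]\Big|.
\]
I would keep the bracketed difference together throughout, since $\sum_{j,l}\P_j(l)^2$ alone is only $O(1)$ and all the gain comes from cancellation of the leading order of $\P_j(l)$ and $\P_{i[n^\theta]}(k[n^\gamma])$; and I would split off the block $i=0$, where $\P_{i[n^\theta]}=\P_0=\delta$ is out of reach of the local limit theorem.

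For $i=0$ one has $0\le j<[n^\theta]$ and $\P_0(k[n^\gamma])=\mathbf 1\{k[n^\gamma]=0\}$, so $\P_j(l)\,|\P_j(l)-\P_0(k[n^\gamma])|\le\P_j(l)^2+\P_j(l)\,\mathbf 1\{k[n^\gamma]=0\}$; summing the first term over all $l,k$ gives $n^{-(\alpha-1)/\alpha}\sum_{j=0}^{[n^\theta]}P(Y_j=0)\lesssim n^{-(1-\theta)(\alpha-1)/\alpha}$ by \eqref{eq:1}, and the second gives at most $[n^\theta]\,n^{-(\alpha-1)/\alpha}$; both are $\lesssim n^{\theta}/n^{(\alpha-1)/\alpha}$, hence absorbed.

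For $i\ge1$, so $j\ge[n^\theta]$, I would apply Theorem \ref{llt} with $b=1-\alpha\gamma$ (legitimate since $\gamma<\theta/\alpha<1/\alpha$ by \eqref{g:t:cond}) to write $\P_j(l)=n^{-1/\alpha}p_{j/n}\big((l-\mu j)n^{-1/\alpha}\big)+E_j(l)$, and similarly for $\P_{i[n^\theta]}(k[n^\gamma])$, with errors $|E|\lesssim(in^\theta)^{-(1+a)/\alpha}+n^\gamma(in^\theta)^{-2/\alpha}$. The two densities sit at times differing by $\le[n^\theta]/n$ and, after recentring, at points differing by $\lesssim(n^\gamma+n^\theta)n^{-1/\alpha}\lesssim n^{\theta-1/\alpha}$, so Lemma \ref{lem:p:der} and the companion gradient estimate $|\partial_x p_t(x)|\lesssim t^{-1/\alpha}p_t(x/2)$ (which follows from $p_t(x)=t^{-1/\alpha}p_1(xt^{-1/\alpha})$ and $|p_1'(y)|\lesssim p_1(y/2)$) give
\[
\big|n^{-1/\alpha}p_{j/n}(\cdot)-n^{-1/\alpha}p_{i[n^\theta]/n}(\cdot)\big|\lesssim\Big(\frac1i+\frac{n^{\theta(\alpha-1)/\alpha}}{i^{1/\alpha}}\Big)\,n^{-1/\alpha}p_{\,\sim\,i[n^\theta]/n}(\cdot).
\]
Inserting these into $n^{-(\alpha-1)/\alpha}\sum_{i\ge1}\sum_{j,k,l}\P_j(l)\,|\cdots|$: for the density part, $\sum_{k,l}\P_j(l)\,n^{-1/\alpha}p_{\sim}(\cdot)\le\big(\sup_x n^{-1/\alpha}p_{i[n^\theta]/n}(xn^{-1/\alpha})\big)\sum_{k,l}\P_j(l)\lesssim(in^\theta)^{-1/\alpha}$, the inner $j$‑sum contributes $[n^\theta]$, and what remains is $\frac{n^{\theta(\alpha-1)/\alpha}}{n^{(\alpha-1)/\alpha}}\big(\sum_{i\ge1}i^{-1-1/\alpha}+n^{\theta(\alpha-1)/\alpha}\sum_{i=1}^{r-1}i^{-2/\alpha}\big)\lesssim n^{\theta+o(1)}/n^{(\alpha-1)/\alpha}$, using $\sum_i i^{-1-1/\alpha}=O(1)$, $\sum_{i=1}^{r-1}i^{-2/\alpha}=O(n^{o(1)})$, $2(\alpha-1)/\alpha\le1$ and $\theta(\alpha-1)/\alpha\le\theta$; for the LLT‑error part, $\sum_{k,l}\P_j(l)=1$, the $j$‑sum gives $[n^\theta]$, leaving $n^{-(\alpha-1)/\alpha}[n^\theta]\sum_{i=1}^{r-1}\big[(in^\theta)^{-(1+a)/\alpha}+n^\gamma(in^\theta)^{-2/\alpha}\big]$, whose $n^\gamma(in^\theta)^{-2/\alpha}$ piece is handled as above (via $\gamma\le\theta/\alpha$) and whose $(in^\theta)^{-(1+a)/\alpha}$ piece evaluates to $n^{o(1)-\min(a,\alpha-1)/\alpha}\lesssim n^{-(\alpha-1)\theta}$ because $\theta<\min(a,\alpha-1)/\alpha$ by \eqref{g:t:cond}. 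Collecting the three contributions gives the asserted bound.

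The main obstacle is the exponent bookkeeping: one must check that each of the six error pieces (the two LLT errors, the two density‑smoothness errors coming from the $\partial_t$ and $\partial_x$ estimates, and the two $i=0$ terms) is dominated by $n^{-(\alpha-1)\theta}+n^{\theta+o(1)}/n^{(\alpha-1)/\alpha}$. This forces a case split on whether $a\lessgtr\alpha-1$ in the power sum $\sum_i i^{-(1+a)/\alpha}$ — in the subcritical case the $(in^\theta)^{-(1+a)/\alpha}$ piece comes out exactly as $n^{-a/\alpha}$, and it lies below $n^{-(\alpha-1)\theta}$ precisely because \eqref{g:t:cond} forces $\theta<a/\alpha$ — together with a consistent use of $\gamma\le\theta/\alpha$ and $\alpha\le2$ in the Riemann‑sum reductions.
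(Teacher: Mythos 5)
Your proposal is correct, but it follows a genuinely different route from the paper's proof of Lemma \ref{lem:2_1}. The paper expands the product: the diagonal piece becomes $\tfrac{1}{n^{(\alpha-1)/\alpha}}\sum_{i,j}P(X_j=\tilde X_j)$, while the cross piece is rewritten via the Markov property at the block start time as $\sum_w P(X_{i[n^\theta]}=l-w,\,\tilde X_{i[n^\theta]}=k[n^\gamma])\,\P_{j-i[n^\theta]}(w)$, truncated to $|w|\le c_3 n^\theta$ using the large--deviation bound of Lemma \ref{lem:ld} together with the uniform bound of Lemma \ref{lem:green}; both pieces are then evaluated to leading order by the local limit theorem for the symmetrized walk, and the two leading terms $\tfrac{\alpha\tilde p_1(0)}{\alpha-1}\big(\tfrac{r[n^\theta]-1}{n}\big)^{(\alpha-1)/\alpha}$ cancel exactly, leaving only the stated errors. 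You instead keep the bracket $\P_j(l)-\P_{i[n^\theta]}(k[n^\gamma])$ intact, approximate each kernel by the stable density via Theorem \ref{llt}, and control the density difference across one coarse block by the time-derivative bound of Lemma \ref{lem:p:der} together with a spatial gradient bound, then sum absolute values using $\sum_l\P_j(l)=1$ and $\sup_x p_t(x)\lesssim t^{-1/\alpha}$; your exponent bookkeeping (the use of $\alpha\le 2$, $\gamma<\theta\le 2\theta/\alpha$, and $(\alpha-1)\theta\le\theta<\min(a,\alpha-1)/\alpha$ from \eqref{g:t:cond}, plus the $i=0$ block handled by \eqref{eq:1}) checks out, and the reindexing from \eqref{limit} to the block form is exactly what the paper intends. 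What your route buys: it avoids Lemmas \ref{lem:green} and \ref{lem:ld} and the delicate exact cancellation of two $O(1)$ quantities, and in fact proves the slightly stronger statement with the absolute value inside the sums; the price is one ingredient not stated in the paper, the estimate $|\partial_x p_t(x)|\lesssim t^{-1/\alpha}p_t(x/2)$ (only its sup-norm form $\sup_x|\partial_x p_t(x)|\lesssim t^{-2/\alpha}$ is actually used), which is immediate from the scaling $p_t(x)=t^{-1/\alpha}p_1(xt^{-1/\alpha})$ and boundedness of $p_1'$, so this is a harmless addition. One cosmetic remark: since you evaluate the density at the exactly recentred point $(l-\mu j)n^{-1/\alpha}$, the choice $b=1-\alpha\gamma$ in Theorem \ref{llt} is not forced (any $b\le 1$ would do and $b=1$ gives a sharper error), but the choice you made is valid and still closes the estimate.
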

\begin{proof}
We write
\be \label{eq:2:1} \begin{split}
&\frac{1}{n^{(\alpha-1)/\alpha}} \sum_{i,j} \sum_{k,l} \P_j(l) \cdot \Big[ \P_j(l)-\P_{i[n^\theta]}\big(k[n^\gamma]\big)\Big] \\
&= \frac{1}{n^{(\alpha-1)/\alpha}} \sum_{i,j} P(X_j=\tilde X_j)  \\
&\quad - \frac{1}{n^{(\alpha-1)/\alpha}}\sum_{i,j}\sum_{k,l}\sum_{w\in \Z} P\left(X_{i[n^\theta]}=l-w,\, \tilde{X}_{[in^\theta]}=k[n^\gamma]\right) \cdot \P_{j-[in^\theta]}(w), 
\end{split}\ee
the equality holding by an application of the Markov property. We focus on the second term for now, we will return to the first term later. We split the sum according to whether $i=0$ or not. 
The $i=0$ term is of order $n^{\theta}n^{-(\alpha-1)/\alpha}$. For the term corresponding to $i\ge 1$ replace $\tilde X_{i[n^\theta]} = k[n^\gamma]$ by $\tilde X_{i[n^\theta]}=l$ by using Theorem \ref{llt} with $b=1-\alpha\gamma$ to obtain
\bes\begin{split}
& \frac{1}{n^{(\alpha-1)/\alpha}}\sum_{i,j}\sum_{k,l} \sum_w P\left(X_{i[n^\theta]}=l-w,\, \tilde{X}_{[in^\theta]}=k[n^\gamma]\right) \cdot \P_{j-[in^\theta]}(w) \\
&=O\left(\frac{n^\theta}{n^{(\alpha-1)/\alpha}}\right) + \frac{1}{n^{(\alpha-1)/\alpha}}\sum_{i=1}^{r-1}\sum_{j}\left[O\left(\frac{1}{(in^\theta)^{(1+a)/\alpha}} \right)+ O\left(\frac{n^\gamma}{(in^\theta)^{2/\alpha}}\right)\right]\\
&\quad +\frac{1}{n^{(\alpha-1)/\alpha}}\sum_{i=1}^{r-1}\sum_{j}\sum_{k,l} \sum_{w} P\left(X_{i[n^\theta]}=l-w, \tilde{X}_{[in^\theta]}=l\right)\cdot \P_{j-[in^\theta]}(w) \\
&=O\left(\frac{n^\theta}{n^{(\alpha-1)/\alpha}}\right) +O\left(\frac{n^{o(1)}}{n^{\min(a, \alpha-1)/\alpha}}\right)\\
&\quad +\frac{1}{n^{(\alpha-1)/\alpha}}\sum_{i=1}^{r-1}\sum_{j}\sum_{w} P\left(X_{i[n^\theta]}-\tilde{X}_{i[n^\theta]}=w\right)\cdot \P_{j-[in^\theta]}(w).
\end{split}\ees
Returning to our expression \eqref{eq:2:1} we now can write it as
\bes
\begin{split}
&  \frac{1}{n^{(\alpha-1)/\alpha}} \sum_{i,j} P(X_j=\tilde X_j) -\frac{1}{n^{(\alpha-1)/\alpha}}\sum_{i=1}^{r-1} \sum_j\sum_{|w|\le c_3n^\theta} P\left(X_{i[n^\theta]}-\tilde{X}_{i[n^\theta]}=w\right)\cdot \P_{j-[in^\theta]}(w) \\
& +O\left(\frac{1}{n^{(\alpha-1)\theta}}\right)+O\left(\frac{n^\theta}{n^{(\alpha-1)/\alpha}}\right) +O\left(\frac{n^{o(1)}}{n^{\min(a, \alpha-1)/\alpha}}\right), 
\end{split}
\ees
where we have used Lemma \ref{lem:green} and Lemma \ref{lem:ld} with the constant $c_3$ from there. Let us now consider each of the first two terms above. Using Theorem \ref{llt} as well as \eqref{eq:scale} one gets for the first term
\bes
\frac{1}{n^{(\alpha-1)/\alpha}} \sum_{i,j} P(X_j=\tilde X_j) = \frac{\alpha \tilde p_1(0) }{\alpha-1} \cdot \left(\frac{r[n^\theta]-1}{n}\right)^{(\alpha-1)/\alpha} + O\left(\frac{n^{o(1)}}{n^{\min (a, \,\alpha-1)/\alpha}}\right),
\ees
where $\tilde p_1(\cdot)$ is the transition kernel of $-2\nu (-\Delta)^{\alpha/2}$. Using Theorem \ref{llt} again with $b=1-\alpha\theta$, we get for $|w|\le c_3n^\theta$
\bes\begin{split}
&\frac{[n^\theta]}{n^{(\alpha-1)/\alpha}}\sum_{i=1}^{r-1}\sum_{|w|\le c_3[n^\theta]} P\left(X_{i[n^\theta]}-\tilde{X}_{i[n^\theta]}=w\right)\cdot \P_{j-i[n^\theta]}(w) \\
&=\frac{[n^\theta]}{n^{(\alpha-1)/\alpha}}\sum_{i=1}^{r-1}\left[\frac{\tilde p_1(0)}{(i[n^\theta])^{1/\alpha}}+ O\left(\frac{1}{(in^\theta)^{(1+a)/\alpha}}\right)+O\left(\frac{n^\theta}{(in^\theta)^{2/\alpha}}\right)\right] \\
&= \frac{\alpha \tilde p_1(0) }{\alpha-1} \cdot \left(\frac{r[n^\theta]-1}{n}\right)^{(\alpha-1)/\alpha} +O\left(\frac{n^{\theta(\alpha-1)/\alpha}}{n^{(\alpha-1)/\alpha}}\right)+ O\left(\frac{n^{o(1)}}{n^{\min(a,\, \alpha-1)/\alpha}}\right) + O\left(\frac{n^{\theta+o(1)}}{n^{(\alpha-1)/\alpha}}\right).
\end{split}\ees
Collecting all our estimates and recalling our conditions \eqref{g:t:cond} on $\gamma$ and $\theta$ completes the proof.
\end{proof}

We next bound the second term in \eqref{eq:2*}.
\begin{lemma} \label{lem:2_2} The second term in \eqref{eq:2*} has the bound
\[\frac{1}{n^{(\alpha-1)/\alpha}}\Big\vert\sum_{i,j} \sum_{k,l} \P_{i[n^\theta]}\big(k[n^\gamma]\big)\cdot \Big[ \P_{i[n^\theta]}\big(k[n^\gamma]\big)-\P_j(l)\Big]\Big \vert \lesssim \frac{n^{\gamma}}{n^{(\alpha-1)\theta}}+ \frac{n^{\theta+\gamma+o(1)}}{n^{(\alpha-1)/\alpha}} \]
where the limits in the summation are as in \eqref{eq:2*}.
\end{lemma}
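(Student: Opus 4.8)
The plan is to expand the bracket and exploit a near-cancellation between $\sum \P_{i[n^\theta]}^2(k[n^\gamma])$ and $\sum \P_{i[n^\theta]}(k[n^\gamma])\,\P_j(l)$, closely paralleling the bookkeeping in the proof of Lemma \ref{lem:2_1}. First I would isolate the $i=0$ contribution: since $\P_0(k[n^\gamma])=\ind\{k=0\}$, this piece equals $\frac{1}{n^{(\alpha-1)/\alpha}}\sum_{j<[n^\theta]}\sum_{l<[n^\gamma]}\big(1-\P_j(l)\big)$, which is crudely $\lesssim \frac{[n^\theta][n^\gamma]}{n^{(\alpha-1)/\alpha}}=\frac{n^{\theta+\gamma}}{n^{(\alpha-1)/\alpha}}$, already within the claimed bound.

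For $i\ge1$ the plan is to replace every random-walk transition probability by the corresponding Stable($\alpha$) density using Theorem \ref{llt} (with $b=1-\alpha\gamma$ for the $k[n^\gamma]$-kernels and $b=1-\alpha\theta$ elsewhere), and then perform Riemann-sum / lattice-thinning approximations to turn the sums over the sublattice $[n^\gamma]\Z$ and over the $j,l$-blocks into integrals of $p$; the restriction $\alpha\gamma<\theta$ from \eqref{g:t:cond} is exactly what makes $\P_{i[n^\theta]}(\cdot)$ vary slowly on the scale $[n^\gamma]$ for every $i\ge1$, so these approximations carry quantitative error bounds. With this, the $\sum\P_{i[n^\theta]}^2(k[n^\gamma])$-part reduces, up to errors, to $\frac{[n^\theta]}{n^{(\alpha-1)/\alpha}}\sum_{i=1}^{r-1}P(Y_{i[n^\theta]}=0)$, which by Lemma \ref{lem:green} and a Riemann-sum computation equals $\frac{\alpha\tilde p_1(0)}{\alpha-1}\big(\frac{r[n^\theta]-1}{n}\big)^{(\alpha-1)/\alpha}$ plus lower-order terms; and the $\sum\P_{i[n^\theta]}(k[n^\gamma])\P_j(l)$-part, after the Markov splitting $\P_j(l)=\sum_w\P_{i[n^\theta]}(w)\P_{j-i[n^\theta]}(l-w)$ (legitimate because the relabelled index $j$ lies in block $i$, so $j\ge i[n^\theta]$) followed by the same thinning, produces the identical leading expression. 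The two cancel.

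It then remains to collect all error terms — those from Theorem \ref{llt} (after renormalisation, of the shape $(i[n^\theta])^{-(1+a)/\alpha}$ and $n^\gamma(i[n^\theta])^{-2/\alpha}$), those from the spatial and temporal Riemann-sum approximations, and the within-block discrepancy between the kernel at time $j$ and at $i[n^\theta]$ (handled via Lemma \ref{lem:p:der}) — and to verify, exactly as at the end of the proof of Lemma \ref{lem:2_1}, that after multiplication by $\frac{[n^\theta][n^\gamma]}{n^{(\alpha-1)/\alpha}}$ and summation over $1\le i\le r-1$ with $r\le[Tn^{1-\theta}]$, the total is $\lesssim \frac{n^\gamma}{n^{(\alpha-1)\theta}}+\frac{n^{\theta+\gamma+o(1)}}{n^{(\alpha-1)/\alpha}}$. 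Large-$|w|$ tails are discarded with Lemma \ref{lem:ld}, and Lemma \ref{lem:green} is invoked whenever a Green's-function sum must be bounded rather than evaluated.

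The main obstacle I anticipate is the cancellation itself: by Lemma \ref{lem:green} each of the two sums is only of order $1$ (not $o(1)$), so neither can be estimated in isolation — one must push both expansions far enough to see that their leading $t^{(\alpha-1)/\alpha}$-terms literally coincide before the residuals can be bounded. A secondary subtlety is why this lemma is a factor $n^\gamma$ weaker than Lemma \ref{lem:2_1}: it is because the pinned kernel $\P_{i[n^\theta]}$ is sampled only on $[n^\gamma]\Z$, so its spatial thinning error carries an extra $n^\gamma$, and one needs $\alpha\gamma<\theta$ together with $\gamma\le\theta$ from \eqref{g:t:cond} to absorb it; when $\alpha=2$ the pertinent Riemann sums diverge logarithmically, which accounts for the $n^{o(1)}$ in the stated bound.
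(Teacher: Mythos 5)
Your plan is essentially sound and would deliver the stated bound, but it is organized differently from, and more heavily than, the paper's proof, which never performs the leading-order cancellation you anticipate as the main obstacle. After isolating $i=0$ exactly as you do, the paper keeps the bracket intact and applies the Markov property inside it, writing $\P_j(l)=\sum_w\P_{j-i[n^\theta]}(w)\,\P_{i[n^\theta]}(l-w)$; for $|w|\le c_3n^\theta$ (the complementary range being discarded with Lemmas \ref{lem:ld} and \ref{lem:green}) the two kernels in the bracket then sit at the \emph{same} time $i[n^\theta]$ with spatial arguments within $O(n^\theta)$ of each other, so a single application of Theorem \ref{llt} with $b=1-\alpha\theta$ bounds the bracket pointwise by $O\big((in^\theta)^{-(1+a)/\alpha}+n^\theta(in^\theta)^{-2/\alpha}\big)$, while the outer probabilities $\P_{i[n^\theta]}(k[n^\gamma])$ and $\P_{j-i[n^\theta]}(w)$ absorb the $k$- and $w$-sums, leaving only the raw factors $[n^\theta][n^\gamma]$ from $j$ and $l$; no identification of a common $\frac{\alpha\tilde p_1(0)}{\alpha-1}t^{(\alpha-1)/\alpha}$ term is needed (that bookkeeping is genuinely required only in Lemma \ref{lem:2_1}). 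Your evaluate-and-cancel route can be carried out, but with two cautions: the error in Theorem \ref{llt} is a sup bound, not summable in the spatial variable, so when thinning $\sum_k\P^2_{i[n^\theta]}(k[n^\gamma])$ and the cross term you must always keep one undisturbed probability factor (or truncate spatially via a large-deviation estimate as in Lemma \ref{lem:ld}) before summing over the unbounded $k$- and $w$-ranges; and the evaluation of $\frac{[n^\theta]}{n^{(\alpha-1)/\alpha}}\sum_i P(Y_{i[n^\theta]}=0)$ into the explicit $t^{(\alpha-1)/\alpha}$ term comes from Theorem \ref{llt} together with the scaling relation \eqref{eq:scale}, not from Lemma \ref{lem:green}, which only gives boundedness. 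Your accounting of the extra $n^\gamma$ relative to Lemma \ref{lem:2_1} and of the logarithmic $n^{o(1)}$ at $\alpha=2$ is correct.
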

\begin{proof} We separate out the $i=0$ term and obtain
\bes\label{eq:2_2} \begin{split}
& \frac{1}{n^{(\alpha-1)/\alpha}} \sum_{i,j} \sum_{k,l} \P_{i[n^\theta]}\big(k[n^\gamma]\big)\cdot \Big[ \P_{i[n^\theta]}\big(k[n^\gamma]\big)-\P_j(l)\Big] \\
&= O \left( \frac{n^{\theta+\gamma}}{n^{(\alpha-1)/\alpha}}\right) + \frac{1}{n^{(\alpha-1)/\alpha}} \sum_{i=1}^{r-1} \sum_{j, k,l} \sum_w \P_{i[n^\theta]}\big(k[n^\gamma]\big)\cdot \Big[ \P_{i[n^\theta]}\big(k[n^\gamma]\big)-\P_{i[n^\theta]}(l-w)\Big]\cdot \P_{j-i[n^\theta]}(w)
\end{split}\ees
The reason for the error bound for the first term is that restricting $i=0$ forces $k=0$, and the number of terms in
the summation over $j$ and $l$ is of order $n^{\theta+\gamma}$.
As in Lemma \ref{lem:2_1} we split the sum over $w$ according to whether $|w|\le c_3[n^\theta]$ or not. In the case $|w|\le c_3[n^\theta]$ we use Theorem \ref{llt} with $b=1-\alpha\theta$. Thus the above is 
\bes\begin{split} \label{eq:2_2:1}
&=O \left( \frac{n^{\theta+\gamma}}{n^{(\alpha-1)/\alpha}}\right)  +\frac{n^{\gamma+\theta}}{n^{(\alpha-1)/\alpha}} \sum_{i=1}^{r-1} \left[O\left(\frac{1}{(in^\theta)^{(1+a)/\alpha}}\right) + O\left(\frac{n^\theta}{(in^\theta)^{2/\alpha}}\right)\right] \\
&\quad + \frac{1}{n^{(\alpha-1)/\alpha}}\sum_{i=1}^{r-1} \sum_{j,k,l}\sum_{|w|\ge c_3n^\theta} \P_{i[n^\theta]}(k[n^\gamma])\cdot\left[\P_{i[n^\theta]}(k[n^\gamma]) -\P_{i[n^\theta]}(l-w)\right]\cdot \P_{j-i[n^\theta]}(w)  \\
&= O\left( \frac{n^{\gamma+o(1)}}{n^{\min(a, \alpha-1)/\alpha}}\right) + O \left(\frac{n^{\gamma+\theta+o(1)}}{n^{(\alpha-1)/\alpha}}\right) \\
&\quad + \frac{1}{n^{(\alpha-1)/\alpha}}\sum_{i=1}^{r-1} \sum_{j,k,l}\sum_{|w|\ge c_3n^\theta} \P_{i[n^\theta]}(k[n^\gamma])\cdot\left[\P_{i[n^\theta]}(k[n^\gamma]) -\P_{i[n^\theta]}(l-w)\right]\cdot \P_{j-i[n^\theta]}(w)  
\end{split}\ees
To bound the last term we ignore the difference in the expression and instead bound the sum. By Lemmas \ref{lem:green}
and \ref{lem:ld}
\bes \label{eq:2_2:2}
\frac{1}{n^{(\alpha-1)/\alpha}}\sum_{i=1}^{r-1} \sum_{j,k,l}\sum_{|w|\ge c_3n^\theta} \P^2_{i[n^\theta]}(k[n^\gamma])\cdot \P_{j-i[n^\theta]}(w)  \lesssim \frac{n^{\gamma}}{n^{(\alpha-1)\theta}},
\ees
and 
\bes\begin{split} \label{eq:2_2:3}
&\frac{1}{n^{(\alpha-1)/\alpha}}\sum_{i=1}^{r-1} \sum_{j,k,l}\sum_{|w|\ge c_3n^\theta} \P_{i[n^\theta]}(k[n^\gamma])\cdot \P_{i[n^\theta]}(l-w)\cdot\P_{j-i[n^\theta]}(w) \\
&\lesssim  \frac{1}{n^{(\alpha-1)/\alpha}}\sum_{i=1}^{r-1}\sum_j\sum_{y=0}^{[n^\gamma]-1}\sum_{|w|\ge c_3n^\theta} P\left( \tilde X_{i[n^\theta]}- X_{i[n^\theta]} = y-w\right)\cdot \P_{j-i[n^\theta]}(w) \\
&\lesssim  \frac{1}{n^{(\alpha-1)/\alpha}}\sum_{i=1}^{r-1}\sum_j\sum_{y=0}^{[n^\gamma]-1}\sum_{|w|\ge c_3n^\theta} P\left( \tilde X_{i[n^\theta]}- X_{i[n^\theta]} = 0\right)\cdot \P_{j-i[n^\theta]}(w)  \\
&\lesssim\frac{n^{\gamma}}{n^{(\alpha-1)\theta}},
\end{split}\ees
 where we used Lemma \ref{lem:green} in the last step. Collecting our estimates completes the proof.
\end{proof}

\noindent\textbf{Acknowledgements:} The author expresses his gratitude to Davar Khoshnevisan for providing 
the proof of Lemma 3.3. He also thanks David Applebaum for comments on an earlier version of the paper. Partial support from EPSRC through grant EP/N028457/1 is gratefully acknowledged.

{\small \bibliography{growthrefs2}
\bibliographystyle{plain}}

	\noindent {\footnotesize MATHEW JOSEPH, SCHOOL OF MATHEMATICS AND STATISTICS, UNIVERSITY OF SHEFFIELD, HOUNSFIELD ROAD, SHEFFIELD - S37RH, U.K., EMAIL: {\tt m.joseph@sheffield.ac.uk}
\end{document}